\numberwithin{equation}{section}
\theoremstyle{plain}
\newtheorem{theorem}{Theorem}[section]
\newtheorem{corollary}[theorem]{Corollary}
\newtheorem{lemma}[theorem]{Lemma}
\newtheorem{proposition}[theorem]{Proposition}
\newtheorem{BigThm}{Theorem}
\newtheorem{BigConj}[BigThm]{Conjecture}
\theoremstyle{definition}
\newtheorem{definition}[theorem]{Definition}
\theoremstyle{remark}
\newtheorem{remark}[theorem]{Remark}
\newcommand{\map}[1]{\xrightarrow{#1}}
\newcommand{\iso}{\cong}
\newcommand{\co}{\mathcal O}
\newcommand{\alg}{\mathrm{alg}}
\newcommand{\Lie}{\mathrm{Lie}}
\newcommand{\kk}{{\bm{k}}}
\newcommand{\univ}{\mathrm{univ}}
\newcommand{\fiber}{ \beta_z}
\newcommand{\pol}{\psi}
\newcommand{\LL}{{\mathbb{L}}}
\newcommand{\VV}{{\mathbb{V}}}
\newcommand{\A}{\mathbb{A}}
\newcommand{\R}{\mathbb{R}}
\newcommand{\B}{\mathbb{B}}
\newcommand{\Q}{\mathbb{Q}}
\newcommand{\Z}{\mathbb{Z}}
\newcommand{\C}{\mathbb{C}}
\renewcommand{\H}{\mathbb{H}}
\renewcommand{\P}{\mathbb{P}}
\newcommand{\F}{\mathbb{F}}
\newcommand{\kzxz}[4]{\left(\begin{smallmatrix} #1 & #2 \\ #3 & #4\end{smallmatrix}\right) }
\newcommand{\kabcd}{\kzxz{a}{b}{c}{d}}
\renewcommand{\Im}{\operatorname{Im}}
\renewcommand{\Re}{\operatorname{Re}}
\newcommand{\calD}{\mathcal{D}}
\newcommand{\calF}{\mathcal{F}}
\newcommand{\calH}{\mathcal{H}}
\newcommand{\calO}{\mathcal{O}}
\newcommand{\calR}{\mathcal{R}}
\newcommand{\calS}{\mathcal{S}}
\newcommand{\calV}{\mathcal{V}}
\newcommand{\fraka}{\mathfrak a}
\newcommand{\frakb}{\mathfrak b}
\newcommand{\frakd}{\mathfrak d}
\newcommand{\frakn}{\mathfrak n}
\newcommand{\frakz}{\mathfrak z}
\newcommand{\xx}{\lambda}
\newcommand{\eps}{\varepsilon}
\newcommand{\bs}{\backslash}
\newcommand{\norm}{\operatorname{N}}
\newcommand{\vol}{\operatorname{vol}}
\newcommand{\tr}{\operatorname{tr}}
\newcommand{\Log}{\operatorname{log}}
\newcommand{\Sl}{\operatorname{SL}}
\newcommand{\SL}{\operatorname{SL}}
\newcommand{\Orth}{\operatorname{O}}
\newcommand{\Uni}{\operatorname{U}}
\newcommand{\Hom}{\operatorname{Hom}}
\newcommand{\CT}{\operatorname{CT}}
\newcommand{\Aut}{\operatorname{Aut}}
\newcommand{\Spec}{\operatorname{Spec}}
\newcommand{\End}{\operatorname{End}}
\newcommand{\Iso}{\operatorname{Iso}}
\newcommand{\dv}{\operatorname{div}}
\newcommand{\Gr}{\operatorname{Gr}}
\newcommand{\Gal}{\operatorname{Gal}}
\newcommand{\Pic}{\operatorname{Pic}}
\newcommand{\Div}{\operatorname{Div}}
\newcommand{\ord}{\operatorname{ord}}
\newcommand{\gen}{\operatorname{gen}}
\newcommand{\reg}{\operatorname{reg}}
\begin{document}

\title[Heights of Kudla-Rapoport divisors]{Heights of Kudla-Rapoport divisors and derivatives of $L$-functions}

\author[Jan H.~Bruinier, Benjamin Howard, and Tonghai Yang]{Jan Hendrik Bruinier, Benjamin Howard, and Tonghai Yang}
\address{Fachbereich Mathematik, Technische Universit\"at Darmstadt, Schlossgartenstr.~7,
D--64289 Darmstadt, Germany}
\email{bruinier@mathematik.tu-darmstadt.de}
\address{Department of Mathematics, Boston College, 140 Commonwealth Ave, Chestnut Hill, MA 02467, USA}
\email{howardbe@bc.edu}
\address{Department of Mathematics, University of Wisconsin Madison, Van Vleck Hall, Madison, WI 53706, USA}
\email{thyang@math.wisc.edu}

\subjclass[2000]{14G35, 14G40, 11G18, 11F27}

\thanks{The first author is partially supported by DFG grant BR-2163/4-1.
The second author is partially supported by NSF grant DMS-1201480.
The third author is partially supported by a NSF grant DMS-1200380 and a Chinese grant.}


\begin{abstract}
We study special cycles on integral models of Shimura varieties associated with unitary
similitude groups of signature $(n-1,1)$.
We construct an arithmetic theta lift from harmonic Maass forms of weight $2-n$ to the
arithmetic Chow group of  the integral model of a unitary Shimura variety, by associating to a harmonic Maass form
$f$ a linear combination of Kudla-Rapoport divisors, equipped with the Green function given by the
regularized theta lift of $f$.

Our main result is an equality of two complex numbers: (1) the height pairing of the arithmetic theta lift of $f$ against a
CM cycle, and (2) the central  derivative of the  convolution $L$-function of a weight
$n$ cusp form (depending  on $f$) and the theta function of a positive definite hermitian lattice of rank $n-1$.
When specialized to the case $n=2$, this result can be viewed as a variant of the Gross-Zagier formula for
Shimura curves associated to unitary groups of signature $(1,1)$.
The proof relies on, among other things, a new method for computing improper arithmetic intersections.
\end{abstract}

\maketitle

\setcounter{tocdepth}{1}
\tableofcontents


\section{Introduction}


Let $\kk \subset \C$ be an imaginary quadratic field of odd discriminant  $d_\kk$, and
let $\mathfrak{d}_\kk$ be the different of $\kk$. Let $\chi_\kk$ be the
quadratic Dirichlet  character determined by  $\kk/\Q$.


\subsection{Motivation: heights of Heegner points}


To motivate the results of this paper, we first recall the famous results of Gross and Zagier
\cite{GZ}.  Fix  a  normalized new eigenform
\[
g \in S_2(\Gamma_0(N)),
\]
 and assume that $N$ and $\kk$ satisfy the usual Heegner hypothesis:  every prime divisor of $N$ splits in $\kk$.
This allows us to fix an ideal $\mathfrak{n} \subset \co_\kk$ satisfying $\co_\kk/\mathfrak{n} \iso \Z/N\Z$.
For any fractional $\co_\kk$-ideal $\mathfrak{a}$, the cyclic $N$-isogeny of elliptic curves
\[
y_\mathfrak{a} = [ \C/\mathfrak{a} \to \C/\mathfrak{n}^{-1} \mathfrak{a} ]
\]
defines a \emph{Heegner point} on $X_0(N)(H)$, where $H$ is the Hilbert class field of $\kk$.
If we define a weight $2$ cuspform
\[
\phi^{\mathrm{Heeg}}(\tau) = \sum_{m\ge 1} T_m(y_{\co_\kk} -\infty) \cdot q^m
\]
valued in $J_0(N)(H)$, where the $T_m$ are Hecke operators, then the Petersson inner product
\[
\phi^{\mathrm{Heeg}}(g) = \langle \phi^{\mathrm{Heeg}}, g \rangle_{\mathrm{Pet}} \in J_0(N)(H)\otimes \C
\]
is essentially the projection of the divisor $y_{\co_\kk} -\infty$ to the $g$-isotypic component of the Jacobian $J_0(N)$.

After  endowing the fractional ideal $\mathfrak{a}$ with the self-dual hermitian form
$
\langle x,y\rangle =  \norm(\fraka)^{-1} x\overline{y}  ,
$
we may construct the weight one theta series
\[
\theta_\mathfrak{a} (\tau)
= \sum_{x\in \mathfrak{a}} q^{\langle x,x\rangle} \in M_1(\Gamma_0(|d_\kk|) ,\chi_\kk).
\]
The  Rankin-Selberg convolution $L$-function $L(g,\theta_\mathfrak{a} ,s)$
 satisfies a functional equation  forcing it to vanish at $s=1$, and
the Gross-Zagier theorem implies
\[
\big[ \phi^{\mathrm{Heeg}}(g)  : y_{\mathfrak a} -\infty \big]_{\mathrm{NT}} = c\cdot L'(g,\theta_\mathfrak{a} ,1).
\]
Here $c$ is some explicit nonzero constant, and the pairing on the left is the N\'eron-Tate height.

The goal of this paper is to obtain similar results when $g$ is replaced by  a cusp form of weight $n\ge 2$,
the weight $1$ theta series  $\theta_\mathfrak{a}$ is replaced by a weight $n-1$ theta
series determined by a hermitian lattice of rank $n-1$,
and the Heegner points on modular curves are replaced by special cycles on Shimura
varieties associated to groups of unitary  similitudes.
There are earlier results of Zhang \cite{Zh} and Nekov\'a\v{r} \cite{Nek} on Gross-Zagier theorems for higher weight modular forms, but
those results differ from ours in two essential ways: (1) those authors work with height pairings of cycles on Kuga-Sato varieties
fibered over modular curves, while we work with height pairings on unitary Shimura varieties, and (2)
they work with theta series of weight $1$, while we work with theta series of weight $n-1$.


\subsection{Statement of the main result}


Our main result will be a Gross-Zagier-type formula for the central derivative of the convolution $L$-function of
a cusp form of any weight $n\ge 2$ with a theta series of weight $n-1$.  This formula will involve
 the intersection multiplicities of special cycles on a unitary Shimura  variety.  We begin by describing the
 Shimura variety.

For a pair of nonnegative integers $(p,q)$, denote by $M_{(p,q)}$ the moduli space of principally polarized
abelian varieties $A \to S$ over $\kk$-schemes, equipped with an action of $\co_\kk$ satisfying
the \emph{signature $(p,q)$ condition}: every $a\in \co_\kk$ acts on $\Lie(A)$ with characteristic polynomial
$(T-a)^p(T-\overline{a})^q$.  We require also that the Rosati involution on $\End(A)\otimes \Q$ restrict to
complex conjugation on the image of $\co_\kk$. The moduli space $M_{(p,q)}$ is a Deligne-Mumford stack,
smooth over $\kk$ of dimension $pq$, and is a disjoint union of Shimura varieties associated to unitary similitude groups.

The theory of integral models of the stacks $M_{(p,q)}$ remains incomplete, but we only need two special cases:
\begin{enumerate}
\item
 there is a smooth and proper stack $\mathcal{M}_{(p,0)}$ over $\co_\kk$ with generic fiber $M_{(p,0)}$,
\item
there is a regular and flat stack $\mathcal{M}_{(p,1)}$ over $\co_\kk$ with generic fiber $M_{(p,1)}$.
\end{enumerate}
The product
\[
\mathcal{M} = \mathcal{M}_{(1,0)} \times_{\co_\kk} \mathcal{M}_{(n-1,1)}
\]
is  an $n$-dimensional regular algebraic stack, flat over $\co_\kk$, and  is typically disconnected.
Moreover,   $\mathcal{M}$ has a canonical toroidal compactification $\mathcal{M}^*$,
whose boundary is a smooth divisor.

Let $\Lambda$ be a positive definite self-dual hermitian lattice  of rank $n-1$; that is,
a projective $\co_\kk$-module of rank $n-1$ endowed with a positive definite hermitian form
$\langle\cdot,\cdot \rangle$ inducing an isomorphism  $\Lambda\iso \Hom_{\co_\kk}(\Lambda,\co_\kk)$.
The   $\co_\kk$-stack
\[
\mathcal{Y} = \mathcal{M}_{(1,0)} \times_{\co_\kk} \mathcal{M}_{(0,1)} \times_{\co_\kk} \mathcal{M}_{(n-1,0)}
\]
is smooth and proper of relative dimension $0$, and the morphism  $\mathcal{Y} \to \mathcal{M}$ defined by
\[
(A_0,A_1,B) \mapsto (A_0, A_1\times B)
\]
allows us to view $\mathcal{Y}$ as a $1$-dimensional cycle on $\mathcal{M}$.  To every geometric point
$(A_0,A_1,B)$ of $\mathcal{Y}$ there is an associated self-dual hermitian $\co_\kk$-module $\Hom_{\co_\kk}(A_0,B)$
of signature $(n-1,0)$, whose  isomorphism class is constant on each connected component of $\mathcal{Y}$.
Let $\mathcal{Y}_\Lambda \subset \mathcal{Y}$
be the union of all connected components on which $\Hom_{\co_\kk}(A_0,B) \iso \Lambda$.

To a  hermitian module $\VV$ over the adele ring $\A_\kk$ there is an associated
\emph{invariant} $\mathrm{inv}(\VV) \in \{\pm 1\}$, defined as a product of local invariants.
If $\mathrm{inv}(\VV)=1$ then $\VV$ is \emph{coherent}, in the sense that $\VV$
arises as the adelization of a hermitian space over $\kk$.  Otherwise, $\VV$ is
\emph{incoherent}.  In Section \ref{ss:hermitian} we define the notion of a
\emph{hermitian $(\kk_\R, \widehat{\co}_\kk)$-module} $\LL$.   Essentially,
$\LL$ is an integral structure on a  hermitian $\A_\kk$-module.  It consists of
an archimedean part $\LL_\infty$, which is a hermitian space over $\kk_\R=\kk\otimes_\Q\R$,
and a finite part $\LL_f$, which is a hermitian $\widehat{\co}_\kk$-module.

As explained in Section \ref{s:moduli spaces}, to each point of the moduli space $\mathcal{M}$ there is
associated an incoherent  hermitian $(\kk_\R, \widehat{\co}_\kk)$-module, whose isomorphism
class is constant on the connected components of $\mathcal{M}$.  Thus we obtain a decomposition
$
\mathcal{M} = \bigsqcup_\LL \mathcal{M}_\LL
$
where  $\LL$ runs over all incoherent self-dual hermitian $(\kk_\R,\widehat{\co}_\kk)$-modules
of signature $(n,0)$, and similarly for the compactification
\[
\mathcal{M}^* = \bigsqcup_\LL \mathcal{M}_\LL^*.
\]
The stack $\mathcal{Y}_\Lambda$ admits an analogous decomposition
\[
\mathcal{Y}_\Lambda = \bigsqcup_{\LL_0} \mathcal{Y}_{(\LL_0,\Lambda)},
\]
where  $\LL_0$ runs over all incoherent self-dual hermitian $(\kk_\R,\widehat{\co}_\kk)$-modules
of signature $(1,0)$.  From now on we fix one such $\LL_0$, and set
$\LL = \LL_0\oplus\Lambda$; for the meaning of the direct sum, see Remark \ref{rem:sum}.
The morphism $\mathcal{Y}_\Lambda \to \mathcal{M}^*$ restricts to a morphism
\begin{equation}\label{intro:cm cycle}
\mathcal{Y}_{(\LL_0,\Lambda)} \to \mathcal{M}_\LL^* ,
\end{equation}
which allows us to view $\mathcal{Y}_{(\LL_0,\Lambda)}$ as a cycle on $ \mathcal{M}_\LL^*$ of dimension $1$.

Let $\widehat{\mathrm{CH}}^1_\C(\mathcal{M}_\LL^*)$ be the codimension one arithmetic Chow group with complex coefficients,
defined,  as in the work of Gillet-Soul\'e \cite{SABK}, as the space of rational equivalence classes
of divisors on $\mathcal{M}_\LL^*$ endowed with Green functions.   In fact, we use the more general
arithmetic Chow groups defined by Burgos-Kramer-K\"uhn \cite{BKK}, which allow for Green functions
with  $\log$-$\log$ singularities along the boundary.   The map  (\ref{intro:cm cycle}) induces a  linear functional
\[
\widehat{\mathrm{CH}}^1_\C(\mathcal{M}_\LL^*) \to \C
\]
called the \emph{arithmetic degree along $\mathcal{Y}_{  (\LL_0,\Lambda)  }$}, and  denoted
$\widehat{\mathcal{Z}} \mapsto [\widehat{\mathcal{Z}}    : \mathcal{Y}_{  (\LL_0,\Lambda)  } ]$.

The hermitian form on the $\widehat{\co}_\kk$-module $\LL_f$ determines a $\Q/\Z$-valued quadratic form on the finite
discriminant group  $\frakd_\kk^{-1}\LL_f/\LL_f$.  If we denote by $S_\LL$  the (finite dimensional)
space of complex valued functions on this finite quadratic space, there is a Weil representation
\[
\omega_\LL : \SL_2(\Z) \to \Aut( S_\LL).
\]
 Let $H_{2-n}(\omega_\LL)$ be the space of  harmonic Maass forms for $\SL_2(\Z)$ of weight $2-n$
 with values in the vector space $S_\LL$, transforming according to $\omega_\LL$.

As explained in Section~\ref{s:green functions}, there is an arithmetic theta lift of harmonic Maass forms
\[
H_{2-n}(\omega_\LL)^\Delta \longrightarrow \widehat{\mathrm{CH}}_\C^1(\mathcal{M}^*_\LL),
\]
denoted $f\mapsto  \widehat{\Theta}_\LL(f)$, whose definition is roughly as follows.
There is a theta lift from  functions on the upper half plane to functions on the
Shimura variety $\mathcal{M}_\LL(\C)$.  If one attempts to lift an element
$f\in H_{2-n}(\omega_\LL)^\Delta$, the theta integral diverges due to the growth of $f$ at the
cusp.  There is a natural way to regularize the divergent integral in order to obtain a function $\Phi_\LL(f)$
on $\mathcal{M}_\LL(\C)$, but the  regularization process introduces singularities
of logarithmic type into the function $\Phi_\LL(f)$;  see \cite{Bo1} and \cite{BF}.  In fact
$\Phi_\LL(f)$ is a Green function for a certain divisor   $\mathcal{Z}_\LL(f)(\C)$ on $\mathcal{M}_\LL(\C)$,
which can be written in an explicit way as a linear combination of the complex Kudla-Rapoport divisors
$\mathcal{Z}_\LL(m,\mathfrak{r})(\C)$   introduced in \cite{KR1} and studied further in \cite{KR2}, \cite{Ho2}, and \cite{Ho3}.
Here $\mathfrak{r}$ is an $\co_\kk$-ideal dividing  $\mathfrak{d}_\kk$, and $m\in \norm(\mathfrak{r})^{-1} \Z$ is positive.
The complex Kudla-Rapoport  divisors are defined in terms of a moduli problem, and so have natural
extensions to the integral model $\mathcal{M}_\LL$.  Thus we obtain an extension of $\mathcal{Z}_\LL(f)(\C)$
to the integral model as well. The result is a divisor  $\mathcal{Z}_\LL(f)$ on  $\mathcal{M}_\LL$ together with a Green function $\Phi_\LL(f)$.
 The arithmetic theta lift of $f$ is then defined by first
adding boundary components with appropriate multiplicities in order to define a compactified
arithmetic divisor
\[
\widehat{\mathcal{Z}}^\mathrm{total}_\LL(f)\in \widehat{\mathrm{CH}}^1(\mathcal{M}_\LL^*),
\]
and then adding a certain multiple (depending on the constant term of $f$) of the
metrized cotautological bundle $\widehat{\mathcal{T}}^*_\LL$  of Section \ref{ss:cotaut bundle} to obtain
\[
\widehat{\Theta}_\LL(f)\in \widehat{\mathrm{CH}}^1(\mathcal{M}_\LL^*).
\]

\begin{remark}
One of the minor miracles of the construction of $\Phi_{\LL}(f)$ is that, despite having
a logarithmic singularity along $\mathcal{Z}_{\LL}(f)$, it is defined at \emph{every} point of the
complex  Shimura variety $\mathcal{M}_{\LL}(\C)$.
Expressed differently, the smooth function $\Phi_{\LL}(f)$, initially defined on the complement of
$\mathcal{Z}_{\LL}(f)$,
has a natural discontinuous extension to all points.  The behavior of $\Phi_{\LL}(f)$
at the points of $\mathcal{Z}_{\LL}(f)$, as described in Corollary \ref{cor:sing},
plays an essential role in our calculation of  improper intersections.
\end{remark}

\begin{remark}
The Green functions used here are constructed  as regularized theta lifts of harmonic Maass forms, as in \cite{Br1}, \cite{BF}, and
\cite{BY1}, and so are different from the  Kudla-style Green functions used in  \cite{Ho2} and \cite{Ho3}.
\end{remark}

Let $S_n(\overline{\omega}_\LL)$
be the space of weight $n$ cusp forms for $\SL_2(\Z)$ with values in $S_\LL$, transforming according
to the complex conjugate representation $\overline{\omega}_\LL$.
Denote by $\Delta$  the automorphism group of the finite group $\frakd_\kk^{-1}\LL_f/\LL_f$
with its $\Q/\Z$-valued quadratic form.   Any $\Delta$-invariant cusp form
\[
g (\tau)=\sum_{m\in \Q_{>0}} a(m) q^m  \in S_n(\overline{\omega}_\LL)^\Delta
\]
has  Fourier coefficients  $a(m) \in S_\LL^\Delta$.
Similarly, the space  $S_\Lambda$  of complex valued functions on $\mathfrak{d}_\kk^{-1} \Lambda/\Lambda$
has a natural action $\omega_\Lambda : \SL_2(\Z) \to \Aut(S_\Lambda)$, and
there  is a vector valued theta series
\[
\theta_\Lambda (\tau) = \sum_{m\in \Q_{>0}} R_\Lambda(m) q^m \in M_{n-1}(\omega_\Lambda^\vee)
\]
taking values in the dual space  $S_\Lambda^\vee$, whose
$m$-th Fourier coefficient $R_\Lambda(m) : S_\Lambda \to \C$ is  the representation number
\[
R_\Lambda(m , \varphi) =
\sum_{\substack{\lambda\in\frakd_\kk^{-1}\Lambda\\ \langle \lambda,\lambda \rangle=m}} \varphi(\lambda).
\]
We define the \emph{Rankin-Selberg convolution $L$-function}
\begin{equation} \label{eq:vectorRankin-Selbergintro}
L( g, \theta_\Lambda , s) =
 \Gamma \big(\frac{s}{2} +n-1 \big)
\sum_{m \in \Q_{>0}}  \frac{ \big\{\overline{ a(m)},  R_\Lambda(m) \big\}  }
{ (4\pi m)^{\frac{s}{2}+n-1 }},
\end{equation}
 where the pairing  $\{\cdot,\cdot\}$ is the tautological pairing between $S_\LL$ and $S_\LL^\vee$,
and $R_\Lambda(m)$ is viewed as an element of $S_\LL^\vee$ using the
natural surjection $S_\LL\to S_{\Lambda}$.
The $L$-function \eqref{eq:vectorRankin-Selbergintro} satisfies a simple functional equation in $s\mapsto -s$,
which forces it to vanish at $s=0$.

As in \cite{BF}, there is a $\Delta$-invariant surjective differential operator
\[
\xi :
H_{2-n}(\omega_\LL) \longrightarrow S_n(\overline{\omega}_\LL )
\]
defined by
\[
\xi(f)(\tau) = 2iv^{2-n} \frac{ \overline{\partial f} }{ \partial \overline{\tau}},
\]
where $\tau=u+iv$ is the variable on the upper half-plane.

The following is our main result.  It is stated in the text as Theorem \ref{thm:arithmetic degree}.

\begin{BigThm}\label{BigA}
Fix a $g\in S_n(\overline{\omega}_\LL)^\Delta $, and let $f \in H_{2-n}(\omega_\LL)^\Delta$ be any
harmonic form satisfying $\xi(f) = g$.
The arithmetic theta lift of $f$ and the $L$-function (\ref{eq:vectorRankin-Selbergintro}) are related by
\begin{equation}\label{intro main formula}
[ \widehat{\Theta}_\LL (f):  \mathcal{Y}_{(\LL_0,\Lambda)}    ] =
- \deg_\C \mathcal{Y}_{(\LL_0 , \Lambda) } \cdot L'( g , \theta_\Lambda , 0).
\end{equation}
The constant appearing on the right is
\[
\deg_\C \mathcal{Y}_{(\LL_0 , \Lambda) }
=
\sum_{ y\in \mathcal{Y}_{(\LL_0,\Lambda)}(\C)} \frac{1}{|\Aut(y)|}.
\]
An explicit formula for this constant is given in Remark \ref{rem:degree}.
\end{BigThm}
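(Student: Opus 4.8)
The plan is to compute both sides by decomposing the arithmetic intersection into its archimedean and non-archimedean contributions, and then matching each against the corresponding piece of the derivative $L'(\vec g,\theta_\Lambda,0)$ via an arithmetic Siegel–Weil / Kudla program argument. First I would expand the intersection pairing $[\widehat{\Theta}_\mathscr{L}(f):\mathtt{Y}_{(\mathscr{L}_0,\Lambda)}]$ using the definition of $\widehat{\Theta}_\mathscr{L}(f)$ as the total arithmetic divisor $\widehat{\mathtt{Z}}^\mathrm{total}_\mathscr{L}(f)$ plus a multiple of the metrized cotautological bundle $\widehat{\mathtt{T}}^*_\mathscr{L}$. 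Since $\mathtt{Y}_{(\mathscr{L}_0,\Lambda)}$ is a $1$-cycle disjoint from the boundary of $\mathtt{M}^*_\mathscr{L}$ (the CM points do not degenerate), the boundary contributions to the pairing vanish, and one is left with the pairing of the ``interior'' arithmetic divisor against the CM cycle. Decomposing $\mathtt{Z}_\mathscr{L}(f)$ into the Kudla–Rapoport divisors $\mathtt{Z}_\mathscr{L}(m,\mathfrak r)$ with the coefficients coming from the principal part of $f$, and using that $\Phi_\mathscr{L}(f)$ is the Green function, the total pairing becomes a sum, over $m>0$, of the local intersection multiplicities of $\mathtt{Z}_\mathscr{L}(m,\mathfrak r)$ with $\mathtt{Y}_{(\mathscr{L}_0,\Lambda)}$ at finite primes, plus the archimedean term $\sum_y \Phi_\mathscr{L}(f)(y)/|\mathrm{Aut}(y)|$, plus the cotautological contribution.

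Next I would handle the archimedean term. The key analytic input is the Kudla–Millson / Borcherds structure: $\Phi_\mathscr{L}(f)$ is a regularized theta lift, so its value at a CM point $y$ unfolds against the theta kernel to produce a Fourier coefficient of an Eisenstein-type or Rankin–Selberg-type integral. More precisely, summing the values of $\Phi_\mathscr{L}(f)$ over $\mathtt{Y}_{(\mathscr{L}_0,\Lambda)}(\C)$ and using the Siegel–Weil identification of the theta integral over $\mathtt{Y}$ with an incoherent Eisenstein series (or the ``see-saw'' pairing of $\theta_\Lambda$ with $f$), the archimedean contribution should equal $-\deg_\C\mathtt{Y}_{(\mathscr{L}_0,\Lambda)}$ times the ``archimedean part'' of $L'(\vec g,\theta_\Lambda,0)$ — i.e. the terms where the derivative hits the Gamma factor or the archimedean Whittaker function. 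Here one uses $\xi(f)=\vec g$ to convert the regularized lift of $f$ into something governed by $\vec g$, exactly as in the work of Bruinier–Yang on CM values of automorphic Green functions.

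For the non-archimedean term, for each finite prime $\mathfrak p$ I would compute the local intersection multiplicity of $\mathtt{Z}_\mathscr{L}(m,\mathfrak r)$ with a point of $\mathtt{Y}_{(\mathscr{L}_0,\Lambda)}$ in terms of the deformation theory of the underlying $p$-divisible groups; by the results on Kudla–Rapoport divisors recalled in the excerpt (\cite{KR1}, \cite{KR2}, \cite{Ho2}, \cite{Ho3}), these are given by lengths of local rings that match, prime by prime, the logarithmic derivatives of local Whittaker functions attached to the hermitian lattice $\Lambda\oplus\mathscr{L}_0$ at a place where the relevant quadratic space is anisotropic. Summing over $\mathfrak p$ and over $m$, and assembling these local densities, one recovers the ``finite part'' of the derivative of the convolution $L$-function, and combining with the archimedean term gives the full $-\deg_\C\mathtt{Y}_{(\mathscr{L}_0,\Lambda)}\cdot L'(\vec g,\theta_\Lambda,0)$; the cotautological term matches the $m=0$ contribution coming from the constant term of $f$ together with the normalizing factor in the definition of the $L$-function. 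The independence of the right-hand side from the choice of $f$ lifting $\vec g$ is automatic since the left-hand side is clearly defined in terms of $f$ alone and the identity pins down its value.

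The main obstacle I expect is the computation of the \emph{improper} intersections: the CM cycle $\mathtt{Y}_{(\mathscr{L}_0,\Lambda)}$ need not meet the support of $\mathtt{Z}_\mathscr{L}(f)$ properly, either at finite primes or in the generic fiber, so the naive intersection product is ill-defined and one cannot simply add up local contributions. This is precisely where the ``new method for computing improper arithmetic intersections'' advertised in the abstract must be deployed: one would move the cycle within its rational equivalence class, or — more likely, given the discontinuous extension of $\Phi_\mathscr{L}(f)$ emphasized in the introduction — use the explicit description of the singularity of $\Phi_\mathscr{L}(f)$ along $\mathtt{Z}_\mathscr{L}(f)$ from Corollary~\ref{cor:sing} to give a well-defined regularized value of the star product at the points of improper intersection. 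Controlling this regularization, and checking that it produces exactly the ``missing'' Whittaker-derivative terms rather than spurious corrections, is the technical heart of the argument; everything else is a (substantial but structured) bookkeeping of local densities against Fourier coefficients of Eisenstein series.
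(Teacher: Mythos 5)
Your overall architecture is sound and matches the paper's in broad outline: the boundary does not meet the CM cycle, the archimedean contribution is computed exactly as you describe (the CM value formula of Theorem \ref{thm:CM value}, via Siegel--Weil and the incoherent Eisenstein series, in the style of Bruinier--Yang), the finite contributions are lengths of deformation rings computed via Gross's canonical lifting results (Theorem \ref{thm:zero cycles}), and these match the coefficients $a^+_{\mathscr{L}_0}(m_1)$ of the derivative of the incoherent Eisenstein series (Proposition \ref{prop:eisenstein coeff}). You also correctly isolate the improper intersection as the crux. The reduction to the special forms $f_{m,\mathfrak{r}}$ and extension by linearity, which you omit, is routine.

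The genuine gap is that neither of your proposed remedies for the improper intersection would work, and the actual mechanism is absent. Moving $\mathtt{Y}_{(\mathscr{L}_0,\Lambda)}$ in its rational equivalence class is not available (the CM cycle is rigid and the intersection is defined by pullback to $\mathtt{Y}$, not by a moving lemma), and a regularized star product using Corollary \ref{cor:sing} alone does not tell you what number to assign to the one-dimensional components $\mathtt{X}_{(\mathscr{L}_0,\Lambda)}(0,m,\mathfrak{r})$ of the finite intersection, which appear precisely when $R_\Lambda(m,\mathfrak{r})\neq 0$. The paper's resolution is the deformation-theoretic adjunction isomorphism of Theorem \ref{thm:adjunction}: each $Y$-improper component $Z$ of $\mathtt{Z}_\mathscr{L}(m,\mathfrak{r})$ satisfies $\co(Z)|_Y\iso\mathtt{T}_\mathscr{L}|_Y$ canonically (via the obstruction section $\bm{obst}(\lambda)$ on the first-order neighborhood of $Y$), and there are exactly $R_\Lambda(m,\mathfrak{r})$ such components. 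This produces a canonical nonzero section $\bm{\sigma}_{m,\mathfrak{r}}$ of the twisted bundle $\widehat{\mathtt{Z}}_\mathscr{L}(f_{m,\mathfrak{r}})\otimes\widehat{\mathtt{T}}_\mathscr{L}^{\otimes -R_\Lambda(m,\mathfrak{r})}$ restricted to $\mathtt{Y}$, whose divisor is exactly the proper part of the intersection (Proposition \ref{magic section divisor}) and whose norm at every CM point $y$ is the discontinuously extended Green function value $\Phi_\mathscr{L}(y,f_{m,\mathfrak{r}})$ (Proposition \ref{prop:complex adjunction}, which is where Corollary \ref{cor:sing} actually enters). Without this, you also have no way to see why the cotautological bundle enters with the multiplicity $R_\Lambda(m,\mathfrak{r})$, nor why its arithmetic degree along $\mathtt{Y}$ -- computed by the Chowla--Selberg formula in Theorem \ref{thm:taut degree}, another ingredient missing from your outline -- supplies exactly the absent $m_1=0$ term $a^+_{\mathscr{L}_0}(0,\mathfrak{r})\cdot R_\Lambda(m,\mathfrak{r})$ needed to complete the sum $\sum_{m_1+m_2=m}a^+_{\mathscr{L}_0}(m_1,\mathfrak{r})R_\Lambda(m_2,\mathfrak{r})$ and cancel against the constant term in the CM value formula.
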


We prove   Theorem  \ref{BigA} by  first verifying (\ref{intro main formula}) for certain distinguished harmonic Maass forms $f=f_{m,\mathfrak{r}}$
satisfying $\mathcal{Z}_\LL(f) = \mathcal{Z}_\LL(m,\mathfrak{r})$.  The calculation of the left hand side
of (\ref{intro main formula})  is seriously complicated by the fact that the cycles $\mathcal{Z}_\LL(m,\mathfrak{r})$
and $\mathcal{Y}_{(\LL_0, \Lambda)}$  typically  intersect improperly.
 Calculations of improper intersection have been done in some low-dimensional situations elsewhere in the literature
(for example in \cite{GZ}, \cite{KRY2}, and \cite{Ho1}),  but our methods are new, and seem considerably more flexible than the
laborious  calculations of earlier authors.    The idea is to use deformation theory to show that the metrized line bundle
 \begin{equation}\label{intro twist}
\widehat{\mathcal{Z}}_\LL^\heartsuit(m,\mathfrak{r}) =
\widehat{\mathcal{Z}}_\LL(f_{m,\mathfrak{r}}) \otimes \widehat{\mathcal{T}}_\LL^{-R_\Lambda(m,\mathfrak{r})}
\end{equation}
on $\mathcal{M}_\LL$ acquires a canonical nonzero section $\bm{\sigma}_{m,\mathfrak{r}}$ when restricted to
$\mathcal{Y}_{(\LL_0, \Lambda)}$.  To compute the intersection multiplicity of (\ref{intro twist})
with $\mathcal{Y}_{(\LL_0,\Lambda)}$,  it suffices to compute the degree
of the $0$-cycle  $\mathrm{div}(\bm{\sigma}_{m,\mathfrak{r}})$ on $\mathcal{Y}_{(\LL_0,\Lambda)}$, and  the
norm $|| \bm{\sigma}_{m,\mathfrak{r}} ||_y$ at each $y \in \mathcal{Y}_{(\LL_0,\Lambda)}(\C)$.  The divisor
$\mathrm{div}(\bm{\sigma}_{m,\mathfrak{r}})$ turns out to be exactly the divisor obtained by
intersecting $\mathcal{Z}_\LL(m,\mathfrak{r}) \cap \mathcal{Y}_{(\LL_0,\Lambda)}$ and then throwing away
all components of the intersection having dimension $>0$. In other words, it is the proper
part of the intersection, which can be computed directly using results of Gross.  The norm
$|| \bm{\sigma}_{m,\mathfrak{r}} ||_y$ turns out to be the value of the Green function  $\Phi_\LL(f_{m,\mathfrak{r}})$
at $y$, even when $y$ lies on $\mathcal{Z}_\LL(m,\mathfrak{r})$, the singularity of the Green function!
  Thus we are able to compute the intersection multiplicity of (\ref{intro twist}) with $\mathcal{Y}_{(\LL_0,\Lambda)}$
by computing only proper intersections and   the CM values of Green functions.


\subsection{Applications and further directions of study}


In the spirit of \cite{Ku4}, let us consider only those Kudla-Rapoport divisors
\[
\widehat{\mathcal{Z}}^\mathrm{total}_\LL(m,\mathfrak{r}) = \widehat{\mathcal{Z}}^\mathrm{total}_\LL( f_{ m,\mathfrak{r}} )
\]
with  $\mathfrak{r}=\co_\kk$, and form the formal generating series
\[
\widehat{\phi}(\tau) = \widehat{\mathcal{T}}_\LL + \sum_{ m>0 }\widehat{\mathcal{Z}}^\mathrm{total}_\LL(m,\co_\kk) \cdot  q^m
\in  \widehat{\mathrm{CH}}_\C^1(\mathcal{M}^*_\LL) [[q]].
\]
When $n=2$ there is some mild ambiguity in the choice of harmonic Maass form $f_{m,\mathfrak{r}}$, and hence in the
choice of Green function in the arithmetic divisor $\widehat{\mathcal{Z}}^\mathrm{total}_\LL(m,\mathfrak{r})$.
See Lemma \ref{lem:f-mr} and the remark that follows it.
Because of this technical issue, in this subsection we assume that $n>2$.

\begin{BigConj}\label{BigB}
The formal generating series $\widehat{\phi}$ is a modular form of weight $n$, level
$\Gamma_0( |d_\kk|)$, and character $\chi_\kk^n$.  In other words
\[
\widehat{\phi} \in \widehat{\mathrm{CH}}_\C^1(\mathcal{M}^*_\LL)  \otimes M_n(\Gamma_0( |d_\kk| ) ,\chi_\kk^n ).
\]
\end{BigConj}

This conjecture should be taken with a small grain of salt:  to achieve modularity it may be necessary to
slightly modify the formal  generating series by  vertical divisors  on
$\mathcal{M}^*_\LL$ supported at the primes dividing $\mathfrak{d}_\kk$.  In any case, some form of this conjecture
is  certainly true, and is the subject of ongoing investigations of Kudla, Rapoport, and the three authors.  Indeed,
if one replaces the unitary Shimura variety by an orthogonal Shimura variety, and works only in the Chow group of the generic fiber
rather than in the arithmetic Chow group of an integral model, the corresponding modularity result is due to
Borcherds \cite{Bo2}.

Theorem \ref{BigA} gives evidence for Conjecture \ref{BigB} as it is currently stated.
 Indeed, the theorem implies that $[\widehat{\Theta}_\LL(f) : \mathcal{Y}_{ ( \LL_0,\Lambda) } ] =0$
for all $f\in H_{2-n}(\omega_\LL)^\Delta$ with $\xi(f)=0$.    The following corollary of Theorem \ref{BigA} can be deduced
from this and the modularity criterion \cite{Bo2} of Borcherds.
We omit the details of the proof, as we expect to prove some form Conjecture \ref{BigB} in the near future.

\begin{BigThm}
The formal $q$-expansion
\[
[ \widehat{\phi}(\tau) : \mathcal{Y}_{ ( \LL_0,\Lambda) } ] = [ \widehat{\mathcal{T}}_\LL : \mathcal{Y}_{ ( \LL_0,\Lambda) } ]
+  \sum_{m>0}  [ \widehat{\mathcal{Z}}_\LL(m,\co_\kk) : \mathcal{Y}_{ ( \LL_0,\Lambda) } ] \cdot q^m
\]
defines an element of $M_n(\Gamma_0( |d_\kk| ) ,\chi_\kk^n )$.
\end{BigThm}

Suppose that Conjecture \ref{BigB} is true.  Given a scalar valued  form $g_0 \in S_n(\Gamma_0(|d_\kk|) , \chi_\kk^n)$
we may then imitate \cite{Ku4} and form the Petersson inner product
\[
\widehat{\phi}(g_0) = \langle  \widehat{\phi} , g_0 \rangle_{\mathrm{Pet}} \in \widehat{\mathrm{CH}}_\C^1(\mathcal{M}^*_\LL) .
\]
The form $g_0$ determines a vector valued form
\[
g(\tau) =\sum_{\gamma \in \Gamma_0(D) \backslash \SL_2(\Z)}
   (g_0|_n\gamma)(\tau)  \cdot \overline{\omega_\LL(\gamma^{-1}) \varphi_0}\in S_n(\overline{\omega}_\LL)^\Delta,
\]
where $\varphi_0\in S_\LL$ is the characteristic function of $0\in \mathfrak{d}_\kk^{-1}\LL_f/\LL_f$.
Now pick any $f\in H_{n-2}(\omega_\LL)^\Delta$ satisfying $\xi(f)=g$.   Using \cite[Theorem 1.1]{BF} one can show that
$\widehat{\Theta}_\LL(f) = \widehat{\phi}(g_0) $
and so, assuming Conjecture \ref{BigB},  Theorem \ref{BigA} may be restated as
\[
[ \widehat{\phi}(g_0) :  \mathcal{Y}_{(\LL_0,\Lambda)}    ] =
- \deg_\C \mathcal{Y}_{(\LL_0 , \Lambda) } \cdot L'( g , \theta_\Lambda , 0).
\]
Under some mild restrictions
(for example, assuming that $n$ is even and that $g_0$ is a newform) the $L$-function on the right can be
expressed in terms of the classical Rankin-Selberg $L$-function of the scalar valued form $g_0$ and the scalar valued
theta series
\[
\sum_{ \lambda \in \Lambda} q^{\langle \lambda,\lambda \rangle } \in M_{n-1}(\Gamma_0(|d_\kk|) , \chi_\kk^{n-1} ).
\]
The statement and the proof of the precise relation between $L$-functions are slightly involved.  We hope to explore this
reformulation of Theorem \ref{BigA} in terms of scalar valued holomorphic forms in a future work, after  Conjecture \ref{BigB}
has been proved.

Apart from providing evidence for Conjecture \ref{BigB}, our  methods have applications to Colmez's conjectural
extension \cite{Co} of the Chowla-Selberg formula to CM abelian varieties of arbitrary dimension.  Very roughly,
the idea is this:  after fixing a totally real field $F/\Q$ of degree $n$, one can replace the  cycle $\mathcal{Y}_{( \LL_0,\Lambda)}$
by a cycle $\mathcal{Y}_E$ on $\mathcal{M}^*_\LL$ formed from abelian varieties with complex multiplication by the CM field $E=\kk\otimes_\Q F$.
It is  expected that a variant of Theorem \ref{BigA} holds for this new cycle $\mathcal{Y}_E$, and some results in this direction
can be found in \cite{Ho2}.  However, the proof of Theorem \ref{BigA} uses the Chowla-Selberg formula
in an essential way, and so without \emph{a priori} knowledge of Colmez's conjectural extension,
one cannot complete the proof of the desired variant of Theorem \ref{BigA} without using some additional tools.  The results of \cite{YaColmez}
suggest that Conjecture \ref{BigB} is the new tool needed, and that a proof of new cases of
Colmez's conjecture can be deduced as a byproduct of the proof of the variant of Theorem \ref{BigA}.
In short, once Conjecture \ref{BigB} is proved, the methods of this paper will yield the proof of Colmez's conjecture
for all CM abelian varieties that appear as points of the moduli space $M_{(n-1,1)}$.  Again, this application
is being investigated by Kudla, Rapoport, and the three authors.


\subsection{Notation and terminology}

We write $\H$ for the complex upper half plane. For a complex number $z$ we put $e(z)=e^{2\pi i z}$.
As usual, we denote by $\A$  the ring of adeles of $\Q$  and write $\A_f$ for the finite adeles.

The quadratic imaginary field $\kk$ and its embedding $\kk\hookrightarrow \C$ are
fixed throughout the paper, and $\mathfrak{d}_\kk$ and $d_\kk$ denote the
different and discriminant of $\kk$.
In Section \ref{s:green functions} we make no restriction on $d_\kk$, but throughout the rest of the paper
 we assume that $d_\kk$ is odd.   Write $\calO_\kk$, $\A_\kk$ and $\A_{\kk,f}$ for the ring of integers,
 adeles, and finite adeles  of $\kk$, respectively. The class number of $\kk$ is $h_\kk$, and
$w_\kk=|\mu(\kk)|$ is the number of roots of unity in $\kk$.
Denote by $o(d_\kk)$ the number of distinct prime divisors of $d_\kk$, and by
\[
\chi_\kk:  \A^\times \longrightarrow \{\pm 1\}
\]
the quadratic character determined by the extension $\kk/\Q$. For any $m\in \Q_{>0}$ define
\begin{equation}\label{rho}
\rho(m) =| \{ \mathfrak{b} \subset\co_\kk : \mathrm{N}(\mathfrak{b}) = m \} | .
\end{equation}
Obviously $\rho(m)=0$ unless $m\in \Z_{>0}$.
Abbreviate $\kk_\R=\kk\otimes_\Q \R$.
For a positive integer $m$ we denote by $\sigma_1(m)$ the sum of the positive divisors of $m$,
and set  $\sigma_1(0)=- 1/ 24$.


\subsection*{Acknowledgements}


We thank the referee for his/her careful reading of our manuscript and for the insightful
comments.


\section{Hermitian spaces and modular forms.}


\label{sect:prelims}

This section contains some basic definitions and notation concerning hermitian spaces,
theta series, and vector valued modular forms.


\subsection{Invariants of hermitian spaces}
\label{ss:hermitian}


A \emph{hermitian $\co_\kk$-module} is a projective $\co_\kk$-module $L$ of finite
rank equipped with a hermitian  form
$\langle\cdot,\cdot\rangle: L\times L\to \co_\kk$.
Our convention is that hermitian forms are $\co_\kk$-linear in the first variable
and $\co_\kk$-conjugate-linear in the  second variable.  All hermitian forms are assumed to be nondegenerate.
For an $\co_\kk$-ideal  $\mathfrak{r} \mid \mathfrak{d}_\kk$,
every vector $x\in \mathfrak{r}^{-1} L$ satisfies
\begin{equation}\label{denom bound}
\langle x,x\rangle \in  \mathrm{N}( \mathfrak{r} )^{-1} \Z,
\end{equation}
and $Q(x) = \langle x,x\rangle$ defines a $d_\kk^{-1} \Z/\Z$-valued quadratic form on $\mathfrak{d}_\kk^{-1} L/L$.
A hermitian $\co_\kk$-module $L$  is \emph{self-dual} if it satisfies
\[
L = \{ x\in L\otimes_\Z\Q : \langle x, L\rangle\subset \co_\kk \}.
\]
We can similarly talk about self-dual hermitian $\widehat{\co}_\kk$-modules, and
hermitian spaces over $\kk$, over its completions,  and over $\A_\kk$.

If $\mathfrak{A}_0$ and $\mathfrak{A}$ are hermitian $\co_\kk$-modules with
hermitian forms $h_{\mathfrak{A}_0}$ and $h_\mathfrak{A}$, the $\co_\kk$-module
\begin{equation}\label{betti hom}
L(\mathfrak{A}_0, \mathfrak{A}) = \Hom_{\co_\kk}(\mathfrak{A}_0,\mathfrak{A})
\end{equation}
carries a hermitian form $\langle \cdot,\cdot\rangle$ characterized by the relation
\[
\langle f , g \rangle \cdot h_{\mathfrak{A}_0} (x,y) =
h_\mathfrak{A}  (f(x), g(y) )
\]
for all $x,y\in \mathfrak{A}_0$.  If $\mathfrak{A}_0$ and $\mathfrak{A}$ are self-dual
then so is $L(\mathfrak{A}_0,\mathfrak{A})$.  Of course a similar discussion holds for
hermitian $\widehat{\co}_\kk$-modules.

A hermitian space $\VV$ over $\A_\kk$ has an
archimedean part $\VV_\infty$ and a nonarchimedean part
$\VV_f = \prod_p \VV_p$, which are hermitian spaces over
$\kk_\R$ and $\A_{\kk,f}$, respectively.
The archimedean part is uniquely determined by its signature, while each factor $\VV_p$
is uniquely determined by its dimension  and the \emph{local invariant}
\[
\mathrm{inv}_p(\VV) = \chi_{\kk,p}(\det( \VV_p)) \in \{\pm 1\}.
\]
Of course the invariant is also defined for $p=\infty$, but carries less information than
the signature.  The \emph{invariant} of $\VV$ is the product of local invariants:
\[
\mathrm{inv}(\VV)  = \prod_{p\le \infty} \mathrm{inv}_p(\VV).
\]
If $\mathrm{inv}(\VV)=1$ then there is a hermitian space $V$ over $\kk$, unique
up to isomorphism, satisfying  $\VV \iso V\otimes_{\Q} \A$.
In this case we say that $\VV$ is \emph{coherent}. If instead
$\mathrm{inv}(\VV)=-1$ then no such $V$ exists, and we say that $\VV$ is
 \emph{incoherent}.

We will need a notion of a hermitian space over $\A_\kk$ with an integral structure.

\begin{definition}
A \emph{hermitian $(\kk_\R,\widehat{\co}_\kk)$-module}
is a  hermitian space $\VV$
over $\A_\kk$ together with a finitely generated $\widehat{\co}_\kk$-submodule $\LL_f\subset \VV_f$
of maximal rank on which the hermitian form is $\widehat{\co}_\kk$-valued.
\end{definition}

Equivalently, we could define a  hermitian $(\kk_\R,\widehat{\co}_\kk)$-module as a pair
$\LL=(\LL_\infty,\LL_f)$
in which  $\LL_\infty$ is a hermitian space over $\kk_\R$,
and $\LL_f=\prod_p\LL_p$  is a hermitian space over $\widehat{\co}_\kk$
of the same rank  as $\LL_\infty$.
One recovers the first definition from the second by setting
$\VV_\infty=\LL_\infty$ and $\VV_f = \LL_f\otimes_{\widehat{\Z}} \A_f$.
We use the following terminology.
\begin{enumerate}
\item
The \emph{signature} of  a hermitian $(\kk_\R,\widehat{\co}_\kk)$-module $\LL$
is the signature of $\LL_\infty$,
\item
$\LL$ is \emph{self-dual} if $\LL_f$ is a self-dual
hermitian $\widehat{\co}_\kk$-module,
\item
$\LL$ is \emph{coherent} (or \emph{incoherent}) if $\VV$ is.
\end{enumerate}

Obviously, every hermitian $\co_\kk$-module $L$ gives rise to a coherent hermitian
$(\kk_\R,\widehat{\co}_\kk)$-module $\LL$ determined by
$\LL_\infty = L\otimes_{\Z} \R$ and $\LL_f = L \otimes_\Z \widehat{\Z}$.
Conversely, for each hermitian $(\kk_\R,\widehat{\co}_\kk)$-module $\LL$
there is a (possibly empty) finite collection of hermitian $\co_\kk$-modules that give rise to it.
This finite collection is the \emph{genus of $\LL$},
and is denoted
\begin{equation}\label{genus def}
\mathrm{gen}(\LL) = \left\{
\begin{array}{l}
\mbox{isomorphism classes of} \\
\mbox{hermitian $\co_\kk$-modules $L$ }
\end{array}
:
\begin{array}{c}
\LL_\infty \iso L \otimes_\Z \R \\
\LL_f \iso L\otimes_{\Z} \widehat{\Z}
\end{array}
\right\}.
\end{equation}
The genus is nonempty if and only if $\LL$ is coherent, and any  two
$L, L' \in \mathrm{gen}(\LL)$ satisfy $L\otimes_\Z\Q \iso L'\otimes_\Z \Q$
as hermitian spaces over $\kk$.

\begin{remark}\label{rem:nearby}
Given a hermitian space $\VV$ over $\A_\kk$ and a rational prime $p$
nonsplit in $\kk$, there is a   \emph{nearby} hermitian space
$\VV(p)$ over $\A_\kk$  determined up to isomorphism by the conditions
\begin{enumerate}
\item
$\VV(p)_\ell \iso \VV_\ell$ for every place $\ell\not=p$,
\item
$\VV(p)_p \not\iso \VV_p$.
\end{enumerate}
In other words,  $\VV(p)$ is obtained from $\VV$ by changing
the local  invariant at $p$, and so
\[
\mathrm{inv}(\VV(p)) = -\mathrm{inv}(\VV).
\]
If instead we take $p=\infty$ then there is no single notion of $\VV(\infty)$.  However, in the applications
 $\VV$ will be positive definite,  and $\VV(\infty)$ will be
obtained from $\VV$ by switching the signature from $(n,0)$ to $(n-1,1)$.
\end{remark}


\subsection{Theta functions and vector valued modular forms}


Let $(M,Q)$ be an even integral lattice, that is, a free $\Z$-module of finite rank equipped with a non-degenerate $\Z$-valued quadratic form $Q$.  For simplicity we assume here that the rank of $M$ is even. We denote the signature of $M$ by $(b^+,b^-)$. Let $M'$ be the dual lattice of $M$. The quadratic form $Q$ induces a $\Q/\Z$-valued quadratic form on the discriminant group $M'/M$.

Let $\omega$ be the restriction to $\Sl_2(\Z)$ of the
Weil representation of $\Sl_2(\widehat \Q)$ (associated with the standard additive character of $\A/\Q$) on the Schwartz-Bruhat functions on $M\otimes_\Z\widehat\Q$. The restriction of $\omega$ to $\Sl_2(\Z)$ takes the subspace
$S_M$ of Schwartz-Bruhat functions which are supported on $\widehat M'$ and invariant under translations by $\widehat M$ to itself.
We obtain a representation $\omega_M:\Sl_2(\Z)\to \Aut(S_M)$. Throughout we identify $S_M$ with the space of functions $M'/M\to \C$. Let $S_M^\vee$ be the dual space of $S_M$, and denote by
\[
\{\cdot,\cdot\} : S_M\times S_M^\vee\longrightarrow \C
\]
the tautological $\C$-bilinear pairing. The group $\Sl_2(\Z)$ acts on $S_M^\vee$ through the dual representation $\omega_M^\vee$, given by
$\omega_M^\vee(\gamma)(f)=f\circ \omega_M(\gamma^{-1})$ for $f\in S_M^\vee$. On the space $S_M$ we also have the conjugate representation $\overline{\omega}_M$ given by
\[
\overline{\omega}_M(\gamma)(\varphi)= \overline{\omega_M(\gamma)(\overline{\varphi})}
\]
for $\varphi\in S_M$. Note that $\overline{\omega}_M$ is the representation denoted $\rho_M$ in \cite{Bo1}, \cite{Br1}, \cite{BF}. The same construction can also be applied in slightly greater generality.
For instance, in later applications we will use it when $M$ is a quadratic module over $\widehat \Z$.

Let $\Gr(M)$ be the Grassmannian of negative definite $b^-$-dimensional subspaces of $M\otimes_\Z \R$.
For $z\in \Gr(M)$ and $\xx\in M\otimes_\Z \R$, we denote by $\xx_{z}$ and $\xx_{z^\perp}$ the orthogonal projection of $\xx$ to $z$ and $z^\perp$, respectively.
If $\varphi\in S_M$, and $\tau\in \H$ with $v=\Im(\tau)$, we let
\[
\Theta_M(\tau,z,\varphi) = v^{b^-/2} \sum_{\xx\in M'} \varphi(\xx)
e\big(Q(\xx_{z^\perp})\tau+Q(\xx_{z})\overline{ \tau}\big)
\]
be the associated Siegel theta function.
For $\gamma\in \Sl_2(\Z)$ it satisfies the transformation law
\[
\Theta_M(\gamma\tau,z,\varphi) = (c\tau +d)^{\frac{b^+-b^-}{2}} \Theta_M(\tau,z,\omega_M(\gamma)\varphi).
\]
Following \cite{Ku:Integrals}, we view the Siegel theta function as a function
\[
 \H \times\mathrm{Gr}(M) \longrightarrow S_M^\vee,\quad (\tau,z)\mapsto \Theta_M(\tau,z).
\]
The above transformation law implies that
$\Theta_M(\tau,z)$ transforms as a (non-holomorphic) modular form of weight $(b^+-b^-)/2$ for the group $\Sl_2(\Z)$ with values in $S_M^\vee$.

Let $k\in \Z$, and let $\sigma$ be a finite dimensional representation of $\Sl_2(\Z)$
on a complex vector space $V_\sigma$,  which factors through a finite quotient of $\Sl_2(\Z)$.
We denote by $H_k(\sigma)$ the vector space of harmonic Maass forms\footnote{More precisely, these are the \emph{harmonic weak Maass forms} of \cite{BY1}. For simplicity we omit the adjective `weak'.} of weight $k$ for the group $\Sl_2(\Z)$ with representation $\sigma$ as in \cite{BY1}. We write $M_k^!(\sigma)$, $M_k(\sigma)$, and $S_k(\sigma)$ for the subspaces of weakly holomorphic modular forms, holomorphic modular forms, and cusp forms, respectively.
Taking $V_\sigma=M$ and $\sigma$ to be the Weil representation, the natural action of the orthogonal group of $M$
on  $S_M$ commutes with the action of $\Sl_2(\Z)$, and
hence there is an induced action on the above spaces of $S_M$-valued modular forms.

A harmonic Maass form $f\in H_k(\sigma)$ has a Fourier expansion of the form
\begin{align}
\label{eq:fourierf}
f(\tau)=\sum_{\substack{m\in \Q\\ m\gg -\infty }} c^+(m) q^m
+\sum_{\substack{m\in \Q \\ m<0 } } c^-(m) \Gamma(1-k, 4\pi |m| v) q^m
\end{align}
with Fourier coefficients $c^\pm(m)\in V_\sigma$. Here $q=e^{2\pi i\tau}$, and $\Gamma(s,x)=\int_x^\infty e^{-t}t^{s-1}dt$ denotes the incomplete gamma function.
The coefficients are supported on rational numbers with uniformly bounded denominators. The first summand on the right hand side of \eqref{eq:fourierf}
is denoted by $f^+$ and is called the \emph{holomorphic part} of $f$, the second summand is denoted by $f^-$ and is called the \emph{non-holomorphic part}.

Recall from \cite{BF} the conjugate-linear differential operator $\xi_k: H_k(\omega_M)\to S_{2-k}(\overline{\omega}_M)$ defined  by
\begin{align}
\label{defxi}
\xi_k(f)(\tau)=2iv^{k} \overline{\frac{\partial f}{\partial \overline{\tau}}}.
\end{align}
The kernel of $\xi_k$ is equal to $M^!_{k}(\omega_M)$.
According to \cite[Corollary~3.8]{BF}  there is  an exact sequence
\[
\xymatrix{ 0\ar[r]& M^!_{k}(\omega_M) \ar[r]& H_{k}(\omega_M)
\ar[r]^{\xi_k}& S_{2-k}(\overline{\omega}_M) \ar[r] & 0 }.
\]
If $f\in H_k(\omega_M)$ has Fourier coefficients $c^\pm(m)\in S_M$ as in \eqref{eq:fourierf},
 we abbreviate
$
c^\pm(m,\mu)=c^\pm(m)(\mu)\in \C
$
for all  $\mu\in M'/M$.


\section{Divisors on unitary Shimura varieties}
\label{s:moduli spaces}


In this section we introduce the arithmetic Shimura variety $\mathcal{M}$ on which we
will be doing intersection theory, and introduce the Kudla-Rapoport divisors on $\mathcal{M}$.
Recall  that $d_\kk$ is odd.  This hypothesis will be used in several places, but the primary reason for imposing it is that without
this assumption  the integral model $\mathcal{M}$ is not known (or necessarily expected)  to be flat or regular.


\subsection{The stack $\mathcal{M}$ and the Kudla-Rapoport divisors}
\label{ss:moduli}


We first define $\co_\kk$-stacks $\mathcal{M}_{(m,0)}$ and $\mathcal{M}_{(m,1)}$ as moduli
spaces of abelian schemes with additional structure.

\begin{definition}
Let $\mathcal{M}_{(m,0)}$ be the algebraic stack over $\co_\kk$ whose functor of points assigns to
an $\co_\kk$-scheme $S$ the groupoid of triples $(A,\pol,i)$, in which
\begin{itemize}
\item
$A$ is an abelian scheme over $S$ of relative dimension $m$,
\item
$\pol:A \to A^\vee$ is a principal polarization,
\item
$i:\co_\kk\to \End(A)$ is an action of $\co_\kk$ on $A$.
\end{itemize}
We insist  that  the polarization  $\pol$ be  $\co_\kk$-linear, in the sense that
$\pol \circ i (\overline{x}) = i (x)^\vee \circ \pol$  for every  $x\in \co_\kk$.  We further
 insist that the action of $\co_\kk$ satisfy the \emph{signature $(m,0)$ condition}: the
 induced action of $\co_\kk$ on the $\co_S$-module $\Lie(A)$ is through the
structure morphism $\co_\kk\to \co_S$.
\end{definition}

We usually just write $A\in \mathcal{M}_{(m,0)}(S)$
 for an $S$-valued point, and suppress $\pol$ and $i$ from the notation.
It is proved in   \cite{Ho3} that the stack $\mathcal{M}_{(m,0)}$  is  smooth and proper of relative dimension $0$
over    $\co_\kk$. The stack $\mathcal{M}_{(0,m)}$ is defined in  the same way,
but the signature condition is replaced by the \emph{signature $(0,m)$ condition}: the induced action of
$\co_\kk$ on $\Lie(A)$ is through the complex conjugate of the structure morphism $\co_\kk\to \co_S$.


\begin{definition}
Let $\mathcal{M}_{(m,1)}$ be the algebraic stack over $\co_\kk$ whose functor of points assigns to
an $\co_\kk$-scheme $S$ the groupoid of quadruples $(A,\pol,i,\mathcal{F})$ in which
\begin{itemize}
\item
$A$ is an abelian scheme over $S$ of relative dimension $m+1$,
\item
$\pol:A \to A^\vee$ is a principal polarization of $A$,
\item
$i:\co_\kk\to \End(A)$ is an action of $\co_\kk$ on $A$,
\item
$\mathcal{F} \subset \Lie(A)$ is an $\co_\kk$-stable $\co_S$-submodule, which is locally an $\co_S$-module
direct summand of rank $m$.
\end{itemize}
We again insist that $\pol$ be $\co_\kk$-linear, and that the subsheaf $\mathcal{F}$
satisfy  \emph{Kr\"amer's signature $(m,1)$ condition}: the action of $\co_\kk$ on $\mathcal{F}$
is through the structure morphism $\co_\kk\to \co_S$, while the action of $\co_\kk$ on the
line bundle $\Lie(A)/\mathcal{F}$ is through the complex conjugate of the structure morphism.
\end{definition}

When no confusion will arise, we denote $S$-valued points simply by $A\in \mathcal{M}_{(m,1)}(S)$.
By work of Pappas \cite{Pa} and Kr\"amer \cite{Kr}, the stack $\mathcal{M}_{(m,1)}$ is known to be
 regular and flat over  $\co_\kk$ of relative dimension $m$, and to be  smooth over $\co_\kk[1/d_\kk]$.

From now on we fix an integer $n\ge 2$ and define a regular and flat $\co_\kk$-stack
\[
\mathcal{M} = \mathcal{M}_{(1,0)} \times_{\co_\kk} \mathcal{M}_{(n-1,1)}
\]
of dimension $n$.  If  $S$ is a connected $\co_\kk$-scheme and $(A_0,A)\in \mathcal{M}(S)$,  the  $\co_\kk$-module
 \[
L(A_0,A) = \Hom_{\co_\kk}(A_0,A)
 \]
carries a positive definite  hermitian form
$\langle x ,  y\rangle =   \pol_0^{-1} \circ y^\vee \circ \pol    \circ x,$
where the composition on the right is viewed as an element of $\co_\kk\iso \End_{\co_\kk}(A_0)$.

In the special case where $S=\Spec(\F)$ for an algebraically closed field $\F$,
and $\ell\not=\mathrm{char}(\F)$ is a prime, the $\co_{\kk,\ell}$-module
$ \Hom_{\co_{\kk,\ell}}( T_\ell(A_0), T_\ell(A) )$ carries  a  hermitian form defined in a similar way.
 Here $T_\ell$ denotes $\ell$-adic Tate module.

The following proof  is left for the reader; compare with Proposition 2.12(ii) of \cite{KR2}.

\begin{proposition}\label{prop:genus type}
For every algebraically closed field $\F$ and every $(A_0,A) \in \mathcal{M}(\F)$,  there is a unique incoherent
self-dual hermitian $(\kk_\R,\widehat{\co}_\kk)$-module  $\LL(A_0,A)$ of signature $(n,0)$ satisfying
 \[
 \LL(A_0,A)_\ell \iso   \Hom_{\co_{\kk,\ell}} (  T_\ell ( A_0) , T_\ell( A) )
 \]
for every prime $\ell\not=\mathrm{char}(\F)$.  Furthermore,  $\LL(A_0,A)$
depends only on the connected component of $\mathcal{M}$ containing $(A_0,A)$, and not on $(A_0,A)$ itself.
\end{proposition}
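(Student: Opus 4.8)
The plan is to establish, in order, the \emph{uniqueness} of $\mathscr{L}(A_0,A)$, its \emph{existence}, and its \emph{constancy} on connected components; the unifying idea is to control everything away from $p=\cha(\F)$ and then transport the remaining ($p$-adic and archimedean) information along a lift to characteristic $0$. For uniqueness, note first that a self-dual hermitian $(\kk_\R,\widehat{\co}_\kk)$-module of signature $(n,0)$ is pinned down up to isomorphism by its finite part $\prod_\ell\mathscr{L}_\ell$, since there is a unique positive definite hermitian $\kk_\R$-space of rank $n$. As the hypothesis prescribes $\mathscr{L}_\ell$ for all $\ell\ne p$, only the component $\mathscr{L}_p$ is free, and I would then recall the local classification of self-dual (unimodular) hermitian $\co_{\kk,p}$-modules of rank $n$: exactly one when $p$ splits or is inert in $\kk$, of local invariant $+1$; exactly two when $p$ ramifies, distinguished by their invariant $\pm1$. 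Since $d_\kk$ is odd, $p$ is automatically unramified whenever $p\nmid d_\kk$, so only these cases occur. Hence $\mathscr{L}_p$ is determined by the value of $\mathrm{inv}_p$, and since the incoherence requirement $\prod_{\ell\le\infty}\mathrm{inv}_\ell=-1$ determines that value uniquely, $\mathscr{L}(A_0,A)$ is unique.

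For existence one must exhibit a valid $\mathscr{L}_p$. Using the signature conditions and the principal polarizations as in \S\ref{ss:hermitian}, the modules $\mathscr{L}_\ell=\Hom_{\co_{\kk,\ell}}(T_\ell A_0,T_\ell A)$ for $\ell\ne p$ are self-dual hermitian of rank $n$; in particular $\mathrm{inv}_\ell=+1$ at split and inert $\ell$, and $\mathrm{inv}_\infty=+1$ because a positive definite form has positive discriminant. When $p\mid d_\kk$ one can therefore choose $\mathrm{inv}_p(\mathscr{L}_p)\in\{\pm1\}$, in exactly one way, so that the product of all local invariants is $-1$; this disposes of the ramified case. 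When $p\nmid d_\kk$ the value $\mathrm{inv}_p(\mathscr{L}_p)=+1$ is forced, so one must verify on its own that $\prod_{\ell\ne p,\,\ell<\infty}\mathrm{inv}_\ell(\mathscr{L}_\ell)=-1$. Here I would use the flatness of $\mathtt{M}=\mathtt{M}_{(1,0)}\times_{\co_\kk}\mathtt{M}_{(n-1,1)}$ over $\co_\kk$ to write $(A_0,A)$ as the specialization of a geometric point $(\tilde A_0,\tilde A)$ of $\mathtt{M}$ over an algebraically closed field of characteristic $0$ in the same connected component; passing to $\C$, the Betti homologies furnish a hermitian $\kk$-space $V=\Hom_{\co_\kk}(H_1\tilde A_0,H_1\tilde A)\otimes_\Z\Q$ which, by the signature conditions, has signature $(n-1,1)$ (so $\mathrm{inv}_\infty(V)=-1$); the comparison isomorphism gives $V_\ell\iso\mathscr{L}_\ell\otimes_{\Z_\ell}\Q_\ell$ for finite $\ell\ne p$, while $V_p$ contains the unimodular lattice $\Hom_{\co_{\kk,p}}(T_p\tilde A_0,T_p\tilde A)$, hence $\mathrm{inv}_p(V)=+1$. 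The Hasse product formula $\prod_{\ell\le\infty}\mathrm{inv}_\ell(V)=1$ then yields exactly $\prod_{\ell\ne p,\,\ell<\infty}\mathrm{inv}_\ell(\mathscr{L}_\ell)=-1$. (The case $\cha(\F)=0$, in which every $\mathscr{L}_\ell$ is prescribed, is this argument applied with $(\tilde A_0,\tilde A)=(A_0,A)$.)

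Finally, constancy on connected components. Since $\mathtt{M}$ is regular, each of its connected components is irreducible, so taking generic fibres sets up a bijection between the connected components of $\mathtt{M}$ and those of the generic fibre $\mathtt{M}_\kk$. On $\mathtt{M}_\kk$ I would invoke the standard description of these unitary Shimura varieties: the connected component through a point $(\tilde A_0,\tilde A)$ determines both the hermitian $\kk$-space $\Hom_{\co_\kk}(H_1\tilde A_0,H_1\tilde A)\otimes\Q$ and the isomorphism class of the lattice $\prod_\ell\mathscr{L}_\ell(\tilde A_0,\tilde A)$, hence $\mathscr{L}(\tilde A_0,\tilde A)$. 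For an arbitrary geometric point over a field of characteristic $p>0$, the lift $(\tilde A_0,\tilde A)$ constructed above lies in the same connected component of $\mathtt{M}$, and specialization identifies $T_\ell$ compatibly with the hermitian forms for every $\ell\ne p$; since in both cases $\mathscr{L}_p$ is the unique self-dual rank-$n$ hermitian $\co_{\kk,p}$-module completing the data away from $p$ to an incoherent $\mathscr{L}$, one gets $\mathscr{L}(A_0,A)\iso\mathscr{L}(\tilde A_0,\tilde A)$. Combining this with the bijection of components completes the proof.

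The step I expect to be the main obstacle is precisely this reduction to characteristic $0$: it rests on the flatness results of Pappas and Kr\"amer for $\mathtt{M}_{(n-1,1)}$, which guarantee that points over fields of positive characteristic are specializations of characteristic-$0$ points lying in the same connected component, and on the (standard, but external) fact that the hermitian space is a connected-component invariant of the characteristic-$0$ moduli space. By contrast, the local classification of unimodular hermitian lattices over $\co_{\kk,p}$ and the compatibility of the Betti and \'etale realizations with polarizations are routine.
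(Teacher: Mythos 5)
The paper gives no proof of this proposition (it is explicitly ``left for the reader,'' with a pointer to \cite[Proposition 2.12(ii)]{KR2}), and your argument is correct and is essentially the intended one: prescribe $\mathscr{L}_\ell$ away from $p=\cha(\F)$, use the local classification of self-dual hermitian $\co_{\kk,p}$-lattices together with the incoherence constraint to pin down (or freely choose) the component at $p$, and verify incoherence and component-constancy by lifting to characteristic $0$ via flatness and comparing Betti and $\ell$-adic realizations, with the Hasse product formula applied to the signature $(n-1,1)$ hermitian space $\Hom_{\co_\kk}(H_1\tilde A_0,H_1\tilde A)\otimes\Q$. The only points deserving a remark are that the self-duality of $\Hom_{\co_{\kk,\ell}}(T_\ell A_0,T_\ell A)$ at $\ell\mid d_\kk$ requires the small computation identifying duality for the Weil pairing with duality for the hermitian form (the same computation implicit in \eqref{symplectic}), and that constancy could alternatively be deduced without the complex uniformization by noting that your specialization argument shows $[\mathscr{L}(A_0,A)]$ is preserved along specializations and generizations, hence constant on each connected (equivalently, by regularity, irreducible) component.
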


From Proposition \ref{prop:genus type} we obtain a decomposition
\begin{equation}\label{genus decomp}
\mathcal{M} = \bigsqcup_\LL \mathcal{M}_\LL
\end{equation}
where  $\LL$ runs over all incoherent  self-dual hermitian $(\kk_\R,\widehat{\co}_\kk)$-modules
 of signature $(n,0)$, and $\mathcal{M}_\LL$ is the union of those
connected components of $\mathcal{M}$ for which $\LL(A_0,A) \iso \LL$ at every
geometric point $(A_0,A)$.

If $(A_0,A)\in \mathcal{M}(\C)$ then we may form the Betti  homology groups
\begin{equation} \label{homology}
\mathfrak{A}_0 = H_1(A_0(\C),\Z),   \qquad
\mathfrak{A} = H_1(A(\C), \Z). \
\end{equation}
 Each  is a self-dual hermitian
$\co_\kk$-module.  Indeed, the polarization on $A_0$
induces a perfect $\Z$-valued symplectic form $\pol_0$ on $\mathfrak{A}_0$, and there
is a unique hermitian form $h_{\mathfrak{A}_0}$ on $\mathfrak{A}_0$ satisfying
\[
\pol_0(x,y) = \mathrm{Tr}_{\kk/\Q}   h_{\mathfrak{A}_0}(\delta_\kk^{-1} x,y),
\]
where $\delta_\kk=\sqrt{d_\kk}$ is the square root lying in the upper half complex
plane\footnote{More precisely, there is a choice of $i=\sqrt{-1}$ such that
$\pol_0(ix,x)$ and $\pol(ix,x)$ are positive definite, and we choose
$\delta_\kk$ to lie in the same connected component of $\C\smallsetminus \R$
as $i$}.
Similarly $\mathfrak{A}$ is equipped with a perfect symplectic form $\pol$, and a
hermitian form $h_\mathfrak{A}$ satisfying
\begin{equation}\label{symplectic}
\pol(x,y) = \mathrm{Tr}_{\kk/\Q}   h_{\mathfrak{A}}(\delta_\kk^{-1} x,y).
\end{equation}
The hermitian $\co_\kk$-modules $\mathfrak{A}_0$ and $\mathfrak{A}$ have
signatures $(1,0)$ and $(n-1,1)$.  As in (\ref{betti hom}), the $\co_\kk$-module
\[
L(\mathfrak{A}_0,\mathfrak{A}) = \Hom_{\co_\kk}(\mathfrak{A}_0,\mathfrak{A})
\]
carries a self-dual hermitian form of signature $(n-1,1)$, and the pair
$(A_0,A)$ lies on $\mathcal{M}_\LL(\C)$ if and only if
\begin{equation}\label{betti genus}
\widehat{L}(\mathfrak{A}_0,\mathfrak{A}) \iso  \LL_f.
\end{equation}

We now define divisors on $\mathcal{M}$  following  Kudla-Rapoport \cite{KR2}.

\begin{definition}\label{def:KR divisors}
For each positive  $m\in \Q$ and each  $\mathfrak{r} \mid \mathfrak{d}_\kk$, define the
\emph{Kudla-Rapoport divisor} $\mathcal{Z}(m, \mathfrak{r} )$ as the algebraic stack over $\co_\kk$
whose functor of points assigns to every connected $\co_\kk$-scheme $S$ the groupoid
 of  triples $(A_0,A,\lambda)$ in which
 \begin{itemize}
 \item
 $(A_0,A) \in \mathcal{M}(S)$,
 \item
$\lambda \in \mathfrak{r}^{-1} L (A_0,A)$  satisfies $\langle \lambda , \lambda \rangle =m$.
\end{itemize}
We further require that the morphism $\delta_\kk \lambda : A_0 \to A$ induce the trivial map
\begin{equation}\label{extra vanishing}
\delta_\kk \lambda :\Lie(A_0) \to \Lie(A)/\mathcal{F},
\end{equation}
where $\delta_\kk$ is any $\co_\kk$-module generator of $\mathfrak{d}_\kk$.
 \end{definition}

\begin{remark}\label{rem:trivially empty}
Of course (\ref{denom bound}) implies that  $\mathcal{Z}(m, \mathfrak{r} )=\emptyset$ unless
$m\in \mathrm{N}(\mathfrak{r})^{-1} \Z$.
\end{remark}

\begin{remark}\label{rem:mostly vanishing}
The vanishing of (\ref{extra vanishing})  is automatic if $\mathrm{N}(\mathfrak{r}) \in \co_S^\times$.
Indeed, if $\mathrm{N}(\mathfrak{r}) \in \co_S^\times$ then any $\lambda \in \mathfrak{r}^{-1}L(A_0,A)$
induces an $\co_\kk$-linear map
$
\lambda : \Lie(A_0) \to \Lie(A)/\mathcal{F}.
$
The action of $\co_\kk$ on the image of this map is through \emph{both} the structure map
$\co_\kk\to \co_S$ \emph{and} through its conjugate, and so the image is annihilated by
all  $\alpha-\overline{\alpha}$ with $\alpha\in \co_\kk$.  These elements
generate the ideal $\mathfrak{d}_\kk=\delta_\kk\co_\kk$.
\end{remark}

The forgetful map $j:\mathcal{Z}(m, \mathfrak{r} ) \to \mathcal{M}$ is finite, unramified, and representable,
as in  \cite[Proposition 2.9]{KR2}.   By \cite[Lemma 1.19]{Vi},
any geometric point of $\mathcal{M}$ admits an \'etale neighborhood $U \to\mathcal{M}$
such  that  $\mathcal{Z}(m, \mathfrak{r} )_{/U} \to U$ restricts to a closed immersion of schemes
on each connected component  of $\mathcal{Z}(m, \mathfrak{r} )_{/U}$.  Moreover, each of these
components is locally defined by a single equation
(when $\mathfrak{r}=\co_\kk$   this is proved in \cite{Ho3}; the general case is similar),
and so defines a divisor on $U$.  Adding them up defines a divisor  $\mathcal{Z}(m, \mathfrak{r} )_{/U}$ on $U$,
which by \'etale descent defines a divisor on $\mathcal{M}$.
When no confusion is possible we use the same letter $\mathcal{Z}(m, \mathfrak{r} )$ to denote the stack,
the associated divisor, and the  associated line bundle. For any $\LL$ as in (\ref{genus decomp}), define
$\mathcal{Z}_{\LL}(m,\mathfrak{r} ) = \mathcal{Z}(m, \mathfrak{r} ) \times_\mathcal{M}\mathcal{M}_\LL$
so that
\[
\mathcal{Z}(m,\mathfrak{r}) = \bigsqcup_\LL \mathcal{Z}_\LL (m,\mathfrak{r}).
\]


\subsection{Complex uniformization}
\label{ss:complex uniformization}


Fix one $\LL$ as in (\ref{genus decomp}).
Here we recall the uniformization of the smooth complex orbifold
$\mathcal{M}_\LL(\C)$ and its Kudla-Rapoport divisors.
The complex uniformization is explained in   \cite{KR2} and \cite{Ho2},
and so we only sketch the main ideas.

Recalling that $\LL$ has signature $(n,0)$, let
$\LL(\infty)$ be the coherent hermitian $(\kk_\R,\widehat{\co}_\kk)$-module with archimedean component
of signature $(n-1,1)$, but with the same finite part as $\LL$.
 To each point $(A_0,A)\in \mathcal{M}_\LL(\C)$  there is an associated pair
$(\mathfrak{A}_0,\mathfrak{A})$ of self-dual hermitian $\co_\kk$-modules
as in (\ref{homology}), and a  self-dual hermitian $\co_\kk$-module $L(\mathfrak{A}_0,\mathfrak{A})$
of signature $(n-1,1)$.  In the notation of (\ref{genus def}),  the isomorphism (\ref{betti genus}) is equivalent to
\[
L(\mathfrak{A}_0,\mathfrak{A}) \in \mathrm{gen}(\LL(\infty)).
\]
The pair $(\mathfrak{A}_0,\mathfrak{A})$
depends on the connected component of $\mathcal{M}_\LL(\C)$ containing $(A_0,A)$,
but not on $(A_0,A)$ itself, and the formation of $(\mathfrak{A}_0,\mathfrak{A})$ from
$(A_0,A)$ establishes a bijection from the set of
connected components of $\mathcal{M}_\LL(\C)$ to the set of isomorphism classes of
pairs $(\mathfrak{A}_0,\mathfrak{A})$ in which
\begin{itemize}
\item
$\mathfrak{A}_0$ is a self-dual hermitian $\co_\kk$-module of signature $(1,0)$,
\item
$\mathfrak{A}$ is a self-dual hermitian $\co_\kk$-module of signature $(n-1,1)$,
\item
$L(\mathfrak{A}_0,\mathfrak{A}) \in \mathrm{gen}(\LL(\infty))$.
\end{itemize}

We now give an explicit  parametrization of the connected component of
$\mathcal{M}_\LL(\C)$ indexed by one pair $(\mathfrak{A}_0,\mathfrak{A})$.
Let $\mathcal{D}_{(\mathfrak{A}_0,\mathfrak{A})}$
be the space of negative $\kk_\R$-lines in  $L(\mathfrak{A}_0,\mathfrak{A})_\R$.
The group
\[
\Gamma_{(\mathfrak{A}_0,\mathfrak{A})} = \Aut(\mathfrak{A}_0) \times \Aut(\mathfrak{A})
\]
sits in a short exact sequence
\[
1 \to \mu(\kk) \to \Gamma_{(\mathfrak{A}_0,\mathfrak{A})} \to \Aut(L(\mathfrak{A}_0,\mathfrak{A})) \to 1
\]
in which the arrow $\mu(\kk) \to \Gamma_{(\mathfrak{A}_0,\mathfrak{A})}$
is the diagonal inclusion, and $\Gamma_{(\mathfrak{A}_0,\mathfrak{A})} \to \Aut(L(\mathfrak{A}_0,\mathfrak{A}))$
sends $(\gamma_0,\gamma)$ to the automorphism $\lambda \mapsto  \gamma \circ \lambda \circ \gamma_0^{-1}$.

There is a morphism of complex orbifolds
\[
\Gamma_{(\mathfrak{A}_0,\mathfrak{A})} \backslash \mathcal{D}_{(\mathfrak{A}_0,\mathfrak{A}) }
\to \mathcal{M}_\LL(\C)
\]
defined by sending the negative line $z\in \mathcal{D}_{(\mathfrak{A}_0,\mathfrak{A})}$
to the pair $(A_0,A_z)$, where $A_0(\C) = \mathfrak{A}_{0 \R} /\mathfrak{A}_0$ and
$A_z(\C) = \mathfrak{A}_\R / \mathfrak{A}$
as real Lie groups with $\co_\kk$-actions.  The complex structure on $A_0(\C)$
is defined by the natural action of $\kk_\R \iso \C$ on $\mathfrak{A}_{0 \R}$,
but the complex structure on $A_z(\C)$ depends on $z$.   A choice of nonzero vector
$a_0\in \mathfrak{A}_0$ determines an isomorphism
$L(\mathfrak{A}_0,\mathfrak{A})_\R \to \mathfrak{A}_\R$
by $\lambda\mapsto \lambda(a_0)$.  The image of $z$ under this isomorphism is a negative line
$z\subset  \mathfrak{A}_\R$, which does not depend on the choice of $a_0$.
Of course $\mathfrak{A}_\R$ inherits a complex structure from its $\co_\kk$-action and the
isomorphism $\kk_\R \iso \C$, but this does  \emph{not} define the complex
structure on $A_z(\C)$.   Instead, define an $\R$-linear
endomorphism $I_z$ of $\mathfrak{A}_\R$ by
\[
I_z (a)= \begin{cases}
i \cdot a& \mbox{if } a\in z^\perp \\
-i\cdot a & \mbox{if } a\in z
\end{cases}
\]
and use this new complex structure $I_z$ to make $A_z(\C)$ into a complex Lie group.  The symplectic
form $\pol$ on $\mathfrak{A}$ defined by (\ref{symplectic}) defines a polarization
on $A_z(\C)$, and the subspace
\[
z^\perp \subset  \mathfrak{A}_\R \iso \Lie(A_z)
\]
satisfies Kr\"amer's signature $(n-1,1)$ condition.
From the discussion above we find the complex uniformization
\begin{equation}\label{uniformization}
\mathcal{M}_\LL (\C) \iso \bigsqcup_{( \mathfrak{A}_0,\mathfrak{A}) }
\Gamma_{(\mathfrak{A}_0 , \mathfrak{A}) } \backslash
\mathcal{D}_{(\mathfrak{A}_0, \mathfrak{A} )}.
\end{equation}


\begin{remark}\label{component count}
Assume that either  $n>2$,  or that $\LL(\infty)$ contains, everywhere locally, a
nonzero  isotropic vector.   The strong approximation theorem implies that
\[
| \mathrm{gen}(\LL(\infty))|= 2^{1-o(d_\kk)}h_\kk.
\]
For each $L\in  \mathrm{gen}(\LL(\infty))$ there are exactly $h_\kk$ pairs
$(\mathfrak{A}_0,\mathfrak{A})$  satisfying $L(\mathfrak{A}_0, \mathfrak{A}) \iso L$,
and hence $\mathcal{M}_\LL(\C)$ has  $2^{1-o(d_\kk)}h_\kk^2$ components.
\end{remark}

Now we turn to the complex uniformization of the Kudla-Rapoport divisors.
 For any $m\in \Q_{>0}$ and any  $\mathfrak{r}\mid \mathfrak{d}_\kk$, the algebraic
 stack of Definition \ref{def:KR divisors} admits a complex uniformization
\begin{equation}\label{KR uniformization}
\mathcal{Z}_\LL(m,\mathfrak{r}) (\C) \iso  \bigsqcup_{( \mathfrak{A}_0,\mathfrak{A}) }
\Big(
 \Gamma_{ (\mathfrak{A}_0, \mathfrak{A}) } \backslash
 \bigsqcup_{  \substack{
 \lambda\in \mathfrak{r}^{-1} L(\mathfrak{A}_0,\mathfrak{A}) \\
 \langle \lambda,\lambda\rangle =m
 } }
 \mathcal{D}_{(\mathfrak{A}_0,\mathfrak{A}) }(\lambda)
  \Big),
\end{equation}
in which $\mathcal{D}_{(\mathfrak{A}_0,\mathfrak{A} )} (\lambda) \subset \mathcal{D}_{(\mathfrak{A}_0,\mathfrak{A} )}$
is the space of negative lines orthogonal to $\lambda$.  The essential point is that
$\mathcal{D}_{(\mathfrak{A}_0,\mathfrak{A} )} (\lambda)$ is precisely the locus of points
$z\in \mathcal{D}_{(\mathfrak{A}_0,\mathfrak{A} )}$ for which the $\R$-linear map
$\lambda: \mathfrak{A}_{0\R} \to \mathfrak{A}_\R$
is $\C$-linear relative to the complex structure $I_z$.


\subsection{Divisors attached to harmonic Maass forms}
\label{ss:harmonic divisors}


Fix an $\LL$   as in (\ref{genus decomp}).
The hermitian form $\langle\cdot,\cdot\rangle$ on  $\LL_f$ defines a $\widehat{\Z}$-valued
quadratic form  $Q(\lambda)=\langle \lambda, \lambda\rangle$.  The dual lattice is
$\mathfrak{d}_\kk^{-1} \LL_f$, and there is an  induced
$d_\kk^{-1} \Z/\Z$-valued quadratic form $Q$ on the  discriminant group
 $\mathfrak{d}_\kk^{-1}\LL_f/\LL_f$.
 Let $\Delta$ denote the automorphism group of $\mathfrak{d}_\kk^{-1}\LL_f/\LL_f$
 with its quadratic form.  The group $\Delta$ acts on the space $S_\LL$ of complex-valued
 functions on $\mathfrak{d}_\kk^{-1}\LL_f/\LL_f$, and commutes with the Weil
 representation
 \[
\omega_\LL:  \SL_2(\Z) \to \Aut(S_\LL).
 \]
 To every $\Delta$-invariant harmonic Maass form
 $f\in H_{2-n}(\omega_\LL)$
 we will construct a divisor $\mathcal{Z}_\LL(f)$ on $\mathcal{M}_\LL$
 as a linear combination of  Kudla-Rapoport divisors.

\begin{definition}
We will say that  $\mathfrak{d}_\kk^{-1}\LL_f / \LL_f$ is \emph{isotropic} if
$\mathfrak{d}_\kk^{-1}\LL_p/\LL_p$
represents $0$ non-trivially for every prime $p$ dividing $d_\kk$. This condition is equivalent to the
existence of an isotropic element of order $|d_\kk|$ in $\mathfrak{d}_\kk^{-1}\LL_f / \LL_f$.
\end{definition}

\begin{remark}
If $n>2$ then $\mathfrak{d}_\kk^{-1}\LL_f / \LL_f$ is always isotropic.  If $n=2$ then
$\mathfrak{d}_\kk^{-1}\LL_f / \LL_f$ is isotropic if and only if $\LL_f$ represents $0$
nontrivially everywhere locally; this is equivalent to all connected components of $\mathcal{M}_\LL$
being noncompact.
\end{remark}

For every  $m\in  \Q / \Z$ and every  $\mathfrak{r} \mid \mathfrak{d}_\kk$,
define  a $\Delta$-invariant function $\varphi_{m, \mathfrak{r}} \in S_\LL$
as the characteristic function of the subset
\[
\{
\lambda\in \mathfrak{r}^{-1} \LL_f/\LL_f  :  \; Q(\lambda) =m
\} \subset   \mathfrak{d}_\kk^{-1} \LL_f/\LL_f .
\]
Using \cite[Chapter IV.1.7]{Se73}, it is easy to check that $\varphi_{m,\mathfrak{r}}\not= 0$
if and only if  $m \in \mathrm{N}(\mathfrak{r})^{-1} \Z / \Z$.
By Witt's theorem the finitely many nonzero $\varphi_{m,\mathfrak{r}}$'s form a basis of $S_\LL^\Delta$.

\begin{lemma}\label{lem:f-mr}
For any $m\in \Q_{>0}$ and any $\mathfrak{r} \mid \mathfrak{d}_\kk$, there is an
 $f_{m, \mathfrak{r} }\in H_{2-n}(\omega_{\LL})^\Delta$
with holomorphic part of the form
\[
f^+_{m, \mathfrak{r} }(\tau) =  \varphi_{m, \mathfrak{r}} \cdot q^{-m}
+ \sum_{ k\in \Q_{\ge 0} } c_{m,\mathfrak{r}}^+(k)\cdot q^k,
\]
for some $c_{m,\mathfrak{r}}^+(k) \in S_\LL$.
Furthermore
\begin{enumerate}
\item
if $n>2$, then  $f_{m,\mathfrak{r}}$ is unique;
\item
if $n=2$ and $\mathfrak{d}_\kk^{-1} \LL_f/\LL_f$
is not isotropic, then $f_{m,\mathfrak{r}}$ is again unique;
\item
if $n=2$ and $\mathfrak{d}_\kk^{-1} \LL_f/\LL_f$ is
isotropic, then any two such $f_{m,\mathfrak{r}}$ differ by a constant,
and $f_{m,\mathfrak{r}}$ is uniquely determined  if
we impose the further condition that $c_{m,\mathfrak{r}}^+(0) \in S_\LL$
vanishes at the trivial coset of  $\mathfrak{d}_\kk^{-1}\LL_f/\LL_f$.
That is to say, $c^+_{m,\mathfrak{r}}(0,0)=0$.
\end{enumerate}
\end{lemma}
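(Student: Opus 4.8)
The plan is to deduce this statement from the exact sequence \eqref{ex-sequ} together with the classical theory of weakly holomorphic forms and their principal parts. First I would observe that the claimed $f_{m,\mathfrak r}$, if it exists, is a harmonic Maass form in $H_{2-n}(\omega_\mathscr L)^\Delta$ whose principal part is exactly $\varphi_{m,\mathfrak r}\cdot q^{-m}$ (with $\varphi_{m,\mathfrak r}\in S_\mathscr{L}^\Delta$ as defined just above the lemma), and whose holomorphic part has no nonconstant positive terms forced on it; the constant and positive Fourier coefficients $c^+_{m,\mathfrak r}(k)$ are simply whatever they turn out to be. So existence amounts to: every element of $S_\mathscr{L}^\Delta$ of the form $\varphi_{m,\mathfrak r}\,q^{-m}$ occurs as the principal part of some harmonic Maass form of weight $2-n$ with representation $\omega_\mathscr{L}$, and that this form can be chosen $\Delta$-invariant.

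For existence I would use the surjectivity of $\xi=\xi_{2-n}$ from \eqref{ex-sequ}, or more directly the standard fact (going back to Bruinier--Funke, and in the vector-valued setting to \cite{BF}, \cite{BY1}) that the space of harmonic Maass forms surjects onto the space of admissible principal parts, the only obstruction being pairing against cusp forms in $S_n(\overline\omega_\mathscr{L})$ via the residue pairing. Concretely: by Serre duality for vector-valued modular forms, a prescribed principal part $P=\varphi\,q^{-m}$ is the principal part of a \emph{weakly holomorphic} form in $M^!_{2-n}(\omega_\mathscr{L})$ if and only if the constant term of $\{P,g\}$ vanishes for all $g\in S_n(\overline\omega_\mathscr{L})$; dropping that constraint, one always gets a harmonic Maass form with that principal part (one adds a suitable Poincar\'e-series-type correction whose $\xi$-image accounts for the cuspidal obstruction). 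To get $\Delta$-invariance, since $P=\varphi_{m,\mathfrak r}\,q^{-m}$ is already $\Delta$-invariant and $\Delta$ is a finite group commuting with $\omega_\mathscr{L}$, I would simply average any preimage over $\Delta$; averaging preserves harmonicity and the weight, and fixes the principal part because $\varphi_{m,\mathfrak r}$ is $\Delta$-fixed. One should also normalize so that $f_{m,\mathfrak r}$ has no negative-index terms other than the prescribed $q^{-m}$ term and no non-holomorphic principal part; this is automatic for the preimage produced by the standard construction.

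For the uniqueness dichotomy, the difference of two candidates $f_{m,\mathfrak r}$ lies in $H_{2-n}(\omega_\mathscr{L})^\Delta$ and has holomorphic part supported on $k\ge 0$, hence is a \emph{holomorphic} modular form: it has no principal part, so $\xi(f_{m,\mathfrak r}-f'_{m,\mathfrak r})\in S_n(\overline\omega_\mathscr{L})$ is itself the $\xi$-image of a form with trivial principal part, which by a standard pairing argument forces $\xi(f_{m,\mathfrak r}-f'_{m,\mathfrak r})=0$, so the difference lies in $M_{2-n}(\omega_\mathscr{L})^\Delta$, the space of holomorphic modular forms of weight $2-n$. For $n>2$ the weight $2-n$ is negative, so $M_{2-n}(\omega_\mathscr{L})=0$ and uniqueness holds (case (1)). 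For $n=2$ the weight is $0$, and $M_0(\omega_\mathscr{L})^\Delta$ consists of constants, and more precisely of functions $\mathfrak d_\kk^{-1}\mathscr L_f/\mathscr L_f\to\C$ that are both constant on $\Delta$-orbits and invariant under $\omega_\mathscr{L}$; I would show this space is nonzero exactly when $\mathfrak d_\kk^{-1}\mathscr L_f/\mathscr L_f$ is isotropic in the sense of Definition \ref{def:isotropic} (cases (2) and (3)), in which case it is spanned by the characteristic function of the isotropic locus, or rather by the $\omega_\mathscr{L}$-invariant constant; pinning down $c^+_{m,\mathfrak r}(0,0)=0$ then removes this one-dimensional ambiguity. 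The main obstacle, and the step needing the most care, is the precise description of $M_0(\omega_\mathscr{L})^\Delta$ for $n=2$: identifying exactly when a nonzero invariant vector exists, relating it to the isotropy condition via Minkowski--Hasse / the structure of finite quadratic modules at primes dividing $d_\kk$, and checking the link to compactness of $\mathtt M_\mathscr{L}$. The rest is a routine combination of the exact sequence \eqref{ex-sequ}, Serre duality, and finite-group averaging.
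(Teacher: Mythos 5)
Your proposal follows essentially the same route as the paper: existence via the Bruinier--Funke surjectivity onto prescribed principal parts (the paper cites \cite[Proposition 3.11]{BF}), uniqueness by noting that the difference of two candidates has vanishing principal part, hence is holomorphic of weight $2-n$, which forces it to vanish for $n>2$ and to be a constant form in $M_0(\omega_\mathscr{L})^\Delta$ for $n=2$. The one step you flag as needing the most care --- that $M_0(\omega_\mathscr{L})^\Delta$ is zero in the non-isotropic case and one-dimensional with injective constant-term evaluation at the trivial coset in the isotropic case --- is exactly what the paper disposes of by citing \cite[Theorem 5.4]{Sch}, so your outline is complete modulo that reference.
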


\begin{remark}
In order to make the notation $f_{m,\mathfrak{r}}$ unambiguous,
when $n=2$ and $\mathfrak{d}_\kk^{-1}\LL_f/\LL_f$ is isotropic we always choose
  $f_{m,\mathfrak{r}}$  so that $c_{m,\mathfrak{r}}^+(0,0)=0$.
\end{remark}

\begin{proof}
The existence statement follows from \cite[Proposition 3.11]{BF}.
To prove the uniqueness statement when $n>2$, we note that a harmonic Maass form
$f\in H_{k}(\omega_{\LL})$
with vanishing principal part is automatically holomorphic \cite[Proposition 3.5]{BF}.
Since the weight is negative, it vanishes identically.
Now suppose that $n=2$. Using the same argument as for $n>2$,  we see that any
two   $f_{m,\mathfrak{r}}$ differ by an element of $M_0(\omega_{\LL})^\Delta$, that is, by an element of  $S_\LL$ which is invariant under the action of the group $\Sl_2(\Z)\times \Delta$.

If $\mathfrak{d}_\kk^{-1}\LL_f/\LL_f$ is not isotropic then it is
easily seen that $M_0(\omega_{\LL})^\Delta=0$.
If $\mathfrak{d}_\kk^{-1}\LL_f/\LL_f$ is isotropic then it follows from \cite[Theorem 5.4]{Sch}  that
the space of invariants  $M_0(\omega_{\LL})^\Delta$ has dimension $1$, and that
 the map  $M_{0}(\omega_{\LL})^\Delta \to \C$ given by evaluation
 of the constant term  at the trivial coset
of  $\mathfrak{d}_\kk^{-1}\LL_f/\LL_f$ is an isomorphism.
\end{proof}

Fix $f\in H_{2-n}(\omega_\LL)^\Delta$.
An argument similar to the uniqueness part of Lemma \ref{lem:f-mr}
shows that $f$ may be decomposed as a $\C$-linear combination
\begin{equation}\label{f decomp}
f (\tau)= \mathrm{const}+  \sum_{ \substack{ m\in \Q_{>0} \\ \mathfrak{r} \mid \mathfrak{d}_\kk }}
 \alpha_{m,\mathfrak{r}}   \cdot  f_{m,\mathfrak{r}}(\tau)
\end{equation}
where  $\mathrm{``const"}$ is a  constant form in $M_{2-n}(\omega_\LL)^\Delta$.
This constant form is  necessarily $0$, except
when $n=2$ and $\mathfrak{d}_\kk^{-1} \LL_f/\LL_f$ is isotropic.
Define a divisor on  $\mathcal{M}_\LL$ with complex coefficients
\begin{equation}\label{form divisor}
\mathcal{Z}_\LL(f) =
\sum_{ \substack{ m\in \Q_{>0} \\ \mathfrak{r} \mid \mathfrak{d}_\kk }}
\alpha_{m,\mathfrak{r}} \cdot \mathcal{Z}_\LL(m,\mathfrak{r}).
\end{equation}
Obviously $\mathcal{Z}_\LL(f_{m,\mathfrak{r}}) = \mathcal{Z}_\LL(m,\mathfrak{r})$.

\begin{remark}
Although the  decomposition of  (\ref{f decomp}) is not unique, the divisor
(\ref{form divisor})  does not depend on the choice of decomposition.
This amounts to  verifying that  $\mathcal{Z}_\LL(m,\mathfrak{r})=0$ whenever  $f_{m,\mathfrak{r}}=0$,
which is clear:  if $f_{m,\mathfrak{r}}=0$ then $\varphi_{m,\mathfrak{r}}=0$, which implies  that
$m \not\in \mathrm{N}(\mathfrak{r})^{-1} \Z$.
Thus  $\mathcal{Z}_\LL(m,\mathfrak{r})=0$ by Remark \ref{rem:trivially empty}.
\end{remark}


\subsection{Compactification}
\label{ss:compact}


The moduli space $\mathcal{M}_{(n-1,1)}$ defined in Section \ref{ss:moduli} admits a canonical
toroidal compactification
\[
\mathcal{M}_{(n-1,1)} \hookrightarrow \mathcal{M}^*_{(n-1,1)}.
\]
Over $\co_\kk[1/d_\kk]$ the construction
is found in \cite{Lan}; the extension to $\co_\kk$ is  in \cite{Ho3}.  The $\co_\kk$-stack
\[
\mathcal{M}^*= \mathcal{M}_{(1,0)} \times \mathcal{M}^*_{(n-1,1)}
\]
is regular,  proper and flat over $\co_\kk$ of relative dimension $n-1$, and  smooth over $\co_\kk[1/d_\kk]$.
It contains $\mathcal{M}$ as a dense open substack, and the boundary $\mathcal{M}^*\smallsetminus\mathcal{M}$,
when endowed with its reduced substack structure, is proper and smooth over $\co_\kk$ of relative
dimension $n-2$.   Exactly as in (\ref{genus decomp}), there is a decomposition
$
\mathcal{M}^* = \bigsqcup_\LL \mathcal{M}^*_\LL
$
in which  $\mathcal{M}^*_\LL$ is, by definition,
 the Zariski closure of $\mathcal{M}_\LL$ in $\mathcal{M}^*$.

Fix a $\Delta$-invariant $f\in H_{2-n}(\omega_\LL)$ with holomorphic part
\[
f^+(\tau) = \sum_{ \substack{ m\in \Q \\ m \gg -\infty}} c^+(m) q^m,
\]
so that $c^+(m)\in S_\LL$.
We will  define a divisor $\mathcal{B}_\LL( f )$ on $\mathcal{M}^*_\LL$,
 supported on the boundary $\partial\mathcal{M}_\LL = \mathcal{M}_\LL^* \smallsetminus \mathcal{M}_\LL$.
 Start with a component $\mathcal{B}$ of the geometric fiber $\partial\mathcal{M}_{\LL / \kk^\alg}$.
 This component lies on some connected component of $\mathcal{M}^*_{\LL/\kk^\alg}$,
 which, as in Section \ref{ss:complex uniformization},  is indexed by a pair $(\mathfrak{A}_0,\mathfrak{A})$.
 As in \cite{Ho3},    the component $\mathcal{B}$  corresponds to the
 $\Gamma_{(\mathfrak{A}_0,\mathfrak{A})}$-orbit of an isotropic $\co_\kk$-direct summand
 $\mathfrak{a} \subset L(\mathfrak{A}_0,\mathfrak{A})$ of rank one, and
 by \cite[Proposition 2.6.3]{Ho3} there is a decomposition
 \[
 L(\mathfrak{A}_0,\mathfrak{A}) = E \oplus \mathfrak{a} \oplus \mathfrak{b}
 \]
 in which $\mathfrak{b}$ is an isotropic $\co_\kk$-submodule of rank one, and
 $\mathfrak{a}^\perp = \mathfrak{a}\oplus E$.  Under any such decomposition,
 $E$ is a self-dual hermitian $\co_\kk$-module of signature $(n-2,0)$.

 The \emph{multiplicity of $\mathcal{B}$ with respect to $f$} is defined as follows.
 Regard $c^+(m)$ as a function on
 \[
 \mathfrak{d}_\kk^{-1} \LL_f/\LL_f \iso
 \mathfrak{d}_\kk^{-1} E / E
 \oplus  \mathfrak{d}_\kk^{-1} \mathfrak{a} /  \mathfrak{a} \oplus  \mathfrak{d}_\kk^{-1} \mathfrak{b}/\mathfrak{b}.
 \]
 If $n>2$ then
\[
 \mathrm{mult}_\mathcal{B}(f) =
 \sum_{m\in \Q_{>0}}  \frac{m}{n-2}
 \sum_{  \substack{  \lambda \in \mathfrak{d}^{-1}_\kk E  \\  \langle \lambda,\lambda\rangle =m } }
\sum_{  \substack{ \mu  \in \mathfrak{d}^{-1}_\kk \fraka /\fraka}}
c^+( -m, \lambda +\mu) .
\]
When $f=f_{m,\mathfrak{r}}$ this simplifies to
 \[
  \mathrm{mult}_\mathcal{B}(f_{m,\mathfrak{r}}) =
 \frac{m\mathrm{N}(\mathfrak{r}) }{n-2}   \cdot  |  \{  \lambda \in \mathfrak{r}^{-1} E  :  \langle \lambda,\lambda\rangle =m  \} |  .
 \]
If $n=2$ then $E=0$, and we instead define
\[
 \mathrm{mult}_\mathcal{B}(f) =
 -2\sum_{m\in \Z_{\geq 0}}
\sum_{  \substack{ \mu  \in \mathfrak{d}^{-1}_\kk \fraka /\fraka}}
c^+( -m, \mu) \sigma_1(m).
\]
 In the next section (see Remark \ref{rem:rational} and Corollary
 \ref{cor:defbf}) we will show that the above multiplicities of the
 boundary components with respect to $f$ are given by regularized
 theta lifts of $f$ to positive definite hermitian spaces of signature
 $(n-2,0)$.

 Exactly as in  \cite[Section 3.7] {Ho3},  the isomorphism class of the hermitian module $E$
 is constant on the $\Gal(\kk^\alg/\kk)$-orbit of $\mathcal{B}$,
 and so summing over all geometric components  $\mathcal{B}$ yields a divisor
 \[
 \mathcal{B}_\LL(f) = \sum_\mathcal{B} \mathrm{mult}_\mathcal{B}(f)   \cdot \mathcal{B}
 \]
 on  $\mathcal{M}^*_{ \LL / \kk^\alg }$ which   descends to
 $\mathcal{M}^*_{ \LL / \kk }$.  Denote in the same way the divisor on $\mathcal{M}_\LL^*$
 obtained by taking the Zariski closure.

 \begin{definition}\label{def:total KR}
Let  $\mathcal{Z}_\LL^*(f)$ be the Zariski closure  in $\mathcal{M}_\LL^*$
of the Kudla-Rapoport divisor  $\mathcal{Z}_\LL(f)$, and define
the \emph{total Kudla-Rapoport divisor}  on $\mathcal{M}^*_\LL$ by
 \[
 \mathcal{Z}_\LL^\mathrm{total}(f) =
 \mathcal{Z}_\LL^*(f) + \mathcal{B}_\LL(f) .
 \]
\end{definition}

Let  $\widehat{\mathrm{CH}}^1_\R(\mathcal{M}_\LL^*)$ be the arithmetic Chow group with real
coefficients and $\log$-$\log$ growth along the boundary in the sense of Burgos-Kramer-K\"uhn
\cite{BKK,BBK} (see also  \cite{Ho3} for a rapid review of the essentials), and set
\[
\widehat{\mathrm{CH}}^1_\C(\mathcal{M}_\LL^*) =
\widehat{\mathrm{CH}}^1_\R(\mathcal{M}_\LL^*)\otimes_\R \C.
\]
In the  next section (see especially Section \ref{ss:KR green}) we will construct a Green function $\Phi_\LL(f )$, which will
allow us to define an arithmetic cycle class
\[
  \widehat{\mathcal{Z}}^{\mathrm{total}}_\LL (f)=
  \big( \mathcal{Z}^\mathrm{total}_\LL(f)   ,  \Phi_\LL(f ) \big)
  \in \widehat{\mathrm{CH}}^1_\C(\mathcal{M}_\LL^*).
\]


\section{Green functions for divisors}
\label{s:green functions}


Here we consider the analytic theory of Shimura varieties
associated to hermitian spaces of signature $(n-1,1)$ over imaginary
quadratic fields. We also study their special divisors and define automorphic
Green functions   for special divisors as regularized theta lifts of harmonic Maass forms.
In Section \ref{sect:greenbd} we study these Green
functions on toroidal compactifications, and show that they are log-log Green functions in the
sense of \cite{BKK} for linear combinations of special divisors and boundary divisors.
We prove that the
multiplicities of the boundary divisors are given by regularized theta
lifts of harmonic Maass forms to hermitian
spaces of signature $(n-2,0)$.  To this end we compute
Fourier-Jacobi expansions of Green functions and analyze the different
terms at the boundary.
We use these results to construct Green functions for Kudla-Rapoport divisors on
 the complex orbifold $\mathcal{M}^*_\LL(\C)$ studied in Section \ref{s:moduli spaces}.

Since it does not cause any
extra work, and for future reference, we make no restriction on $d_\kk$
in the present section and allow it to be even.  Moreover, we work with Shimura varieties of arbitrary level structure.

Let $V$ be a hermitian space over $\kk$
equipped with a hermitian form
$\langle\cdot,\cdot\rangle$. Throughout we let $V_\R=V\otimes_\Q\R$ and assume that the signature of $V$ is $(n-1,1)$. We write
$\langle\cdot,\cdot\rangle_\Q$ for the symmetric bilinear form
$\langle x,y\rangle_\Q=\tr_{\kk/\Q}\langle x,y\rangle$.
The associated quadratic form over $\Q$ is
$Q(x) = \frac{1}2 \langle x,x\rangle_\Q=\langle x,x\rangle$. Note that the Weil representations of $\SL_2 \subset \Uni(1, 1)$ associated to the quadratic form over $\Q$ and the hermitian form are the same.


\subsection{Hermitian spaces and unitary Shimura varieties}


We realize the hermitian symmetric space
associated to the unitary group $\Uni(V)$ as the Grassmannian
$\calD$ of negative $\kk_\R$-lines in $V_\R$.  It can be
viewed as an open subset of the projective space $\P(V_\R)$
of the complex vector space $V_\R$.  The domain $\calD$  is not a
tube domain unless $n=2$, in which case it is isomorphic to the complex
upper half plane $\H$. In general, $\calD$ has a realization as a
Siegel domain as follows.

Let $\ell\in V$  be a nonzero isotropic vector and let $\widetilde\ell\in V$ be isotropic such that
$\langle \ell,\widetilde\ell\rangle=1$. The orthogonal complement
\begin{align*}
W= \ell^\perp \cap \widetilde\ell{}^\perp
\end{align*}
is a positive definite hermitian space over $\kk$ of dimension $n-2$, and we have $V=W\oplus \kk\ell\oplus \kk\widetilde\ell$.
If $z\in \calD$, then
$\langle z,\ell\rangle\neq 0$. Hence $z$
has a unique basis vector of the form
\[
\frakz+\tau \sqrt{d_\kk}\ell + \widetilde\ell
\]
with $\frakz\in W_\R$ and $\tau\in \C$. We denote this vector by
the pair $(\frakz,\tau)$. The condition that the restriction of the hermitian form to
$z$ is negative definite
is
equivalent to requiring that
\[
\norm(\frakz,\tau)=-\langle(\frakz,\tau),(\frakz,\tau) \rangle =2\sqrt{|d_\kk|}\Im (\tau)-\langle \frakz,\frakz\rangle
\]
is positive. Consequently, $\calD$ is isomorphic to
\[
\calH_{\ell,\widetilde\ell}=\{  (\frakz,\tau)\in W_\R\times \H :  2\sqrt{|d_\kk|}\Im
(\tau) >\langle \frakz,\frakz\rangle
\}.
\]

For $z\in \calD$ and $\xx\in V_\R$ we let $\xx_{z^\perp}$ and $\xx_z$ be the orthogonal projections of $\xx$ to $z^\perp$ and $z$, respectively. Then the majorant
\[
\langle \xx,\mu\rangle_z = \langle \xx_{z^\perp},\mu_{z^\perp}\rangle - \langle \xx_{z},\mu_{z}\rangle
\]
associated to $z$ defines a positive definite hermitian form on $V_\R$.
If $0\neq z_0\in z$, we have
\[
\langle \xx,\xx\rangle_z = \langle \xx,\xx\rangle +2 \frac{|\langle \xx,z_0\rangle|^2}{|\langle z_0,z_0\rangle|}.
\]

The hermitian domain $\calD$ carries over it a \emph{tautological bundle}, whose fiber
at the point $z\in \calD$ is the negative line $z$.  The hermitian form on $V_\R$ induces a hermitian metric on the tautological
bundle, whose  first Chern form $\Omega$ is $\Uni(V)(\R)$-invariant and positive.
It corresponds to an invariant K\"ahler metric on $\calD$
and gives rise to an invariant volume form $d\mu(z)=\Omega^{n-1}$.
In the coordinates of $\calH_{\ell,\widetilde\ell}$ we have
\[
\Omega=-dd^c\log \norm(\frakz,\tau).
\]

Let $L\subset V$ be an $\calO_\kk$-lattice, that is, a finitely
generated $\calO_\kk$-submodule such that $V=L\otimes_\Z \Q$ and such that the restriction of $\langle\cdot,\cdot\rangle$ to $L$ takes values in $\frakd_\kk^{-1}$.  With the quadratic form $Q(x)
=\langle x,x\rangle$, we may also view $L$ as a lattice  over $\Z$. Throughout we assume that $L$ is even as a lattice over $\Z$, that is, $\langle x,x\rangle\in \Z$ for all $x\in L$. This condition is automatically fulfilled if the hermitian form on $L$ takes values in $\calO_\kk$.
Let
\begin{align*}
L'&= \{ x\in V:  \text{$\langle x,y\rangle_\Q \in \Z$ for all $y\in L$}\},\\
L'_{\calO_\kk}&= \{ x\in V:  \text{$\langle x,y\rangle \in \calO_\kk$ for all $y\in L$}\}
\end{align*}
be the $\Z$-dual and the $\calO_\kk$-dual of $L$, respectively, so that
$L'=\frakd_\kk^{-1}L'_{\calO_\kk}\supset L$.

Let  $\Gamma$ be a finite index subgroup of the unitary group $\Uni(L)$ of $L$.  The quotient
\[
X_\Gamma =\Gamma\bs \calD.
\]
is a complex orbifold of dimension $n-1$. It is compact if and only if $V$ is anisotropic.
In particular, if $n>2$, then $X_\Gamma$ is non-compact.
We define the volume of $X_\Gamma$ by $\vol(X_\Gamma)= \int_{X_\Gamma} \Omega^{n-1}$ and the degree of a divisor $Z$ on $X_\Gamma$ by
\[
\deg(Z)= \int_{Z} \Omega^{n-2}.
\]

\subsubsection{Special divisors}

For any vector $\xx\in V$ of positive norm we put
\[
\calD(\xx)= \{ z\in \calD:  \langle z,\xx\rangle=0\}.
\]
Let $S_L$ be the complex vector space of functions $L'/L\to \C$.
In the spirit of \cite{Ku:Duke}, for $\varphi\in S_L$ and $m\in \Q_{>0}$ we define the special divisor
\[
Z(m,\varphi)= \sum_{\substack{\xx\in L'\\ \langle \xx,\xx\rangle =m}} \varphi(\xx) \calD(\xx).
\]
We write $Z(m)$ for the element of
\[
\Hom_\C(S_L,\Div_\C(\calD)) \iso \Div_\C(\calD) \otimes_\C S_L^\vee
\]
given by $\varphi\mapsto Z(m,\varphi)$.
If $\varphi$ is invariant under $\Gamma$, then $Z(m,\varphi)$ is a
$\Gamma$-invariant divisor and descends to a divisor on the quotient
$X_\Gamma$, which we will also denote by $Z(m,\varphi)$.


\subsection{Regularized theta lifts}


In this subsection we define automorphic Green functions for special
divisors as regularized theta lifts of harmonic Maass forms. These
Green functions turn out to be harmonic if the degree of the
corresponding divisor vanishes.

Let  $\tau=u+iv$ be the variable  in the upper half plane $\H$.   The
$\calO_\kk$-lattice $L$ together with the symmetric bilinear form
$\langle\cdot ,\cdot\rangle_\Q$ is an even $\Z$-lattice of signature
$(2n-2,2)$. Let $\omega_L$ be the corresponding Weil representation on
$S_L$ as in Section~\ref{sect:prelims}. For $z\in \calD$ fixed, the Siegel theta function
$\Theta_L(\tau,z)$ is a non-holomorphic
modular form of weight $n-2$ for $\Sl_2(\Z)$ with representation
$\omega_L^\vee$.

Let $f\in H_{2-n}(\omega_L)$, and denote its Fourier coefficients by $c^\pm(m)\in S_L$ as in \eqref{eq:fourierf}.
Note that $c^\pm(m,\mu)=c^\pm(m,-\mu)$ for $\mu\in L'/L$,  by \cite[Section 3]{BF}.
The pairing $\{ f,\Theta_L(\tau,z)\}$ is a function on $\H$, which is invariant under $\Sl_2(\Z)$.
Following \cite{Bo1} and \cite{BF}, we consider the regularized theta lift
\begin{align} \label{eq:AutoGreen}
\Phi(z,f)= \int_{\Sl_2(\Z)\bs \H}^{\reg}
\{ f,\Theta_L(\tau,z)\} d\mu(\tau)
\end{align}
of $f$, where $d\mu(\tau)= \frac{du\,dv}{v^2}$ is the invariant measure.
The integral is regularized by taking the constant term in the Laurent expansion at $s=0$ of the meromorphic continuation of
\[
\Phi(z,f,s)= \lim_{T\to \infty}\int_{\calF_T}
\{ f,\Theta_L(\tau,z)\} v^{-s}d\mu(\tau).
\]
Here $\calF_T$ denotes the standard fundamental domain for $\Sl_2(\Z)$ truncated at height $T$.
If $\Re(s)>0$, the limit exists and defines a smooth function in $z$ on all of $\calD$, which is invariant under the action of $\Gamma$ if $f$ is invariant under $\Gamma$.  It has a meromorphic continuation
in $s$ to  $\C$; see \cite{Bo1} or \cite{BF}.
The function $\Phi(z,f)$ is defined on all (!) of $\calD$, but it is only smooth on the complement of the divisor
\[
Z(f)= \sum_{m>0} \{c^+(-m),Z(m)\}.
\]

To describe the behavior near this divisor, we extend the incomplete Gamma function $\Gamma(0,t)=\int_{t}^{\infty}e^{-v}\frac{dv}{v}$ to a function on $\R_{\geq 0}$ by defining it as the constant term in the Laurent expansion at $s=0$ of the meromorphic continuation of $\int_{1}^{\infty}e^{-tv}v^{-s}\frac{dv}{v}$.
Hence we have
\[
\widetilde\Gamma(0,t)=\begin{cases}
\Gamma(0,t),&\text{if $t>0$,}\\
0,&\text{if $t=0$.}
\end{cases}
\]
The following result is a slight strengthening of \cite[Theorem 6.2]{Bo1} in our setting.

\begin{theorem}
\label{thm:sing}
For any $z_0\in \calD$ there exists a neighborhood $U\subset\calD$  such that the function
\[
\Phi(z,f)-\sum_{\substack{\xx\in L'\cap z_0^\perp}}
c^+(-\langle \xx,\xx\rangle ,\xx) \widetilde\Gamma(0,4\pi |\langle \xx_z,\xx_z\rangle |)
\]
is smooth on $U$. Here $c^+(m, \xx)$ stands for the value $c^+(m)(\lambda)$ of $c^+(m)$ at $\lambda+L$.
\end{theorem}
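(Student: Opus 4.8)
The plan is to follow the argument of \cite[Theorem~6.2]{Bo1} (see also \cite[Theorem~6.1]{BF}); the one new feature is the non-holomorphic part $f^-$, and I will show that $f^-$ contributes only a function smooth on all of $\calD$, so that the singularities of $\Phi(z,f)$ are governed by $f^+$ exactly as in the weakly holomorphic case. First I would split the regularized integral: fixing the standard fundamental domain $\calF$ for $\Sl_2(\Z)$, write for $\Re(s)\gg0$
\[
\Phi(z,f,s)=I_1(z,s)+I_2(z,s),
\]
where $I_1$ is the integral of $\{f,\Theta_L(\tau,z)\}v^{-s}$ over the compact part $\calF\cap\{v\le1\}$ and $I_2$ is the integral over the cusp strip $\calF\cap\{v>1\}=\{|u|\le\tfrac12,\ v>1\}$. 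The term $I_1$ is entire in $s$, and $I_1(z,0)$ is smooth in $z$ on all of $\calD$ because $\Theta_L$ is smooth on $\H\times\calD$ and the domain of integration is compact; so $I_1$ is irrelevant for the singularity.

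Next I would compute $I_2$ by performing the $u$-integration first, which extracts the zeroth Fourier coefficient of the $1$-periodic function $\{f,\Theta_L(\tau,z)\}$ in $u$. Writing $\Theta_L(\tau,z)=v\sum_{\xx\in L'}e\big(\langle\xx_{z^\perp},\xx_{z^\perp}\rangle\tau+\langle\xx_z,\xx_z\rangle\bar\tau\big)$ as an $S_L^\vee$-valued function, and using $\langle\xx,\xx\rangle_z-\langle\xx,\xx\rangle=-2\langle\xx_z,\xx_z\rangle=2|\langle\xx_z,\xx_z\rangle|$, the zeroth coefficient is
\[
v\sum_{\xx\in L'}\Big(c^+(-\langle\xx,\xx\rangle,\xx)+c^-(-\langle\xx,\xx\rangle,\xx)\,\Gamma(1-k,4\pi\langle\xx,\xx\rangle v)\Big)e^{-4\pi|\langle\xx_z,\xx_z\rangle|v},
\]
with $k=2-n$, where the $c^-$-term occurs only when $\langle\xx,\xx\rangle>0$. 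Hence $I_2(z,s)=\int_1^\infty(\cdots)v^{-s-2}\,dv$, and interchanging sum and integral (justified by the Gaussian factors, uniformly for $z$ near $z_0$) gives
\[
I_2(z,s)=\sum_{\xx\in L'}c^+(-\langle\xx,\xx\rangle,\xx)\int_1^\infty e^{-4\pi|\langle\xx_z,\xx_z\rangle|v}v^{-s-1}\,dv\;+\sum_{\substack{\xx\in L'\\\langle\xx,\xx\rangle>0}}c^-(-\langle\xx,\xx\rangle,\xx)\int_1^\infty\Gamma(1-k,4\pi\langle\xx,\xx\rangle v)e^{-4\pi|\langle\xx_z,\xx_z\rangle|v}v^{-s-1}\,dv.
\]
Since $\Gamma(1-k,4\pi\langle\xx,\xx\rangle v)$ decays exponentially in $v$ whenever $\langle\xx,\xx\rangle>0$, the second sum is entire in $s$ and at $s=0$ defines a function smooth in $z$ on all of $\calD$; this is the contribution of $f^-$, and it needs no subtraction. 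For the first sum, the definition of $\tilde\Gamma$ is exactly that the constant term in the Laurent expansion at $s=0$ of $\int_1^\infty e^{-av}v^{-s-1}\,dv$ equals $\tilde\Gamma(0,a)$, for every $a\ge0$; after checking that meromorphic continuation and extraction of the constant term may be done termwise (again by Gaussian decay, uniformly near $z_0$), one obtains
\[
\Phi(z,f)=\big(\text{smooth on }\calD\big)\;+\;\sum_{\xx\in L'}c^+(-\langle\xx,\xx\rangle,\xx)\,\tilde\Gamma\big(0,4\pi|\langle\xx_z,\xx_z\rangle|\big).
\]

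Finally I would localize near $z_0$ by splitting this last sum according to whether $\xx\in z_0^\perp$ or not. Since $z_0^\perp\subset V_\R$ is positive definite, $L'\cap z_0^\perp$ contains only finitely many vectors of bounded length, and as the principal part of $f^+$ is finite only finitely many terms of $\sum_{\xx\in L'\cap z_0^\perp}$ are nonzero (the term $\xx=0$ vanishes because $\tilde\Gamma(0,0)=0$); this finite sum is precisely the one subtracted in the statement. For $\xx\notin z_0^\perp$ one has $z_0\notin\calD(\xx)$, and I claim $\sum_{\xx\in L',\,\xx\notin z_0^\perp}c^+(-\langle\xx,\xx\rangle,\xx)\tilde\Gamma(0,4\pi|\langle\xx_z,\xx_z\rangle|)$ is smooth on a small enough neighbourhood $U$ of $z_0$: the terms with $\langle\xx,\xx\rangle\le0$ have $|\langle\xx_z,\xx_z\rangle|=\tfrac12(\langle\xx,\xx\rangle_z-\langle\xx,\xx\rangle)$, which on a compact neighbourhood of $z_0$ dominates a positive multiple of the square length of $\xx$; and for $\langle\xx,\xx\rangle>0$, $\xx\notin z_0^\perp$, boundedness of the majorant $\langle\xx,\xx\rangle_z$ forces all but finitely many relevant $\xx$ to have $|\langle\xx_z,\xx_z\rangle|$ large, while $U$ may be shrunk so that $|\langle\xx_z,\xx_z\rangle|$ stays bounded away from $0$ for the remaining finitely many, as none of their loci $\calD(\xx)$ pass through $z_0$. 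In all cases $\tilde\Gamma(0,\cdot)=\Gamma(0,\cdot)$ is smooth with rapid decay, so the series and its term-by-term derivatives converge uniformly on $U$. Combining, $\Phi(z,f)-\sum_{\xx\in L'\cap z_0^\perp}c^+(-\langle\xx,\xx\rangle,\xx)\tilde\Gamma(0,4\pi|\langle\xx_z,\xx_z\rangle|)$ is smooth on $U$.

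I expect the main obstacle to be exactly the uniform (in $z$ near $z_0$) estimates underlying the termwise manipulation of the $s$-integral and the uniform convergence in the localization step; this is the analytic core of Borcherds' original argument. Incorporating $f^-$ is comparatively painless, since it produces only a globally smooth term.
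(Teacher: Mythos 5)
Your proposal is correct and follows essentially the same route as the paper: the paper's proof consists of noting that $L'\cap z_0^\perp$ is positive definite (hence the subtracted sum is finite), citing the unfolding argument of \cite[Theorem 6.2]{Bo1} and \cite[Theorem 2.12]{Br1} to produce the singular terms as $\CT_{s=0}\big[\int_1^\infty e^{4\pi\langle\xx_z,\xx_z\rangle v}v^{-s-1}dv\big]$, and then identifying this constant term with $\tilde\Gamma(0,4\pi|\langle\xx_z,\xx_z\rangle|)$. You have simply unpacked the cited computation in detail, including the routine verification that $f^-$ contributes only a globally smooth term, so there is nothing to add.
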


\begin{proof}
We begin by noticing that $L'\cap z_0^\perp$ is a positive definite $\calO_\kk$-module of rank $\leq n-1$. Hence the sum on the right hand side is finite.

Arguing as in the proof of \cite[Theorem 6.2]{Bo1} (see also \cite[Theorem 2.12]{Br1}), we see that there exists a small neighborhood $U\subset\calD$ of $z_0$ on which the function
\[
\Phi(z,f)-\sum_{\substack{\xx\in L'\cap z_0^\perp}}
c^+(-\langle \xx,\xx\rangle ,\xx) \CT_{s=0}\bigg[\int_{v=1}^\infty e^{4\pi \langle \xx_z,\xx_z\rangle v} v^{-s-1} \,dv\bigg]
\]
is smooth. Here $\CT_{s=0}[\cdot]$ denotes the constant term in the Laurent expansion in $s$ at $0$.
Inserting the definition of $\widetilde \Gamma(0,t)$ we obtain the assertion.
\end{proof}

\begin{corollary}
\label{cor:sing}
For any $z_0\in \calD$ we have
\[
\Phi(z_0,f)=\lim_{\substack{z\to z_0\\z\notin Z(f)}}\bigg[
\Phi(z,f)+\sum_{\substack{\xx\in L'\cap z_0^\perp\\ \xx\neq 0}}
c^+(-\langle \xx,\xx\rangle ,\xx) (\log(4\pi |\langle \xx_z,\xx_z\rangle |)-\Gamma'(1))\bigg].
\]
\end{corollary}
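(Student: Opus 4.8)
The plan is to derive this straight from Theorem~\ref{thm:sing}, using the behaviour of the (regularized) incomplete Gamma function near $0$. By Theorem~\ref{thm:sing}, there is a neighborhood $U$ of $z_0$ on which the function
\[
g(z)=\Phi(z,f)-\sum_{\xx\in L'\cap z_0^\perp}c^+(-\langle\xx,\xx\rangle,\xx)\,\tilde\Gamma\big(0,4\pi|\langle\xx_z,\xx_z\rangle|\big)
\]
is smooth, hence continuous at $z_0$. The first step is to compute $g(z_0)$: every $\xx\in L'\cap z_0^\perp$ has $\xx_{z_0}=0$ (the projection onto the line $z_0$ of a vector orthogonal to $z_0$ vanishes), so $\langle\xx_{z_0},\xx_{z_0}\rangle=0$ and $\tilde\Gamma(0,0)=0$; the entire sum disappears and $g(z_0)=\Phi(z_0,f)$. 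Consequently $\Phi(z_0,f)=\lim_{z\to z_0}g(z)$ along any approach, in particular along $z\notin Z(f)$.

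The second step is to analyze the sum as $z\to z_0$. The term $\xx=0$ is identically $\tilde\Gamma(0,0)=0$ and can be dropped. For $\xx\neq 0$ the module $L'\cap z_0^\perp$ is positive definite (as noted in the proof of Theorem~\ref{thm:sing}, so only finitely many $\xx$ occur with $c^+(-\langle\xx,\xx\rangle,\xx)\neq0$), and for such $\xx$ the condition $z\notin Z(f)$ forces $z\notin\calD(\xx)$, hence $\xx_z\neq0$ and $|\langle\xx_z,\xx_z\rangle|>0$, so the logarithm is defined along the limit. The analytic input is the elementary expansion
\[
\Gamma(0,t)=-\log t+\Gamma'(1)+o(1)\qquad(t\to0^+),
\]
obtained by writing $\Gamma(0,t)=-\log t+\int_t^1\frac{e^{-v}-1}{v}\,dv+\int_1^\infty\frac{e^{-v}}{v}\,dv$ and identifying the limiting constant with $-\gamma=\Gamma'(1)$. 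Since $\langle\xx_z,\xx_z\rangle\to\langle\xx_{z_0},\xx_{z_0}\rangle=0$ as $z\to z_0$, this yields, for each fixed such $\xx$,
\[
\tilde\Gamma\big(0,4\pi|\langle\xx_z,\xx_z\rangle|\big)=-\log\big(4\pi|\langle\xx_z,\xx_z\rangle|\big)+\Gamma'(1)+o(1).
\]

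The final step is to substitute this into $g(z)$ and take the limit $z\to z_0$ with $z\notin Z(f)$. As the sum is finite, the finitely many $o(1)$ errors vanish in the limit, and we obtain
\[
\Phi(z_0,f)=\lim_{\substack{z\to z_0\\ z\notin Z(f)}}\bigg[\Phi(z,f)+\sum_{\substack{\xx\in L'\cap z_0^\perp\\ \xx\neq0}}c^+(-\langle\xx,\xx\rangle,\xx)\big(\log(4\pi|\langle\xx_z,\xx_z\rangle|)-\Gamma'(1)\big)\bigg],
\]
which is the claim (terms with vanishing coefficient being understood as $0$). There is essentially no obstacle here beyond bookkeeping; the only genuine point is the identity $g(z_0)=\Phi(z_0,f)$, which relies on the normalization $\tilde\Gamma(0,0)=0$ together with the implication $\xx\perp z_0\Rightarrow\xx_{z_0}=0$, and the only analytic ingredient is the $t\to0^+$ asymptotics of $\Gamma(0,t)$.
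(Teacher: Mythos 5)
Your proof is correct and follows exactly the paper's route: Corollary \ref{cor:sing} is deduced from Theorem \ref{thm:sing} together with the expansion $\Gamma(0,t)=-\log t+\Gamma'(1)+o(1)$ as $t\to 0^+$. The extra bookkeeping you supply (that $\xx_{z_0}=0$ for $\xx\perp z_0$, so the sum vanishes at $z_0$ by $\tilde\Gamma(0,0)=0$, and that the sum is finite) is exactly what the paper leaves implicit.
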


\begin{proof}
Using the fact that
$\Gamma(0,t)=-\log(t)+\Gamma'(1)+o(t)$ as $t\to 0$, the corollary follows from Theorem \ref{thm:sing}.
\end{proof}

By a {\em Green function} for a divisor $D$ on a complex manifold $X$ we mean a smooth function $G$ on $X\smallsetminus D$ with the property that for every point $z_0\in X$ there is a neighborhood $U$ and a local equation $\phi=0$ for $D$ on $U$ such that $G+\log|\phi|^2$ extends to a smooth function on all of $U$.
Using this definition, we may rephrase Theorem \ref{thm:sing} and the corollary by saying that $\Phi(z,f)$ is a Green function for $Z(f)$.  In fact, the difference of $\log | \langle  \xx_z,\xx_z \rangle|$ and  $\log |\phi_\xx|^2$ for any local equation $\phi_\xx=0$ of $\mathcal{D}(\xx)$ extends to a smooth function.
In the next subsection we will study the growth of $\Phi(z,f)$ at the boundary of a toroidal compactifaction of $X_\Gamma$  and show that it can also be considered as a Green function for a suitably `compactified' divisor
there.

\begin{remark}
Corollary \ref{cor:sing}, together with Theorem \ref{thm:CM value},
will be used in Section \ref{sect:compladj}  to compute the height pairing of a hermitian line bundle corresponding to an arithmetic  Kudla-Rapoport divisor with a CM cycle.
\end{remark}


\begin{proposition}
Let $\Delta_\calD$ be the $\Uni(V)(\R)$-invariant Laplacian on $\calD$. There exists a non-zero real constant $c$ (which only depends on the normalization of $\Delta_\calD$ and which is independent of $f$), such that
\[
\Delta_\calD \Phi(z,f) = c\cdot \deg Z(f)
\]
on the complement of the divisor $Z(f)$.
\end{proposition}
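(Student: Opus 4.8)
The plan is to reduce the statement to a pointwise identity for the integrand and then apply the unfolding/current-equation machinery already used to establish Theorem~\ref{thm:sing}. First I would recall that the Siegel theta function satisfies a Laplace-type equation in the $z$-variable: there is a differential operator relation tying $\Delta_\calD^{(z)}\Theta_L(\tau,z)$ to a weight-raising/lowering operator acting on $\Theta_L(\tau,z)$ in the $\tau$-variable. Concretely, $\Delta_\calD \Theta_L(\tau,z)$ equals (up to the normalization constant $c$) the image of $\Theta_L(\tau,z)$ under the hyperbolic Laplacian $\Delta_{2-n,\tau}$ of weight $2-n$ on $\H$; this is the classical computation of Borcherds (see the proof of the main results in \cite{Bo1}) and holds because $\Theta_L$ is an eigenfunction under the joint action encoding the theta correspondence between $\SL_2$ and $\Uni(V)$.

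Granting this, I would apply $\Delta_\calD$ to the regularized integral \eqref{eq:AutoGreen}, justifying that $\Delta_\calD$ passes inside the regularized integral (the regularization is in $s$, and for $\Re(s)$ large the integral converges absolutely and locally uniformly in $z$, so differentiation under the integral sign is legitimate and commutes with taking the constant term at $s=0$). This turns $\Delta_\calD\Phi(z,f)$ into the regularized integral of $\{f, \Delta_{2-n,\tau}\Theta_L(\tau,z)\}$, up to the constant $c$. Now I would move the Laplacian off the theta function and onto $f$ via the self-adjointness of $\Delta_{2-n,\tau}$ with respect to the Petersson pairing together with the fact that $\bar\omega_L$ and $\omega_L^\vee$ are dual representations: after integration by parts on the truncated domain $\calF_T$, the main term becomes the regularized integral of $\{\Delta_{2-n}f, \Theta_L(\tau,z)\}$, which vanishes identically because $f$ is harmonic ($\Delta_{2-n}f = 0$ by definition of a harmonic Maass form). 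What survives are the boundary terms of the integration by parts at the truncation height $T$; as $T\to\infty$ these are governed entirely by the Fourier expansion of $f$ at the cusp, and in particular only by the principal part and the constant term of $f$.

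The main obstacle will be the careful bookkeeping of these cusp contributions: one must show that the boundary terms, after meromorphic continuation in $s$ and extraction of the constant term at $s=0$, produce exactly $c\cdot\deg Z(f)$ and nothing more. The key input here is the growth of the theta integral at the cusp: the constant term (in the $\tau$-Fourier expansion) of $\{f,\Theta_L(\tau,z)\}$ contributes a term proportional to $\deg Z(m,\cdot)$ paired against the Fourier coefficients $c^+(-m)$ of the holomorphic part of $f$, by the standard computation of the ``constant term of the theta lift'' (as in \cite{Bo1}, \cite{BF}), while the non-holomorphic part $f^-$ and the exponentially decaying terms contribute nothing in the limit. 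Summing over $m>0$ yields $\sum_{m>0}\{c^+(-m), \deg Z(m)\} = \deg Z(f)$ by the very definition $Z(f) = \sum_{m>0}\{c^+(-m), Z(m)\}$ and linearity of $\deg$. Finally, the constant $c$ depends only on the normalization of $\Delta_\calD$ relative to the invariant metric and is manifestly independent of $f$, since it is extracted once and for all from the theta-function Laplace identity; this gives the proposition.
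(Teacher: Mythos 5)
Your overall strategy is exactly the one the paper has in mind: the proof in the text is a one-line reference to \cite[Theorem 4.7]{Br1}, and that argument is precisely the intertwining identity for the theta kernel (relating $\Delta_\calD$ applied to $\Theta_L(\tau,z)$ to the hyperbolic Laplacian in $\tau$ of the appropriate weight --- note it is the weight $n-2$ operator on the $\Theta$ side, dual to the weight $2-n$ operator annihilating $f$), followed by differentiation under the regularized integral, Stokes' theorem on $\calF_T$, and the harmonicity of $f$. Up through that point your outline is correct.

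The genuine gap is in your final paragraph, where you claim the surviving boundary terms at the truncation height $T$ produce $\sum_{m>0}\{c^+(-m),\deg Z(m)\}=\deg Z(f)$. That is not what the cusp contributes. In the constant term (in $u$) of $\{f,\Theta_L(\tau,z)\}$, the term indexed by $\lambda\in L'$ with $Q(\lambda)=-m$ carries the factor $e^{4\pi v\,Q(\lambda_z)}$, which decays as $v\to\infty$ unless $\lambda_z=0$; for $z$ off the singular locus only $\lambda=0$ survives, so the $T\to\infty$ boundary terms yield a multiple of $c^+(0,0)$ alone. The degrees $\deg Z(m)$ never appear in this local computation at a fixed $z$. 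To arrive at the statement as formulated, one needs the separate geometric input that $\deg Z(f)$ is itself proportional to $c^+(0,0)\cdot\vol(X_\Gamma)$ --- obtained, for instance, by integrating the resulting identity (equivalently the $dd^c$-current equation for $\Phi$) over $X_\Gamma$, or from the fact that the generating series of degrees of special divisors is a multiple of an Eisenstein series. Without supplying that step, your argument establishes $\Delta_\calD\Phi(z,f)=c'\cdot c^+(0,0)$ but does not close the loop to $c\cdot\deg Z(f)$. (Relatedly, the resulting $c$ necessarily involves $\vol(X_\Gamma)$, so it is independent of $f$ but not a universal constant.)
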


\begin{proof}
This can be proved in the same way as \cite[Theorem 4.7]{Br1}.
\end{proof}

\begin{proposition}
\label{prop:lp}
Let $f\in H_{2-n}(\omega_L)$ be $\Gamma$-invariant.
\begin{enumerate}
\item
If $n>2$, then the Green function $\Phi(z,f)$ belongs to $L^{p}(X_\Gamma,\Omega^{n-1})$ for every $p<2$.
\item
If $n>3$, then $\Phi(z,f)$ belongs to $L^{2}(X_\Gamma,\Omega^{n-1})$.
\end{enumerate}
\end{proposition}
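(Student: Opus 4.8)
The plan is to estimate $\int_{X_\Gamma}|\Phi(z,f)|^p\,\Omega^{n-1}$ by cutting $X_\Gamma$ into three kinds of pieces: a relatively compact open set disjoint from both $Z(f)$ and the cusps, a tubular neighbourhood of $Z(f)$, and neighbourhoods of the (finitely many, zero-dimensional) cusps of $X_\Gamma$. On the first piece there is nothing to do, since $\Phi(z,f)$ is smooth and bounded there while $\Omega^{n-1}$ has finite mass.

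On the second piece I would apply Corollary~\ref{cor:sing} (equivalently Theorem~\ref{thm:sing}): near a point of $Z(f)$ the function $\Phi(z,f)$ differs by a smooth function from a finite $\C$-linear combination of terms $\log|\langle\xx_z,\xx_z\rangle|$, and each such term differs from $\log|\phi_\xx|^2$ --- $\phi_\xx$ a local equation for $\calD(\xx)$ --- by a smooth function. Since $\int_{|w|<1}\big|\log|w|^2\big|^{p}\,dw\,d\bar{w}<\infty$ for every finite $p$, and the directions transverse to $Z(f)$ contribute only a factor of finite mass (with $\Omega^{n-1}$ smooth there), the contribution of this piece to $\int|\Phi(z,f)|^p\,\Omega^{n-1}$ is finite for \emph{every} $p<\infty$. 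Hence $Z(f)$ imposes no restriction, and any constraint on $p$ must come from the cusps.

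At a cusp I would work in the Siegel-domain coordinates $(\frakz,\tau)\in\calH_{\ell,\tilde\ell}$ introduced above, in which approaching the cusp amounts to $\norm(\frakz,\tau)=2\sqrt{|d_\kk|}\,\Im\tau-\langle\frakz,\frakz\rangle\to\infty$. Two estimates are needed. The first is the growth estimate
\[
\Phi(z,f)=O\big(\norm(\frakz,\tau)\big)
\]
on a neighbourhood of the cusp (outside $Z(f)$): the dominant term is the ``Weyl vector'' contribution attached to the principal part of $f^{+}$, which is linear in $\Im\tau$, while the constant term of $f^{+}$ and the non-holomorphic part $f^{-}$ contribute only $O\big(\log\norm(\frakz,\tau)\big)$. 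For weakly holomorphic $f$ this is the familiar behaviour of $-\log\|\Psi(f)\|^{2}$ for a Borcherds product; in general it follows from the structure of the regularized theta lift near the cusps (see \cite{Bo1}, \cite{Br1}) and is made precise, along with the Fourier--Jacobi expansion, in Section~\ref{sect:greenbd}. The second is the companion estimate for the measure: from \eqref{eq:omega}, expanding $\Omega^{n-1}=\big(-dd^{c}\log\norm(\frakz,\tau)\big)^{n-1}$ shows that on a fundamental domain for $\Gamma$ near the cusp, on which $\frakz$ and $\Re\tau$ stay bounded while $\Im\tau>Y$, one has
\[
\Omega^{n-1}\ \asymp\ \norm(\frakz,\tau)^{-n}\,d\mathrm{vol}(\frakz)\,d\Re\tau\,d\Im\tau .
\]
Combining the two --- and absorbing the logarithmic singularities of $\Phi(z,f)$ along the part of $Z(f)$ that meets the cusp exactly as in the previous paragraph --- the cusp contribution is bounded by a constant times $\int_{Y}^{\infty}(\Im\tau)^{\,p-n}\,d\Im\tau$, which is finite precisely when $p<n-1$. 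As $n>2$ forces $n-1\ge 2$, this gives $\Phi(z,f)\in L^{p}(X_\Gamma,\Omega^{n-1})$ for every $p<2$; and if $n>3$ then $n-1>2$, so $p=2$ is admissible as well.

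The main obstacle is the first of these two estimates --- proving $\Phi(z,f)=O(\norm(\frakz,\tau))$ at the cusps, and in particular showing that the non-holomorphic part $f^{-}$ and the constant term of $f^{+}$ contribute only $O(\log\norm(\frakz,\tau))$ rather than a larger power of $\norm(\frakz,\tau)$. This is precisely the boundary analysis that underlies Section~\ref{sect:greenbd} (and, in the $\Orth(2,l)$ case, the estimates of \cite{Br1}); once it is available, the remaining steps are routine.
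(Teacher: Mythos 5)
Your proposal is correct and follows essentially the same route as the paper: the paper also reduces to a local check at boundary points, invokes the boundary expansion of Theorem \ref{thm:bndgrowth} for the linear growth of $\Phi(z,f)$ in $\log|q_r|$ (your $\Im\tau$), uses the same formula $\Omega^{n-1}\asymp\norm(\frakz,\tau)^{-n}\,d\frakz\,d\bar\frakz\,d\tau\,d\bar\tau$, and arrives at the same convergence threshold $p<n-1$. The growth estimate you flag as the main obstacle is exactly what Theorem \ref{thm:bndgrowth} supplies, and the paper's proof likewise cites it rather than reproving it.
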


We will prove this Proposition at the end of Section \ref{sect:greenbd}.

\begin{theorem}
Assume that $n>2$ and that $f\in H_{2-n}(\omega_L)$ is $\Gamma$-invariant.
Let $G$ be a smooth real valued function on $X_\Gamma\smallsetminus Z(f)$ with the properties:
\begin{enumerate}
\item $G$ is a Green function for $Z(f)$,
\item $\Delta_{\calD} G = \text{constant}$,
\item $G\in L^{1+\eps}(X_\Gamma,\Omega^{n-1})$ for some $\eps>0$.
\end{enumerate}
Then $G(z)$ differs from $\Phi(z,f)$ by a constant.
\end{theorem}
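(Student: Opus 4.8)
The plan is to analyze the difference $\psi = G - \Phi(\cdot,f)$ on $X_\Gamma$, showing in turn that it extends to a smooth function on all of $X_\Gamma$, that it is harmonic, and that it lies in $L^{1+\eps}(X_\Gamma,\Omega^{n-1})$ for some $\eps$ with $1<1+\eps<2$, and then to invoke an $L^p$-Liouville theorem to conclude that $\psi$ is constant. First I would check smoothness of $\psi$ across $Z(f)$: by hypothesis~(i) and the discussion following Corollary~\ref{cor:sing}, both $G$ and $\Phi(\cdot,f)$ are Green functions for $Z(f)$, so near any point, if $\phi=0$ is a local equation for $Z(f)$, then $G+\log|\phi|^2$ and $\Phi(\cdot,f)+\log|\phi|^2$ are both smooth; subtracting, $\psi$ is smooth there, and since it is manifestly smooth away from $Z(f)$ it extends to a smooth function on all of $X_\Gamma$.

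Next, recall that $X_\Gamma=\Gamma\bs\calD$ is a complete orbifold of finite volume. By hypothesis~(ii) and Proposition~\ref{prop:greenlapl}, $\Delta_\calD\psi$ equals a constant $a$. For the integrability, note that since $\vol(X_\Gamma)<\infty$ we may shrink the $\eps$ of hypothesis~(iii) so that $1<1+\eps<2$; then $G\in L^{1+\eps}(X_\Gamma,\Omega^{n-1})$ by~(iii), while $\Phi(\cdot,f)\in L^{1+\eps}(X_\Gamma,\Omega^{n-1})$ by Proposition~\ref{prop:lp}, so $\psi\in L^{1+\eps}(X_\Gamma,\Omega^{n-1})$.

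To see that $a=0$, I would choose a family of smooth compactly supported cutoff functions $\chi_T$ on $X_\Gamma$, $T\to\infty$, with $0\le\chi_T\le1$, with $\chi_T\to1$ pointwise, and with $\|\Delta_\calD\chi_T\|_{L^q(X_\Gamma,\Omega^{n-1})}\to0$ for $q=\tfrac{1+\eps}{\eps}$; such a family exists because $X_\Gamma$ has finite volume and bounded geometry (for instance $\chi_T$ depending only on the distance to a base point, with the bound on $\Delta_\calD\chi_T$ coming from Laplacian comparison on the locally symmetric space $X_\Gamma$, or alternatively cutoffs adapted to the structure of the toroidal boundary as in Section~\ref{sect:greenbd}). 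Integrating by parts,
\[
a\int_{X_\Gamma}\chi_T\,\Omega^{n-1}=\int_{X_\Gamma}\chi_T\,\Delta_\calD\psi\;\Omega^{n-1}=\int_{X_\Gamma}\psi\,\Delta_\calD\chi_T\;\Omega^{n-1},
\]
and Hölder's inequality bounds the last integral by $\|\psi\|_{L^{1+\eps}}\,\|\Delta_\calD\chi_T\|_{L^q}\to0$, while the left-hand side converges to $a\cdot\vol(X_\Gamma)$ by dominated convergence. Since $\vol(X_\Gamma)>0$, this forces $a=0$, so $\psi$ is harmonic on $X_\Gamma$.

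Finally, a harmonic function lying in $L^{1+\eps}(X_\Gamma,\Omega^{n-1})$ on the complete finite-volume orbifold $X_\Gamma$ must be constant: since $1+\eps>1$, this is Yau's $L^p$-Liouville theorem, applied to the nonnegative subharmonic function $|\psi|$ (if one prefers to work on a manifold, pass first to a finite torsion-free cover via Selberg's lemma), together with connectedness of $X_\Gamma$. Hence $\psi$ is constant, which is exactly the claim. I expect the one genuinely technical point to be the construction of the cutoffs $\chi_T$ with $\|\Delta_\calD\chi_T\|_{L^q}\to0$, i.e.\ obtaining enough control on the geometry of $X_\Gamma$ near its boundary; the finiteness of $\vol(X_\Gamma)$ together with the boundary analysis of Section~\ref{sect:greenbd} should make this routine.
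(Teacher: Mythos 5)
Your argument reaches the right conclusion and its first two steps (smoothness of $\psi=G-\Phi(\cdot,f)$ across $Z(f)$ because both are Green functions for the same divisor, and $\psi\in L^{1+\eps}$ with $1<1+\eps<2$ via Proposition \ref{prop:lp} and finiteness of the volume) match the paper exactly. Where you diverge is in the middle: the paper does not first prove that the constant $a=\Delta_\calD\psi$ vanishes. It simply observes that $\psi$ (or $-\psi$, whichever has nonnegative Laplacian) is a smooth subharmonic function on the complete manifold $X_\Gamma$ lying in $L^{1+\eps}$, and invokes Yau's theorem in the form of \cite[Corollary~4.22]{Br1} to conclude that it is constant --- whence $a=0$ comes out for free rather than being an input. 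Your detour through the cutoff functions $\chi_T$ with $\|\Delta_\calD\chi_T\|_{L^q}\to 0$ is precisely the step you should be suspicious of: it is asserted on the grounds of ``finite volume and bounded geometry,'' but making it rigorous requires smoothing distance functions, Laplacian comparison away from the cut locus (or an explicit analysis in the cusps), and a summability argument over annuli; none of this is routine enough to wave at, and none of it is needed. If you reorganize so that Yau is applied to the subharmonic function $\pm\psi$ itself rather than to $|\psi|$ after separately establishing harmonicity, the entire cutoff construction disappears and your proof collapses to the paper's two-line argument. (Your final step --- passing from constancy of $|\psi|$ to constancy of $\psi$ via connectedness --- is fine but likewise becomes unnecessary.)
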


\begin{proof}[Proof]
The difference $G(z)-\Phi(z,f)$ is a smooth subharmonic function on the complete Riemann manifold $X_\Gamma$
which is contained in $L^{1+\eps}(X_\Gamma,\Omega^{n-1})$.
By a result of Yau, such a function must be constant (see e.g. \cite[Corollary 4.22]{Br1}).
\end{proof}

For $n=2$ one can obtain a similar characterization by also requiring growth conditions at the cusps of $X_\Gamma$ (if there are any).


\subsection{The toroidal compactification}


\label{ss:torcomp}

The orbifold  $X_\Gamma=\Gamma\bs \calD$ can be compactified as follows.
Let $\Iso(V)$ be the set of
isotropic one-dimensional subspaces $I\subset V$. The group $\Gamma$
acts on $\Iso(V)$ with finitely many orbits. The rational boundary
point corresponding to $I\in \Iso(V)$  is the point $I_\R=I\otimes_\Q\R\in \P(V_\R)$. It lies in the closure of $\calD$ in $\P(V_\R)$.
The Baily-Borel compactification of $X_\Gamma$ is obtained by equipping the quotient
\[
\Gamma\bs \big(\calD\cup \{I_\R:  I\in \Iso(V)\}\big)
\]
with the Baily-Borel topology and complex structure.  The boundary points of this compactification are usually singular.
In contrast, here we work
with a canonical toroidal compactification of $X_\Gamma$, which we now
describe; see also \cite[Chapter 1.1.5]{Hof} and \cite[Section 3.3]{Ho3}.
It can be viewed as a resolution of the singularities at
the boundary points of the Baily-Borel compactification.

Let $I\in \Iso(V)$ be a one-dimensional isotropic subspace. Let $\ell\in I$ be a generator, and let
 $\widetilde\ell\in V$ be isotropic such that $\langle \ell,\widetilde\ell\rangle=1$. For $\eps>0$ we put
 \[
 U_\eps(\ell)= \left\{z\in \calD: -\frac{\langle z,z\rangle}{|\langle z,\ell\rangle|^2}>\frac{1}{\eps} \right\}.
 \]
In the coordinates of $\calH_{\ell,\widetilde\ell}$ we have
\[
 U_\eps(\ell)\cong \{(\frakz,\tau)\in  \calH_{\ell,\widetilde\ell}:  \norm(\frakz,\tau)>1/\eps\}.
\]
The stabilizer $\Uni(V)_\ell$ of $\ell$ acts on this subset.
Let $\Gamma_\ell=\Gamma\cap\Uni(V)_\ell$.
If $\eps $ is sufficiently small,
then
\begin{align}
\label{eq:oi}
\Gamma_\ell \bs U_\eps(\ell) \longrightarrow X_\Gamma
\end{align}
is an open immersion.
The center of $\Uni(V)_\ell$ is given by the subgroup of translations $T_a$ for $a\in \Q$, where
\[
T_a(\xx)=\xx +a\langle \xx,\ell\rangle \sqrt{d_\kk}\ell
\]
for $\xx\in V$.
It is isomorphic to the additive group over $\Q$.
The action of the translations on $\calH_{\ell,\widetilde\ell}$ is given by
$T_a(\frakz,\tau)= (\frakz,\tau+a).
$
The center of $\Gamma_\ell$
is of the form
\[
\Gamma_{\ell,T}= \{ T_a: a\in r\Z\}
\]
for a unique $r\in \Q_{>0}$, which is sometimes called the {\em width} of the cusp $I_\R$. If we put $q_r=e^{2\pi i \tau/r}$, then $(\frakz,\tau)\mapsto (\frakz,q_r)$ defines an isomorphism from $\Gamma_{\ell,T}\bs U_\eps(\ell)$ to
\[
V_\eps(\ell)=\left\{ (\frakz,q_r)\in \C^{n-2}\times \C:  0<|q_r|<\exp\left(-\tfrac{\pi}{r\sqrt{|d_\kk|}}(\langle\frakz,\frakz\rangle+1/\eps)\right)\right\}.
\]
Hence $\Gamma_{\ell,T}\bs U_\eps(\ell)$ can be viewed as a punctured disc bundle over $\C^{n-2}$. Adding the origin to every disc gives the disc bundle
\[
\widetilde V_\eps(\ell)= \left\{ (\frakz,q_r)\in \C^{n-2}\times \C :  |q_r|<\exp\left(-\tfrac{\pi}{r\sqrt{|d_\kk|}}(\langle\frakz,\frakz\rangle+1/\eps)\right)\right\}.
\]

The action of $\Gamma_\ell$ on $V_\eps(\ell)$ extends to an action on $\widetilde V_\eps(\ell)$, which leaves the boundary divisor $q_r=0$ invariant, and which is free if $\Gamma$ is sufficiently small.
We obtain an open immersion of orbifolds
\begin{align}
\label{eq:partcomp}
\Gamma_\ell\bs U_\eps(\ell) \longrightarrow \left(\Gamma_\ell/\Gamma_{\ell,T}\right)\bs \widetilde V_\eps(\ell).
\end{align}
It can be used to glue the right hand side to $X_\Gamma$ to obtain a partial compactification, which is smooth if $\Gamma$ is sufficiently small.
For a point $(\frakz_0,0)\in \widetilde V_{\eps}(\ell)$ and $\delta>0$, we put
\begin{align}
\label{boundary-nbhd}
B_{\delta}(\frakz_0,0)= \left\{ (\frakz,q_r)\in \widetilde V_\eps(\ell):  \langle \frakz-\frakz_0,  \frakz-\frakz_0\rangle <\delta,\;|q_r|<\delta\right\}.
\end{align}
The images of the $B_{\delta}(\frakz_0,0)$ for $\delta>0$ under the natural map to  $\left(\Gamma_\ell/\Gamma_{\ell,T}\right)\bs \widetilde V_\eps(\ell)$
define a basis of open neighborhoods of the boundary point given by $(\frakz_0,0)$.

We let $X_\Gamma^*$ be the compactification of $X_\Gamma$ obtained by gluing
the right hand side of \eqref{eq:partcomp} to $X_\Gamma$ for every $\Gamma$-class of $\Iso(V)$. We denote by $B_I$ the boundary divisor of $X_\Gamma^*$ corresponding to $I\in\Iso(V)$.

The behavior of the special divisor $Z(m,\varphi)$ near the boundary can be described as follows.
Let $I\in \Iso(V)$ and let $\ell\in I$ be a generator.
Let $0<\eps <\frac{1}{2m}$ be small enough so that \eqref{eq:oi} defines
an open immersion. Then Lemma \ref{maj-est} below implies that the pullback
of $Z(m,\varphi)$ to $U_\eps(\ell)$ is given by the local special
divisor
\[
Z_\ell(m,\varphi)= \sum_{\substack{\xx\in L'\cap \ell^\perp\\ \langle \xx,\xx\rangle =m}} \varphi(\xx) \calD(\xx).
\]

\begin{lemma}
\label{maj-est}
If $z_0$ is a generator of $z\in \calD$ and $\xx\in V\otimes_\Q\R$, we have
\[
\langle \xx,\xx\rangle_z \geq  \frac{|\langle \xx,\ell\rangle|^2 |\langle z_0, z_0\rangle|}{2  |\langle z_0,\ell\rangle|^2}.
\]
\end{lemma}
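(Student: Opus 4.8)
The plan is to deduce the inequality from the Cauchy--Schwarz inequality for the positive definite majorant $\langle\cdot,\cdot\rangle_z$, after replacing $\ell$ by a suitable auxiliary vector. First I would record the relevant positivity: since $V_\R$ has signature $(n-1,1)$ and $z$ is a negative $\kk_\R$-line, the line $z$ is negative definite and its orthogonal complement $z^\perp$ is positive definite, so $\langle\cdot,\cdot\rangle_z$ is a positive definite hermitian form on $V_\R$. Write $\ell=\ell_{z^\perp}+\ell_z$ for the orthogonal decomposition of $\ell$ relative to $z$, and set
\[
\mathbf{w}=\ell_{z^\perp}-\ell_z,
\]
so that $\mathbf{w}_{z^\perp}=\ell_{z^\perp}$ and $\mathbf{w}_z=-\ell_z$ (the reflection of $\ell$ across $z^\perp$).

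The two computations I would carry out are: (1) $\langle\xx,\mathbf{w}\rangle_z=\langle\xx,\ell\rangle$ for every $\xx\in V_\R$, and (2) $\langle\mathbf{w},\mathbf{w}\rangle_z=\tfrac{2|\langle z_0,\ell\rangle|^2}{|\langle z_0,z_0\rangle|}$. For (1), expand $\xx=\xx_{z^\perp}+\xx_z$; by the definition of the majorant and the fact that $z$ and $z^\perp$ are orthogonal for the original hermitian form, all cross terms drop out and both $\langle\xx,\mathbf{w}\rangle_z$ and $\langle\xx,\ell\rangle$ equal $\langle\xx_{z^\perp},\ell_{z^\perp}\rangle+\langle\xx_z,\ell_z\rangle$. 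For (2), note first that $\langle\mathbf{w},\mathbf{w}\rangle_z=\langle\ell_{z^\perp},\ell_{z^\perp}\rangle-\langle\ell_z,\ell_z\rangle=\langle\ell,\ell\rangle_z$, and then apply the displayed majorant identity $\langle\ell,\ell\rangle_z=\langle\ell,\ell\rangle+\tfrac{2|\langle\ell,z_0\rangle|^2}{|\langle z_0,z_0\rangle|}$ together with $\langle\ell,\ell\rangle=0$ (as $\ell$ is isotropic) and $|\langle\ell,z_0\rangle|=|\langle z_0,\ell\rangle|$.

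Finally, Cauchy--Schwarz for $\langle\cdot,\cdot\rangle_z$ gives
\[
|\langle\xx,\ell\rangle|^2=|\langle\xx,\mathbf{w}\rangle_z|^2\le\langle\xx,\xx\rangle_z\cdot\langle\mathbf{w},\mathbf{w}\rangle_z=\langle\xx,\xx\rangle_z\cdot\frac{2|\langle z_0,\ell\rangle|^2}{|\langle z_0,z_0\rangle|},
\]
and rearranging yields the claim (here $\langle z_0,\ell\rangle\neq 0$, since otherwise $\ell\in z^\perp$ would force $\langle\ell,\ell\rangle_z=\langle\ell,\ell\rangle=0$, contradicting positive definiteness of the majorant and $\ell\neq 0$). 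This lemma is essentially routine; the only point that requires a little care is that one cannot feed $\ell$ itself into Cauchy--Schwarz, because the original pairing $\langle\xx,\ell\rangle$ is not visibly controlled by the majorant norm of $\ell$. It is precisely the reflected vector $\mathbf{w}$, not $\ell$, that satisfies $\langle\xx,\mathbf{w}\rangle_z=\langle\xx,\ell\rangle$. As an alternative to introducing $\mathbf{w}$, one may apply Cauchy--Schwarz separately on the positive definite space $z^\perp$ and on the negative definite line $z$, and then combine the two estimates using $ab+cd\le\sqrt{a^2+c^2}\,\sqrt{b^2+d^2}$, which amounts to the same computation.
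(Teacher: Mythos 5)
Your proof is correct. Claim (1) holds because the cross terms $\langle\xx_{z^\perp},\ell_z\rangle$ and $\langle\xx_z,\ell_{z^\perp}\rangle$ vanish, so both sides equal $\langle\xx_{z^\perp},\ell_{z^\perp}\rangle+\langle\xx_z,\ell_z\rangle$; claim (2) follows from $\langle\mathbf{w},\mathbf{w}\rangle_z=\langle\ell,\ell\rangle_z$, the displayed majorant identity, and the isotropy of $\ell$; and Cauchy--Schwarz for the positive definite hermitian form $\langle\cdot,\cdot\rangle_z$ then gives exactly the stated bound. Your route is, however, genuinely different from the paper's. The paper argues by equivariance: both sides of the inequality are invariant under the stabilizer of $\ell$ in $\Uni(V)(\R)$ and under rescaling $z_0$, so one may assume $\langle z_0,\ell\rangle=1$ and $z_0=\tau\sqrt{d_\kk}\,\ell+\tilde\ell$ (i.e.\ $\frakz=0$ in the Siegel domain coordinates), after which the inequality is a direct computation left to the reader. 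Your argument avoids the reduction entirely and is coordinate-free: the reflected vector $\mathbf{w}=\ell_{z^\perp}-\ell_z$ is precisely the vector representing the functional $\xx\mapsto\langle\xx,\ell\rangle$ with respect to the majorant, so the sharp constant in the lemma is exposed as $\langle\ell,\ell\rangle_z^{-1}$, the inequality is seen to be optimal (with equality when $\xx$ is proportional to $\mathbf{w}$), and the observation that one must not feed $\ell$ itself into Cauchy--Schwarz is exactly the right caution. The paper's reduction is shorter to state but hides the computation; yours is self-contained and arguably the cleaner proof. One tiny presentational point: the hypothesis that $\ell$ is isotropic is not restated in the lemma but is in force from the surrounding discussion ($\ell$ generates $I\in\Iso(V)$), and you correctly use it, so no gap there.
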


\begin{proof}
The right hand side is independent of the choice of the generator $z_0$, and so  we may assume $\langle z_0,\ell\rangle=1$.
Moreover, both sides of the inequality remain unchanged if we act on $\xx$ and $z_0$ by elements of the stabilizer of $\ell$ in $\Uni(V)(\R)$.
Using this observation, one may reduce to the case $z_0= \tau \sqrt{d_\kk}\ell + \widetilde\ell$.
The remaining computation we leave to the reader.
\end{proof}


\subsection{Regularized integrals}


Let $k\in \Z_{\geq 0}$, and let $(M,Q)$ be an even integral lattice as in Section \ref{sect:prelims}.
Following \cite{Bo1}, for $f\in H_{-k}(\omega_M)$ and $g\in M_{k}(\omega_M^\vee)$
we define a regularized Petersson pairing by
\begin{align}
\label{eq:thetadef}
(f,g)^{\reg}&=\int_{\Sl_2(\Z)\bs \H}^{\reg} \{ f(\tau), g(\tau)\} d\mu(\tau)\\
\nonumber
&=\lim_{T\to \infty}\int_{\calF_T} \{ f(\tau), g(\tau)\} d\mu(\tau).
\end{align}
In Section \ref{sect:greenbd} such integrals will occur as multiplicities of the boundary components,
where $g$ will be the theta function of a positive definite hermitian lattice given by a quotient of $L$.

In the special case when $k=0$ and $g$ is constant, this integral is evaluated in \cite[Theorem 9.1]{Bo1}.
Here we describe how the integral can be computed when $k>0$.
We denote the Fourier expansion of $g$ by
\[
g(\tau)= \sum_{m\geq 0 } b(m) q^m,
\]
with coefficients $b(m)\in S_M^\vee$.
We let $\vartheta=q\frac{d}{dq}$ be the Ramanujan theta operator on $q$-series.
Recall that the image under $\vartheta$ of a holomorphic modular form
$g$ of weight $k$ is in general not a modular form. However, the function
\[
\widetilde\vartheta(g)=\vartheta(g)-\frac{k}{12} g E_2
\]
is a holomorphic modular form of weight $k+2$.
Here \[ E_2(\tau)=-24\sum_{m\geq 0} \sigma_1(m)q^m\] denotes the non-modular Eisenstein series of weight $2$ for $\Sl_2(\Z)$. If $R_k=2i\frac{\partial}{\partial\tau}+\frac{k}{v} $ denotes the Maass raising operator and $E_2^*(\tau)=E_2(\tau)-\frac{3}{\pi v}$ the non-holomorphic (but modular) Eisenstein series of weight $2$, we also have
\begin{align}
\label{eq:serre}
\widetilde\vartheta(g)=-\frac{1}{4\pi }R_k(g)-\frac{k}{12} g E_2^*.
\end{align}
If $h(q)\in \C((q))$ is a (formal) Laurent series in $q$, we denote by $\CT[h]$ its constant term.

\begin{theorem}\label{thm:regint}
Let $f\in H_{-k}(\omega_M)$ and $g\in M_{k}(\omega_M^\vee)$ be as above.
\begin{enumerate}
\item
If $k>0$, then
\[
(f,g)^{\reg} = \frac{4\pi}{k}\CT[ \{f^+,\vartheta(g)\}]= \frac{4\pi}{k}\sum_{m>0} m\cdot \{c^+(-m),b(m)\}.
\]

\item
 If $k=0$  (so that $g$ is constant), then
\[
(f,g)^{\reg} = \frac{\pi}{3}\CT[ \{f^+,g E_2)\}]= -8\pi\sum_{m\geq 0} \sigma_1(m)\cdot \{c^+(-m),g\}.
\]
\end{enumerate}
\end{theorem}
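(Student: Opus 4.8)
The plan is to compute the regularized integral $(f,g)^{\reg}$ by unfolding against the Fourier expansion of $g$, using the standard Rankin–Selberg / Borcherds regularization device. First I would introduce the auxiliary integral
\[
I(s) = \lim_{T\to\infty} \int_{\calF_T} \{ f(\tau), g(\tau)\} v^{-s}\, d\mu(\tau),
\]
whose constant term at $s=0$ is $(f,g)^{\reg}$. Since $f$ has negative weight and $g$ has weight $k$ with matching representation, the pairing $\{f,g\}$ is $\Sl_2(\Z)$-invariant, and the only obstruction to convergence is the exponential growth of $f^+$ at the cusp; the contribution of $f^-$ and of the non-principal part of $f^+$ converges absolutely for $\Re(s)$ large and extends holomorphically, contributing $0$ to the constant term after a standard argument (as in \cite[Section 6--9]{Bo1}). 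So the computation reduces to the finitely many terms of the principal part of $f^+$ paired against $g$.

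The key step is then to note that $\{f^+,g\}$ is (formally) an $\Sl_2(\Z)$-invariant-looking expression, but $g$ itself is genuinely modular of weight $k>0$; the right object to pair $f^+$ with is not $g$ but the \emph{weight-shifted} combination coming from the raising operator. Concretely, I would use (\ref{eq:serre}), namely $\tilde\vartheta(g) = -\tfrac{1}{4\pi} R_k(g) - \tfrac{k}{12} gE_2^*$, together with integration by parts: the Maass raising operator $R_{-k}$ on $f$ is adjoint (up to sign and the constant $c$) to the lowering operator, and one has the classical identity relating $\xi_{-k}(f)$ and $\partial f/\partial\bar\tau$. Since $f$ is harmonic, applying the lowering operator to $f$ produces only $f^-$-type data, and one arrives at an identity expressing $I(s)$, up to terms vanishing at $s=0$, as a constant-term extraction $\CT[\{f^+, \vartheta(g)\}]$ — the non-modular pieces $gE_2$ resp. $gE_2^*$ being precisely what make $\vartheta(g)$ quasimodular but keep the constant-term pairing with the quasimodular-complementary $f^+$ well defined. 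Unwinding $\vartheta(g) = \sum_{m>0} m\, b(m) q^m$ and pairing against $f^+ = \sum c^+(m) q^m$ gives $\sum_{m>0} m\{c^+(-m),b(m)\}$, and tracking the constant $4\pi/k$ through the raising-operator normalization yields part (1).

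For part (2), when $k=0$ the form $g$ is a constant in $S_M^\vee$, $\vartheta(g)=0$, and the above identity degenerates; here one instead invokes \cite[Theorem 9.1]{Bo1} directly (the $k=0$ regularized integral against a constant), or re-runs the unfolding with the Eisenstein series $E_2$ playing the role of $\tilde\vartheta$: the weight-$2$ non-holomorphic completion $E_2^*$ enters, the $\tfrac{3}{\pi v}$ correction term contributes a convergent integral whose constant term is computed explicitly, and the surviving piece is $\tfrac{\pi}{3}\CT[\{f^+, gE_2\}] = -8\pi \sum_{m\ge 0}\sigma_1(m)\{c^+(-m),g\}$, using $E_2 = -24\sum \sigma_1(m)q^m$ and $\sigma_1(0) = -1/24$ so that the $m=0$ term is handled uniformly. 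The main obstacle I anticipate is the bookkeeping in the integration-by-parts / Stokes argument: one must justify that the boundary terms at height $T$ and the $s$-regularization interact correctly, so that only the constant term survives and no spurious contributions from $f^-$ or from the truncation remain — this is exactly the delicate point in Borcherds' original arguments, and I would handle it by citing \cite[Section 6--7]{Bo1} for the analytic continuation and the vanishing of the unwanted terms, reducing the new content to the algebraic identity (\ref{eq:serre}) and the explicit Fourier-coefficient computation.
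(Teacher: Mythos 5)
Your proposal follows essentially the same route as the paper's proof: rewrite the regularized integral using the Serre derivative identity \eqref{eq:serre} (equivalently $\bar\partial(E_2^*d\tau)=-\tfrac{3}{\pi}d\mu(\tau)$), apply Stokes' theorem so that the boundary term at height $T$ produces $\tfrac{4\pi}{k}\CT[\{f^+,\vartheta(g)\}]$ while the remaining Petersson-type pairing of $\xi_{-k}(f)$ with $R_k(g)$ vanishes, and for $k=0$ invoke Borcherds' Theorem 9.1/9.2. This matches the paper's argument in both structure and the key identities used.
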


\begin{proof}
(1) We use the identity
$\overline{\partial} (E^*_2 d\tau) = -\frac{3}{\pi}d\mu(\tau)$ to obtain
\begin{align}
\label{eq:thetadef1}
(f,g)^{\reg}=-\frac{\pi}{3}\int_{\Sl_2(\Z)\bs \H}^{\reg} \{ f(\tau), \overline{\partial}(g E^*_2 d\tau)\} .
\end{align}
In view of
\eqref{eq:serre}, we have
\[
 \overline{\partial}(g E^*_2 d\tau) = -\frac{3}{k\pi} \overline{\partial}( R_k(g)d\tau).
\]
Putting this into \eqref{eq:thetadef1}, we get
\begin{align*}
(f,g)^{\reg}&=\frac{1}{k}\int_{\Sl_2(\Z)\bs \H}^{\reg} \{ f(\tau),\overline{\partial}( R_k(g)d\tau)\} \\
&=\frac{1}{k}\lim_{T\to \infty} \int_{\calF_T}d \{ f(\tau), R_k(g)d\tau\}
-\frac{1}{k}\int_{\Sl_2(\Z)\bs \H}\{ (\overline{\partial} f), R_k(g)d\tau\}\\
&=-\frac{1}{k}\lim_{T\to \infty} \int_{0}^1\{ f(u+Ti), R_k(g)(u+Ti)\}du\\
&\phantom{=}{}+\frac{1}{k}
\int_{\Sl_2(\Z)\bs \H}\{ \overline{\xi_{-k}( f)}, R_k(g)\} v^{k+2}d\mu(\tau).
\end{align*}
The second summand on the right hand side is a Petersson scalar product which is easily seen to vanish. The first summand is equal to
$\frac{4\pi}{k}\CT[\{ f^+, \vartheta(g) \}]$.
This concludes the proof of the $k>0$ case.

(2) If $k=0$, and $f\in M^!_{0}(\omega_L)$, the assertion follows from \cite[Theorem 9.2]{Bo1}. If  $f\in H_{0}(\omega_L)$ it can be proved in the same way.
\end{proof}


\subsection{Automorphic Green functions at the boundary}
\label{sect:greenbd}


Let $I\in \Iso(V)$ be an isotropic $\kk$-line. Then $\fraka=I\cap L$ is a
projective $\calO_\kk$-module of rank $1$.
The $\calO_\kk$-module
\[
D= (L\cap \fraka^\perp) / \fraka
\]
is positive definite of rank $n-2$. Let $\ell\in \fraka$ be a primitive (that is, $\Q\ell\cap \fraka=\Z\ell$) isotropic vector. We write $\fraka=\fraka_0\ell$ with a fractional ideal $\fraka_0\subset \kk$, and we let $\widetilde \ell\in V$ be isotropic such that $\langle \widetilde \ell, \ell\rangle=1$.

The lattice $D$ can be realized as a sublattice of $L$ as follows.
The lattice $\fraka^*=L_{\calO_\kk}'\cap I^\perp$ is a projective
$\calO_\kk$-module of rank $n-1$. The quotient $L_{\calO_\kk}'/\fraka^*$
is an $\calO_\kk$-module of rank $1$, which is projective since it is
torsion free.  Hence there is a projective $\calO_\kk$-module
$\frakb\subset L_{\calO_\kk}'$ of rank $1$ such that
$L_{\calO_\kk}'=\fraka^*\oplus \frakb$.
We have $\langle \fraka,L_{\calO_\kk}'\rangle =\langle \fraka,\frakb\rangle =\calO_\kk$
and $\langle \frakb,L\rangle =\calO_\kk$.  We put
\[
E=L\cap\fraka^\perp\cap\frakb^\perp.
\]

\begin{lemma}
With $\mathfrak{a}$ and  $\mathfrak{b}$ defined as above,
\label{lem:lat1}
\begin{enumerate}
\item
 $L\cap \fraka^\perp = E\oplus \fraka$ and $D\cong E$;
\item
if $L$ is $\calO_\kk$-self-dual then $L=E\oplus \fraka\oplus \frakb$;
\item
if $L$ is $\calO_\kk$-self-dual and $d_\kk$ is odd then in (2)  we may chose $\frakb$ to be isotropic.
\end{enumerate}
\end{lemma}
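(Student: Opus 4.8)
The plan is to prove the three parts in order, (i) and (ii) being formal and (iii) being where the hypothesis $d_\kk$ odd enters essentially.

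\emph{Parts (i) and (ii).} First note $L\cap\fraka^\perp=L\cap I^\perp$, since $\fraka$ spans $I$ over $\kk$ and hence $\fraka^\perp=\ell^\perp=I^\perp$. The inclusions $\fraka\subset L\cap\fraka^\perp$ (as $\fraka$ is isotropic) and $E\subset L\cap\fraka^\perp$ are clear, and the sum $E+\fraka$ is direct: an element $\gamma\ell\in\fraka$ with $\gamma\in\fraka_0$ lying in $\frakb^\perp$ satisfies $\gamma\langle\ell,\frakb\rangle=0$, and $\langle\ell,\frakb\rangle=\fraka_0^{-1}\neq 0$ forces $\gamma=0$. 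For the reverse inclusion, given $x\in L\cap\fraka^\perp$ the map $b\mapsto\langle b,x\rangle$ is an element of $\Hom_{\co_\kk}(\frakb,\co_\kk)$ (its values lie in $\co_\kk$ because $\langle\frakb,L\rangle=\co_\kk$); since the pairing $\fraka\times\frakb\to\co_\kk$ is perfect — a standard fact for invertible $\co_\kk$-modules once $\langle\fraka,\frakb\rangle=\co_\kk$ — there is a unique $a\in\fraka$ with $\langle b,x-a\rangle=0$ for all $b\in\frakb$, whence $x-a\in L\cap\fraka^\perp\cap\frakb^\perp=E$ and $x\in E+\fraka$. This gives $L\cap\fraka^\perp=E\oplus\fraka$. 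Because $L\cap\fraka^\perp\subset I^\perp$ and $\langle I,I^\perp\rangle=0$, the summand $\fraka$ lies in the radical of the form on $L\cap\fraka^\perp$, so that form descends to $D=(L\cap\fraka^\perp)/\fraka$ and the quotient map restricts to an isometry $E\xrightarrow{\sim}D$. Part (ii) is then immediate and uses no hypothesis on $d_\kk$: if $L$ is $\co_\kk$-self-dual then $L'_{\co_\kk}=L$, hence $\fraka^*=L'_{\co_\kk}\cap I^\perp=L\cap\fraka^\perp=E\oplus\fraka$ by (i), and $L=L'_{\co_\kk}=\fraka^*\oplus\frakb=E\oplus\fraka\oplus\frakb$.

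\emph{Part (iii).} Here I would keep the decomposition $L=E\oplus\fraka\oplus\frakb$ of (ii) and adjust $\frakb$ into an isotropic summand. For any $\co_\kk$-linear map $\phi\colon\frakb\to\fraka$, the module $\frakb^{\mathrm{new}}=(\id+\phi)(\frakb)$ is again a complement of $\fraka^*=E\oplus\fraka$ in $L$ (the projection of $L$ onto $\frakb$ along $\fraka^*$ restricts to an isomorphism $\frakb^{\mathrm{new}}\to\frakb$), so (ii) applied with $\frakb^{\mathrm{new}}$ still yields a decomposition $L=E'\oplus\fraka\oplus\frakb^{\mathrm{new}}$; it remains to choose $\phi$ with $\frakb^{\mathrm{new}}$ isotropic. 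Write $\frakb=\frakc\tilde b$ for a fractional ideal $\frakc$ and a vector $\tilde b$ spanning the $\kk$-line $\frakb\otimes_{\co_\kk}\kk$, and set $\phi(\tilde b)=\mu\ell$; then $\phi(\frakb)\subset\fraka$ forces $\mu\in\frakc^{-1}\fraka_0$, and $\frakb^{\mathrm{new}}=\frakc(\tilde b+\mu\ell)$. Using $\langle\ell,\ell\rangle=0$,
\[
\langle \tilde b+\mu\ell,\ \tilde b+\mu\ell\rangle
=\langle\tilde b,\tilde b\rangle+\tr_{\kk/\Q}\!\big(\bar\mu\,\langle\tilde b,\ell\rangle\big),
\]
so $\frakb^{\mathrm{new}}$ is isotropic exactly when the right-hand side vanishes. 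As $\mu$ runs over $\frakc^{-1}\fraka_0$, the element $\bar\mu\,\langle\tilde b,\ell\rangle$ runs over the fractional ideal $(\frakc\,\overline{\frakc})^{-1}=\norm(\frakc)^{-1}\co_\kk$ (one uses $\langle\frakb,\ell\rangle=\overline{\fraka_0}^{-1}$, which follows from $\langle\fraka,\frakb\rangle=\co_\kk$). On the other hand $\langle\tilde b,\tilde b\rangle\in\norm(\frakc)^{-1}\Z$, because $\norm(\frakc)\langle\tilde b,\tilde b\rangle\co_\kk=\langle\frakb,\frakb\rangle\subset\langle\frakb,L\rangle=\co_\kk$ and $\langle\tilde b,\tilde b\rangle\in\Q$. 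Therefore a suitable $\mu$ exists iff every element of $\norm(\frakc)^{-1}\Z$ is $\tr_{\kk/\Q}$ of an element of $\norm(\frakc)^{-1}\co_\kk$, i.e. iff $\tr_{\kk/\Q}(\co_\kk)=\Z$; and this holds exactly when $d_\kk$ is odd, for then $\co_\kk=\Z[\tfrac{1+\sqrt{d_\kk}}{2}]$ with $\tr_{\kk/\Q}(\tfrac{1+\sqrt{d_\kk}}{2})=1$, whereas for $d_\kk$ even $\tr_{\kk/\Q}(\co_\kk)=2\Z$.

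\emph{Main obstacle.} Parts (i) and (ii) are routine. The substantive point is the ideal bookkeeping in (iii): pinning down the fractional ideal swept out by $\bar\mu\langle\tilde b,\ell\rangle$ and checking that $\langle\tilde b,\tilde b\rangle$ is confined to the matching ideal $\norm(\frakc)^{-1}\Z$ — the latter using $\langle\frakb,L\rangle=\co_\kk$ rather than merely $\langle\frakb,\frakb\rangle\subset\co_\kk$ — so that the whole obstruction collapses to the elementary equivalence $\tr_{\kk/\Q}(\co_\kk)=\Z\iff d_\kk$ odd. A variant would be to reduce (iii) to the rank-two self-dual hermitian module $N=E^\perp\cap L=\fraka\oplus\frakb$ and argue locally at each prime, where everything is principal, that such an $N$ with an isotropic direct-summand line is a hyperbolic plane when $d_\kk$ is odd; but the explicit computation above seems cleanest.
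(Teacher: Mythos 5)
Your proof is correct and follows essentially the same route as the paper's: parts (i) and (ii) by projecting along the dual pair $(\fraka,\frakb)$ using the perfectness of $\langle\cdot,\cdot\rangle$ on $\fraka\times\frakb$, and part (iii) by replacing $\frakb$ with a unipotent translate $(\id+\phi)(\frakb)$ inside $\fraka\oplus\frakb$ and solving a trace equation in $\co_\kk$ --- which is exactly the paper's explicit substitution $\tilde\ell\mapsto\tilde\ell-\omega\langle\tilde\ell,\tilde\ell\rangle\ell$ with $\tr_{\kk/\Q}(\omega)=1$. Your version of (iii) additionally identifies the obstruction as precisely the surjectivity of $\tr_{\kk/\Q}:\co_\kk\to\Z$ (equivalently, the oddness of $d_\kk$), a pleasant sharpening that the statement does not require.
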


Let $f\in H_{2-n}(\omega_L)$. By analogy with \cite[Theorem 5.3]{Bo1}, the harmonic Maass form $f$ induces an $S_D$-valued harmonic Maass form $f_D\in H_{2-n}(\omega_D)$. It is characterized by its values on $\nu\in D'/D$ as follows:
\begin{align}
\label{eq:deffd}
f_D(\tau)(\nu)=  \sum_{\substack{\mu\in L'/L\\\mu \mid L\cap \fraka^\perp=\nu}}f(\tau)(\mu).
\end{align}
Here $\mu\mid L\cap \fraka^{\perp}$ denotes the restriction of $\mu\in\Hom(L,\Z)$ to $L\cap \fraka^{\perp}$, and we consider $\nu\in D'$ as an element of $\Hom(L\cap\fraka^{\perp},\Z)$ via the quotient map $ L\cap\fraka^{\perp}\to D$.

Let $\eps>0$ such that \eqref{eq:oi} is an open immersion. For a
boundary point $(\frakz_0,0)\in \widetilde V_{\eps}(\ell)$ and $\delta>0$,
we consider the Green function $\Phi(z,f)$ in the open
neighborhood $B_{\delta}(\frakz_0,0)$ defined in
\eqref{boundary-nbhd}. The pullback of the special divisor $Z(f)$ to
$B_{\delta}(\frakz_0,0)$ is given by the linear combination of local
special divisors
\[
Z_\ell(f)=
\sum_{m>0} \left\{ c^+(-m), Z_\ell(m)\right\}.
\]
Note that $Z_\ell(m)$ is invariant under the subgroup of translations $\Gamma_{\ell,T}\subset \Gamma_\ell$. The  support of $Z_\ell(m)$ on $\widetilde V_{\eps}(\ell)$ is the union of the sets
$\{(\frakz,q_r):  \langle \frakz+\widetilde\ell,\xx\rangle=0\}$ for $\xx\in ( L'\cap \ell^\perp)/\Gamma_{\ell,T}$ with $\langle\xx,\xx\rangle=m$.

\begin{theorem}
\label{thm:bndgrowth}
Let $f\in H_{2-n}(\omega_L)$
and denote its Fourier coefficients by $c^\pm(m)$.
Let $(\frakz_0,0)\in \widetilde V_{\eps}(\ell)$ be a boundary point.
The set
\[
S_f=\{\xx\in L'\cap\ell^\perp :  \text{$\langle \xx,\xx\rangle>0$, $c^+(-\langle \xx,\xx\rangle,\xx)\neq 0$
and $\langle \frakz_0+\widetilde \ell,\xx\rangle =0$}\}
\]
is finite.
If $\delta>0$ is sufficiently small, then the function
\[
\Phi(z,f) +\frac{r\Phi^D(f_D)}{2\pi\norm(\fraka_0)}\log|q_r|+c^+(0,0)\log \left|\log|q_r|\right|
+2\sum_{\xx\in S_f}c^+(-\langle\xx,\xx\rangle,\xx) \log|\langle \frakz+\widetilde\ell,\xx\rangle|
\]
has a continuation to a continuous function on $B_{\delta}(\frakz_0,0)$. It is smooth on the complement
 of the boundary divisor $q_r=0$, and its images under the differentials $\partial$, $\overline{\partial}$, $\partial\overline{\partial}$
have log-log growth along the divisor $q_r=0$ in the sense of  \cite[Definition 1.2]{BBK}.
Here
\[
\Phi^D(f_D)=(f_D,\Theta_D)^{\reg}
\]
 is the regularized Petersson pairing of $f_D$ and the theta function $\Theta_D$ as defined in \eqref{eq:thetadef}.
\end{theorem}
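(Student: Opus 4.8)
The plan is to transfer the cusp analysis of Borcherds \cite[\S5--7]{Bo1}, in the harmonic Maass form formulation of Bruinier \cite{Br1}, to the Siegel domain model $\calH_{\ell,\tilde\ell}$ and its partial compactification $\tilde V_\eps(\ell)$; the essential tool is partial Poisson summation of the Siegel theta function $\Theta_L(\tau,z)$ in the isotropic direction $\ell$. First I would record the finiteness of $S_f$: the form on $L'\cap\ell^\perp$ is positive semidefinite with radical $\kk\ell$, so the projection $L'\cap\ell^\perp\to(L'\cap\ell^\perp)/\Z\ell$ maps onto a lattice in a positive definite space, and $c^+(-\langle\xx,\xx\rangle,\xx)\neq0$ forces $\langle\xx,\xx\rangle$ into the finite set of denominators occurring in the principal part of $f$; since $\langle\tilde\ell,\ell\rangle=1$ while $\langle\frakz_0,\ell\rangle=0$, the linear form $\langle\frakz_0+\tilde\ell,\cdot\rangle$ is injective on each fiber $\Z\ell$, and hence $S_f$ is finite.

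Next, fix a large cutoff $T_0$ and split the regularized integral \eqref{eq:AutoGreen} as $\int_{\calF_{T_0}}\{f,\Theta_L\}\,d\mu$ plus $\lim_{T\to\infty}\int_{\calF_T\smallsetminus\calF_{T_0}}\{f,\Theta_L\}\,d\mu$, the regularization residing entirely in the second piece. In that piece one substitutes the Fourier expansion \eqref{eq:fourierf} of $f$ together with the Fourier--Jacobi expansion of $\Theta_L(\tau,z)$ obtained by Poisson summing over the rank-one $\calO_\kk$-sublattice $\fraka=\fraka_0\ell\subset L$, using the decomposition $L\cap\ell^\perp=E\oplus\fraka$ with $D\cong E$ of Lemma \ref{lem:lat1}(i). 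This exhibits $\Theta_L(\tau,z)$, up to elementary Gaussian factors attached to the hyperbolic plane $\kk\ell\oplus\kk\tilde\ell$, as a sum over a Fourier--Jacobi index whose index-$0$ part is built from the holomorphic theta function $\Theta_D(\tau)$ of the positive definite lattice $D$, while the index-$\neq0$ part is exponentially small as $q_r\to0$, uniformly for $\frakz$ near $\frakz_0$ --- this last estimate resting on Lemma \ref{maj-est} and its refinement to a genuine Gaussian bound.

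These three contributions to the statement arise as follows. Integrating the index-$0$ part against $f$: pairing its $\Theta_D$-factor with $f$ reproduces the regularized Petersson integral $\Phi^D(f_D)=(f_D,\Theta_D)^{\reg}$ of \eqref{eq:thetadef}, where $f_D$ is the induced harmonic Maass form \eqref{eq:deffd} --- the analogue of Borcherds' restriction identity \cite[Theorem 5.3]{Bo1} --- while carrying out the remaining $\tau$-integral of the elementary hyperbolic Gaussian, a Kronecker-limit-type computation as in the cusp analysis of \cite{Bo1} and \cite{Br1}, yields the term proportional to $\Phi^D(f_D)\log|q_r|$ with constant $\tfrac{r}{2\pi\norm(\fraka_0)}$ ($r$ the width \eqref{eq:defr}, $\norm(\fraka_0)$ the covolume of $\fraka$), together with the term proportional to $c^+(0,0)\log\big|\log|q_r|\big|$ coming from the regularization of the logarithmically divergent part of the $v$-integral, which sees only the constant coefficient of $f$ and, since $\fraka\subset L$, only its value at the trivial coset. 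The $\int_{\calF_{T_0}}$ piece supplies the logarithmic singularities along the local special divisors through $(\frakz_0,0)$: running the proof of Theorem \ref{thm:sing} uniformly on a boundary neighborhood, and using that for $\xx\in\ell^\perp$ the $q_r$-independent linear form $\langle\frakz+\tilde\ell,\xx\rangle$ is a local equation for $\calD(\xx)$, shows that these are cancelled exactly by the term $2\sum_{\xx\in S_f}c^+(-\langle\xx,\xx\rangle,\xx)\log|\langle\frakz+\tilde\ell,\xx\rangle|$.

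It remains to collect the error terms and verify the regularity claims. The exponentially small index-$\neq0$ contributions, the contribution of the non-holomorphic part $f^-$ (which decays exponentially at the cusp), the continuous remainder left over from the $\int_{\calF_{T_0}}$ piece, and the subleading terms of the index-$0$ $v$-integral --- of size $O(1/\log(1/|q_r|))$ --- together extend continuously across $q_r=0$, and the images under $\partial$, $\bar\partial$ and $\partial\bar\partial$ of this remainder, like those of $\log\big|\log|q_r|\big|$ itself, satisfy the log-log (Poincaré) growth bounds of \cite[Definition 1.2]{BBK}, which gives the asserted continuation. I expect this final bookkeeping to be the main obstacle: one must check, uniformly on all of $B_\delta(\frakz_0,0)$ and after two differentiations, that nothing survives beyond the exhibited $\log|q_r|$, $\log\big|\log|q_r|\big|$ and $\log|\langle\frakz+\tilde\ell,\xx\rangle|$ terms with growth worse than log-log --- in particular that the Kronecker-limit evaluation of the index-$0$ integral leaves a genuinely log-log-mild remainder, and that differentiating the Poisson-summed theta kernel does not degrade the uniform bounds.
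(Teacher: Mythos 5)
Your overall strategy---Fourier--Jacobi expansion of the theta kernel in the isotropic direction, in the style of Borcherds and \cite{Br1}---is indeed the method of the paper (which, rather than redoing the Poisson summation, pulls back to the orthogonal group, quotes \cite[Theorem 3.9]{Br1} to get the expansion of Theorem \ref{fourierphi2}, and then expands the smaller lift $\Phi^K$ a second time at the isotropic vector $\ell_K$). But your proposed bookkeeping misplaces the singular terms in a way that would not close. The truncated integral $\int_{\calF_{T_0}}\{f,\Theta_L\}\,d\mu$ is an integral of a family of functions smooth in $z$ over a compact region, hence is smooth on all of $\calD$ and uniformly bounded up to the boundary; it cannot ``supply the logarithmic singularities along the local special divisors.'' Those singularities come from the divergent tail $v>T_0$: for $\xx$ nearly orthogonal to $z$ the $v$-integral produces $\Gamma(0,4\pi|\langle\xx_z,\xx_z\rangle|)\sim -\log|\langle\xx_z,\xx_z\rangle|$, which is exactly the mechanism of Theorem \ref{thm:sing}. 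Correspondingly, your claim that the index-$\neq 0$ part of the Fourier--Jacobi expansion is ``exponentially small as $q_r\to0$, uniformly for $\frakz$ near $\frakz_0$'' is false precisely for the coefficients indexed by $\xx\in L'\cap\ell^\perp$ with $\langle\xx,\xx\rangle>0$: these produce the $q_r$-independent terms $\Log\big(1-e(\langle\frakz+\tilde\ell,\xx\rangle)\big)$, which do not decay at the boundary and which are singular exactly along $\langle\frakz+\tilde\ell,\xx\rangle=0$ for $\xx\in S_f$. The exponential decay established by Lemma \ref{maj-est} applies only to the vectors with $\langle\xx,\ell\rangle\neq0$ (and to the $c^-$ terms).

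There is a second omission that affects the smoothness assertion off $q_r=0$. The terms $\Log\big(1-e(u+i|y|)\big)$ with $y=\Im\langle\frakz+\tilde\ell,\xx\rangle$ are not smooth across the real hypersurface $y=0$: beyond the singularity $\log|\langle\frakz+\tilde\ell,\xx\rangle|$ they carry a piecewise-linear contribution $4\pi\sum_{\Im\langle\frakz+\tilde\ell,\xx\rangle<0}c^+(\cdots)\Im\langle\frakz+\tilde\ell,\xx\rangle$, which is continuous but has a corner. The Kronecker-limit evaluation of the degenerate part of the expansion produces, besides the $\log|q_r|$ term, a Bernoulli-polynomial term $\B_2\big(\Im\langle\frakz+\tilde\ell,\xx\rangle/\Im(a)\big)$ with the matching corner, and the two must be seen to cancel (this is the comparison of \eqref{term2bb} with \eqref{term4} in the paper). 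Your outline never mentions these terms, and without them the function you write down would be continuous but not smooth away from $q_r=0$, contradicting the statement. In a genuinely one-step Poisson summation over the rank-two lattice $\fraka$ these pieces arise together and can be packaged via the second Kronecker limit formula, but that has to be carried out explicitly; as written, your decomposition into ``smooth compact part $+$ Kronecker term $+$ exponentially small'' has no slot for them.
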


We postpone the proof of the theorem to Section  \ref{ss:Fourier-Jacobi}.

\begin{remark}
\label{rem:rational}
Let $c^\pm_D(m)\in S_D$ be the coefficients of $f_D$, and write
$\Theta_D(\tau)= \sum_{m\geq 0 } R_D(m) q^m$,
where the representation numbers $R_D(m)\in S_D^\vee$ are given by
\[
R_D(m , \varphi)= \sum_{\substack{\xx\in D'\\ Q(\xx)=m}} \varphi(\xx)
\]
for $\varphi\in S_D$. If $n>2$, then according to Theorem \ref{thm:regint} we have
\[
\Phi^D(f_D) = \frac{4\pi}{n-2}\CT[ \{f_D^+,\vartheta(\Theta_D)\}]= \frac{4\pi}{n-2}\sum_{m>0} m\cdot \{c_D^+(-m),R_D(m)\}.
\]
If $n=2$, then $D$ is trivial, and we have
\[
\Phi^D(f_D) = \frac{\pi}{3}\CT[ f_D^+\cdot E_2]= -8\pi\sum_{m\geq 0}  c_D^+(-m)\sigma_1(m).
\]
\end{remark}

We now associate a boundary divisor to the harmonic Maass form $f\in H_{2-n}(\omega_L)$.
We define the multiplicity of the  boundary divisor  $B_I$ with respect to $f$ by
\[
\mathrm{mult}_{B_I}(f)= \frac{r\Phi^{D}(f_{D})}{4\pi \norm(\fraka_0)}.
\]
If the principal part of $f$ has rational coefficients, then according to Remark \ref{rem:rational}, this multiplicity is rational.
In the special case that $d_\kk$ is odd, $L$ is $\calO_\kk$-self-dual, and $\Gamma=\Uni(L)$, we have in  view of Lemma \ref{lem:lat1}
that $r=\norm(\fraka_0)$, and therefore
$\mathrm{mult}_{B_I}(f)= \frac{1}{4\pi}\Phi^{D}(f_{D})$.
We define the boundary divisor associated with $f$ by
\[
B(f)= \sum_{I\in \Iso(V)/\Gamma} \mathrm{mult}_{B_I}(f)\cdot B_I.
\]
Theorem \ref{thm:bndgrowth} implies the following corollary.

\begin{corollary}
\label{cor:defbf}
The function $\Phi(z,f)$ is a logarithmic Green function on $X_\Gamma^*$ for the divisor $Z(f)+B(f)$ with possible additional log-log growth along the boundary divisors $B_I$.
\end{corollary}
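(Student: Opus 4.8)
The plan is to obtain the corollary by gluing together two local pictures of $\Phi(z,f)$: the interior picture supplied by Theorem~\ref{thm:sing} and Corollary~\ref{cor:sing}, and the boundary picture supplied by Theorem~\ref{thm:bndgrowth}; the only real work is to match the explicit logarithmic terms produced by these results against a local defining equation for the divisor $Z(f)+B(f)$, using the definition~\eqref{eq:mult1} of $\mathrm{mult}_{B_I}(f)$.

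First I would dispose of the open part $X_\Gamma\subset X_\Gamma^*$, where there is nothing new to do: the discussion following Corollary~\ref{cor:sing} already shows that $\Phi(z,f)$ is smooth on $X_\Gamma\smallsetminus Z(f)$ and is a logarithmic Green function for $Z(f)$ there, with no log-log contributions. It then remains to analyze a neighborhood of a boundary point. I would fix a $\Gamma$-class $I\in\Iso(V)/\Gamma$, a generator $\ell\in I$, and a boundary point $(\frakz_0,0)\in\tilde V_\eps(\ell)$, shrinking $\eps$ so that \eqref{eq:oi} is an open immersion; then a neighborhood of the image of $(\frakz_0,0)$ in $X_\Gamma^*$ is identified with a neighborhood in $(\Gamma_\ell/\Gamma_{\ell,T})\bs\tilde V_\eps(\ell)$, onto which $\Phi(z,f)$ descends by its $\Gamma$-invariance. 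Since $\Iso(V)/\Gamma$ is finite and these charts together with $X_\Gamma$ cover $X_\Gamma^*$, it suffices to treat $\Phi(z,f)$ near $(\frakz_0,0)$ inside such a chart.

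The heart of the deduction is then to read off Theorem~\ref{thm:bndgrowth}: it asserts that, for $\delta$ small,
\[
\Phi(z,f)+\frac{r\Phi^D(f_D)}{2\pi\norm(\fraka_0)}\log|q_r|+c^+(0,0)\log\big|\log|q_r|\big|+2\sum_{\xx\in S_f}c^+(-\langle\xx,\xx\rangle,\xx)\log\big|\langle\frakz+\tilde\ell,\xx\rangle\big|
\]
extends to a continuous function on $B_\delta(\frakz_0,0)$ which is smooth off $q_r=0$ and whose images under $\partial$, $\bar\partial$, $\partial\bar\partial$ have log-log growth along $q_r=0$. In this chart $q_r$ is a local equation for the boundary divisor $B_I$, while for each $\xx$ the linear form $\langle\frakz+\tilde\ell,\xx\rangle$ is a local equation for the component $\calD(\xx)$ of the local special divisor $Z_\ell(f)$, which is exactly the pullback of $Z(f)$. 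Using $\mathrm{mult}_{B_I}(f)=\frac{r\Phi^D(f_D)}{4\pi\norm(\fraka_0)}$ from~\eqref{eq:mult1}, and that units of $\calO_\kk$ have complex absolute value $1$ (so proportional choices of $\xx\in S_f$ may be grouped, and the $n=2$ case — where $S_f$ is empty — is included uniformly), one checks that the two middle logarithmic sums together differ by a smooth function from $\log|\phi|^2$, where $\phi$ is a local equation near $(\frakz_0,0)$ for the pullback of $Z(f)+B(f)$. Hence $\Phi(z,f)+\log|\phi|^2+c^+(0,0)\log\big|\log|q_r|\big|$ has the stated continuity, off-$q_r$ smoothness, and log-log growth of derivatives; this is precisely the assertion that $\Phi(z,f)$ is a logarithmic Green function for $Z(f)+B(f)$ with the possible extra log-log term $c^+(0,0)\log\big|\log|q_r|\big|$ along $B_I$, in the sense of \cite[Definition 1.2]{BBK}. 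Assembling the interior and boundary cases over the finitely many $I$ completes the proof.

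I expect essentially all the analytic difficulty to be contained in Theorem~\ref{thm:bndgrowth} itself, whose proof is deferred to Section~\ref{ss:Fourier-Jacobi}; granting it, the only genuinely delicate bookkeeping in the present deduction is keeping track of the factor $2$ relating a Green function's $\log|\phi|^2$ to the coefficients appearing in Theorem~\ref{thm:bndgrowth}, reconciling the normalization of $\mathrm{mult}_{B_I}(f)$ with the coefficient $\frac{r}{2\pi\norm(\fraka_0)}$ of $\log|q_r|$, and confirming that the residual $c^+(0,0)\log\big|\log|q_r|\big|$ contribution is of the type permitted by the Burgos--Kramer--K\"uhn formalism rather than an obstruction to $\Phi(z,f)$ being a log-log Green function for $Z(f)+B(f)$.
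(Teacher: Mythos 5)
Your proposal is correct and is precisely the deduction the paper intends: the corollary is stated as an immediate consequence of Theorem~\ref{thm:bndgrowth} (for boundary points) together with the interior analysis of Theorem~\ref{thm:sing}, and your matching of the coefficient $\frac{r\Phi^D(f_D)}{2\pi\norm(\fraka_0)}$ of $\log|q_r|$ with $2\,\mathrm{mult}_{B_I}(f)$ from \eqref{eq:mult1}, of the sum over $S_f$ with a local equation for $Z_\ell(f)$, and of the residual $c^+(0,0)\log|\log|q_r||$ term with the permitted log-log growth is exactly the bookkeeping required.
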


\subsection{The Fourier-Jacobi expansion}
\label{ss:Fourier-Jacobi}

Here we compute the Fourier-Jacobi expansion of the automorphic Green
function $\Phi(z,f)$ using \cite{Hof}, \cite{Bo1} and \cite{Br1}, and
we provide the proofs of Theorem \ref{thm:bndgrowth} and  Proposition \ref{prop:lp}.

The natural embedding of $\calD$ into the Grassmannian of negative
definite $2$-dimensional oriented real subspaces of $V_\R$ is
compatible with the actions of the unitary group
$\Uni(V,\langle\cdot,\cdot\rangle)$ and the orthogonal group
$\Orth(V,\langle\cdot,\cdot\rangle_\Q)$. We may calculate the theta
lift of $f\in H_{2-n}(\omega_L)$ to $X_\Gamma$ by lifting to the
orthogonal group $\Orth(V,\langle\cdot,\cdot\rangle_\Q)$ and then pulling
back to the unitary group.

We continue to use the setup of Section \ref{sect:greenbd}. In
addition we introduce the following notation.  We fix $\ell'\in L'$
such that $\langle\ell',\ell\rangle_\Q=1$.  We denote by $N$ the
positive integer which generates the ideal $\langle L,\ell\rangle_\Q
\subset \Z$.  We write $\ell^{\perp,\Q}$ for the orthogonal complement
of $\ell$ with respect to the bilinear form
$\langle\cdot,\cdot\rangle_\Q$, and put
$K=(L\cap\ell^{\perp,\Q})/\Z\ell$. Then $K$ is an even lattice over
$\Z$ of signature $(2n-3,1)$.

For $x\in V_\R$ we put $x^2=\langle x,x\rangle_\Q$
and $|x|=\sqrt{|x^2|}$.
Let $(\frakz,\tau)\in \calH_{\ell,\widetilde\ell}$ and let $z$ be the corresponding point in $\calD$.
We have
\[
\ell_{z}^2
= -\frac{2}{\norm(\frakz,\tau)},
\]
where the quantity $\norm(\frakz,\tau)= -\langle (\frakz,\tau),(\frakz,\tau)\rangle$ is positive.
We also view $z$ as a two-dimensional (oriented) real subspace of $V_\R$.
The vector $\ell_z$  spans a one-dimensional real subspace of $z$,
whose orthogonal complement in $z$ with respect to $\langle \cdot, \cdot\rangle_\Q$ we denote by $w$, so that $z=w\oplus \R\ell_z$.
The real line $w$ is generated by the vector
$w_0(z)=-i(\frakz,\tau)=-i(\frakz+\tau\sqrt{d_\kk}\ell+\widetilde\ell)$, which we use to define an orientation on $w$.
Hence we obtain a map
\begin{align}
\label{eq:mapdk}
\calD\longrightarrow \Gr^+(K)
\end{align}
to the Grassmannian $\Gr^+(K)$ of oriented negative lines in $K\otimes_\Z\R$.
If $\xx\in K\otimes_\Z\R$, we have $\langle -i(\frakz,\tau),\xx\rangle_\Q=2\Im\langle (\frakz,\tau),\xx\rangle$.
The orthogonal projection of $\lambda$ to $w$ is given by
\[
\frac{|\xx_w|}{|\ell_{z}|}= |\Im\langle (\frakz,\tau),\xx\rangle|.
\]
We also define the vector
\[
\mu=-\ell'+\frac{\ell_z}{2\ell_z^2}+\frac{\ell_{z^\perp}}{2\ell_{z^\perp}^2}
\]
in $L\cap\ell^{\perp,\Q}$. It is easily checked that
\[
\langle\mu,\xx\rangle_\Q = \Re \langle (\frakz,\tau),\xx\rangle.
\]
For $w\in \Gr^+(K)$ and $\xx\in K\otimes_\Z\R$, we write $\langle w,\xx\rangle_\Q>0$ if $\langle w_0,\xx\rangle_\Q>0$ for a vector $w_0\in w$ defining the orientation.

Let $f\in H_{2-n}(\omega_L)$.
Similarly as in \eqref{eq:deffd}, according to \cite[Theorem 5.3]{Bo1}, the harmonic Maass form $f$ induces an
$S_K$-valued harmonic Maass form $f_K\in H_{2-n}(\omega_K)$. It is characterized by its values on $\nu\in K'/K$ as follows:
\[
f_K(\tau)(\nu)=  \sum_{\substack{\mu\in L'/L\\\mu \mid L\cap \ell^{\perp,\Q}=\nu}}f(\tau)(\mu).
\]
Here $\mu\mid L\cap \ell^{\perp,\Q}$ denotes the restriction of $\mu\in\Hom(L,\Z)$ to $L\cap \ell^{\perp,\Q}$,
and we consider $\nu\in K'$ as an element of $\Hom(L\cap\ell^{\perp,\Q},\Z)$ via the quotient map $ L\cap\ell^{\perp,\Q}\to K$.

Finally, following \cite[(3.25)]{Br1}, we define a special function for $A,B\in \R$ by
\[
\calV_n(A,B) = \int\limits_0^\infty \Gamma(n-1, A^2 y) e^{-B^2 y -1/y}y^{-3/2}\,dy.
\]
According to \cite[p.~74]{Br1} we have
\[
\calV_n(A,B)=2(n-2)! \sum_{r=0}^{n-2} \frac{A^{2r}}{r!}(A^2+B^2)^{1/4-r/2} K_{r-1/2} (2\sqrt{A^2+B^2}).
\]
The following result is now an immediate consequence of \cite[Theorem 3.9]{Br1}.

\begin{theorem}
\label{fourierphi2}
Let $f\in H_{2-n}(\omega_L)$ and denote its Fourier coefficients by $c^\pm(m)\in S_L$.
Let  $z\in \calD\smallsetminus Z(f)$ with $|\ell_z^2|<\frac{1}{2m_0}$,
where $m_0=\max\{m\in \Q: \; \text{$c^+(-m)\neq 0$}\}$.
Then the Green function $\Phi(z,f)$ is equal to
\begin{align*}
& \frac{1}{\sqrt{2} |\ell_z|} \Phi^K(w,f_K) +C_{f}+c^+(0,0)\log|\ell_z^2|\\
& {}-2 \sum_{\substack{\xx\in K'\smallsetminus \{0\} }} \sum_{\substack{\nu\in L'/L \\ \nu\mid L\cap \ell^{\perp,\Q}=\xx}
} c^+(-\langle\xx,\xx\rangle,\nu)\\
 & \hskip .4in   {}\times\Log \left( 1-e\big(\langle\nu,\ell'\rangle_\Q+\langle\xx,\mu\rangle_\Q+i|\xx_w|/|\ell_z|\big)\right)\\
& {}+\frac{2}{\sqrt{\pi}}  \sum_{\substack{\xx\in K'\\ \langle\xx,\xx\rangle >0}}\, \sum_{\substack{\nu\in L'/L \\ \nu\mid L\cap \ell^{\perp,\Q}=\xx}} c^-(-\langle\xx,\xx\rangle,\nu)\\
&{}\qquad \times \sum_{j\geq 1}\frac{1}{j} e\big(j\langle\nu,\ell'\rangle_\Q+j\langle\xx,\mu\rangle_\Q\big)
\calV_{n} \left( \frac{\pi j|\xx|}{|\ell_z|},\, \frac{\pi j |\xx_w|}{|\ell_z|}\right),
\end{align*}
where
\begin{align*}
C_{f} = -c^+(0,0)\left( \log(2\pi ) +\Gamma'(1)\right) -2 \sum_{\substack{a\in\Z/N\Z\\ a\not\neq 0}} c^+(0, a \ell/N) \log|1-e(a/N)|.
\end{align*}
Here $\Phi^K(w,f_K)$ denotes the function on $\Gr^+(K)$ given by the regularized theta lift of $f_K$
for the orthogonal group of $K$ as in \cite[Chapter 3.1]{Br1}. We view it as a function on $\calD$ via
the map \eqref{eq:mapdk}.
Finally, $\Log(z)$ stands for the principle branch of the complex logarithm.
\hfill$\square$
\end{theorem}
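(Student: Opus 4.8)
The plan is to deduce this expansion from the corresponding result for the orthogonal group, \cite[Theorem 3.9]{Br1}, together with the explicit dictionary between the orthogonal Grassmannian coordinates and the Siegel coordinates $(\frakz,\tau)$ on $\calH_{\ell,\tilde\ell}$ set up in this section. The first step is to identify $\Phi(z,f)$ with an orthogonal theta lift. Since $Q(x)=\langle x,x\rangle=\tfrac12\langle x,x\rangle_\Q$, the module $L$ with the bilinear form $\langle\cdot,\cdot\rangle_\Q$ is an even $\Z$-lattice of signature $(2n-2,2)$; as observed at the start of Section~\ref{s:green functions}, the Weil representations of $\Sl_2(\Z)$ attached to the hermitian form and to $\langle\cdot,\cdot\rangle_\Q$ agree on $S_L$, and so do the Siegel theta functions $\Theta_L(\tau,z)$. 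Hence, under the embedding of $\calD$ into the Grassmannian $\Gr(L)$ of oriented negative-definite $2$-planes in $V_\R$, the function $\Phi(z,f)$ of \eqref{eq:AutoGreen} is exactly the restriction of the Borcherds--Bruinier regularized theta lift $\Phi^L(\cdot,f)$ of \cite[Chapter 3]{Br1}, with the same $\CT_{s=0}$ regularization; no discrepancy of normalization arises at this point.

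Next I would apply \cite[Theorem 3.9]{Br1} at the cusp determined by the primitive isotropic vector $\ell\in L$. That theorem expands $\Phi^L(\cdot,f)$ in a neighborhood of $\ell$ in terms of: the regularized lift $\Phi^K(w,f_K)$ of the induced form $f_K\in H_{2-n}(\omega_K)$ on $K=(L\cap\ell^{\perp,\Q})/\Z\ell$, viewed on $\Gr^+(K)$ via \eqref{eq:mapdk}; an infinite sum of terms $\Log(1-e(\cdots))$ governed by the holomorphic coefficients $c^+(-\langle\xx,\xx\rangle,\nu)$; an infinite sum built from the non-holomorphic coefficients $c^-(-\langle\xx,\xx\rangle,\nu)$ and the special function $\calV_n$; and an explicit constant term. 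Here one checks that the $S_K$-valued form appearing in \cite{Br1} coincides with the $f_K$ of \eqref{eq:deffk} (equivalently, of \cite[Theorem 5.3]{Bo1}), which is immediate from the two identical descriptions of the induced Weil representation on $S_K$ via restriction of functionals $\mu\mapsto \mu\mid L\cap\ell^{\perp,\Q}$.

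Finally I would translate every term into the coordinates $(\frakz,\tau)$. Writing $z\leftrightarrow(\frakz,\tau)$, the identities recorded above give $\ell_z^2=-2/\norm(\frakz,\tau)$, $|\xx_w|/|\ell_z|=|\Im\langle(\frakz,\tau),\xx\rangle|$, and $\langle\mu,\xx\rangle_\Q=\Re\langle(\frakz,\tau),\xx\rangle$. Thus the argument $\langle\nu,\ell'\rangle_\Q+\langle\xx,\mu\rangle_\Q+i|\xx_w|/|\ell_z|$ of the exponential in the $\Log$-terms becomes $\langle\nu,\ell'\rangle_\Q+\langle(\frakz,\tau),\xx\rangle$; the leading factor $1/(\sqrt2|\ell_z|)$ equals $\tfrac12\sqrt{\norm(\frakz,\tau)}$; and the arguments of $\calV_n$ rewrite with the same projections as above. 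The term $c^+(0,0)\log|\ell_z^2|$ and the constant $C_f$ — with $N$ the positive generator of $\langle L,\ell\rangle_\Q\subset\Z$ and the finite sum over $a\in\Z/N\Z$ — carry over verbatim from \cite[Theorem 3.9]{Br1}. Substituting yields the displayed formula.

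I expect the only real difficulty to be bookkeeping in the last two steps: pinning down Bruinier's normalization of the leading coefficient $\tfrac1{\sqrt2|\ell_z|}$ and of $C_f$, verifying that the hypothesis $|\ell_z^2|<1/(2m_0)$ is precisely the range in which \cite[Theorem 3.9]{Br1} is valid (so that the Fourier--Jacobi expansion converges and the relevant unfolding is legitimate), and tracking the orientation of the negative line $w$ fixed by $w_0(z)=-i(\frakz,\tau)$ so that the inequalities ``$\langle w,\xx\rangle_\Q>0$'' match the sign conventions of \cite{Br1}. No new idea beyond the material already assembled in this section is needed.
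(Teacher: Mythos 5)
Your proposal matches the paper's proof: the paper likewise notes that the unitary lift is the restriction of the orthogonal lift along the embedding $\calD\hookrightarrow\Gr(L)$, sets up the same dictionary ($\ell_z^2=-2/\norm(\frakz,\tau)$, $|\xx_w|/|\ell_z|=|\Im\langle(\frakz,\tau),\xx\rangle|$, $\langle\mu,\xx\rangle_\Q=\Re\langle(\frakz,\tau),\xx\rangle$, and $f_K$ as in \cite[Theorem 5.3]{Bo1}), and then declares the statement an immediate consequence of \cite[Theorem 3.9]{Br1}. Your bookkeeping caveats about normalizations and orientations are exactly the checks the paper leaves implicit, so nothing is missing.
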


\begin{remark}
If $f\in M^!_{2-n}(\omega_L)$ is weakly holomorphic and has integral principal part, then according to \cite[Theorem 4.2.1]{Hof}
there exists a meromorphic modular form $\Psi(z,f)$ of weight $c^+(0,0)/2$ for the group $\Gamma$ (with a multiplier system of finite order) such that $-2\log\|\Psi(z,f)\|^2=\Phi(z,f)$ and $\dv(\Psi(z,f))=\frac{1}{2}Z(f)$.
Here $\|\cdot\|$ denotes the suitably normalized Petersson metric.
The above Fourier expansion of $\Phi(z,f)$ leads to the Borcherds product expansion
\begin{align*}
\Psi(z,f)&=e\big(\langle(\frakz,\tau),\varrho_W\rangle\big)\\
&\phantom{=}{}\times
\prod_{\substack{\xx\in K'\\ \langle W,\xx\rangle_\Q >0}} \prod_{\substack{\nu\in L'/L \\ \nu\mid L\cap \ell^{\perp,\Q}=\xx}}
\big( 1-e(\langle\nu,\ell'\rangle_\Q+\langle(\frakz,\tau),\xx\rangle)
\big)^{c^+(-\langle\xx,\xx\rangle,\nu)},
\end{align*}
which converges for $\norm(\frakz,\tau)>4m_0$.
Here $W\subset \Gr^+(K)$ denotes a Weyl chamber corresponding to $f$
(that is, a connected component of the complement of the singular locus of $\Phi^K(w,f)$),
and $\varrho_W\subset K\otimes_\Z\Q$ denotes the corresponding Weyl vector. Moreover $\langle W,\xx\rangle_\Q >0$
means that $\langle w,\xx\rangle_\Q >0$ for $w\in W$, see \cite[Section 4.1.2]{Hof}.
\end{remark}

We now turn to the proof of Theorem \ref{thm:bndgrowth}.
We begin with two technical lemmas.
The first one gives an estimate for the majorant of
the lattice $K$.  For $0<C<1$ we define
\[
\calS_C= \{ (\frakz,\tau)\in \calH_{\ell,\widetilde\ell}:  C\cdot 2\sqrt{|d_\kk|}\Im(\tau)>\langle \frakz,\frakz\rangle \}.
\]
For $\xx\in K\otimes_\Z\R$ and $(\frakz,\tau)\in \calH_{\ell,\widetilde\ell}$ we define
\[
h((\frakz,\tau),\xx) = \norm(\frakz,\tau)\langle \xx,\xx\rangle + 2 (\Im\langle (\frakz,\tau),\xx\rangle)^2.
\]

\begin{lemma}
\label{hyp-maj}
Let $0<C<1$. There exists an $\eps>0$ such that for any $(\frakz,\tau)\in \calS_C$ and any
$\xx = \xx_D -a\sqrt{d_\kk} \ell - \frac{b}{\sqrt{d_\kk}}\widetilde\ell\in K\otimes_\Z\R$ (where $\xx_D\in D\otimes_\Z\R$ and $a,b\in \R$), we have
\[
h((\frakz,\tau),\xx) \geq  \eps \left(a^2 |d_\kk| + b^2\Im(\tau)^2+ \norm(\frakz,\tau) \langle \xx_D,\xx_D\rangle\right).
 \]
\end{lemma}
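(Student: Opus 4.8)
The plan is to make everything explicit in the Siegel coordinates on $\calH_{\ell,\tilde\ell}$ and thereby reduce the assertion to an elementary quadratic inequality in a handful of real parameters; conceptually the lemma records that the positive definite majorant of the hyperbolic lattice $K$ degenerates in a controlled way as $z$ approaches the cusp $\ell$ (up to the factor $\tfrac12$, $h((\frakz,\tau),\xx)$ is $\norm(\frakz,\tau)$ times the value of that majorant at $\xx$). First I would compute the two quantities entering $h$. Writing $\xx = \xx_D - a\sqrt{d_\kk}\,\ell - \tfrac{b}{\sqrt{d_\kk}}\,\tilde\ell$ and the distinguished basis vector of $z$ as $(\frakz,\tau) = \frakz + \tau\sqrt{d_\kk}\,\ell + \tilde\ell$, a direct computation using $\langle\ell,\ell\rangle = \langle\tilde\ell,\tilde\ell\rangle = 0$, $\langle\ell,\tilde\ell\rangle = 1$, $\frakz,\xx_D\in W_\R$ (hence orthogonal to $\ell$ and $\tilde\ell$), and $\overline{\sqrt{d_\kk}} = -\sqrt{d_\kk}$ gives
\[
\langle\xx,\xx\rangle = \langle\xx_D,\xx_D\rangle - 2ab,\qquad
\Im\langle(\frakz,\tau),\xx\rangle = \Im\langle\frakz,\xx_D\rangle + b\,\Im(\tau) + a\sqrt{|d_\kk|}.
\]
Set $N = \norm(\frakz,\tau)$, $v = \Im(\tau)$, $P = \langle\xx_D,\xx_D\rangle \ge 0$, and introduce the scalars $p = a\sqrt{|d_\kk|}$, $q = bv$, $r = \Im\langle\frakz,\xx_D\rangle$. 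Then
\[
h((\frakz,\tau),\xx) = N\big(P - 2ab\big) + 2(r+p+q)^2 = NP - 2t\,pq + 2(r+p+q)^2,\qquad t := \frac{N}{v\sqrt{|d_\kk|}},
\]
and the inequality to be proved is $h \ge \eps\,(p^2 + q^2 + NP)$.

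Next I would translate the hypothesis $(\frakz,\tau)\in\calS_C$. Since $\langle\frakz,\frakz\rangle \ge 0$ (positive definiteness of $W$) and $N = 2\sqrt{|d_\kk|}\,v - \langle\frakz,\frakz\rangle$, the condition $\langle\frakz,\frakz\rangle < 2C\sqrt{|d_\kk|}\,v$ is equivalent to $2(1-C)\sqrt{|d_\kk|}\,v < N \le 2\sqrt{|d_\kk|}\,v$, i.e.\ $t\in(2-2C,\,2]$, and it also gives $\langle\frakz,\frakz\rangle < \tfrac{C}{1-C}N$. Putting $\beta := \langle\frakz,\frakz\rangle/N$, we have $\beta\in[0,\tfrac{C}{1-C})$ and $t = \tfrac{2}{1+\beta}$. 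By the Cauchy--Schwarz inequality for the positive definite hermitian form on $W_\R$, $r^2 \le \langle\frakz,\frakz\rangle\langle\xx_D,\xx_D\rangle = \beta NP$. As $h$ involves $r$ only through $2(r+p+q)^2$, I may replace that term by its minimum over $|r|\le\sqrt{\beta NP}$, which reduces the lemma to the purely scalar statement: for all $p,q\in\R$, $P\ge 0$, and all $\beta\in[0,\tfrac{C}{1-C})$ with $t = \tfrac{2}{1+\beta}$,
\[
F := NP - 2t\,pq + 2\max\!\big(0,\; |p+q| - \sqrt{\beta NP}\,\big)^2 \;\ge\; \eps\,(p^2 + q^2 + NP),
\]
for a suitable $\eps = \eps(C) > 0$ independent of $\beta$, $p$, $q$, $P$, $N$.

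Finally I would dispatch this scalar inequality by a short case analysis, being careful to keep the constant $C$ in sight at every step. If $|p+q| \le \sqrt{\beta NP}$ the maximum is $0$ and $F = NP - 2tpq$; one then splits on the sign of $pq$, using $-2tpq = 2t|pq| > 4(1-C)|pq|$ together with $p^2+q^2 \le \beta NP + 2|pq|$ when $pq<0$, and $4pq \le (p+q)^2 \le \beta NP$ (so $2tpq \le \tfrac{\beta}{1+\beta}NP < NP$) together with $p^2+q^2 \le (p+q)^2 \le \beta NP$ when $pq\ge 0$. If $|p+q| > \sqrt{\beta NP}$, put $\xi = |p+q| - \sqrt{\beta NP} > 0$, use the identity $-2tpq = -\tfrac t2(p+q)^2 + \tfrac t2(p-q)^2$, expand $(p+q)^2 = (\xi+\sqrt{\beta NP})^2$, and estimate the single remaining cross term $t\,\xi\sqrt{\beta NP}$ by AM--GM with a weight $\mu$ chosen in the nonempty range $\big(\sqrt\beta,\ \tfrac{1+2\beta}{\sqrt\beta}\big)$; the surviving coefficients of $NP$ and $\xi^2$ are then positive, depend continuously on $\beta$, and by compactness of $\beta\in[0,\tfrac{C}{1-C}]$ are bounded below by a positive constant, after which $p^2+q^2 \le \xi^2 + \beta NP + \tfrac12(p-q)^2$ closes the estimate. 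I expect this last step to be the only real obstacle: a careless application of AM--GM to the cross terms loses all dependence on $C$ and forces a negative coefficient as $C\to 1$, so one must propagate the constraint $\beta < \tfrac{C}{1-C}$ (equivalently $t > 2(1-C)$) throughout and tune the auxiliary weights to it; done with this care, one can even exhibit an explicit admissible value of the form $\eps = c\,(1-C)^2$ for an absolute constant $c>0$.
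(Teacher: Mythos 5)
Your proposal is correct, but it takes a genuinely different route from the paper. The paper's proof of Lemma \ref{hyp-maj} is essentially a one-line citation: it observes that $h((\frakz,\tau),\xx)$ is (up to the factor $\norm(\frakz,\tau)/2$) the value at $\xx$ of the orthogonal majorant attached to the negative line $w=\R w_0(z)\in\Gr^+(K)$, and then invokes \cite[Lemma 4.13]{Br1}, adding only the remark that of the inequalities defining the region $\calR_t$ in that reference one needs just $|q(Y_D)|<B y_1y_2$ with $B=t^4/(t^4+1)$ -- which is exactly what membership in $\calS_C$ supplies. You instead give a self-contained elementary proof: your coordinate computations of $\langle\xx,\xx\rangle=\langle\xx_D,\xx_D\rangle-2ab$ and of $\Im\langle(\frakz,\tau),\xx\rangle$ agree with those appearing in the proof of Lemma \ref{lem:growth}, your translation of $(\frakz,\tau)\in\calS_C$ into $t=2/(1+\beta)$ with $\beta=\langle\frakz,\frakz\rangle/\norm(\frakz,\tau)\in[0,\tfrac{C}{1-C})$ is right, the Cauchy--Schwarz elimination of $r=\Im\langle\frakz,\xx_D\rangle$ is a legitimate reduction to a scalar inequality, and the ensuing case analysis closes (in the second case, after clearing the factor $1+\beta$ the quadratic form $NP+(1+2\beta)\xi^2-2\sqrt{\beta}\,\xi\sqrt{NP}$ has determinant $1+\beta>0$, so it is uniformly positive definite for $\beta$ in the compact range, exactly as you argue). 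What each approach buys: the paper's is short and leans on existing machinery for $\Orth(2,l)$, at the cost of asking the reader to unwind the translation between the unitary and orthogonal majorants and to check that the weakened hypothesis really suffices in \cite[Lemma 4.13]{Br1}; yours is longer but transparent, stays entirely in the unitary coordinates of $\calH_{\ell,\tilde\ell}$, and makes the dependence of $\eps$ on $C$ explicit. Both are valid.
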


\begin{proof}
This result can be viewed as a lower bound for the majorant $\langle \xx_{w^\perp},\xx_{w^\perp}\rangle_\Q -\langle \xx_{w},\xx_{w}\rangle_\Q$
associated to the negative line $w=\R w_0(z)\in \Gr^+(K)$.
It directly follows from \cite[Lemma 4.13]{Br1}. Note that in the proof of this lemma, of the equalities defining $\calR_t$ we only need that $|q(Y_D)|<B y_1 y_2$ with $B=\frac{t^4}{t^4+1}$ and $t>0$.
 \end{proof}

\begin{corollary}
\label{cor:growth}
Let $0<C<1$. There exists an $\eps>0$ such that for any $(\frakz,\tau)\in \calS_C$ and any
$\xx = \xx_D - a\sqrt{d_\kk} \ell - \frac{b}{\sqrt{d_\kk}}\widetilde\ell\in K\otimes_\Z\R$ (where $\xx_D\in D\otimes_\Z\R$ and $a,b\in \R$) with $\langle\xx,\xx\rangle\leq 0$, we have
\[
(\Im\langle (\frakz,\tau),\xx\rangle)^2
\geq  \eps \left(a^2 |d_\kk| + b^2\Im(\tau)^2+ \norm(\frakz,\tau) \langle \xx_D,\xx_D\rangle\right).
 \]
\end{corollary}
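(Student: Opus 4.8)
The plan is to deduce Corollary \ref{cor:growth} directly from Lemma \ref{hyp-maj}, the only new observation being that on the locus $\langle\xx,\xx\rangle\le 0$ one may simply discard a term of the favorable sign. Recall that for $(\frakz,\tau)\in \calH_{\ell,\tilde\ell}$ the quantity $\norm(\frakz,\tau)=-\langle(\frakz,\tau),(\frakz,\tau)\rangle$ is \emph{strictly positive}: this is precisely condition \eqref{eq:defnzt} guaranteeing that the associated line lies in $\calD$. Consequently, for $\xx\in K\otimes_\Z\R$ with $\langle\xx,\xx\rangle\le 0$ the product $\norm(\frakz,\tau)\langle\xx,\xx\rangle$ is non-positive, so that
\[
2\big(\Im\langle(\frakz,\tau),\xx\rangle\big)^2 \;\ge\; \norm(\frakz,\tau)\langle\xx,\xx\rangle + 2\big(\Im\langle(\frakz,\tau),\xx\rangle\big)^2 \;=\; h((\frakz,\tau),\xx).
\]

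Next I would apply the lower bound of Lemma \ref{hyp-maj} to the right-hand side. Fixing $0<C<1$ and letting $\eps>0$ be the constant it provides, we obtain, for every $(\frakz,\tau)\in \calS_C$ and every $\xx=\xx_D-a\sqrt{d_\kk}\ell-\tfrac{b}{\sqrt{d_\kk}}\tilde\ell\in K\otimes_\Z\R$ with $\langle\xx,\xx\rangle\le 0$,
\[
2\big(\Im\langle(\frakz,\tau),\xx\rangle\big)^2 \;\ge\; \eps\left(a^2|d_\kk| + b^2\Im(\tau)^2 + \norm(\frakz,\tau)\langle\xx_D,\xx_D\rangle\right).
\]
Dividing by $2$ gives the assertion of the corollary, with $\eps/2$ in place of $\eps$. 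Note the right-hand side is genuinely non-negative, since $\norm(\frakz,\tau)>0$ and $\langle\xx_D,\xx_D\rangle\ge 0$ as $D$ is positive definite, so the estimate is not vacuous.

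There is essentially no obstacle here; the entire analytic content resides in Lemma \ref{hyp-maj}, whose proof in turn is reduced to \cite[Lemma 4.13]{Br1}. This particular reformulation is the shape of the majorant estimate that will be used in the proof of Theorem \ref{thm:bndgrowth} to bound the contributions of the vectors $\xx\in K'$ to the Fourier-Jacobi expansion of $\Phi(z,f)$ from Theorem \ref{fourierphi2} as one approaches the boundary divisor $q_r=0$.
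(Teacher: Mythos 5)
Your deduction is exactly the intended one: since $\norm(\frakz,\tau)>0$ on $\calH_{\ell,\tilde\ell}$ and $\langle\xx,\xx\rangle\le 0$, the term $\norm(\frakz,\tau)\langle\xx,\xx\rangle$ in $h((\frakz,\tau),\xx)$ is non-positive, so Lemma \ref{hyp-maj} immediately gives the bound with $\eps/2$. The paper states the corollary without proof precisely because this is the whole argument; your write-up is correct and matches it.
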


The following lemma is a useful variant of the corollary.

\begin{lemma}
\label{lem:growth}
Let $A\geq 0$ and $0\leq B<1$. Assume that $\Im(\tau)> \frac{(|\frakz|+2A)^2}{4|d_\kk|(1-B)^2}$. Then we have
\[
\Im\langle (\frakz,\tau),\xx\rangle -A |\xx|\geq B \left(a |d_\kk| + b\Im(\tau)\right)
\]
for all $\xx = \xx_D - a\sqrt{d_\kk} \ell - \frac{b}{\sqrt{d_\kk}}\widetilde\ell\in K\otimes_\Z\R$ with $b\geq 0$ and $\langle \xx,\xx\rangle\leq 0$.
\end{lemma}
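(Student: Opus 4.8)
This is an elementary inequality, and I would prove it by a direct computation in the Siegel coordinates $\calH_{\ell,\tilde\ell}$, using only the positive-definiteness of the hermitian form on $W_\R$.

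\emph{Step 1 (make the pairings explicit).} From the orthogonal splitting $V_\R=W_\R\oplus\kk_\R\ell\oplus\kk_\R\tilde\ell$ with $D\otimes_\Z\R\subseteq W_\R$, the normalizations $\langle\ell,\tilde\ell\rangle=1$, $\langle\ell,\ell\rangle=\langle\tilde\ell,\tilde\ell\rangle=0$, $W\perp\{\ell,\tilde\ell\}$, and the conjugate-linearity of $\langle\cdot,\cdot\rangle$ in its second variable, one computes for $\xx=\xx_D-a\sqrt{d_\kk}\,\ell-\tfrac{b}{\sqrt{d_\kk}}\,\tilde\ell$ that
\[
\langle(\frakz,\tau),\xx\rangle=\langle\frakz,\xx_D\rangle+b\,\tau+a\,\delta_\kk,\qquad \langle\xx,\xx\rangle=\langle\xx_D,\xx_D\rangle-2ab .
\]
Hence $\Im\langle(\frakz,\tau),\xx\rangle=\Im\langle\frakz,\xx_D\rangle+b\,\Im(\tau)+a\sqrt{|d_\kk|}$, while $|\xx|^{2}=|\langle\xx,\xx\rangle_\Q|=2\,|\langle\xx_D,\xx_D\rangle-2ab|$.

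\emph{Step 2 (use the hypotheses).} Since $\langle\xx_D,\xx_D\rangle\ge 0$, the condition $\langle\xx,\xx\rangle\le 0$ forces $0\le\langle\xx_D,\xx_D\rangle\le 2ab$; together with $b\ge 0$ this gives $ab\ge 0$, and it forces $a,b>0$ whenever $\langle\xx_D,\xx_D\rangle>0$. (The borderline cases $\langle\xx_D,\xx_D\rangle=0$, in which the error term $\Im\langle\frakz,\xx_D\rangle$ vanishes, reduce to the estimate of Step 3 or to a triviality, and I would dispatch them first.) From $\langle\xx,\xx\rangle\le 0$ once more, $|\xx|^{2}=2(2ab-\langle\xx_D,\xx_D\rangle)\le 4ab$, so $A|\xx|\le 2A\sqrt{ab}$; and the Cauchy--Schwarz inequality for the positive definite form on $W_\R$ gives $|\Im\langle\frakz,\xx_D\rangle|\le|\langle\frakz,\xx_D\rangle|\le|\frakz|\sqrt{ab}$. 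Combining these,
\[
\Im\langle(\frakz,\tau),\xx\rangle-A|\xx|\;\ge\;a\sqrt{|d_\kk|}+b\,\Im(\tau)-(|\frakz|+2A)\sqrt{ab}.
\]

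\emph{Step 3 (bound the cross term).} Apply the arithmetic--geometric mean inequality to the two nonnegative quantities $a\sqrt{|d_\kk|}$ and $b\,\Im(\tau)$:
\[
\sqrt{ab}=\frac{\sqrt{(a\sqrt{|d_\kk|})\,(b\,\Im\tau)}}{|d_\kk|^{1/4}\sqrt{\Im\tau}}\;\le\;\frac{a\sqrt{|d_\kk|}+b\,\Im(\tau)}{2\,|d_\kk|^{1/4}\sqrt{\Im\tau}} .
\]
Then the display of Step 2 is at least $\bigl(a\sqrt{|d_\kk|}+b\,\Im(\tau)\bigr)\bigl(1-\tfrac{|\frakz|+2A}{2|d_\kk|^{1/4}\sqrt{\Im\tau}}\bigr)$, and the lower bound assumed for $\Im(\tau)$ makes the parenthesised factor at least $B$; since $a\sqrt{|d_\kk|}+b\,\Im(\tau)\ge 0$, the asserted inequality follows.

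The argument is entirely elementary, so there is no conceptual obstacle; the only delicate point is the bookkeeping of the various normalization constants — the powers of $|d_\kk|$, the factor relating the hermitian form to $\langle\cdot,\cdot\rangle_\Q$, and the precise meaning of $|\frakz|$ and $|\xx|$ — which must be tracked carefully so that the threshold emerging from the AM--GM step is exactly the one in the statement, together with remembering to treat separately the isotropic vectors with $\xx_D=0$.
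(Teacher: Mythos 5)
Your argument is essentially the paper's own proof: the same explicit expansion $\Im\langle (\frakz,\tau),\xx\rangle=\Im\langle\frakz,\xx_D\rangle+a\sqrt{|d_\kk|}+b\Im(\tau)$, the same Cauchy--Schwarz bound on the cross term, the same deduction $|\xx|^2\leq 4ab$ and $|\xx_D|^2\le 4ab$ from $\langle\xx,\xx\rangle\leq 0$, and your AM--GM step is just a reformulation of the paper's final step, which views $(1-B)\left(a\sqrt{|d_\kk|}+b\Im(\tau)\right)-(|\frakz|+2A)\sqrt{ab}$ as a positive definite binary quadratic form in $\sqrt{a}$ and $\sqrt{b}$. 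The one issue you flag --- that your computation delivers the conclusion with $a\sqrt{|d_\kk|}$ and a threshold $\tfrac{(|\frakz|+2A)^2}{4\sqrt{|d_\kk|}(1-B)^2}$, rather than the $a|d_\kk|$ and $\tfrac{(|\frakz|+2A)^2}{4|d_\kk|(1-B)^2}$ of the statement --- is not a gap in your argument but an inconsistency already present in the source: the first display of the paper's proof correctly produces $a\sqrt{|d_\kk|}$, yet the next line and the statement switch to $a|d_\kk|$, and the version with $a|d_\kk|$ actually fails for $\xx=-\sqrt{d_\kk}\,\ell$ (i.e.\ $\xx_D=0$, $b=0$, $a=1$) once $B>|d_\kk|^{-1/2}$, no matter how large $\Im(\tau)$ is. Your normalization is the one the computation supports, and it is all that is needed where the lemma is applied in the proof of Theorem \ref{thm:bndgrowth}.
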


\begin{proof}
We have
\begin{align*}
\Im\langle (\frakz,\tau),\xx\rangle&= \Im\langle \frakz,\xx_D\rangle + a\sqrt{|d_\kk|} +b\Im(\tau)\\
&\geq  {}-\frac{1}{2}|\xx_D|\cdot |\frakz|+ a\sqrt{|d_\kk|} +b\Im(\tau).
\end{align*}
Since $0\geq \langle\xx,\xx\rangle=\langle\xx_D,\xx_D\rangle-2ab$, we also have $|\xx|^2\leq 4ab$ and $ |\xx_D|^2\leq 4ab$. Consequently,
\begin{align*}
\Im\langle (\frakz,\tau),\xx \rangle -A |\xx|&\geq a\sqrt{|d_\kk|} +b\Im(\tau)-\sqrt{ab}\cdot (|\frakz|+2A)\\
&\geq B\left(a |d_\kk| + b\Im(\tau)\right)\\
&\phantom{\geq}{}+(1-B)\left(a |d_\kk| + b\Im(\tau)\right)-\sqrt{ab}\cdot (|\frakz|+2A).
\end{align*}
The quantity in the latter line can be interpreted as a binary quadratic form in $\sqrt{a}$ and $\sqrt{b}$, which is positive definite if $\Im(\tau)> \frac{(|\frakz|+2A)^2}{4|d_\kk|(1-B)^2}$. This implies the assertion.
\end{proof}

\begin{proof}[Proof of Theorem \ref{thm:bndgrowth}.]
It is easily seen that $S_f$ is finite. To obtain the claimed analytic properties of $\Phi(z,f)$ on  $B_{\delta}(\frakz_0,0)$, we consider the different terms of the Fourier expansion given in Theorem \ref{fourierphi2}.

\emph{Step 1.} We begin with the term
\begin{align*}
&\sum_{\substack{\xx\in K'\\ \langle\xx,\xx\rangle >0}}\, \sum_{\substack{\nu\in L'/L \\ \nu\mid L\cap \ell^{\perp,\Q}=\xx}} c^-(-\langle\xx,\xx\rangle,\nu)\\
&{}\times \sum_{j\geq 1}\frac{1}{j} e\big(j\langle\nu,\ell'\rangle_\Q+j\langle\xx,\mu\rangle_\Q\big) \calV_{n} \left( \frac{\pi j|\xx|}{|\ell_z|},\, \frac{\pi j |\xx_w|}{|\ell_z|}\right).
\end{align*}
According to \cite[equality (3.26)]{Br1}, the function $\calV_n(A,B)$ is bounded by a constant multiple of $e^{-\sqrt{A^2+B^2}}$. Moreover, for $\xx\in K'$ with
$\langle\xx, \xx\rangle>0$ we have
\[
2\frac{\xx^2}{|\ell_z^2|}+2\frac{|\xx_w^2|}{|\ell_z^2|}>\frac{\xx^2}{|\ell_z^2|}+2\frac{|\xx_w^2|}{|\ell_z^2|}=h((\frakz,\tau),\xx).
\]
If we write $\xx = \xx_D -a\sqrt{d_\kk} \ell - \frac{b}{\sqrt{d_\kk}}\widetilde\ell$ (where $\xx_D\in D\otimes_\Z\Q$ and $a,b\in \Q$), then in view of Lemma \ref{hyp-maj}
there exists an $\eps'>0$ such that
\[
\calV_{n} \left( \frac{\pi j|\xx|}{|\ell_z|},\, \frac{\pi j |\xx_w|}{|\ell_z|}\right)
\ll \exp\left(-\eps' j\sqrt{a^2|d_\kk|+b^2\Im(\tau)^2+\norm(\frakz,\tau)\langle\xx_D,\xx_D\rangle}\right).
\]
Since the coefficients $c^-(m,\mu)$ have only polynomial growth as $m\to -\infty$, we find that the above sum over $\xx\in K'$ converges uniformly on $B_\delta(\frakz_0,0)$ to a function which is bounded by
$O(\exp(-\eps''\sqrt{-\log|q_r|}))$ as $q_r\to 0$ for some $\eps''>0$.
Hence this sum converges to a continuous function on $B_\delta(\frakz_0,0)$ which vanishes along the divisor $q_r=0$.
Analogous estimates hold for all iterated partial derivatives with respect to $(\frakz,\tau)$. Using the fact that $d\tau =\frac{r}{2\pi i}\frac{dq_r}{q_r}$, we obtain that the differentials $\partial$, $\overline{\partial}$, $\partial\overline{\partial}$ of this function have log-log growth along $q_r=0$.

\emph{Step 2.} For the term  $c^+(0,0)\log|\ell_z^2|$, we notice that
\begin{align*}
\log|\ell_z^2|&=
-\log(\norm(\frakz,\tau)/2)\\
&=-\log\left(\sqrt{|d_\kk|}\Im(\tau)-\langle\frakz,\frakz\rangle/2\right)\\
&=-\log\left(-\log|q_r|\right)
-\log\left(\frac{r\sqrt{|d_\kk|}}{2\pi}+\frac{\langle\frakz,\frakz\rangle}{2\log|q_r|}\right).
\end{align*}
The second summand on the right hand side extends to a continuous function on $B_\delta(\frakz_0,0)$ whose differentials  have log-log growth along the boundary divisor $q_r=0$.

\emph{Step 3.} Next, we consider the term
\begin{align*}
&\sum_{\substack{\xx\in K'\smallsetminus \{0\} }} \sum_{\substack{\nu\in L'/L \\ \nu\mid L\cap \ell^{\perp,\Q}=\xx}
} c^+(-\langle\xx,\xx\rangle,\nu) \\
&\phantom{=}{}\times \log \left( 1-e\big(\langle\nu,\ell'\rangle_\Q+\langle\xx,\mu\rangle_\Q+i|\xx_w|/|\ell_z|\big)\right)\\
\nonumber
&= \sum_{\substack{\xx\in K'\smallsetminus \{0\} }} \sum_{\substack{\nu\in L'/L \\ \nu\mid L\cap \ell^{\perp,\Q}=\xx}} c^+(-\langle\xx,\xx\rangle,\nu) \\
&\phantom{=}{}\times \log \left( 1-e\big(\langle\nu,\ell'\rangle_\Q+\Re\langle (\frakz,\tau),\xx\rangle+i|\Im\langle(\frakz,\tau),\xx\rangle|\big)\right).
\end{align*}
There exists a constant $C>0$ such that $c^+(m,\nu) =O(e^{C\sqrt{m}})$ for $m\to \infty$. Hence it follows
from Corollary \ref{cor:growth} and Lemma \ref{lem:growth}, that the sum over $\xx\in K'$ with $\langle\xx,\xx\rangle< 0$ converges uniformly on $B_\delta(\frakz_0,0)$ to a function which is bounded by
$O(\exp(-\eps''\sqrt{-\log|q_r|}))$ as $q_r\to 0$ for some $\eps''>0$.
Observe that $\langle\xx,\xx\rangle< 0$ implies that $\langle\xx,\ell\rangle\neq 0$.

Moreover, Lemma \ref{hyp-maj}
implies that, if $\delta$ is sufficiently small, the sum over $\xx\in K'$ with $\langle \xx,\xx \rangle\geq 0$ and $\langle \xx,\ell\rangle\neq 0$ converges uniformly on $B_\delta(\frakz_0,0)$ to a function which is bounded by
$O(\exp(-\eps''\sqrt{-\log|q_r|}))$ as $q_r\to 0$ for some $\eps''>0$.
Analogous estimates hold for all iterated partial derivatives with respect to $(\frakz,\tau)$.
Hence, up to a continuous function with log-log growth differentials, the above sum is equal to
\begin{align}
\label{term2b}
&\sum_{\substack{\xx\in K'\smallsetminus \{0\}\\ \langle \xx,\ell\rangle=0 }} \sum_{\substack{\nu\in L'/L \\ \nu\mid L\cap \ell^{\perp,\Q}=\xx}
} c^+(-\langle\xx,\xx\rangle,\nu) \\
\nonumber
&\phantom{=}{}\times \log \left( 1-e\big(\langle\nu,\ell'\rangle_\Q+\langle\xx,\mu\rangle_\Q+i|\xx_w|/|\ell_z|\big)\right)\\
\nonumber
&=\sum_{\substack{\xx\in L'\cap \ell^\perp/\Z\ell\\\xx\neq 0}}
 c^+(-\langle\xx,\xx\rangle,\xx) \Log \left( 1-e\big(\Re\langle \frakz+\widetilde\ell,\xx\rangle+i|\Im\langle \frakz+\widetilde \ell,\xx\rangle|\big)\right).
\end{align}
Notice that the this sum does not depend on $\tau$.
We let $T_f$ be the finite set
\[
T_f=\{\xx\in L'\cap\ell^\perp/\Z\ell :  \text{$\langle \xx,\xx\rangle>0$, $c^+(-\langle \xx,\xx\rangle,\xx)\neq 0$
and $\Im\langle \frakz_0+\widetilde \ell,\xx\rangle =0$}\}.
\]
It is an analogue for the integral lattice $K$
of the set $S_f$ defined in Theorem \ref{thm:bndgrowth}.
Let $\widetilde T_f$ be a fixed system of representatives for $T_f/\{\pm 1\}$.
If $\delta$ is sufficiently small, then on the the right hand side of \eqref{term2b},
the sum over those $\xx$ which do not belong to $T_f$ defines a smooth function on
$B_\delta(\frakz_0,0)$. Hence, up to a smooth function, \eqref{term2b} is equal to
\begin{align*}
&\sum_{\substack{\xx\in T_f}}
 c^+(-\langle\xx,\xx\rangle,\xx) \Log \left( 1-e\big(\Re\langle \frakz+\widetilde\ell,\xx\rangle+i|\Im\langle \frakz+\widetilde \ell,\xx\rangle|\big)\right)\\
&= \sum_{\substack{\xx\in \widetilde T_f}}
 c^+(-\langle\xx,\xx\rangle,\xx) \log \left|1-e\big(\langle \frakz+\widetilde\ell,\xx\rangle\big)\right|^2
 +4\pi \sum_{\substack{\xx\in \widetilde T_f\\ \Im\langle \frakz+\widetilde\ell,\xx\rangle<0}}
 c^+(-\langle\xx,\xx\rangle,\xx) \Im\langle \frakz+\widetilde\ell,\xx\rangle.
\end{align*}
We find that \eqref{term2b} is the sum of a smooth function on $B_\delta(\frakz_0,0)$ and
\begin{align}
\label{term2bb}
\sum_{\substack{\xx\in S_f}}
 c^+(-\langle\xx,\xx\rangle,\xx) \log| \langle \frakz+\widetilde \ell,\xx\rangle|
 +4\pi \sum_{\substack{\xx\in \widetilde T_f\\ \Im\langle \frakz+\widetilde\ell,\xx\rangle<0}}
 c^+(-\langle\xx,\xx\rangle,\xx) \Im\langle \frakz+\widetilde\ell,\xx\rangle.
\end{align}

\emph{Step 4.} It remains to consider the quantity $\frac{1}{\sqrt{2} |\ell_z|} \Phi^K(w,f_K)$. Let
$\ell_K\in (\kk\ell\cap L)/\Z\ell= \fraka/\Z\ell$ be a primitive vector. Then $\fraka= \Z\ell_K+\Z\ell$. If we write $\ell_K=a\ell$ with $a\in \kk$, we have $\fraka=\fraka_0\ell$ with $\fraka_0= \Z a +\Z\subset \kk$ and
\begin{align}
\label{norma0}
\frac{2\Im(a)}{\sqrt{|d_\kk|}}= \norm(\fraka_0).
\end{align}
The positive definite lattice $(K\cap \ell_K^{\perp,\Q} ) / \Z\ell_K$ is isomorphic to the $\calO_\kk$-lattice $D=L\cap\fraka^\perp/\fraka$.
We use the Fourier expansion given in \cite[Chapter 3.1]{Br1}  with respect to the primitive isotropic vector $\ell_K$, to describe the behavior on $B_\delta(\frakz_0,0)$. The vector
\[
w_1=-i\frac{(\frakz,\tau)}{\sqrt{2\norm(\frakz,\tau)}}
\]
is the unique positively oriented vector in the real line $w$ of length $-1$. For $\xx\in D\otimes_\Z\R$ we have
\begin{align*}
\langle w_1,\xx\rangle_\Q &= \frac{\sqrt{2}\Im\langle \frakz,\xx\rangle}{\sqrt{\norm(\frakz,\tau)}},\\
\langle w_1,\ell_K \rangle_\Q &= \frac{\sqrt{2}\Im(a)}{\sqrt{\norm(\frakz,\tau)}}.
\end{align*}

Let $\ell_K'\in K'$ such that $\langle \ell_K',\ell_K\rangle_\Q=1$. According to \cite[p.~68]{Br1}, we have in our present notation that
\begin{align}\nonumber
\Phi^K(w_1,f_K) & = \frac{1}{\sqrt{2} \langle w_1,\ell_K\rangle_\Q} \Phi^D(f_D)\\
\nonumber
&\phantom{=}{}
+4\sqrt{2}\pi \langle w_1,\ell_K\rangle_\Q \sum_{\xx\in  D'} \sum_{\substack{\nu\in L'/L \\ \nu\mid L\cap \ell^{\perp}=\xx}}
 c^+(-\langle \xx,\xx\rangle, \nu)  \B_2\left( \frac{\langle w_1,\xx\rangle_\Q}{\langle w_1,\ell_K\rangle_\Q} + \langle \nu,\ell_K'\rangle_\Q\right) \\
\nonumber
&\phantom{=}{}+4\sqrt{2} \left(\frac{\pi}{\langle w_1,\ell_K\rangle_\Q}\right)^{n-2}\sum_{\xx\in  D'\smallsetminus\{0\}} \sum_{\substack{\nu\in L'/L \\ \nu\mid L\cap \ell^{\perp}=\xx}}
 c^-(-\langle \xx,\xx\rangle, \nu)
 |\xx|^{n-1}\\
\nonumber
&\phantom{=} \times \sum_{j\geq 1}  j^{n-3} e\left(j \frac{\langle w_1,\xx\rangle_\Q}{\langle w_1,\ell_K\rangle_\Q} + j\langle \nu,\ell_K'\rangle_\Q \right) K_{n-1} \left(\frac{2\pi j |\xx|}{\langle w_1,\ell_K\rangle_\Q}\right).
\end{align}
Here $\B_2(x)$ denotes the $1$-periodic function on $\R$ which agrees on $0\leq x< 1$ with the second Bernoulli polynomial $B_2(x)=x^2-x+1/6$, and $K_\nu(x)$ denotes the $K$-Bessel function. Because of the exponential decay of the $K$-Bessel function, we find that
\begin{align}
\nonumber
&\frac{1}{\sqrt{2} |\ell_z|} \Phi^K(w,f_K)
= \frac{\sqrt{\norm(\frakz,\tau)}}{2}\Phi^K(w,f_K) \\
\nonumber
& = \frac{\norm(\frakz,\tau)}{4\Im(a)} \Phi^D(f_D)\\
\nonumber
&\phantom{=}{}+4\pi \Im(a)
\sum_{\substack{\xx\in L'\cap \fraka^{\perp}/\fraka}} c^+(-\langle \xx,\xx\rangle, \xx)  \B_2\left( \frac{\Im\langle \frakz+\widetilde\ell,\xx\rangle}{\Im(a)}\right)+s(\frakz,\tau),
\end{align}
where $s(\frakz,\tau)$ is a continuous function on $B_\delta(\frakz_0,0)$ with log-log growth differentials.
If $\delta$ is sufficiently small, then the second summand on the right hand side is the sum of a smooth function on $B_\delta(\frakz_0,0)$ and
\[
8\pi \sum_{\substack{\xx\in \widetilde T_f\\ \Im\langle \frakz+\widetilde\ell,\xx\rangle<0}}
 c^+(-\langle\xx,\xx\rangle,\xx) \Im\langle \frakz+\widetilde\ell,\xx\rangle.
\]
Note that this term is the negative of the contribution coming from the second quantity in \eqref{term2bb}.
We obtain that up to a continuous function on $B_\delta(\frakz_0,0)$ with log-log growth differentials, the term
$\frac{1}{\sqrt{2} |\ell_z|} \Phi^K(w,f_K)$ is equal to
\[
-\frac{r\Phi^D(f_D)}{2\pi\norm(\fraka_0)}\log|q_r|+8\pi \sum_{\substack{\xx\in \widetilde T_f\\ \Im\langle \frakz+\widetilde\ell,\xx\rangle<0}}
 c^+(-\langle\xx,\xx\rangle,\xx) \Im\langle \frakz+\widetilde\ell,\xx\rangle.
\]
Here we have also used \eqref{norma0}.

\emph{Step 5.} Adding together all the contributions, we find that if $\delta$ is sufficiently small,
then
\[
\Phi(z,f) +\frac{r\Phi^D(f_D)}{2\pi\norm(\fraka_0)}\log|q_r|+c^+(0,0)\log\left|\log|q_r|
\right|+2\sum_{\xx\in S_f} c^+(-\langle\xx,\xx\rangle,\xx)\log|\langle \frakz+\widetilde\ell,\xx\rangle|
\]
has a continuation to a continuous function on $B_{\delta}(\frakz_0,0)$. It is smooth on the complement of the boundary divisor $q_r=0$, and its images under the differentials $\partial$, $\overline{\partial}$, $\partial\overline{\partial}$
have log-log growth along the divisor $q_r=0$.
\end{proof}

\begin{proof}[Proof of Proposition \ref{prop:lp}]
We only prove that for $n>3$ the Green function $\Phi(z,f)$ belongs to $L^{2}(X_\Gamma^*,\Omega^{n-1})= L^{2}(X_\Gamma,\Omega^{n-1})$. The other
assertion can be proved analogously.
Since $X_\Gamma^*$ is compact, it suffices to show this locally for a small neighborhood of any point of $X_\Gamma^*$.
Since $\Phi(z,f)$ has only logarithmic singularities outside the boundary, and since $\Omega^{n-1}$ is smooth outside the boundary, this is clear outside the boundary points.

Therefore it suffices to show that for any primitive isotropic vector $\ell\in L$ and any boundary point $(\frakz_0,0)\in \widetilde V_{\eps}(\ell)$ the function $\Phi(z,f)$ is square integrable with respect to the measure $\Omega^{n-1}$ in a small neighborhood $B_\delta(\frakz_0,0)$.

It is easily seen that there exists a non-zero constant $c$ such that
\begin{align*}
\Omega^{n-1}&= c\cdot\norm(\frakz,\tau)^{-n}d\frakz\,d\overline{\frakz}\, d\tau \, d\overline{\tau}\\
&= -\frac{r^2c}{4\pi^2} \cdot \left(\frac{\sqrt{|d_\kk|} r}{\pi}\log |q_r|-\langle \frakz,\frakz\rangle\right)^{-n}d\frakz\,d\overline{\frakz}\, \frac{dq_r\, d\overline{ q}_r}{|q_r|^2}.
\end{align*}
Here we have put $d\frakz = d\frakz_1\cdots d\frakz_{n-2}$.
Hence, according to Theorem \ref{thm:bndgrowth}, it suffices to show that
$\log|q_r|$ is square integrable on $B_\delta(\frakz_0,0)$ with respect to the measure $\Omega^{n-1}$.
Since $n>3$, this is now easily seen.
\end{proof}


\subsection{Automorphic Green functions for Kudla-Rapoport divisors}
\label{ss:KR green}


Fix $\LL$ and $f\in H_{2-n}(\omega_\LL)^\Delta$ as in Section
\ref{ss:compact}.  We will construct a Green function for the total Kudla-Rapoport divisor
of Definition \ref{def:total KR}.

Using the uniformization (\ref{uniformization}), fix a connected component
$\Gamma_{(\mathfrak{A}_0,\mathfrak{A})} \backslash \mathcal{D}_{(\mathfrak{A}_0,\mathfrak{A})}$ of  $\mathcal{M}_\LL(\C)$.
In particular,
\[
\widehat{L} (\mathfrak{A}_0,\mathfrak{A}) \iso \LL_f
\]
as hermitian $\widehat{\co}_\kk$-modules. Exactly as with $\LL_f$, the $\Z$-module
$\mathfrak{d}_\kk^{-1} L(\mathfrak{A}_0 , \mathfrak{A}) /  L(\mathfrak{A}_0 , \mathfrak{A})$
is equipped with a $d_\kk^{-1} \Z/\Z$-valued quadratic form whose automorphism group
we again denote by $\Delta$, and there is an isomorphism of quadratic spaces
\begin{equation}\label{quadratic iso}
\mathfrak{d}_\kk^{-1} L(\mathfrak{A}_0 , \mathfrak{A}) /L(\mathfrak{A}_0 , \mathfrak{A})
\iso \mathfrak{d}_\kk^{-1} \LL_f / \LL_f .
\end{equation}
Such an  isomorphism identifies  $S_\LL$  with the space
$S_{L(\mathfrak{A}_0,\mathfrak{A} )}$ of complex valued functions on the left hand side of
(\ref{quadratic iso}). This identification depends on the choice of (\ref{quadratic iso}), but the restriction
\begin{equation}\label{globalized schwartz}
S^\Delta_{L(\mathfrak{A}_0,\mathfrak{A} )} \iso S_\LL^\Delta
\end{equation}
to $\Delta$-invariants is independent of the choice.  This allows us to  view the function $f$
as a $\Delta$-invariant  $S_{L(\mathfrak{A}_0,\mathfrak{A})}$-valued harmonic Maass form.
The construction (\ref{eq:AutoGreen}) defines a  function $\Phi_{L(\mathfrak{A}_0, \mathfrak A)}( f)$ on
$\Gamma_{(\mathfrak{A}_0,\mathfrak{A})}\backslash  \mathcal{D}_{(\mathfrak{A}_0,\mathfrak{A})}$
with  logarithmic singularities along the divisor $\mathcal{Z}_{\LL}(f)(\C)$.

By  repeating the above construction on every connected component of $\mathcal{M}_\LL(\C)$
we obtain a Green function $\Phi_\LL( f)$ for the divisor  $\mathcal{Z}_{\LL}(f)$ on
$\mathcal{M}_\LL$.   By Corollary \ref{cor:defbf},  the pair
\begin{equation}\label{total cycle}
  \widehat{\mathcal{Z}}^{\mathrm{total}}_\LL (f)=
  \big( \mathcal{Z}^\mathrm{total}_\LL(f)   ,  \Phi_\LL(f ) \big)
\end{equation}
defines a class in $\widehat{\mathrm{CH}}^1_\C(\mathcal{M}_\LL^*)$.


\section{Complex multiplication cycles}
\label{s:cm cycles}


In this section we study a  $1$-dimensional cycle $\mathcal{Y}\to \mathcal{M}$  of  complex multiplication points,
and begin the  calculation  of its intersection with the Kudla-Rapoport divisors.


\subsection{Definition of the CM cycle}


For an $\co_\kk$-scheme $S$, an $S$-valued point
\[
(A_1,B) \in  (\mathcal{M}_{(0,1)} \times_{\co_\kk} \mathcal{M}_{(n-1,0)} ) (S)
\]
determines an $S$-valued point $A_1\times B \in \mathcal{M}_{(n-1,1)}(S)$,
where $A_1\times B$ is implicitly endowed with the product polarization, the product action of $\co_\kk$,
and the $\co_\kk$-stable $\co_S$-submodule $\Lie(B)\subset \Lie(A_1\times B)$  satisfying
Kr\"amer's signature $(n-1,1)$ condition.  In other words, the construction
$(A_1, B) \mapsto A_1\times B$ defines a morphism
\[
\mathcal{M}_{(0,1)}  \times_{\co_\kk} \mathcal{M}_{(n-1,0) }  \to \mathcal{M}_{(n-1,1)}.
\]
The algebraic stack
\[
\mathcal{Y} = \mathcal{M}_{(1,0)} \times_{\co_\kk} \mathcal{M}_{(0,1)} \times_{\co_\kk} \mathcal{M}_{(n-1,0) }
\]
is smooth and proper of relative dimension $0$ over $\co_\kk$, and admits a
finite and unramified morphism $\mathcal{Y} \to \mathcal{M}$ defined by
$(A_0,A_1,B) \mapsto (A_0, A_1\times B)$.    The algebraic stack $\mathcal{Y}$
is a \emph{CM cycle}, in the sense that for any triple
$(A_0,A_1,B) \in\mathcal{Y}(S)$ the entries $A_0$ and $A_1$ are elliptic curves with
complex multiplication, while $B$ is isogenous to a product of elliptic curves with complex multiplication.

For any $S$-valued point
$(A_0,A_1,B)\in \mathcal{Y}(S)$ there is an orthogonal  decomposition
\begin{equation}\label{ortho sum}
L(A_0, A_1\times B) \iso L(A_0,A_1) \oplus L(A_0,B),
\end{equation}
where $L(A_0,A_1)  = \Hom_{\co_\kk}(A_0,A_1)$ and $L(A_0,B)  = \Hom_{\co_\kk}(A_0,B)$.

\begin{theorem}[Canonical lifting theorem]\label{thm:superrigid}
Let $\widetilde{S}$ be an $\co_\kk$-scheme, and let $S\hookrightarrow \widetilde{S}$
be a closed subscheme defined by a nilpotent ideal sheaf.
Suppose $k$ and $\ell$ are positive integers. Every pair
\[
(B_1,B_2) \in \left(   \mathcal{M}_{(k,0)}  \times_{\co_\kk}   \mathcal{M}_{(\ell,0)} \right) (S)
\]
admits a unique deformation to an $\widetilde{S}$-valued point
\[
( \widetilde{B}_1, \widetilde{B}_2) \in \left(   \mathcal{M}_{(k,0)}  \times_{\co_\kk}   \mathcal{M}_{(\ell,0)} \right) (\widetilde{S}),
\]
and the restriction map
$
\Hom_{\co_\kk}(  \widetilde{B}_1, \widetilde{B}_2 ) \to \Hom_{\co_\kk}(B_1,B_2)
$
is an isomorphism.
\end{theorem}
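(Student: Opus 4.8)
The plan is to reduce to the case of a square-zero thickening, and then to treat the two assertions separately: the existence and uniqueness of the deformation $(\tilde B_1,\tilde B_2)$ will follow from formal \'etaleness of the moduli stacks, while the statement about homomorphisms will come from Grothendieck--Messing deformation theory together with the signature condition. Since the ideal sheaf $\mathcal I\subset\co_{\tilde S}$ cutting out $S$ is nilpotent, one first filters $\tilde S$ by the closed subschemes defined by $\mathcal I\supset\mathcal I^2\supset\mathcal I^4\supset\cdots$, so that $S\hookrightarrow\tilde S$ factors into finitely many square-zero extensions; it then suffices to prove the theorem when $\mathcal I^2=0$. Now $\mathtt M_{(k,0)}$ and $\mathtt M_{(\ell,0)}$ are smooth of relative dimension $0$ over $\co_\kk$ by \cite{Ho3}, hence \'etale over $\co_\kk$, hence so is $\mathtt M_{(k,0)}\times_{\co_\kk}\mathtt M_{(\ell,0)}$; an \'etale morphism is formally \'etale, so the restriction map from $\tilde S$-points to $S$-points of this product is a bijection. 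This produces the unique deformation $(\tilde B_1,\tilde B_2)$. (All the stacks are Deligne--Mumford, so everything may be checked on an \'etale atlas and the stacky language causes no difficulty.)

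For the homomorphism statement, I would first dispose of injectivity of $\Hom_{\co_\kk}(\tilde B_1,\tilde B_2)\to\Hom_{\co_\kk}(B_1,B_2)$: this is just rigidity of abelian schemes, since $\underline\Hom(\tilde B_1,\tilde B_2)$ is unramified over $\tilde S$ and hence so is its closed subscheme $\underline\Hom_{\co_\kk}(\tilde B_1,\tilde B_2)$, which therefore admits at most one lift of a given $S$-section (here one uses the chosen identification $\tilde B_i\times_{\tilde S}S\cong B_i$). Surjectivity is the heart of the matter, and for it I would invoke Grothendieck--Messing. Writing $\D_i$ for the value on $\tilde S$ of Messing's crystal of $B_i$ --- a locally free $\co_{\tilde S}$-module carrying an $\co_\kk$-action and sitting in an exact sequence $0\to\mathrm{Fil}(\tilde B_i)\to\D_i\to\Lie(\tilde B_i)\to0$ --- the deformation $\tilde B_i$ is recovered from the Hodge filtration $\mathrm{Fil}(\tilde B_i)$, and an $\co_\kk$-linear homomorphism $\phi\colon B_1\to B_2$ over $S$ lifts to a homomorphism $\tilde B_1\to\tilde B_2$ exactly when the induced ($\co_\kk$-linear) map $\D_1\to\D_2$ carries $\mathrm{Fil}(\tilde B_1)$ into $\mathrm{Fil}(\tilde B_2)$.

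The key point is that the signature $(m,0)$ condition pins down $\mathrm{Fil}(\tilde B_i)$ intrinsically in terms of the $\co_\kk$-module structure of $\D_i$ --- this is precisely why $\mathtt M_{(m,0)}$ is \'etale, and not merely smooth, over $\co_\kk$. Indeed the condition forces $\co_\kk$ to act on the quotient $\Lie(\tilde B_i)$ through the structure morphism and, via $\co_\kk$-linearity of the polarization, on $\mathrm{Fil}(\tilde B_i)$ through its complex conjugate. Away from the primes dividing $d_\kk$ these two characters are distinct, $\D_i$ splits as a direct sum of the corresponding isotypic summands, $\mathrm{Fil}(\tilde B_i)$ is one of them, and any $\co_\kk$-linear map $\D_1\to\D_2$ respects this decomposition automatically; hence the Grothendieck--Messing obstruction vanishes, $\phi$ lifts, and the lift is unique by the injectivity already established.

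The main obstacle I anticipate is making this rigidity of the Hodge filtration precise and uniform at the ramified primes of $\kk$, where the two characters coincide modulo $d_\kk$ and the clean isotypic-summand description breaks down. There one expects to argue instead by a local rank computation --- showing that $\mathrm{Fil}(\tilde B_i)$ is still a submodule of $\D_i$ functorially determined by the $\co_\kk\otimes\co_{\tilde S}$-structure, and therefore preserved by $\D_1\to\D_2$ --- or, equivalently, to reduce to complete local Noetherian $\co_\kk$-algebras (by an \'etale-local and limit argument) and appeal to the rigidity of canonical liftings of abelian schemes with $\co_\kk$-multiplication of signature $(m,0)$. Once the filtration-rigidity is in hand at every prime, the theorem follows; everything else in the argument is formal.
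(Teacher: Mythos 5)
Your proposal is correct and follows essentially the same route as the paper, whose proof simply cites \cite[Proposition 2.4.1]{Ho2} and \cite[Proposition 2.1.2]{Ho3}: there, exactly as you describe, the unique deformation comes from the \'etaleness of $\mathtt{M}_{(m,0)}$ over $\co_\kk$, and homomorphisms lift because Grothendieck--Messing (in its de Rham incarnation, valid for abelian schemes over any base after reducing to square-zero thickenings) reduces the question to showing that any $\co_\kk$-linear map of crystals preserves the Hodge filtration. The ramified-prime difficulty you flag is resolved precisely by the ``local rank computation'' you anticipate: writing $\co_\kk=\Z[\Pi]$ and $J=\Pi\otimes 1-1\otimes\Pi$, the signature $(m,0)$ condition forces $\mathrm{Fil}(\tilde B_i)=J\,\D_i$ uniformly at all primes (this is the identity $\mathrm{Fil}(D(G_\ell))=JD(G_\ell)$ used again in the proof of Lemma \ref{lem:local ring}), so the filtration is functorially determined by the $\co_\kk\otimes\co_{\tilde S}$-module structure and is automatically preserved.
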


\begin{proof}
The analogous statement for $p$-divisible groups, proved using Grothendieck-Messing theory
and assuming that $p$ is locally nilpotent
on $S$, is \cite[Proposition 2.4.1]{Ho2}.  To prove the lemma, combine the
argument of [\emph{loc.~cit.}] with the proof of \cite[Proposition 2.1.2]{Ho3},
which is based instead on algebraic de Rham cohomology, and so is valid for
abelian schemes over an arbitrary base.
\end{proof}

Proposition \ref{prop:genus type} has the following analogue, whose proof we again leave to the reader.

\begin{proposition}\label{prop:genus type 2}
Let $S=\Spec(\F)$ be the spectrum of an algebraically closed field, and suppose
$(A_0,A_1,B) \in \mathcal{Y}(\F)$.
\begin{enumerate}
\item
There is a unique incoherent   self-dual hermitian $(\kk_\R,\widehat{\co}_\kk)$-module
$\LL_0(A_0,A_1)$ of signature $(1,0)$  satisfying
 \[
 \LL_0(A_0,A_1)_\ell \iso   \Hom_{\co_{\kk,\ell}} (  T_\ell ( A_0) , T_\ell( A_1) )
 \]
for every prime $\ell\not=\mathrm{char}(\F)$.
\item
The hermitian $\co_\kk$-module $L(A_0,B)$ is self-dual of signature $(n-1,0)$.
\end{enumerate}
Moreover, the  modules   $\LL_0(A_0,A_1)$ and $L(A_0,B)$
depend only the connected component of $\mathcal{Y}$ containing $(A_0,A_1,B)$, and not on
the point $(A_0,A_1,B)$ itself.
\end{proposition}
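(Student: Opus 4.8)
The plan is to deduce everything from Proposition~\ref{prop:genus type}, applied to the point $(A_0,A_1\times B)\in\mathtt{M}(\F)$, from the orthogonal decomposition \eqref{ortho sum}, and from the canonical lifting theorem (Theorem~\ref{thm:superrigid}). Throughout write $p=\cha(\F)$, the case $p=0$ being a degenerate instance in which no ``place $p$'' intervenes. I would begin with part~(2). When $\cha(\F)=0$, the group $\Hom_{\co_\kk}(A_0,B)$ is unchanged under enlarging the algebraically closed base, so I may pass to $\C$ and replace $A_0$, $B$ by their Betti homology $\mathfrak{A}_0=H_1(A_0(\C),\Z)$ and $\mathfrak{B}=H_1(B(\C),\Z)$; these are self-dual hermitian $\co_\kk$-modules of signatures $(1,0)$ and $(n-1,0)$ by the discussion in Section~\ref{ss:hermitian}, whence $L(A_0,B)\cong L(\mathfrak{A}_0,\mathfrak{B})$ is self-dual, and a direct computation of signatures gives $(n-1,0)$. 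When $\cha(\F)=p>0$, I would apply Theorem~\ref{thm:superrigid} over the truncated Witt rings $W_m(\F)$ and pass to the limit to lift $(A_0,B)$ canonically to characteristic $0$ \emph{without changing} $\Hom_{\co_\kk}(A_0,B)$, reducing to the previous case.

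For part~(1), Proposition~\ref{prop:genus type} gives that $\mathscr{L}(A_0,A_1\times B)$ is an incoherent self-dual hermitian $(\kk_\R,\widehat{\co}_\kk)$-module of signature $(n,0)$. By \eqref{ortho sum} and its $\ell$-adic counterpart $\Hom_{\co_{\kk,\ell}}(T_\ell A_0,T_\ell(A_1\times B))\cong\Hom_{\co_{\kk,\ell}}(T_\ell A_0,T_\ell A_1)\oplus\Hom_{\co_{\kk,\ell}}(T_\ell A_0,T_\ell B)$, together with part~(2), the hermitian $(\kk_\R,\widehat{\co}_\kk)$-module $\widehat{L}(A_0,B)$ attached to the lattice $L(A_0,B)$ occurs as a self-dual orthogonal summand of $\mathscr{L}(A_0,A_1\times B)$ at every place $\ell\neq p$ and at $\infty$; at the remaining place $p$ the same holds, because self-dual hermitian $\co_{\kk,p}$-modules of a fixed rank are classified by the local invariant (one class for $p\nmid d_\kk$, two classes for $p\mid d_\kk$). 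Taking $\mathscr{L}_0(A_0,A_1)$ to be the orthogonal complement then yields a self-dual module of signature $(1,0)$ with $\mathscr{L}_0(A_0,A_1)_\ell\cong\Hom_{\co_{\kk,\ell}}(T_\ell A_0,T_\ell A_1)$ for $\ell\neq p$; and since $\widehat{L}(A_0,B)$ is coherent, multiplicativity of the invariant gives $\mathrm{inv}(\mathscr{L}_0(A_0,A_1))=\mathrm{inv}(\mathscr{L}(A_0,A_1\times B))\cdot\mathrm{inv}(\widehat{L}(A_0,B))=(-1)(+1)=-1$, so $\mathscr{L}_0(A_0,A_1)$ is incoherent. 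Uniqueness is then immediate: the archimedean component is fixed by the signature, the components at $\ell\neq p$ are prescribed, and the component at $p$ is forced---either because there is a unique self-dual rank-one hermitian $\co_{\kk,p}$-module ($p\nmid d_\kk$), or because the two possibilities are distinguished by $\mathrm{inv}_p$, which is pinned down by the requirement $\prod_v\mathrm{inv}_v=-1$.

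Finally, for the independence statement I would use that $\mathtt{Y}$ is finite étale over $\co_\kk$, being a fibre product of stacks smooth and proper of relative dimension $0$ over $\co_\kk$; thus a connected component of $\mathtt{Y}$ has connected total space, meeting only finitely many fibres of $\mathtt{Y}\to\Spec\co_\kk$, and it suffices to check that $L(A_0,B)$ and $\mathscr{L}_0(A_0,A_1)$ take the same value at the geometric points of that component in characteristic $0$ and at those in each positive characteristic. Choosing a complete discrete valuation ring mapping to the component, Theorem~\ref{thm:superrigid} (iterated over Witt vectors and passed to the limit) shows that a geometric point in characteristic $p$ is the reduction of a canonically determined point of the component in characteristic $0$, compatibly with the formation of $\Hom_{\co_\kk}(A_0,B)$ and of the $\ell$-adic Tate modules; since moreover the isomorphism classes in question are stable under the Galois action on geometric points, $L(A_0,B)$ is constant on the component. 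Then $\mathscr{L}_0(A_0,A_1)$ is constant as well: by Witt cancellation it is determined up to isomorphism by $\mathscr{L}(A_0,A_1\times B)$---constant on connected components of $\mathtt{M}$ by Proposition~\ref{prop:genus type}, and $\mathtt{Y}\to\mathtt{M}$ carries a connected component into one---and by $\widehat{L}(A_0,B)$, just shown constant. I expect the only non-formal point to be this last one, where the needed deformation-theoretic input is exactly Theorem~\ref{thm:superrigid} together with étaleness of $\mathtt{Y}$ over $\co_\kk$, so the argument parallels \cite{Ho3}; the one genuinely new computation is the local-invariant bookkeeping at $p=\cha(\F)$ carried out in part~(1).
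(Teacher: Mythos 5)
Your argument is correct, and it uses exactly the ingredients the paper assembles for this purpose: Proposition~\ref{prop:genus type} applied to $(A_0,A_1\times B)$, the orthogonal decomposition \eqref{ortho sum} and its $\ell$-adic analogue, the canonical lifting theorem (Theorem~\ref{thm:superrigid}, algebraized via Grothendieck's formal existence theorem as the paper itself does later), and the classification of self-dual hermitian $\co_{\kk,p}$-lattices by rank and determinant to pin down the place $p=\cha(\F)$ and deduce incoherence from $\mathrm{inv}(\mathscr{L}_0)=\mathrm{inv}(\mathscr{L})\cdot\mathrm{inv}(\widehat{L}(A_0,B))=-1$. The paper declares this proof ``left to the reader'' as an analogue of Proposition~\ref{prop:genus type}, and your write-up is a faithful and complete filling-in of that intended route.
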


From Proposition \ref{prop:genus type 2} we have a decomposition
\begin{equation}\label{CM genus}
\mathcal{Y} = \bigsqcup_{ (\LL_0, \Lambda ) } \mathcal{Y}_{(\LL_0, \Lambda )},
\end{equation}
 where the disjoint union is over the isomorphism classes of
pairs $(\LL_0,\Lambda)$ consisting of
\begin{itemize}
\item
an incoherent self-dual hermitian $(\kk_\R, \widehat{\co}_\kk)$-module $\LL_0$
of signature $(1,0)$,
\item
 a  self-dual hermitian $\co_\kk$-module $\Lambda$ of signature $(n-1,0)$.
\end{itemize}
The stack $\mathcal{Y}_{(\LL_0, \Lambda )}$ is the union of those connected components
of $\mathcal{Y}$ along which  $\LL_0(A_0,A_1) \iso \LL_0$ and $ L(A_0,B) \iso \Lambda$.

\begin{remark}\label{rem:sum}
Each pair $(\LL_0,\Lambda)$ as above determines an incoherent self-dual
$(\kk_\R, \widehat{\co}_\kk)$-module $\LL_0\oplus \Lambda$
of signature $(n,0)$, whose archimedean and finite parts are, by definition,
\begin{align*}
( \LL_0\oplus \Lambda)_\infty
& =  \LL_{0 , \infty} \oplus ( \Lambda \otimes_{\Z} \R)  \\
( \LL_0 \oplus \Lambda )_f
&=    \LL_{0 , f} \oplus (\Lambda \otimes_{\Z} \widehat{\Z}  ) . \nonumber
\end{align*}
\end{remark}

For the rest of Section \ref{s:cm cycles}, fix one pair $(\LL_0, \Lambda)$
as in (\ref{CM genus}), and set $\LL = \LL_0 \oplus \Lambda$.
The  morphism $\mathcal{Y} \to \mathcal{M}$ restricts to a morphism
$\mathcal{Y}_{(\LL_0 , \Lambda ) } \to \mathcal{M}_{\LL}$.


\subsection{Decomposition of the  intersection}
\label{ss:zero cycles}


There is a  cartesian diagram (this is the definition of the upper left corner)
\[
\xymatrix{
{ \mathcal{Z}_\LL(m,\mathfrak{r}) \cap \mathcal{Y}_{(\LL_0,\Lambda)}  } \ar[r]  \ar[d]
&  {  \mathcal{Y}_{(\LL_0,\Lambda)}   } \ar[d] \\
{   \mathcal{Z}_\LL(m,\mathfrak{r})   }  \ar[r]  & { \mathcal{M}_\LL ,}
}
\]
and our goal is to decompose the intersection
$\mathcal{Z}_\LL(m,\mathfrak{r}) \cap \mathcal{Y}_{(\LL_0,\Lambda)}$
 into smaller,  more manageable substacks.

Given $m_1,m_2\in \Q_{\ge 0}$ and
$\mathfrak{r} \mid \mathfrak{d}_\kk$, denote by
$\mathcal{X}_{(\LL_0,\Lambda)} (m_1, m_2 , \mathfrak{r}  )$
the algebraic stack over $\co_\kk$ whose functor of points assigns to a connected
$\co_\kk$-scheme $S$ the groupoid of tuples $(A_0,A_1, B ,\lambda_1, \lambda_2)$ in which
\begin{itemize}
\item
$(A_0,A_1,B) \in \mathcal{Y}_{(\LL_0,\Lambda)} (S)$,
\item
$\lambda_1\in \mathfrak{r}^{-1} L(A_0,A_1)$ satisfies $\langle \lambda_1 , \lambda_1 \rangle =m_1$,
\item
$\lambda_2\in \mathfrak{r}^{-1} L(A_0,B)$ satisfies $\langle \lambda_2 , \lambda_2 \rangle =m_2$,
\end{itemize}
and the map $\delta_\kk \lambda_1 : A_0 \to  A_1$ induces the trivial map
\begin{equation}\label{small vanishing}
\delta_\kk \lambda_1 : \Lie(A_0) \to \Lie(A_1)
\end{equation}
for any generator $\delta_\kk \in \mathfrak{d}_\kk$.
As in Remark \ref{rem:mostly vanishing}, vanishing of (\ref{small vanishing}) is automatic  if
$\mathrm{N}(\mathfrak{r}) \in \co_S^\times$.

\begin{proposition}
For every $m\in \Q_{>0}$ and every  $\mathfrak{r} \mid \mathfrak{d}_\kk$,
there is an isomorphism of $\co_\kk$-stacks
\begin{equation}\label{scheme-theoretic decomp}
\mathcal{Z}_\LL(m, \mathfrak{r} ) \cap \mathcal{Y}_{(\LL_0,\Lambda)}
\iso
\bigsqcup_{   \substack{  m_1, m_2 \in \Q_{\ge 0} \\ m_1+m_2 =m  }   }
 \mathcal{X}_{(\LL_0,\Lambda)} (m_1, m_2 , \mathfrak{r} ).
\end{equation}
\end{proposition}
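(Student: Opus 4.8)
The plan is to unwind the definitions on both sides and produce an equivalence of groupoids over every connected $\co_\kk$-scheme $S$, natural in $S$, which by the Yoneda lemma for stacks gives the asserted isomorphism. First I would describe an $S$-point of the left-hand side. By the cartesian diagram defining $\mathtt{Z}_\mathscr{L}(m,\mathfrak{r}) \cap \mathtt{Y}_{(\mathscr{L}_0,\Lambda)}$, such a point is a tuple $(A_0,A_1,B,\lambda)$ where $(A_0,A_1,B) \in \mathtt{Y}_{(\mathscr{L}_0,\Lambda)}(S)$ and $\lambda \in \mathfrak{r}^{-1} L(A_0, A_1\times B)$ satisfies $\langle\lambda,\lambda\rangle = m$, together with the vanishing condition \eqref{extra vanishing} that $\delta_\kk\lambda : \Lie(A_0) \to \Lie(A_1\times B)/\mathcal{F}$ is trivial, where $\mathcal{F} = \Lie(A_1)$ under the identification of $A_1\times B$ with its point in $\mathtt{M}_{(n-1,1)}(S)$.

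Next I would invoke the orthogonal decomposition \eqref{ortho sum}, namely $L(A_0, A_1\times B) \iso L(A_0,A_1) \oplus L(A_0,B)$, which holds over any $\co_\kk$-scheme $S$ and is compatible with the hermitian forms and with the $\mathfrak{r}^{-1}$-scaling. Writing $\lambda = (\lambda_1,\lambda_2)$ accordingly, the condition $\langle\lambda,\lambda\rangle = m$ becomes $\langle\lambda_1,\lambda_1\rangle + \langle\lambda_2,\lambda_2\rangle = m$; since both summands are nonnegative rationals (the hermitian forms on $L(A_0,A_1)$ and $L(A_0,B)$ are positive definite by Proposition \ref{prop:genus type 2}, or rather by the positivity discussed after the definition of $\mathtt{M}$), this forces $\langle\lambda_i,\lambda_i\rangle = m_i$ for a unique pair $m_1,m_2 \in \Q_{\ge 0}$ with $m_1+m_2 = m$. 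This is exactly the disjoint-union indexing on the right-hand side; the finiteness of the index set is automatic since each $m_i$ is determined once we know $\langle\lambda_i,\lambda_i\rangle$.

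Then I would match the remaining conditions. Under the splitting $\Lie(A_1\times B) = \Lie(A_1)\oplus\Lie(B)$ with $\mathcal{F} = \Lie(A_1)$, the quotient $\Lie(A_1\times B)/\mathcal{F}$ is canonically $\Lie(B)$, and $\delta_\kk\lambda$ decomposes as $\delta_\kk\lambda_1$ composed with the projection to $\Lie(A_1)$ (which is killed, so contributes nothing to the map into $\Lie(B)$) plus $\delta_\kk\lambda_2 : \Lie(A_0)\to\Lie(B)$. Hence the vanishing of \eqref{extra vanishing} for $\lambda$ is equivalent to the vanishing of the map $\delta_\kk\lambda_2 : \Lie(A_0)\to\Lie(B)$. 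But $B \in \mathtt{M}_{(n-1,0)}(S)$, so $\co_\kk$ acts on $\Lie(B)$ through the structure map, while on the image of $\Lie(A_0)$ it also acts through the structure map (as $A_0$ has signature $(1,0)$); by the argument of Remark \ref{rem:mostly vanishing}, any such $\co_\kk$-linear map out of a sheaf where $\alpha - \bar\alpha$ acts invertibly-after-scaling—more precisely, the argument there shows $\delta_\kk\lambda_2$ automatically kills $\Lie(A_0)\to\Lie(B)/\mathcal{F}$, but here there is no $\mathcal{F}$—so I must be careful: the relevant point is that in the definition of $\mathtt{X}_{(\mathscr{L}_0,\Lambda)}(m_1,m_2,\mathfrak{r})$ the imposed vanishing condition \eqref{small vanishing} is $\delta_\kk\lambda_1 : \Lie(A_0)\to\Lie(A_1)$ trivial, \emph{not} a condition on $\lambda_2$. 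So I need to check that \eqref{extra vanishing} for $\lambda$ is equivalent to \eqref{small vanishing} for $\lambda_1$ together with \emph{no extra condition on $\lambda_2$}; this holds because, as just noted, the $\Lie(B)$-component of $\delta_\kk\lambda$ is precisely $\delta_\kk\lambda_2$ followed by inclusion, wait—let me restate the actual equivalence carefully in the writeup. The correct statement: $\delta_\kk\lambda : \Lie(A_0)\to\Lie(A_1\times B)/\Lie(A_1) = \Lie(B)$ is the map induced by $\lambda_2$, so it is automatically trivial by the signature argument of Remark \ref{rem:mostly vanishing} applied to $\Lie(B)$ and $\Lie(A_0)$ both having $\co_\kk$ acting through the structure map but the source of $\delta_\kk$, hmm—actually the cleanest route is: $\delta_\kk\lambda_2$ factors through $\Lie(A_0)$ on which $\co_\kk$ acts via the structure map, lands in $\Lie(B)$ on which $\co_\kk$ also acts via the structure map, so it is already $\co_\kk$-linear, and multiplication by $\delta_\kk$ being $\langle\cdot\rangle$-valued forces nothing; the vanishing we \emph{do} need, $\delta_\kk\lambda_1 : \Lie(A_0)\to\Lie(A_1)$, is genuinely a constraint and is exactly \eqref{small vanishing}. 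The main obstacle, and the step deserving the most care, is precisely this bookkeeping of the signature conditions and the Kr\"amer module $\mathcal{F}$ under the product decomposition $A_1\times B$ — verifying that \eqref{extra vanishing} transparently splits into \eqref{small vanishing} plus a vacuous condition, using Remark \ref{rem:mostly vanishing}. Everything else (the decomposition \eqref{ortho sum}, naturality in $S$, compatibility of automorphisms, and descent of the identification to the stacks via \'etale descent as in the construction of $\mathtt{Z}(m,\mathfrak{r})$) is routine, so I would state it briefly and refer to Proposition 2.12 and related statements in \cite{KR2}.
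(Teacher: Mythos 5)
Your overall strategy is the same as the paper's: unwind both sides into groupoids over a connected base $S$, apply the orthogonal decomposition \eqref{ortho sum} to write $\lambda=\lambda_1+\lambda_2$, and use positivity of the hermitian forms to get the indexing $m_1+m_2=m$ with $m_i\in\Q_{\ge 0}$. However, the step you yourself single out as the delicate one --- matching the vanishing condition \eqref{extra vanishing} with \eqref{small vanishing} --- contains a genuine error. You identify the Kr\"amer submodule of $\Lie(A_1\times B)$ as $\mathcal{F}=\Lie(A_1)$. This is backwards: by the definition of the morphism $\mathtt{Y}\to\mathtt{M}$ (and by the signature conditions: $\co_\kk$ acts on the rank-$(n-1)$ module $\Lie(B)$ through the structure map and on the line $\Lie(A_1)$ through its conjugate), the Kr\"amer submodule is $\mathcal{F}=\Lie(B)$, so the quotient $\Lie(A_1\times B)/\mathcal{F}$ is canonically $\Lie(A_1)$, not $\Lie(B)$.

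This is not a harmless relabeling. With your identification, \eqref{extra vanishing} would translate into the vanishing of $\delta_\kk\lambda_2:\Lie(A_0)\to\Lie(B)$, which is neither automatic nor present in the definition of $\mathtt{X}_{(\mathscr{L}_0,\Lambda)}(m_1,m_2,\mathfrak{r})$; your appeal to the signature argument of Remark \ref{rem:mostly vanishing} to dismiss it fails precisely because $\co_\kk$ acts on both $\Lie(A_0)$ and $\Lie(B)$ through the \emph{same} (structure) map, so the image is not annihilated by $\alpha-\overline{\alpha}$. Your final sentence asserts the correct equivalence but does not follow from what precedes it. The correct bookkeeping is short: since $\delta_\kk\lambda_2$ lands in $\Lie(B)=\mathcal{F}$, it dies in the quotient $\Lie(A_1\times B)/\mathcal{F}\iso\Lie(A_1)$, so the map induced by $\delta_\kk\lambda$ on that quotient is exactly $\delta_\kk\lambda_1:\Lie(A_0)\to\Lie(A_1)$; hence \eqref{extra vanishing} for $\lambda$ is equivalent to \eqref{small vanishing} for $\lambda_1$, with no condition whatsoever imposed on $\lambda_2$. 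With that correction the proof goes through as in the paper.
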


\begin{proof}
Suppose $S$ is a connected $\co_\kk$-scheme.  An $S$-valued point on the
left hand side of (\ref{scheme-theoretic decomp}) consists of a pair of triples
\[
(A_0, A,\lambda) \in \mathcal{Z}_\LL(m, \mathfrak{r} )(S)
\qquad
(A_0,A_1,B) \in \mathcal{Y}_{(\LL_0,\Lambda)} (S)
\]
together with an isomorphism $A\iso A_1\times B$  identifying $\Lie(B)$ with the subsheaf
$\mathcal{F}\subset\Lie(A)$.   Under the orthogonal  decomposition
(\ref{ortho sum}),   $\lambda \in \mathfrak{r}^{-1} L(A_0 , A)$ decomposes  as
\[
\lambda = \lambda_1+ \lambda_2\in  \mathfrak{r}^{-1}L(A_0 , A_1) \oplus  \mathfrak{r}^{-1}L(A_0 , B)
\]
 in such a way that
 $\langle \lambda, \lambda \rangle = \langle \lambda_1 ,\lambda_1\rangle +  \langle \lambda_2, \lambda_2\rangle$.
If we set $m_1= \langle \lambda_1,\lambda_1\rangle$ and $m_2=\langle \lambda_2,\lambda_2\rangle$ then the
quintuple $(A_0,A_1,B,\lambda_1,\lambda_2)$ defines an $S$-valued point of
$\mathcal{X}_{(\LL_0,\Lambda)} (m_1, m_2 , \mathfrak{r})$.
This defines the desired isomorphism.
\end{proof}

Now we completely determine the structure of the stacks appearing in the
right hand side of (\ref{scheme-theoretic decomp}).  We will see momentarily
that each has dimension $0$ or $1$, depending on whether $m_1>0$ or $m_1=0$.
For any $m\in \Q_{\ge 0}$ and any $\mathfrak{r} \mid \mathfrak{d}_\kk$, define the
\emph{representation number}
\begin{equation}\label{lambda rep}
R_\Lambda(m,\mathfrak{r}) = \big|
\{ \lambda\in \mathfrak{r}^{-1} \Lambda :  \langle \lambda,\lambda\rangle=m  \}
\big|.
\end{equation}
For $m\in \Q_{>0}$ define a finite set of odd cardinality
\begin{equation}\label{diff set}
\mathrm{Diff}_{\LL_0}(m)
=\{ \mbox{primes $p$ of $\Q :  m$ is not represented by }\LL_{0,p} \otimes _{\Z_p}\Q_p \}.
\end{equation}
  Note that  every  $p\in \mathrm{Diff}_{\LL_0}( m )$ is  nonsplit in $\kk$.

\begin{theorem}\label{thm:zero cycles}
Fix $m_1,m_2\in \Q_{\ge 0}$ with $m_1>0$, and   $\mathfrak{r} \mid \mathfrak{d}_\kk$.
Abbreviate \[ \mathcal{X} = \mathcal{X}_{(\LL_0,\Lambda)} ( m_1, m_2 , \mathfrak{r} ). \]
\begin{enumerate}
\item
If $| \mathrm{Diff}_{\LL_0}(m_1) |> 1$, then  $\mathcal{X} =\emptyset$.

\item
If $ \mathrm{Diff}_{ \LL_0} (m_1  ) = \{p\}$, then
$\mathcal{X}$ has dimension $0$ and is supported in characteristic $p$.  Furthermore,
the \'etale local ring of every geometric point of $\mathcal{X}$ has length
\[
\nu_p(m_1) =
\ord_p(pm_1) \cdot
\begin{cases}
1/2 & \mbox{if $p$ is inert in $\kk$,}\\
1& \mbox{ if $p$ is ramified in $\kk$,}
\end{cases}
\]
and the number of geometric points of $\mathcal{X}$ (counted with multiplicities) is
\begin{equation}\label{geometric counting}
\sum_{ z\in \mathcal{X}(\F_\mathfrak{p}^\alg) } \frac{1}{|\Aut(z) |}
=    \frac{h_\kk}{w_\kk}  \cdot  \frac{  R_\Lambda(m_2, \mathfrak{r} ) }{  |\Aut(\Lambda)| }
 \cdot    \rho \left(   \frac{ m_1 \mathrm{N}(\mathfrak{s}) }{ p^\epsilon } \right)
\end{equation}
where $\mathfrak{p}$ is the unique prime of $\kk$ above $p$,  $\F_\mathfrak{p}^\alg$
is an algebraic closure of its residue field, $\rho$ is defined by (\ref{rho}),
$\mathfrak{s} = \mathfrak{r}/ (\mathfrak{r} + \mathfrak{p})$ is the prime-to-$\mathfrak{p}$ part of $\mathfrak{r}$,
and
\begin{equation}\label{epsilon}
\epsilon = \begin{cases}
1 & \mbox{if $p$ is inert in $\kk$,} \\
0 &\mbox{if $p$ is ramified in $\kk$}.
\end{cases}
\end{equation}
\end{enumerate}
\end{theorem}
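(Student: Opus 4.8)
The plan is to split a point of $\mathtt{X}$ into two independent pieces---one combinatorial, one geometric---and to reduce the geometric piece to a deformation-theoretic length computation of Gross type; this is exactly the computation of the ``proper part'' of $\mathtt{Z}_\mathscr{L}(m,\mathfrak{r})\cap\mathtt{Y}_{(\mathscr{L}_0,\Lambda)}$ alluded to in the introduction. A point of $\mathtt{X}$ is a tuple $(A_0,A_1,B,\lambda_1,\lambda_2)$, and by the orthogonal decomposition \eqref{ortho sum} the vector $\lambda_2\in\mathfrak{r}^{-1}L(A_0,B)$ is independent of the triple $(A_0,A_1,\lambda_1)$. By the canonical lifting theorem (Theorem \ref{thm:superrigid}) applied to $(A_0,B)\in\mathtt{M}_{(1,0)}\times_{\co_\kk}\mathtt{M}_{(n-1,0)}$, the pair $(B,\lambda_2)$ is rigid under infinitesimal thickenings, so it contributes nothing to lengths; and by Proposition \ref{prop:genus type 2} the hermitian $\co_\kk$-module $L(A_0,B)$ is self-dual positive definite, isomorphic to $\Lambda$, so over each geometric point there are exactly $R_\Lambda(m_2,\mathfrak{r})$ choices of $\lambda_2$. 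The morphism $\mathtt{X}\to\mathtt{X}'$ forgetting $(B,\lambda_2)$, where $\mathtt{X}'$ is the stack of triples $(A_0,A_1,\lambda_1)$ satisfying \eqref{small vanishing}, is therefore finite \'etale, and a routine groupoid-cardinality bookkeeping (using the exact sequence relating $\Aut(A_0)\times\Aut(B)$ to $\Aut(\Lambda)$, as in \cite{KRY2,Ho2}) reduces Theorem \ref{thm:zero cycles} to its analogue for $\mathtt{X}'$, the factor $R_\Lambda(m_2,\mathfrak{r})/|\Aut(\Lambda)|$ in \eqref{geometric counting} accounting for $(B,\lambda_2)$.

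The second step locates $\mathtt{X}'$ inside the fibres over $\co_\kk$. Over a base of characteristic $0$ there are no nonzero $\co_\kk$-linear homomorphisms $A_0\to A_1$ (the action of $\co_\kk$ on $A_0$ is through the structure map, on $A_1$ through its conjugate), so $\mathtt{X}'$ is empty there; over a base of characteristic $p$ a nonzero such homomorphism forces $A_0$ and $A_1$ to be supersingular, which in turn forces $p$ to be nonsplit in $\kk$ and is possible only in that one characteristic. At a supersingular point $\Hom_{\co_{\kk,\ell}}(T_\ell A_0,T_\ell A_1)$ equals $\mathscr{L}_{0,\ell}$ for $\ell\neq p$ and equals the nearby local hermitian space at $p$ (Remark \ref{rem:nearby}). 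For $\lambda_1\in\mathfrak{r}^{-1}L(A_0,A_1)$ of norm $m_1>0$ to exist one therefore needs $\mathscr{L}_{0,\ell}$ to represent $m_1$ for all $\ell\neq p$ and the nearby space at $p$ to represent $m_1$; since for nonsplit $p$ exactly one of the two rank-one hermitian spaces over $\kk_p$ represents a given nonzero $m_1$, and since $\mathscr{L}_{0,\infty}$ is positive definite (hence represents $m_1$), this is exactly the condition $\mathrm{Diff}_{\mathscr{L}_0}(m_1)=\{p\}$. This proves part (1); and it shows that in part (2), $\mathtt{X}$ is supported in characteristic $p$, where $\mathrm{Diff}_{\mathscr{L}_0}(m_1)=\{p\}$.

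The main work---and the step I expect to be the principal obstacle---is the local structure of $\mathtt{X}'$ at a supersingular geometric point $y$ when $\mathrm{Diff}_{\mathscr{L}_0}(m_1)=\{p\}$: one must compute the order to which the local equation of $\mathtt{Z}_\mathscr{L}(m,\mathfrak{r})$ vanishes along $\mathtt{Y}_{(\mathscr{L}_0,\Lambda)}$ at $y$, equivalently, the depth to which $\lambda_1$ deforms along the canonical formal deformation of the CM point over $\widehat{\co}_{\mathtt{Y},y}$. I would handle this with Gross's theory of quasi-canonical liftings, in the form adapted to analogous intersection computations in \cite{KRY2} and \cite{Ho2}: the relevant deformation is governed by the quasi-canonical liftings of the Lubin--Tate formal group of $A_0$, whose order of obstruction is read off from the divisibility of $\delta_\kk\lambda_1$ and the vanishing condition \eqref{small vanishing} on $\delta_\kk\lambda_1\colon\Lie(A_0)\to\Lie(A_1)$. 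The resulting length is $\ord_p(pm_1)$, multiplied by $\tfrac12$ when $p$ is inert in $\kk$ (because $\co_{\kk,p}$ is then unramified over $\Z_p$ and the relevant quasi-canonical orders grow in steps of $p^2$) and unchanged when $p$ is ramified; the $p$-part of $\mathfrak{r}$ enters only by shifting the level. This yields the length $\nu_p(m_1)$, and in particular shows $\mathtt{X}$ is $0$-dimensional.

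Finally, for the weighted cardinality one lifts the supersingular data to characteristic $0$ and uses the action of $\CL(\kk)$---equivalently, strong approximation for the norm-one torus $\Uni(\mathscr{L}_0)\cong\kk^1$---together with the theory of optimal embeddings of $\co_\kk$ into the relevant orders, to identify $\sum_z 1/|\Aut(z)|$ over the $(A_0,A_1,\lambda_1)$-data with $\tfrac{h_\kk}{w_\kk}$ times the number of integral $\co_\kk$-ideals of norm $m_1\mathrm{N}(\mathfrak{s})/p^\epsilon$, where $\mathfrak{s}$ is the prime-to-$\mathfrak{p}$ part of $\mathfrak{r}$ and $\epsilon$ is as in \eqref{epsilon}; by \eqref{rho} this number is $\rho(m_1\mathrm{N}(\mathfrak{s})/p^\epsilon)$. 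Multiplying by the $(B,\lambda_2)$-contribution $R_\Lambda(m_2,\mathfrak{r})/|\Aut(\Lambda)|$ gives \eqref{geometric counting}. The only genuinely new point relative to the case $\mathfrak{r}=\co_\kk$ treated in \cite{Ho3} is keeping the $\mathfrak{r}$-level consistent between the local length computation at $p$ and the global ideal count away from $p$.
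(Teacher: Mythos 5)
Your proposal follows essentially the same route as the paper's proof: split off $(B,\lambda_2)$ by the canonical lifting theorem, deduce part (1) from the supersingular/nonsplit analysis of the Tate modules, compute the local length by reducing to Gross via Serre--Tate, and obtain the point count from the genus of the nearby module $\mathscr{L}_0(p)$ together with a class-group/ideal-counting argument. The one substantive point you assert rather than establish is the interaction of the condition \eqref{small vanishing} with the $\mathfrak{p}$-part of $\mathfrak{r}$: in the paper this requires a Grothendieck--Messing computation showing that imposing $\Lie(\delta_\kk\lambda_1)=0$ cuts the naive Gross length from $\nu_p(m_1)+1$ down to $\nu_p(m_1)$ when $\mathfrak{p}\mid\mathfrak{r}$, and a separate argument (via the maximal quaternion order $\End(G)$) that this same condition forces $\lambda_1\in\mathfrak{s}^{-1}L(A_0,A_1)$, which is why $\mathrm{N}(\mathfrak{s})$ rather than $\mathrm{N}(\mathfrak{r})$ appears in \eqref{geometric counting}.
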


\begin{proof}
If $\mathcal{X}\not=\emptyset$ then there is some point $(A_0,A_1,B,\lambda_1,\lambda_2) \in \mathcal{X}(\F)$,
where $\F$ is either $\C$ or $\F_p^\alg$ for some prime $p$.   Let $\overline{A}_1$
be the elliptic curve $A_1$, but with the action of $\co_\kk$ replaced by its complex conjugate.
Thus $\lambda_1:A_0 \to \overline{A}_1$ is an  $\co_\kk$-\emph{conjugate}-linear degree $m_1$
quasi-isogeny between elliptic curves with complex multiplication, and $\co_\kk$ acts
on the Lie algebras of $A_0$ and $\overline{A}_1$ through the \emph{same}
homomorphism $\co_\kk \to \F$. The only way such a conjugate linear quasi-isogeny can exist
is if $\F$ has nonzero characteristic, $p$ is nonsplit in $\kk$, and $A_0$ and $A_1$
are supersingular elliptic curves.  In particular
\[
\Hom_{\Z_\ell}(T_\ell(A_0) , T_\ell(A_1) )  \iso \Hom(A_0,A_1)\otimes_\Z\Z_\ell
\]
for every prime $\ell\not=p$, and hence also
\[
\LL_{0,\ell} \iso \LL_0(A_0,A_1)_\ell  \iso \Hom_{\co_{\kk,\ell}} (T_\ell(A_0) , T_\ell(A_1) )
\iso L(A_0,A_1) \otimes_{\Z}\Z_\ell
\]
as hermitian  $\co_{\kk,\ell}$-modules.  As $\langle \lambda_1,\lambda_1\rangle=m_1$ by definition
of the moduli space $\mathcal{X}$, we have now shown that $\LL_{0,\ell}$
represents $m_1$ for all finite primes $\ell\not=p$.  Therefore $\mathrm{Diff}_{\LL_0}(m_1)$
contains at most one prime, $p$.  We have already remarked that this set has
odd cardinality, and therefore $\mathrm{Diff}_{\LL_0}(m_1)= \{p\}$.

Next we compute the lengths of the local rings.

\begin{lemma}\label{lem:local ring}
The \'etale local ring of $\mathcal{X}$ at every point
\[
(A_0,A_1,B,\lambda_1,\lambda_2) \in \mathcal{X}(\F_\mathfrak{p}^\alg)
\]
is Artinian of length $\nu_p(m_1)$.
\end{lemma}

\begin{proof}
We reduce the proof to calculations of Gross \cite{Gr}.
The  tuple $(A_0,A_1,B,\lambda_1,\lambda_2)$ corresponds to a morphism
$z:\Spec(\F_\mathfrak{p}^\alg) \to \mathcal{X}$,
and by composing with the structure morphism we obtain a geometric point
$\Spec(\F_\mathfrak{p}^\alg) \to \Spec(\co_\kk)$.
Let $W$ be the completion of the \'etale local ring of $\Spec(\co_\kk)$ at this point.
 Let $R$ be the completed
\'etale local ring of $\mathcal{X}$ at $(A_0,A_1,B,\lambda_1,\lambda_2)$.  This ring pro-represents the
deformation functor of the tuple $(A_0,A_1,B,\lambda_1,\lambda_2)$ to Artinian local $W$-algebras
with residue field $\F_\mathfrak{p}^\alg$.   Theorem \ref{thm:superrigid}
implies that $(A_0,B,\lambda_2)$ admits a unique lift to any such $W$-algebra.
Thus the data of $B$ and $\lambda_2$
can be  ignored in the deformation problem, and $R$ pro-represents
the deformation functor of $(A_0,A_1,\lambda_1)$.  Equivalently, $R$ pro-represents
the deformation functor of $(A_0, \overline{A}_1,\lambda_1)$.

By the Serre-Tate theorem we may replace $A_0$ and $\overline{A}_1$ in the above deformation problem
by their $p$-divisible groups, which are $\co_{\kk,p}$-linearly isomorphic.
Call the common $p$-divisible group $G$, so that
\[
\lambda_1 \in \mathfrak{r}^{-1}   \End(G)
\]
is  $\co_{\kk,p}$-conjugate-linear,   $\delta_\kk \lambda_1 :G \to G$ induces the
trivial map on Lie algebras, and
\[
\ord_p(\mathrm{Nrd}(\lambda_1)) =
\ord_p(m_1)
\]
where $\mathrm{Nrd}$ is the reduced norm on the quaternion order $\End(G)$.
The ring $R$ pro-represents the  functor of deformations $(\widetilde{G},\widetilde{\lambda}_1)$ of $(G,\lambda_1)$
 with $\widetilde{\lambda}_1 \in \mathfrak{r}^{-1} \End(\widetilde{G})$ and
\begin{equation}\label{p-div vanishing}
\delta_\kk \widetilde{\lambda}_1: \Lie(\widetilde{G})\to \Lie(\widetilde{G})
\end{equation}
equal to zero.

Suppose first that  $\mathrm{N}(\mathfrak{r}) \in \Z_p^\times$.  Then  $\lambda_1\in \End(G)$,
and Remark \ref{rem:mostly vanishing} implies that the vanishing of (\ref{p-div vanishing}) is
automatically satisfied for any deformation.
In this case, Gross's results immediately imply that $R$ is Artinian of length $\nu_p(m_1)$.

Now suppose  $\mathrm{N}(\mathfrak{r}) \notin \Z_p^\times$, so that
$\mathfrak{r} \co_{\kk,\mathfrak{p}} = \mathfrak{d}_\kk \co_{\kk,\mathfrak{p}}$.
  If we set $y=\delta_\kk \lambda_1\in \End(G)$, then the  ring $R$ pro-represents the functor of defomations
  $(\widetilde{G},\widetilde{y})$ of $(G,y)$ with $\widetilde{y}\in \End(\widetilde{G})$ and
  $\widetilde{y} : \Lie(\widetilde{G}) \to \Lie(\widetilde{G})$ equal to zero.
Let $R'$ be the ring pro-representing the same deformation problem, but without
the condition that $\widetilde{y}: \Lie(\widetilde{G})\to \Lie(\widetilde{G})$ vanish.
By Gross's results $R'\iso W/\mathfrak{p}^{k+1}$, where
\[
k =   \ord_p (\mathrm{Nrd}(y))   = \nu_p(m_1).
\]
Let $(G_{k+1},y_{k+1})$ be the universal deformation of $(G,y)$ to $W/\mathfrak{p}^{k+1}$,
and let $(G_k,y_k)$ be its reduction to $W/\mathfrak{p}^k$.  To show that $R\iso W/\mathfrak{p}^k$
it suffices to prove that
\begin{equation}\label{length lie I}
y_{k+1}:\Lie(G_{k+1}) \to \Lie(G_{k+1})
\end{equation}
is nonzero, but  that
\begin{equation}\label{length lie II}
y_k:\Lie(G_k) \to \Lie(G_k)
\end{equation}
vanishes.

For any $\ell$, let $G_\ell$ denote the canonical
lift\footnote{in the sense of \cite{Gr}, so $G_\ell$ is the unique defomation of $G$, with its action
 of $\co_{\kk,\mathfrak{p}}$, to $R_\ell$}
 of $G$ to $R_\ell = W/\mathfrak{p}^\ell$,
and let $D(G_\ell)$ be the Grothendieck-Messing crystal of $G_\ell$ evaluated at
$R_\ell$.  Thus $D(G_\ell)$ is a free $\co_\kk\otimes_\Z R_\ell$-module of rank one,  and sits in
an exact sequence free $R_\ell$-modules
\[
0 \to \mathrm{Fil} ( D(G_\ell) ) \to D(G_\ell) \to \Lie(G_\ell) \to 0.
\]
Fix any $\Pi\in \co_\kk$ such that $\co_\kk=\Z[\Pi]$.  The proof of \cite[Proposition 2.1.2]{Ho2} shows that
\[
\mathrm{Fil} ( D(G_\ell) ) = J D(G_\ell),
\]
where
\[
J = \Pi \otimes 1 - 1\otimes \Pi \in \co_\kk\otimes_\Z R_\ell
\]
generates (as an $R_\ell$-module), the kernel of the natural map $\co_\kk \otimes_\Z R_\ell \to R_\ell$.
Note that the image of
\[
\overline{J} =  \overline{\Pi} \otimes 1 - 1\otimes \Pi \in \co_\kk\otimes_\Z R_\ell
\]
in $R_\ell$  is $\overline{\Pi} -\Pi$, which  generates the maximal ideal $\mathfrak{p} R_\ell$.

Suppose we are given an $\co_\kk$-conjugate-linear endomorphism
$y_{\ell-1} \in \End(G_{\ell-1})$.  By Grothendieck-Messing theory, such an endomorphism
induces an endomorphism $\widetilde{y}_{\ell-1}$ of $D(G_\ell)$, and $y_{\ell-1}$ lifts to $\End(G_\ell)$ if and only if
the composition
\[
\mathrm{Fil} (   D(G_\ell)  ) \to D(G_\ell) \map{ \widetilde{y}_{\ell-1}} D(G_\ell) \to \Lie(G_\ell)
\]
is trivial. It is now easy to see that each of the following statements is equivalent to the next one:
\begin{enumerate}
\item
$y_{\ell-1}$ lifts to $\End(G_\ell)$,
\item
the image of  $\widetilde{y}_{\ell-1}( J D(G_\ell)) = \overline{J} y_\ell(D(G_\ell))$
in $\Lie(G_\ell)$ is trivial,
\item
the image of $\widetilde{y}_{\ell-1} (D(G_\ell))$ in $\Lie(G_\ell)$ lies in $\mathfrak{p}^{\ell-1}\Lie(G_\ell)$,
\item
the composition
\[
D(G_\ell) \map{ \widetilde{y}_{\ell -1 }} D(G_\ell) \to \Lie(G_\ell) \to \Lie(G_{\ell-1})
\]
vanishes,
\item
the composition
\[
D(G_{\ell-1}) \map{y_{\ell-1}} D(G_{\ell-1})  \to \Lie(G_{\ell-1})
\]
vanishes,
\item
$y_{\ell-1}: \Lie(G_{\ell-1}) \to \Lie(G_{\ell-1})$ is trivial.
\end{enumerate}
Thus $y_{\ell-1}$ lifts to $\End(G_\ell)$ if and only if it induces the zero endomorphism of  $\Lie(G_{\ell-1})$,
and the nonvanishing of (\ref{length lie I}) and vanishing of (\ref{length lie II}) follow immediately.
\end{proof}

To complete the proof of Theorem \ref{thm:zero cycles}, it only remains to prove (\ref{geometric counting}).
We do this through a sequence of lemmas.

\begin{lemma}
Abbreviating $\mathcal{Y}=\mathcal{Y}_{(\LL_0,\Lambda)}$, we have
\begin{equation}\label{twisty count}
\sum_{z\in \mathcal{X}(\F_\mathfrak{p}^\alg) } \frac{1}{ |\Aut(z)| }
=
\sum_{  L_0  }
\sum_{  \substack{ (A_0,A_1,B) \in \mathcal{Y}(\F_\mathfrak{p}^\alg)   \\
L(A_0,A_1) \iso L_0   }      }
\frac{ R_{L_0}(m_1, \mathfrak{s} )  R_\Lambda(m_2, \mathfrak{r} ) }{| \Aut(A_0,A_1,B)| } ,
\end{equation}
where the outer sum on the right is over all hermitian $\co_\kk$-modules $L_0$
of rank one, and the representation number $R_{L_0}(m_1, \mathfrak{s} )$ is defined in the same
way as (\ref{lambda rep}).
\end{lemma}

\begin{proof}
Directly from the definitions, we have
\begin{align*}
&\sum_{z\in \mathcal{X}(\F_\mathfrak{p}^\alg) } \frac{1}{ |\Aut(z)| }\\
&=
\sum_{  (A_0,A_1,B) \in \mathcal{Y}(\F_\mathfrak{p}^\alg)       }
\sum_{   \substack{  \lambda_1\in \mathfrak{r}^{-1} L(A_0,A_1)  \\ \langle \lambda_1,\lambda_1\rangle =m_1 \\  \Lie(\delta_\kk \lambda_1) =0 } }
\sum_{   \substack{  \lambda_2\in \mathfrak{r}^{-1} L(A_0,B)  \\ \langle \lambda_2 , \lambda_2 \rangle =m_2  }   }
\frac{1}{| \Aut(A_0,A_1,B)| }  ,
\end{align*}
where the condition $\Lie(\delta_\kk \lambda_1)=0$ refers to the vanishing of (\ref{small vanishing}).

We claim that
\[
\{ \lambda\in \mathfrak{r}^{-1} L(A_0,A_1) : \Lie(\delta_\kk \lambda )=0 \} = \mathfrak{s}^{-1} L(A_0,A_1)
\]
for all $A_0 \in \mathcal{M}_{(1,0)}(\F_\mathfrak{p}^\alg)$ and  $A_1\in \mathcal{M}_{(0,1)}(\F_\mathfrak{p}^\alg)$.
If  $\lambda\in  \mathfrak{s}^{-1} L(A_0,A_1)$ then, as $\mathfrak{s}$ is prime to $\mathfrak{p}$, $\lambda$ induces
a morphism of Lie algebras $\lambda:\Lie(A_0) \to \Lie(A_1)$.  By the argument of
Remark \ref{rem:mostly vanishing}, the image of this
map is annihilated by $\delta_\kk$, and so $\Lie(\delta_\kk \lambda)=0$.
Conversely, suppose we start with $\lambda\in \mathfrak{r}^{-1} L(A_0,A_1)$ satisfying $\Lie(\delta_\kk \lambda )=0$.
Let $G$ be the connected $p$-divisible group over $\F^\alg_\mathfrak{p}$ of height $2$ and dimension $1$, and set $\co_B=\End(G)$.
Thus $\co_B$ is the maximal order in a quaternion division algebra over $\Q_p$.
We may fix an embedding $\co_{\kk,p} \to \End(G)$ and isomorphisms $A_0[p^\infty] \iso G \iso A_1[p^\infty]$
in such  a way that the first is $\co_{\kk,p}$-linear, and the second is $\co_{\kk,p}$-conjugate-linear.
The hypothesis $\lambda\in \mathfrak{r}^{-1} L(A_0,A_1)$ implies that  $\delta_\kk \lambda\in  \co_B$,
but we cannot have $\delta_\kk \lambda\in \co_B^\times$ (for then $\delta_\kk \lambda$, and also $\Lie(\delta_\kk \lambda)$,
would be an isomorphism).  Therefore $\delta_\kk \lambda$ lies in the unique maximal ideal of $\co_B$, and hence
\[
\lambda\in \co_B \iso \Hom (A_0[p^\infty] , A_1[p^\infty]).
\]
 This implies that
\[
\lambda\in \mathfrak{r}^{-1} L(A_0,A_1) \cap \Hom(A_0[p^\infty] , A_1[p^\infty]) = \mathfrak{s}^{-1} L(A_0,A_1)
\]
as desired.

We have now shown that
\begin{align*}
&\sum_{z\in \mathcal{X}(\F_\mathfrak{p}^\alg) } \frac{1}{ |\Aut(z)| }
\\
&=
\sum_{  (A_0,A_1,B) \in \mathcal{Y}(\F_\mathfrak{p}^\alg)       }
\sum_{   \substack{  \lambda_1\in \mathfrak{s}^{-1} L(A_0,A_1)  \\ \langle \lambda_1,\lambda_1\rangle =m_1  } }
\sum_{   \substack{  \lambda_2\in \mathfrak{r}^{-1} L(A_0,B)  \\ \langle \lambda_2 , \lambda_2 \rangle =m_2  }   }
\frac{1}{| \Aut(A_0,A_1,B)| }  .
\end{align*}
On the right hand side, each $L(A_0,A_1)$ is a hermitian $\co_\kk$-module of rank one,
while  $L(A_0,B) \iso \Lambda$.  The lemma follows immediately.
\end{proof}

Let $\VV_0$ be the incoherent hermitian space over $\A_\kk$ determined by
$\LL_0$, and recall from Remark \ref{rem:nearby} that for
every prime $p$ nonsplit in $\kk$ there is a unique coherent hermitian space
$\VV_0(p)$ that is isomorphic to $\VV_0$ everywhere locally away from
$p$.   We now repeat this construction on the level of $(\kk_\R,\widehat{\co}_\kk)$-modules.
Define a new  hermitian $(\kk_\R, \widehat{\co}_\kk)$-module
$\LL_0(p)$ by setting  $\LL_0(p)_\ell = \LL_{0,\ell}$
for every place $\ell\not=p$.  For  the $p$-component $\LL_0(p)_p$,
take the same underlying $\co_{\kk,p}$-module as $\LL_{0,p}$,
but replace the hermitian form $\langle\cdot,\cdot\rangle_{\LL_{0,p}}$ on $\LL_{0,p}$
with the hermitian form
\[
\langle\cdot,\cdot\rangle_{\LL_0(p)_p} = c_p \langle\cdot,\cdot\rangle_{\LL_{0,p}},
\]
where
\[
c_p = \begin{cases}
\mbox{any uniformizing parameter of $\Z_p$,} & \mbox{ if  $p$ is inert in $\kk$, } \\
\mbox{any element of  $ \Z_p^\times$   that is  not a norm from  $\co_{\kk,p}^\times$,}
&  \mbox{ if  $p$ is ramified in $\kk$. }
\end{cases}
\]
The resulting coherent $(\kk_\R, \widehat{\co}_\kk)$-module $\LL_0(p)$
has $\VV_0(p)$ as its associated hermitian $\A_\kk$-module.
Note that if $p$ is inert in $\kk$ then $\LL_0(p)$ is not self-dual.

\begin{lemma}
Any triple $(A_0,A_1,B)$ appearing in the final sum of (\ref{twisty count}) satisfies
\[
L(A_0,A_1) \in \mathrm{gen}(\LL_0(p)).
\]
\end{lemma}

\begin{proof}
It is easy to see that
\[
\LL_0(p)_\ell   \iso  \LL_{0,\ell} \iso
\LL_0(A_0,A_1)_\ell \iso \Hom_{\co_{\kk,\ell}} ( T_\ell(A_0), T_\ell(A_1) )
\iso L(A_0,A_1) \otimes_{\Z}\Z_\ell
\]
for all primes $\ell\not=p$, and that
\[
\LL_0(p)_\infty \iso  L(A_0,A_1) \otimes_{\Z}\R,
\]
as both sides are positive definite.  In particular  the coherent $\A_\kk$-hermitian spaces
$\VV_0(p)$ and  $L(A_0,A_1) \otimes_{\Z}\A$ are isomorphic at all places
away from $p$, and by  comparing invariants  we see that
\begin{equation}\label{nearby quaternion space}
\VV_0(p)_p \iso L(A_0,A_1) \otimes_{\Z}\Q_p
\end{equation}
as $\kk_p$-hermitian spaces.  In order to strengthen (\ref{nearby quaternion space}) to an isomorphism
\begin{equation}\label{nearby quaternion module}
\LL_0(p)_p \iso L(A_0,A_1) \otimes_{\Z}\Z_p,
\end{equation}
fix $\co_{\kk,p}$-module generators $x$ and $y$ of the left hand side and right hand side,
respectively,  of (\ref{nearby quaternion module}),
and define $p$-adic integers $\alpha=\langle x,x\rangle$ and $\beta=\langle y,y\rangle$.

Let $G$ be the unique connected $p$-divisible group of
height $2$ and dimension $1$ over $\F_\mathfrak{p}^\alg$, and fix an action of $\co_{\kk,\mathfrak{p}}$ on $G$
in such a way that the induced action on $\Lie(G)$ is through the structure map
$\co_{\kk,\mathfrak{p}} \to \F_\mathfrak{p}^\alg$. There is an
$\co_{\kk,\mathfrak{p}}$-linear isomorphism $G\iso A_0[p^\infty]$,
and an $\co_{\kk,\mathfrak{p}}$-\emph{conjugate}-linear isomorphism
$G\iso A_1[p^\infty]$.  These choices identify $L(A_0,A_1) \otimes_{\Z}\Z_p$ with the
submodule of $\co_{\kk,\mathfrak{p}}$-conjugate-linear endomorphisms
\[
\End_{\overline{\co}_{\kk,\mathfrak{p}}}(G) \subset \End(G),
\]
and identify the quadratic form $\langle\cdot,\cdot\rangle$ on $L(A_0,A_1) \otimes_{\Z}\Z_p$
with a $\Z_p^\times$-multiple of the restriction to $\End_{\overline{\co}_{\kk,\mathfrak{p}}}(G)$
of the  reduced norm on the quaternionic order $\End(G)$.
A routine calculation with quaternion algebras, as in \cite[pp.~376--378]{KRY1},  now implies that
 \[
\ord_p(\beta) = \begin{cases}
1 & \mbox{if $p$ is inert in $\kk$,} \\
0 & \mbox{if $p$ is ramified in $\kk$.}
\end{cases}
\]
Comparing with the definition of $\LL_0(p)$ then shows that
$\ord_p(\alpha)=\ord_p(\beta)$. The isomorphism (\ref{nearby quaternion space}) implies that
$\chi_{\kk,p}(\alpha)=\chi_{\kk,p}(\beta)$, and this information is enough to
guarantee that $\alpha/\beta$ is a norm from $\co_{\kk,\mathfrak{p}}^\times$.
This proves (\ref{nearby quaternion module}), and completes the proof of the lemma.
\end{proof}

\begin{lemma}
For each  $L_0\in\mathrm{gen}(\LL_0(p) )$ there are $h_\kk$ isomorphism classes of triples
$(A_0,A_1,B)\in \mathcal{Y}(\F_\mathfrak{p}^\alg)$ such that
$L(A_0,A_1) \iso L_0$.  Any such triple  satisfies
\[
| \Aut(A_0,A_1,B) |  = w^2_\kk  \cdot  |\Aut(\Lambda)| .
\]
\end{lemma}

\begin{proof}
Let $R$ be any complete local Noetherian ring with residue  $\F_\mathfrak{p}^\alg$.
Using Theorem \ref{thm:superrigid} and Grothendieck's formal existence theorem
\cite[Section 8.4.4]{FGA}, the triple $(A_0,A_1,B)$ lifts uniquely to $R$,
as do all of its automorphisms.  Using this, we are easily reduced to the corresponding
counting problem in characteristic $0$, which is easily solved using the
linear algebraic description of  $\mathcal{Y}(\C)$ found in  Section \ref{ss:CM values} below.
\end{proof}

\begin{lemma}
Still assuming that $\mathrm{Diff}_{\LL_0}(m_1)=\{p\}$, we have
\[
\frac{1}{w_\kk}  \sum_{  L_0  \in \mathrm{gen}(\LL_0(p))}
 R_{L_0}(m_1, \mathfrak{s} ) = \rho\left( \frac{ m_1\mathrm{N}(\mathfrak{s} ) }{ p^\epsilon} \right).
\]
\end{lemma}

\begin{proof}
As $\VV_0(p)$ is coherent, we may fix  a hermitian space $V_0$ over $\kk$ such that
\[
V_0\otimes_\Q \A\iso \VV_0(p).
\]
Note that $\mathrm{Diff}_{\LL_0}(m_1) =\{p\}$ implies that  $V_0$ represents $m_1$.
Pick one vector $\lambda_0\in V_0$ such that $\langle \lambda_0,\lambda_0\rangle =m_1$, and an
$\co_\kk$-lattice $L_0\subset V_0$ such that $L_0\in\mathrm{gen}(\LL_0(p))$.

Let $\widehat{\kk}^1$ denote the group of norm one elements in $\widehat{\kk}^\times$,
and define $\widehat{\co}_\kk^1$ in the same way.  As $h$ varies over
$\widehat{\kk}^1/\widehat{\co}_\kk^1$, the lattices $h\cdot L_0\subset V_0$, with the
hermitian forms restricted from $V_0$, vary over $\mathrm{gen}(\LL_0(p))$.  Thus
\[
\frac{1}{w_\kk}  \sum_{  L_0  \in \mathrm{gen}(\LL_0(p))}
 R_{L_0}(m_1, \mathfrak{s} )  =
 \sum_{  h \in \widehat{\kk}^1/\widehat{\co}_\kk^1 }
 \bm{1}_{ h\mathfrak{s}^{-1}L_0 } (\lambda_0),
\]
where $\bm{1}$ denotes characteristic function.
If we fix any $\widehat{\co}_\kk$-linear isomorphism $\widehat{\co}_\kk \iso \widehat{L}_0$,
 the hermitian form on $L_0$ is identified with
$\langle x,y\rangle =     x\overline{y} p^\epsilon u$ for some $u\in\widehat{\co}_\kk^\times$,
and now
\[
\frac{1}{w_\kk}  \sum_{  L_0  \in \mathrm{gen}(\LL_0(p))}
 R_{L_0}(m_1, \mathfrak{s} )  =
 \sum_{  h \in \widehat{\kk}^1/\widehat{\co}_\kk^1 }
 \bm{1}_{ \widehat{\co}_\kk } ( h^{-1}  s \lambda_0)
\]
where $\lambda_0\in \widehat{\kk}^\times$ satisfies  $u \mathrm{N}( \lambda_0 ) = m_1/p^\epsilon$,
and $s \in \widehat{\kk}^\times$ satisfies $s \widehat{\co}_\kk = \widehat{\mathfrak{s}}$.
The equality
\[
 \sum_{  h \in \widehat{\kk}^1/\widehat{\co}_\kk^1 }
 \bm{1}_{ \widehat{\co}_\kk } ( h^{-1}  s \lambda_0)
=
\rho\left( \frac{ m_1\mathrm{N}(\mathfrak{s} ) }{ p^\epsilon} \right)
\]
is easily checked, as both sides admit a factorization over the prime numbers, and the
prime-by-prime comparison is elementary.
\end{proof}

Combining (\ref{twisty count}) and the four lemmas  shows that
\begin{align*}
\sum_{z \in \mathcal{X}(\F_\mathfrak{p}^\alg) } \frac{1}{ |\Aut(z)| }
 & =
\sum_{  L_0  \in \mathrm{gen}(\LL_0(p))}
\sum_{  \substack{ (A_0,A_1,B) \in \mathcal{Y}(\F_\mathfrak{p}^\alg)    \\
L(A_0,A_1) \iso L_0   }      }
 \frac{  R_{L_0}(m_1, \mathfrak{s} ) R_\Lambda(m_2, \mathfrak{r} )  }{| \Aut(A_0,A_1,B)| }  \\
& =
\frac{h_\kk}{w^2_\kk}
\sum_{  L_0  \in \mathrm{gen}(\LL_0(p))}
\frac{  R_{L_0}(m_1, \mathfrak{s} ) R_\Lambda(m_2, \mathfrak{r} )  }{    |\Aut(\Lambda)|  } \\
& =
\frac{h_\kk}{w_\kk}  \cdot
\frac{ R_\Lambda(m_2, \mathfrak{r} )  }{    |\Aut(\Lambda)|  }
\cdot  \rho\left( \frac{ m_1 \mathrm{N}(\mathfrak{s} ) }{ p^\epsilon} \right),
\end{align*}
and completes the proof of Theorem \ref{thm:zero cycles}.
\end{proof}

Theorem  \ref{thm:zero cycles} implies that
$\mathcal{X}_{(\LL_0,\Lambda)} ( m_1 , m_2 , \mathfrak{r} )$ has dimension $0$
whenever $m_1>0$.  Now we turn to the case of $m_1=0$.

\begin{proposition}\label{prop:bad parts}
Fix a positive $m\in \Q$  and   $\mathfrak{r} \mid \mathfrak{d}_\kk$.
\begin{enumerate}
\item
If $R_\Lambda(m, \mathfrak{r})=0$ then
$\mathcal{X}_{(\LL_0,\Lambda)} ( 0 , m , \mathfrak{r} ) = \emptyset.$
\item
If $R_\Lambda(m,\mathfrak{r})\not=0$ then  $\mathcal{X}_{(\LL_0,\Lambda)} ( 0 , m , \mathfrak{r} )$
is nonempty, and  is smooth of relative dimension $0$ over $\co_\kk$.
In particular, it is a regular stack of dimension $1$.
\end{enumerate}
\end{proposition}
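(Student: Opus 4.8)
The plan is to show first that $\lambda_1$ is forced to vanish, whereupon $\mathtt{X}_{(\mathscr{L}_0,\Lambda)}(0,m,\mathfrak{r})$ becomes essentially a disjoint union of copies of $\mathtt{Y}_{(\mathscr{L}_0,\Lambda)}$ indexed by norm-$m$ vectors in $\mathfrak{r}^{-1}\Lambda$, and then to deduce both assertions from Theorem \ref{thm:superrigid}.

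First I would observe that for a connected $\co_\kk$-scheme $S$ and a point $(A_0,A_1,B,\lambda_1,\lambda_2)\in\mathtt{X}_{(\mathscr{L}_0,\Lambda)}(0,m,\mathfrak{r})(S)$, the module $L(A_0,A_1)=\Hom_{\co_\kk}(A_0,A_1)$ is a finitely generated projective $\co_\kk$-module of rank at most $1$, so $L(A_0,A_1)\otimes_\Z\Q$ is a hermitian space over $\kk$ of dimension at most $1$. Every such space is anisotropic, since a nonzero rank-one hermitian form over $\kk$ is a nonzero rational multiple of $\mathrm{N}_{\kk/\Q}$, which has no nontrivial zero because $\kk$ is imaginary. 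As $\lambda_1\in\mathfrak{r}^{-1}L(A_0,A_1)$ satisfies $\langle\lambda_1,\lambda_1\rangle=0$, we conclude $\lambda_1=0$, and the vanishing condition on $\delta_\kk\lambda_1$ in the definition of $\mathtt{X}$ is then automatic. (No condition is imposed on $\lambda_2$ at all; this matches the fact that under $\Lie(A)/\mathcal{F}\iso\Lie(A_1)$ the $L(A_0,B)$-component of any $\lambda$ maps to zero.) Thus $\mathtt{X}_{(\mathscr{L}_0,\Lambda)}(0,m,\mathfrak{r})$ is the moduli stack of quadruples $(A_0,A_1,B,\lambda_2)$ with $(A_0,A_1,B)\in\mathtt{Y}_{(\mathscr{L}_0,\Lambda)}(S)$ and $\lambda_2\in\mathfrak{r}^{-1}L(A_0,B)$ of norm $m$. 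Combining \eqref{scheme-theoretic decomp} with the cartesian diagram defining $\mathtt{Z}_\mathscr{L}(m,\mathfrak{r})\cap\mathtt{Y}_{(\mathscr{L}_0,\Lambda)}$, the forgetful morphism $\pi\colon\mathtt{X}_{(\mathscr{L}_0,\Lambda)}(0,m,\mathfrak{r})\to\mathtt{Y}_{(\mathscr{L}_0,\Lambda)}$ realizes its source as an open and closed substack of $\mathtt{Z}_\mathscr{L}(m,\mathfrak{r})\times_{\mathtt{M}_\mathscr{L}}\mathtt{Y}_{(\mathscr{L}_0,\Lambda)}$; hence $\pi$ is finite, unramified, and representable, since $\mathtt{Z}_\mathscr{L}(m,\mathfrak{r})\to\mathtt{M}_\mathscr{L}$ is.

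For part (1): if $R_\Lambda(m,\mathfrak{r})=0$, then since $L(A_0,B)\iso\Lambda$ at every geometric point of $\mathtt{Y}_{(\mathscr{L}_0,\Lambda)}$ by Proposition \ref{prop:genus type 2}(2), no vector $\lambda_2$ exists, so $\pi$ has empty geometric fibers over a nonempty base and $\mathtt{X}_{(\mathscr{L}_0,\Lambda)}(0,m,\mathfrak{r})=\emptyset$. For part (2), assume $R_\Lambda(m,\mathfrak{r})\ne 0$. Non-emptiness is immediate: pick any geometric point of $\mathtt{Y}_{(\mathscr{L}_0,\Lambda)}$ and, using $L(A_0,B)\iso\Lambda$, a norm-$m$ vector in $\mathfrak{r}^{-1}L(A_0,B)$. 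To prove smoothness of relative dimension $0$ over $\co_\kk$, I would show $\mathtt{X}_{(\mathscr{L}_0,\Lambda)}(0,m,\mathfrak{r})$ is formally étale over $\co_\kk$; as it is finite over $\mathtt{Y}_{(\mathscr{L}_0,\Lambda)}$, hence of finite presentation over $\co_\kk$, formal étaleness upgrades to étaleness. Let $S\hookrightarrow\tilde S$ be a nilpotent thickening of $\co_\kk$-schemes and $(A_0,A_1,B,\lambda_2)$ an $S$-point. Since $\mathtt{Y}_{(\mathscr{L}_0,\Lambda)}$ is smooth of relative dimension $0$, hence étale, over $\co_\kk$, the triple $(A_0,A_1,B)$ has a unique lift $(\tilde A_0,\tilde A_1,\tilde B)$ to $\tilde S$, and Theorem \ref{thm:superrigid} (applied to the pair $(A_0,B)$) identifies $\Hom_{\co_\kk}(\tilde A_0,\tilde B)$ with $\Hom_{\co_\kk}(A_0,B)$ compatibly with the hermitian forms; hence $\lambda_2$ lifts uniquely to a norm-$m$ vector $\tilde\lambda_2\in\mathfrak{r}^{-1}L(\tilde A_0,\tilde B)$. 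The resulting quadruple, with $\tilde\lambda_1=0$, lies in $\mathtt{Y}_{(\mathscr{L}_0,\Lambda)}(\tilde S)$ because $\mathtt{Y}_{(\mathscr{L}_0,\Lambda)}$ is a union of connected components of $\mathtt{Y}$, and it is the unique lift of the given point. This gives formal étaleness over $\co_\kk$; since $\Spec(\co_\kk)$ is regular of dimension $1$, the stack $\mathtt{X}_{(\mathscr{L}_0,\Lambda)}(0,m,\mathfrak{r})$ is then regular of dimension $1$.

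The only step with genuine content is the deformation argument for smoothness, and its substance is entirely carried by Theorem \ref{thm:superrigid}: homomorphisms between the signature-$(\ast,0)$ abelian schemes occurring in $\mathtt{Y}$ are rigid, so once $\lambda_1$ is killed the stack $\mathtt{X}_{(\mathscr{L}_0,\Lambda)}(0,m,\mathfrak{r})$ carries no more moduli than $\mathtt{Y}_{(\mathscr{L}_0,\Lambda)}$ itself. The one point to be careful about is the first step, namely that the $\delta_\kk\lambda$-vanishing condition in the definition of $\mathtt{X}$ really only constrains $\lambda_1$, so that after forcing $\lambda_1=0$ the component $\lambda_2$ is left completely unconstrained.
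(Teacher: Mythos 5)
Your proof is correct and follows essentially the same route as the paper: force $\lambda_1=0$ by positive definiteness, deduce part (1) from $L(A_0,B)\iso\Lambda$ at geometric points, and obtain smoothness by composing the forgetful map to $\mathtt{Y}_{(\mathscr{L}_0,\Lambda)}$ — formally \'etale by the rigidity statement of Theorem \ref{thm:superrigid} — with the smooth relative-dimension-zero structure map of $\mathtt{Y}_{(\mathscr{L}_0,\Lambda)}$. The only point you elide is that $\mathtt{Y}_{(\mathscr{L}_0,\Lambda)}$ is itself nonempty, which the paper supplies by appealing to the complex uniformization (\ref{cm uniformization}).
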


\begin{proof}
The morphism  $\mathcal{Y}_{(\LL_0,\Lambda)} \to \Spec(\co_\kk)$ is smooth of relative dimension
$0$, and Theorem \ref{thm:superrigid} implies that  the map
$\mathcal{X}_{(\LL_0,\Lambda)} ( 0 , m , \mathfrak{r} ) \to \mathcal{Y}_{(\LL_0,\Lambda)}$
defined by
\[
(A_0,A_1,B,0,\lambda_2) \mapsto (A_0,A_1,B)
\]
is formally \'etale.  Hence the composition
$\mathcal{X}_{(\LL_0,\Lambda)} ( 0 , m , \mathfrak{r} )  \to \Spec(\co_\kk)$
is smooth of relative dimension $0$.

It only remains to show that $\mathcal{X}_{(\LL_0,\Lambda)} ( 0 , m , \mathfrak{r} )$
is nonempty if and only if $R_\Lambda(m,\mathfrak{r})\not=0$.
If $\mathcal{X}_{(\LL_0,\Lambda)} ( 0 , m , \mathfrak{r} )$
is nonempty then we may pick any geometric point
\[
(A_0,A_1,B,\lambda_1,\lambda_2) \in \mathcal{X}_{(\LL_0,\Lambda)} ( 0 , m , \mathfrak{r} )(\F).
\]
Using $L(A_0 , B ) \iso \Lambda$, the homomorphism $\lambda_2$  defines an element of
$\Lambda$ satisfying $\langle \lambda_2,\lambda_2\rangle=m$, and in particular
 $R_\Lambda(m,\mathfrak{r})\not=0$.
Conversely,  if $R_\Lambda(m,\mathfrak{r})\not=0$
then pick some $\lambda_2\in \Lambda$ satisfying $\langle \lambda_2,\lambda_2\rangle=m$.
It follows from the uniformization (\ref{cm uniformization}) below that
$\mathcal{Y}_{(\LL_0,\Lambda)} (\C) \not=\emptyset$, and for any choice of
 $(A_0,A_1,B) \in \mathcal{Y}_{(\LL_0,\Lambda)} (\C)$ the vector
 $\lambda_2$ defines an element of $\Lambda\iso L(A_0,B)$.  Setting $\lambda_1=0$, the tuple
$(A_0,A_1,B,\lambda_1,\lambda_2)$ defines a complex point of
$\mathcal{X}_{(\LL_0,\Lambda)} ( 0 , m , \mathfrak{r} )$.
 \end{proof}

\begin{remark}\label{rem:decomposition}
It follows from (\ref{scheme-theoretic decomp}) and Theorem \ref{thm:zero cycles}
that if $R_\Lambda(m, \mathfrak{r} )=0$, the intersection
$\mathcal{Z}(m, \mathfrak{r} ) \cap \mathcal{Y}_{(\LL_0,\Lambda)}$ is isomorphic
to the  zero dimensional stack
\begin{equation}\label{proper part}
\bigsqcup_{   \substack{  m_1 \in \Q_{>0} \\  m_2 \in \Q_{\ge 0} \\ m_1+m_2 =m  }   }
 \mathcal{X}_{(\LL_0,\Lambda)} (m_1, m_2 , \mathfrak{r} ).
 \end{equation}
On the other hand, if $R_\Lambda(m, \mathfrak{r} )\not=0$ then
$\mathcal{Z}(m , \mathfrak{r} ) \cap \mathcal{Y}_{(\LL_0,\Lambda)}$ is the disjoint union of
the zero dimensional stack (\ref{proper part})  with  the one dimensional stack
$\mathcal{X}_{(\LL_0,\Lambda)} (0,m, \mathfrak{r})$.
\end{remark}


\subsection{The CM value formula}
\label{ss:CM values}


 Let $\LL(\infty)$ be  obtained from $\LL$ by
changing the signature at the archimedean place from $(n,0)$ to $(n-1,1)$.
Similarly, let $\LL_0(\infty)$ be obtained from $\LL_0$ by switching the
signature at the archimedean place from $(1,0)$ to $(0,1)$.

 As in Section \ref{ss:harmonic divisors}, the finite $\Z$-module $\mathfrak{d}_\kk^{-1} \LL_f/\LL_f$ is
equipped with a $d_\kk^{-1} \Z/\Z$-valued quadratic form, and we denote by $\Delta$  its automorphism
group as a finite quadratic space. The space $S_\LL$ of   complex valued functions on
$\mathfrak{d}_\kk^{-1}\LL_f/\LL_f$
is equipped with an action of $\Delta$ and a commuting action $\omega_\LL$
of $\Sl_2(\Z)$ defined by the Weil representation.
In exactly the  same way, the finite $\Z$-modules  $\mathfrak{d}_\kk^{-1}\LL_{0,f}/\LL_{0,f}$
and $\mathfrak{d}_\kk^{-1}\Lambda/\Lambda$ are equipped with
quadratic forms (still denoted $Q$),  and the spaces $S_{\LL_0}$ and $S_\Lambda$ are equipped
with actions $\omega_{\LL_0}$ and $\omega_\Lambda$ of $\Sl_2(\Z)$.
Moreover, the obvious isomorphism
\[
S_\LL \iso S_{\LL_0} \otimes_\C S_\Lambda
\]
is $\SL_2(\Z)$-equivariant.
Fix a $\Delta$-invariant harmonic form $f\in H_{2-n}(\omega_\LL)$.
In this subsection we compute the  value of the Green function $\Phi_\LL(f)$ at the points of
$\mathcal{Y}_{(\LL_0,\Lambda)}(\C)$.

First we must describe the complex uniformization of the
CM cycle $\mathcal{Y}_{(\LL_0,\Lambda)}$.  Fix a triple $(\mathfrak{A}_0,\mathfrak{A}_1,\mathfrak{B})$ in which
\begin{itemize}
\item
$\mathfrak{A}_0$ and $\mathfrak{A}_1$ are self-dual hermitian $\co_\kk$-modules of
signatures $(1,0)$ and $(0,1)$, respectively, satisfying
$L(\mathfrak{A}_0,\mathfrak{A}_1) \in \mathrm{gen}(\LL_0(\infty) )$,
\item
$\mathfrak{B}$ is a self-dual hermitian $\co_\kk$-module of signature $(n-1,0)$ satisfying
$L(\mathfrak{A}_0, \mathfrak{B}) \iso \Lambda$.
\end{itemize}
We attach to this  triple the point
$(A_0,A_1,B) \in \mathcal{Y}_{(\LL_0,\Lambda)}(\C),$
where
\begin{align*}
A_0(\C) &= \mathfrak{A}_{0\R} / \mathfrak{A}_0 \\
A_1(\C) &= \mathfrak{A}_{1 \R} / \mathfrak{A}_1 \\
B(\C) &= \mathfrak{B}_\R / \mathfrak{B}
\end{align*}
as real Lie groups with $\co_\kk$-actions.  The complex structure on $A_0(\C)$ is given by the natural action of
$\kk_\R\iso \C$ on $\mathfrak{A}_{0 \R}$, and similarly for the complex structure on $B(\C)$.
The complex structure on $A_1(\C)$  is given by the complex  \emph{conjugate}
of the natural action of  $\kk_\R\iso \C$ on $\mathfrak{A}_{1 \R}$.
The elliptic curves $A_0$ and $A_1$ are endowed with their unique principal
polarizations, while $B$ is endowed with the polarization determined by the
symplectic form $\pol_B$ on $\mathfrak{B}\iso H_1(B(\C),\Z)$ defined  as in (\ref{symplectic}).

The construction $(\mathfrak{A}_0,\mathfrak{A}_1,\mathfrak{B}) \mapsto (A_0,A_1,B)$
establishes a bijection   from the
set of isomorphism classes of all such triples to the set of isomorphism classes
of the category $\mathcal{Y}_{(\LL_0,\Lambda)}(\C)$, and defines an
isomorphism of $0$-dimensional complex orbifolds
\begin{equation}\label{cm uniformization}
\mathcal{Y}_{(\LL_0,\Lambda)}(\C) \iso \bigsqcup_{ (\mathfrak{A}_0,\mathfrak{A}_1,\mathfrak{B})  }
\Gamma_{(\mathfrak{A}_0,\mathfrak{A}_1,\mathfrak{B})}
\backslash \{ y_{(\mathfrak{A}_0,\mathfrak{A}_1,\mathfrak{B})} \},
\end{equation}
where $y_{(\mathfrak{A}_0,\mathfrak{A}_1,\mathfrak{B})}$ is a single point on which
\[
\Gamma_{(\mathfrak{A}_0,\mathfrak{A}_1,\mathfrak{B})} = \Aut(\mathfrak{A}_0,\mathfrak{A}_1,\mathfrak{B} )
\]
acts trivially.
The morphism $\mathcal{Y}_{(\LL_0,\Lambda)}(\C) \to \mathcal{M}_\LL(\C)$ is
easy to describe in terms of (\ref{cm uniformization}) and  (\ref{uniformization}).
For each triple $(\mathfrak{A}_0,\mathfrak{A}_1,\mathfrak{B})$ we set
$\mathfrak{A}=\mathfrak{A}_1 \oplus \mathfrak{B}$, and send the point
$y_{(\mathfrak{A}_0,\mathfrak{A}_1,\mathfrak{B})}$ to the point of $\mathcal{D}_{(\mathfrak{A}_0,\mathfrak{A})}$
defined by the negative $\kk_\R$-line
$
L(\mathfrak{A}_0,\mathfrak{A}_1)_\R \subset L(\mathfrak{A}_0,\mathfrak{A})_\R.
$

\begin{remark}\label{rem:degree}
The uniformization (\ref{cm uniformization}) implies that
$\mathcal{Y}_{(\LL_0,\Lambda)}(\C)$
has $2^{1-o(d_\kk) }h_\kk^2$ points, each with $w_\kk^2 \cdot |\Aut(\Lambda)|$ automorphisms.
Thus the rational number
\[
\deg_\C \mathcal{Y}_{(\LL_0,\Lambda)} =
\sum_{ y\in \mathcal{Y}_{(\LL_0,\Lambda)}(\C)} \frac{1}{|\Aut(y)|}
\]
is given by the explicit formula
\[
\deg_\C \mathcal{Y}_{(\LL_0,\Lambda)}  =    \frac { h_\kk^2  } {  w_\kk^2   }  \cdot
   \frac { 2^{1-o(d_\kk) }    }{  |\Aut(\Lambda) |  } .
\]
\end{remark}
Moreover, we have the following proposition.

\begin{proposition}
Assume that either $n>2$, or $n=2$ and $\LL(\infty)$ contains, everywhere locally,  a nonzero isotropic vector.
\begin{enumerate}
\item
The rule  $L_0 \mapsto L_0  \oplus \Lambda$ establishes a bijection
\[
\mathrm{gen}(\LL_0(\infty)) \rightarrow \mathrm{gen}(\LL(\infty)).
\]
\item
The set $\mathcal{Y}_{(\LL_0, \Lambda)}(\C)$ has exactly one point on every connected component of $\mathcal{M}_{\LL}(\C)$.
\end{enumerate}
\end{proposition}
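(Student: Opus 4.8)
The plan is to prove part~(1) by an adelic computation of the two genera, and then to derive part~(2) from it formally using the complex uniformizations (\ref{uniformization}) and (\ref{cm uniformization}).

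For part~(1), I would first note that $\mathscr{L}(\infty)$ and $\mathscr{L}_0(\infty)$ are both coherent, since their local invariants differ from those of the incoherent $\mathscr{L}$ and $\mathscr{L}_0$ only at the archimedean place. Let $V$ and $V_0$ be the associated hermitian spaces over $\kk$, of signatures $(n-1,1)$ and $(0,1)$. The defining identity $\mathscr{L}_f\iso\mathscr{L}_{0,f}\oplus(\Lambda\otimes_\Z\widehat{\Z})$ gives $V_p\iso (V_0)_p\perp(\Lambda\otimes_\Z\Q_p)$ at every finite $p$, and the archimedean signatures match, so $V\iso V_0\perp(\Lambda\otimes_\Z\Q)$. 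Fix such an identification, so that $\Uni(V_0)$ is the subgroup of $\Uni(V)$ acting trivially on $\Lambda\otimes_\Z\Q$. Choose self-dual reference lattices $L_0^{\mathrm{ref}}\subset V_0$ with $\widehat{L}_0^{\mathrm{ref}}\iso\mathscr{L}_{0,f}$ (possible as $\mathscr{L}_0$ is self-dual), and put $L^{\mathrm{ref}}=L_0^{\mathrm{ref}}\perp\Lambda\subset V$, which is self-dual because $\Lambda$ is. The standard description of a genus as an adelic double coset then reads
\[
\gen(\mathscr{L}_0(\infty))\iso \kk^1\backslash\A_{\kk,f}^1/\widehat{\co}_\kk^1,\qquad \gen(\mathscr{L}(\infty))\iso \Uni(V)(\Q)\backslash\Uni(V)(\A_f)/\Uni(\widehat{L}^{\mathrm{ref}}),
\]
via $g\mapsto gL_0^{\mathrm{ref}}$ and $g\mapsto gL^{\mathrm{ref}}$; here $\kk^1$ is the norm-one torus, and the first bijection is immediate because $\Uni(V_0)=\kk^1$ is already a torus with $\Uni(\widehat{L}_0^{\mathrm{ref}})=\widehat{\co}_\kk^1$.

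The crux is to identify the second double coset with the same group $\kk^1\backslash\A_{\kk,f}^1/\widehat{\co}_\kk^1$, compatibly with $\Uni(V_0)\hookrightarrow\Uni(V)$. For this I would apply strong approximation for the simply connected group $\mathrm{SU}(V)$ — available in the range covered by the hypothesis, exactly as in Remark~\ref{component count} — to collapse $\Uni(V)(\Q)\backslash\Uni(V)(\A_f)/\Uni(\widehat{L}^{\mathrm{ref}})$ along the determinant onto $\kk^1\backslash\A_{\kk,f}^1/\det\!\big(\Uni(\widehat{L}^{\mathrm{ref}})\big)$, and then use the self-duality of $L^{\mathrm{ref}}$ to see $\det\!\big(\Uni(\widehat{L}^{\mathrm{ref}})\big)=\widehat{\co}_\kk^1$. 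Tracing the bijections, $L_0\mapsto L_0\perp\Lambda$ sends $gL_0^{\mathrm{ref}}$ to $g\cdot L^{\mathrm{ref}}$, with $g$ now regarded in $\Uni(V)(\A_f)$ through $\Uni(V_0)=\kk^1\subset\Uni(V)$, whose determinant is the identity of $\kk^1$; hence $L_0\mapsto L_0\perp\Lambda$ becomes the identity automorphism of $\kk^1\backslash\A_{\kk,f}^1/\widehat{\co}_\kk^1$, and in particular is a bijection. I expect the only genuine work to be the reduction of the $\Uni(V)$-double coset via strong approximation, together with the identity $\det\!\big(\Uni(\widehat{L}^{\mathrm{ref}})\big)=\widehat{\co}_\kk^1$; this is also where the hypothesis on $n$ and $\mathscr{L}(\infty)$ is used.

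For part~(2): by (\ref{uniformization}) the connected components of $\mathtt{M}_\mathscr{L}(\C)$ correspond to isomorphism classes of pairs $(\mathfrak{A}_0,\mathfrak{A})$ with $L(\mathfrak{A}_0,\mathfrak{A})\in\gen(\mathscr{L}(\infty))$; since $\mathfrak{A}_0$ is invertible and self-dual, evaluation is an isometry $L(\mathfrak{A}_0,\mathfrak{A})\otimes_{\co_\kk}\mathfrak{A}_0\iso\mathfrak{A}$, so $\mathfrak{A}$ is recovered from the pair $(\mathfrak{A}_0,L(\mathfrak{A}_0,\mathfrak{A}))$, and the components are classified by pairs $(\mathfrak{A}_0,L)$ with $L\in\gen(\mathscr{L}(\infty))$. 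Likewise, by (\ref{cm uniformization}) the points of $\mathtt{Y}_{(\mathscr{L}_0,\Lambda)}(\C)$ correspond to triples $(\mathfrak{A}_0,\mathfrak{A}_1,\mathfrak{B})$, hence — recovering $\mathfrak{A}_1$ from $(\mathfrak{A}_0,L(\mathfrak{A}_0,\mathfrak{A}_1))$ with $L(\mathfrak{A}_0,\mathfrak{A}_1)\in\gen(\mathscr{L}_0(\infty))$, and $\mathfrak{B}$ from $(\mathfrak{A}_0,\Lambda)$ — to pairs $(\mathfrak{A}_0,L_0)$ with $L_0\in\gen(\mathscr{L}_0(\infty))$. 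Under these classifications the morphism $\mathtt{Y}_{(\mathscr{L}_0,\Lambda)}\to\mathtt{M}_\mathscr{L}$, sending $(\mathfrak{A}_0,\mathfrak{A}_1,\mathfrak{B})$ to $(\mathfrak{A}_0,\mathfrak{A}_1\oplus\mathfrak{B})$, becomes $(\mathfrak{A}_0,L_0)\mapsto(\mathfrak{A}_0,L_0\oplus\Lambda)$, using $L(\mathfrak{A}_0,\mathfrak{A}_1\oplus\mathfrak{B})=L(\mathfrak{A}_0,\mathfrak{A}_1)\oplus L(\mathfrak{A}_0,\mathfrak{B})$. By part~(1) the second coordinate is a bijection while the first is untouched, so the induced map from the points of $\mathtt{Y}_{(\mathscr{L}_0,\Lambda)}(\C)$ to the connected components of $\mathtt{M}_\mathscr{L}(\C)$ is a bijection — which is exactly the assertion of part~(2).
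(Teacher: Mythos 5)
Your proof is correct, but for part (1) it takes a different route from the paper's. The paper first observes that both genera have the same cardinality $2^{1-o(d_\kk)}h_\kk$ (Remark \ref{component count}), so only injectivity needs proof; injectivity is then obtained by a short algebraic trick: if $L_0\oplus\Lambda\iso L_0'\oplus\Lambda$, taking top exterior powers shows $L_0\iso L_0'$ as $\co_\kk$-modules, and rank-one positive definite hermitian modules within a fixed genus are determined by their ideal class, so $L_0\iso L_0'$ as hermitian modules. You instead identify \emph{both} class sets with $\kk^1\backslash\A_{\kk,f}^1/\widehat{\co}_\kk^1$ — the source tautologically, the target via strong approximation for $\mathrm{SU}(V)$ followed by the determinant — and check that $L_0\mapsto L_0\oplus\Lambda$ becomes the identity map. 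The two arguments rest on the same input (strong approximation enters the paper's proof through the cardinality count in Remark \ref{component count}), and your determinant reduction is really the adelic avatar of the paper's exterior-power step; but your version buys surjectivity directly rather than deducing it from equicardinality, at the cost of having to verify $\det\bigl(\Uni(\widehat{L}^{\mathrm{ref}})\bigr)=\widehat{\co}_\kk^1$ and to carry out the double-coset reduction carefully (these are standard, and the hypothesis on $n$ and $\mathscr{L}(\infty)$ enters exactly where you say it does). One small inaccuracy: the existence of a global $L_0^{\mathrm{ref}}$ with $\widehat{L}_0^{\mathrm{ref}}\iso\mathscr{L}_{0,f}$ follows from the coherence of $\mathscr{L}_0(\infty)$, not from self-duality per se. For part (2) the paper offers no details ("an easy consequence of (1)"), and your reindexing of components by pairs $(\mathfrak{A}_0,L)$ via the evaluation isometry $L(\mathfrak{A}_0,\mathfrak{A})\otimes_{\co_\kk}\mathfrak{A}_0\iso\mathfrak{A}$, together with the compatibility $L(\mathfrak{A}_0,\mathfrak{A}_1\oplus\mathfrak{B})\iso L(\mathfrak{A}_0,\mathfrak{A}_1)\oplus L(\mathfrak{A}_0,\mathfrak{B})$, is exactly the intended deduction and is consistent with the counts in Remarks \ref{component count} and \ref{rem:degree}.
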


\begin{proof}
(1) Since both sets have $2^{1-o(d_\kk)}h_\kk$ elements (see Remark \ref{component count}), it suffices to show that the map is injective.
For any hermitian $\co_\kk$-modules  $L_0, L_0' \in \mathrm{gen}(\LL_0( \infty) )$
there are fractional ideals $\mathfrak{b}$ and $\mathfrak{b}'$ such that $\mathfrak{b} \iso L_0$  and $\mathfrak{b}'\iso L_0'$,
where the hermitian forms on $\mathfrak{b}$ and $\mathfrak{b}'$ are defined by   $-x\overline{y}$.
If  $L_0\oplus \Lambda \iso L_0' \oplus \Lambda$, then taking top exterior powers (in the category of $\co_\kk$-modules)
shows that $L_0 \iso L_0'$ as $\co_\kk$-modules.  But this implies that $\mathfrak{b}$ and $\mathfrak{b}'$ lie in the same ideal class,
and hence are isomorphic as hermitian $\co_\kk$-modules.  Therefore $L_0 \iso L_0'$ as hermitian $\co_\kk$-modules.
Claim (2) is an easy consequence of (1).
\end{proof}

Let $P\subset\Sl_2(\Z)$ be the subgroup of upper triangular matrices.
For each $\varphi \in S_{\LL_0}$,  define an \emph{incoherent}
Eisenstein series of weight $1$
\begin{equation}\label{incoherent Eisenstein}
E_{ \LL_0}(\tau, s, \varphi)=
\sum_{\gamma \in P \backslash \Sl_2(\Z)}
 \omega_{\LL_0}(\gamma)  \varphi  (0)  \cdot (c\tau+d)^{-1} \cdot
\mathrm{Im}(\gamma \tau)^{\frac{s}2}.
\end{equation}
Here  $\gamma =  \kabcd$, $\tau=u+iv\in \H$, and $s$ is a complex variable
with $\mathrm{Re}(s) \gg 0$.
The Eisenstein series  has meromorphic continuation to all $s$, and
is holomorphic at $s=0$.  As (\ref{incoherent Eisenstein}) is linear in $\varphi$,
it may be viewed as a function $E_{\LL_0}(\tau, s)$
taking values in the dual space $S_{\LL_0}^\vee$.
This particular Eisenstein series was studied in \cite{Scho} and  \cite[Section 2]{BY1}. Indeed,
if we pick any $L_0\in\mathrm{gen}(\LL_0(\infty))$ then
(\ref{incoherent Eisenstein}) is precisely the Eisenstein series denoted
$E_{L_0}(\tau,s,1)$ in \cite{BY1}, and depends only on the genus of $L_0$, not
on $L_0$ itself.

By \cite[Proposition 2.5]{BY1}, the completed Eisenstein series
\[
E^*_{ \LL_0}(\tau, s, \varphi) = \Lambda(\chi_\kk, s+1) \cdot E_{ \LL_0}(\tau, s, \varphi)
\]
satisfies the functional equation $E^*_{ \LL_0}(\tau, - s,\varphi) = - E^*_{ \LL_0}(\tau, s,\varphi)$, where
\[
\Lambda(\chi_\kk, s) = |d_\kk|^{\frac{s}2} \pi^{-\frac{s+1}2} \Gamma\big(\frac{s+1}{2}\big) L(\chi_\kk, s).
\]
In particular $E_{\LL_0}(\tau, 0) =0$.
The central derivative $E_{\LL_0}'(\tau, 0)$ at $s=0$ is a harmonic Maass form
of weight $1$ with representation $\omega_{\LL_0}^\vee$,
whose holomorphic part we denote (as in \cite[(2.26)]{BY1}) by
\begin{equation} \label{eq:HolPart}
 \mathcal E_{\LL_0}(\tau) =\sum_{m \gg -\infty} a_{\LL_0}^+(m) \cdot q^m.
\end{equation}

Up to a change of notation, the following proposition is due to Schofer \cite{Scho}; see also
\cite[Theorem 2.6]{BY1}.  Be warned that both references contain minor misstatements.
The formula of part (4) is misstated in \cite{Scho}, but the error is corrected in \cite{BY1}. The
formula of (2) is correct in \cite{Scho}, but is misstated in \cite{BY1}.

 \begin{proposition}  \label{prop:eisenstein coeff}
 Recall the finite set $\mathrm{Diff}_{\LL_0}(m)$ of odd cardinality from (\ref{diff set}),
and the function $\rho$ of (\ref{rho}).
 The coefficients $a^+_{\LL_0}(m) \in S_{\LL_0}^\vee$ are given by the following formulas.
\begin{enumerate}
\item
If $m<0$  then $a_{\LL_0}^+(m)=0$.
\item
The constant term  is
\[
 a_{\LL_0}^+(0,\varphi)   =
\varphi(0) \cdot \left(   \gamma +  \log\left|\frac{ 4 \pi }{d_\kk } \right|
-  2 \frac{ L ' (\chi_\kk,0)}{L(\chi_\kk,0)} \right)
\]
for every $\varphi\in S_{\LL_0}$. Here $\gamma=-\Gamma'(1)$ is Euler's constant.
\item
If $m>0$ and $|\mathrm{Diff}_{\LL_0}(m) | >1$, then $a_{\LL_0}^+(m)=0$.
\item
If  $m>0$ and   $\mathrm{Diff}_{\LL_0}(m)=\{p\}$ for a single prime $p$, then
\[
a^+_{\LL_0}(m, \varphi )
=
- \frac{   w_\kk}{ 2 h_\kk}\cdot  \rho \left( \frac{m  |d_\kk| }{p^\epsilon}  \right) \cdot  \ord_p(pm)\cdot \log(p)
\sum_{
\substack{  \mu \in \mathfrak{d}_\kk^{-1} \LL_{0,f} / \LL_{0,f}
\\ Q(\mu) =m } } 2^{s(\mu) } \varphi(\mu) .
\]
On the right hand side, $s(\mu)$ is the number of  primes  $q\mid d_\kk$ such that  $\mu_q= 0$,
$\epsilon$ is defined by (\ref{epsilon}), and $Q(\mu)=m$ is understood as an equality  in $\Q/\Z$.
\end{enumerate}
\end{proposition}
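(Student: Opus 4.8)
The plan is to compute the Fourier coefficients of the incoherent Eisenstein series $E_{\mathscr{L}_0}(\tau,s,\varphi)$ directly from its definition as a sum over $P\backslash\SL_2(\Z)$, unfold to get an expression as a product of local Whittaker functions, and then extract the value and first derivative at $s=0$. Since $E_{\mathscr{L}_0}(\tau,s,\varphi)$ coincides, for a choice of $L_0\in\mathrm{gen}(\mathscr{L}_0(\infty))$, with the Eisenstein series $E_{L_0}(\tau,s,1)$ of \cite{BY1}, this is essentially a matter of quoting \cite[Theorem 2.6]{BY1} (equivalently Schofer \cite{Scho}), with the task reduced to bookkeeping: rewriting the statement in the vector-valued notation of the present paper and reconciling the two slightly incompatible normalizations noted in the paragraph preceding the proposition. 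First I would recall the adelic description: the weight-$1$ incoherent Eisenstein series attached to $\mathscr{V}_0$ is a genuine section of a degenerate principal series whose local components at places $\ell\neq\infty$ are given by the standard Siegel-Weil section for the local hermitian space $\mathscr{V}_{0,\ell}$, and at $\infty$ by the incoherent ($(0,1)$-type) section; the functional equation $E^*_{\mathscr{L}_0}(\tau,-s,\varphi)=-E^*_{\mathscr{L}_0}(\tau,s,\varphi)$ from \cite[Proposition 2.5]{BY1} forces $E_{\mathscr{L}_0}(\tau,0,\varphi)=0$, so all the arithmetic content lives in the derivative.

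For the $m$-th coefficient with $m\neq 0$, the plan is to factor the global Whittaker integral as a product $W_{m,\infty}(s)\cdot\prod_{p<\infty}W_{m,p}(s,\varphi)$ of local Whittaker functions, where $W_{m,p}(s,\varphi)$ is a local density attached to the lattice $\mathscr{L}_{0,p}$ and the condition $Q(\mu)=m$. The key structural input is that $W_{m,p}(0,\varphi)=0$ precisely when $\mathscr{V}_{0,p}$ does not represent $m$, i.e. when $p\in\mathrm{Diff}_{\mathscr{L}_0}(m)$; hence if $|\mathrm{Diff}_{\mathscr{L}_0}(m)|>1$ the product of two vanishing factors (together with a finite derivative) kills the coefficient, giving (3), and if $m<0$ the archimedean factor $W_{m,\infty}(0)$ already vanishes, giving (1). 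When $\mathrm{Diff}_{\mathscr{L}_0}(m)=\{p\}$, exactly one local factor vanishes at $s=0$, so $\tfrac{d}{ds}\big|_{s=0}$ picks out $W_{m,p}'(0,\varphi)\cdot\prod_{\ell\neq p}W_{m,\ell}(0,\varphi)$; the derivative $W_{m,p}'(0,\varphi)$ is the nonsplit local density computation, which produces the factor $\ord_p(pm)\log p$ and the local weight $2^{s(\mu)}$ (the powers of $2$ arising from ramified primes dividing $d_\kk$ where $\mu_q=0$), while the product over $\ell\neq p$ of the good local densities is recognized as $\rho(m|d_\kk|/p^\epsilon)$ up to the constant $w_\kk/(2h_\kk)$ coming from the archimedean factor, the completed $L$-value $\Lambda(\chi_\kk,1)$, and the class-number/unit normalization. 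This is the same bookkeeping that appears in the proof of Theorem \ref{thm:zero cycles}, and indeed Proposition \ref{prop:eisenstein coeff}(4) is the analytic shadow of the geometric counting formula (\ref{geometric counting}); I would simply cite \cite[Theorem 2.6]{BY1} and \cite{Scho} for these local computations rather than redo them.

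For the constant term (2), the plan is separate: at $m=0$ the unfolded expression involves $E^*_{\mathscr{L}_0}(\tau,s,\varphi)$ near $s=0$, and since $E^*$ is odd in $s$ its value at $0$ vanishes while its derivative is computed from the logarithmic derivative of $\Lambda(\chi_\kk,s+1)$ at $s=0$ together with the archimedean factor; this yields the constant $\gamma+\log|4\pi/d_\kk|-2L'(\chi_\kk,0)/L(\chi_\kk,0)$. Here one must be careful to use the corrected form of \cite{BY1} rather than the misstated version in \cite{Scho}, exactly as flagged in the paper. The main obstacle, such as it is, is not a genuine mathematical difficulty but the reconciliation of normalizations: the paper's $S_{\mathscr{L}_0}$-valued formalism, the conventions for $\omega_{\mathscr{L}_0}$ versus $\overline{\omega}_{\mathscr{L}_0}$, and the two sign/normalization errata in the literature must be tracked precisely so that the constants $w_\kk/(2h_\kk)$ and the exponents $s(\mu)$, $\epsilon$ come out exactly right; once the dictionary is fixed, the proposition is a transcription of results already in \cite{Scho} and \cite{BY1}, and I would present it as such.
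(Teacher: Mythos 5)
Your proposal is correct and follows essentially the same route as the paper, which offers no independent proof but simply attributes the proposition to Schofer \cite{Scho} and \cite[Theorem 2.6]{BY1} (modulo the flagged misstatements in each reference); your sketch of the underlying Whittaker-function factorization and the vanishing/derivative analysis at the unique prime in $\mathrm{Diff}_{\mathscr{L}_0}(m)$ is an accurate description of how those references establish the formulas. Nothing further is needed.
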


Define a \emph{coherent}  Eisenstein series of weight $-1$ associated to $\LL_0(\infty)$ by
\begin{equation}\label{coherent Eisenstein}
E_{\LL_0(\infty)}(\tau, s, \varphi )=
\sum_{\gamma \in P \backslash \Sl_2(\Z)}
\omega_{\LL_0}(\gamma) \varphi  (0) \cdot
 (c\tau +d) \cdot \mathrm{Im}(\gamma \tau)^{\frac{s}2+1 }.
\end{equation}
This is the Eisenstein series denoted $E_{L_0}(\tau,s,-1)$ in \cite[Section 2]{BY1},
for any choice of  $L_0\in \mathrm{gen}(\LL_0(\infty))$.
The following  relationship between the coherent and incoherent Eisenstein series
was first observed by Kudla \cite[(2.17)]{Ku:Integrals}, and is a special case of \cite[Lemma 2.3]{BY1}.

\begin{proposition} \label{prop:EisensteinRelation}
For any $\varphi \in  S_{\LL_0}$ the Eisenstein series (\ref{incoherent Eisenstein})
and (\ref{incoherent Eisenstein}) are related by the equality
\[
- 2\cdot  \overline{\partial} (E_{\LL_0}'(\tau, 0,\varphi )d\tau )
= E_{\LL_0(\infty)}(\tau, 0, \varphi) \cdot v^{-2} du\wedge dv
\]
of smooth $2$-forms on $\H$.
\end{proposition}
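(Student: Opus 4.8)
The plan is to deduce Proposition \ref{prop:EisensteinRelation} from the general comparison of coherent and incoherent Eisenstein series in \cite[Lemma 2.3]{BY1}. First I would fix any $L_0 \in \mathrm{gen}(\mathscr{L}_0(\infty))$ (such an $L_0$ exists since $\mathscr{L}_0(\infty)$ is coherent of signature $(0,1)$, hence positive definite in the relevant sense), and observe, as already recorded in the text just above, that the incoherent series \eqref{incoherent Eisenstein} is precisely $E_{L_0}(\tau,s,1)$ in the notation of \cite{BY1}, while the coherent series \eqref{coherent Eisenstein} is $E_{L_0}(\tau,s,-1)$; in particular both are independent of the choice of $L_0$ within the genus. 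This reduces everything to a statement purely about the Eisenstein series $E_{L_0}(\tau,s,\pm 1)$ attached to a single positive definite rank-one hermitian lattice, at which point \cite[Lemma 2.3]{BY1} applies directly.

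Next I would record the two ingredients from \cite{BY1} that are needed. The first is that $E_{\mathscr{L}_0}(\tau,0)=0$, which is already stated in the excerpt and follows from the odd functional equation of the completed series $E^*_{\mathscr{L}_0}(\tau,s,\varphi)$ via \cite[Proposition 2.5]{BY1}; this is what makes the \emph{derivative} $E'_{\mathscr{L}_0}(\tau,0,\varphi)$ the natural object and guarantees it is a harmonic Maass form of weight $1$. The second is the weight-lowering/raising relation between $E_{L_0}(\tau,s,1)$ and $E_{L_0}(\tau,s,-1)$ from \cite[Lemma 2.3]{BY1}: applying the Maass lowering operator $L_1 = -2iv^2\,\partial_{\bar\tau}$ (equivalently $-v^2$ times $\bar\partial$ composed with $d\tau$) to the weight $1$ incoherent Eisenstein series produces, up to the explicit constant, the weight $-1$ coherent Eisenstein series. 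I would then differentiate this identity in $s$ at $s=0$. Because the coherent series $E_{\mathscr{L}_0(\infty)}(\tau,s)$ is holomorphic and nonvanishing at $s=0$ while the incoherent one vanishes there, differentiating the relation in $s$ kills the term involving $E_{\mathscr{L}_0}(\tau,0)$ (which is zero anyway) and leaves exactly
\[
-2\,\bar\partial\big(E'_{\mathscr{L}_0}(\tau,0,\varphi)\,d\tau\big) = E_{\mathscr{L}_0(\infty)}(\tau,0,\varphi)\cdot v^{-2}\,du\wedge dv,
\]
which is the assertion. The only real bookkeeping is to track the precise constant $-2$ and the conversion between $\bar\partial(\,\cdot\,d\tau)$ and the $v^{-2}\,du\wedge dv$ normalization of the invariant measure, together with the weight shift $s/2 \mapsto s/2+1$ visible in comparing \eqref{incoherent Eisenstein} and \eqref{coherent Eisenstein}; all of this is exactly as in \cite[Lemma 2.3]{BY1} and I would simply cite it.

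The step I expect to be the main (though modest) obstacle is verifying that the normalizations in \cite{BY1} match those used here — in particular that $E_{\mathscr{L}_0}(\tau,s,\varphi)$ as defined by \eqref{incoherent Eisenstein} coincides on the nose (not merely up to scalar) with $E_{L_0}(\tau,s,1)$ of \cite{BY1}, including the convention for the Weil representation $\omega_{\mathscr{L}_0}$ and the slash action, and similarly for \eqref{coherent Eisenstein}. Once the dictionary is pinned down, the proof is a one-line specialization of \cite[Lemma 2.3]{BY1} differentiated at $s=0$, using $E_{\mathscr{L}_0}(\tau,0)=0$. I would also remark that, since both sides of the claimed identity are $\C$-linear in $\varphi$, it suffices to check it for $\varphi$ running over a basis of $S_{\mathscr{L}_0}$, but in fact no such reduction is needed because the argument is uniform in $\varphi$.
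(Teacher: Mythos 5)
Your proposal is correct and takes essentially the same route as the paper: the paper gives no argument beyond remarking that the identity "was first observed by Kudla \cite[(2.17)]{Ku:Integrals} and is a special case of \cite[Lemma 2.3]{BY1}," which is precisely the reduction you carry out (identify \eqref{incoherent Eisenstein} and \eqref{coherent Eisenstein} with $E_{L_0}(\tau,s,\pm 1)$ of \cite{BY1}, apply the lowering-operator relation $L_1 E_{\mathscr L_0}(\tau,s)=\tfrac{s}{2}E_{\mathscr L_0(\infty)}(\tau,s)$, and differentiate at $s=0$). The only quibble is a bookkeeping slip: what dies upon differentiating is the term $\tfrac{s}{2}\big|_{s=0}\cdot E'_{\mathscr L_0(\infty)}(\tau,0)$ coming from the product rule, not a term involving $E_{\mathscr L_0}(\tau,0)$, so the vanishing of the incoherent series at $s=0$ is not actually what is used here.
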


Suppose $L_0\in \mathrm{gen}(\LL_0(\infty))$.   Exactly as in
(\ref{globalized schwartz}), there is an $\SL_2(\Z)$-equivariant  isomorphism
$S_{L_0}^{\Delta_0} \iso S_{\LL_0}^{\Delta_0}$,
which allows us to define a non-holomorphic theta series
$\theta_{L_0} :\H \to (S^\vee_{\LL_0})^{\Delta_0}$
by
\[
\theta_{L_0}(\tau,\varphi ) = v
\sum_{   \lambda \in  \mathfrak{d}_\kk^{-1} L_0  } \varphi(\lambda) e^{ 2\pi i \langle \lambda,\lambda \rangle \overline{\tau}}.
\]
 for any $\varphi \in S_{\LL_0}^{\Delta_0}$. The following
 proposition follows  from \cite[Proposition 2.2]{BY1}.

\begin{proposition}[Siegel-Weil formula] \label{Siegel-Weil}
The coherent Eisenstein series (\ref{coherent Eisenstein}) is related to the above
theta series by
\[
\frac{2^{o(d_\kk)}}{h_\kk}
\sum_{L_0 \in \gen(\LL_0(\infty))} \theta_{L_0}(\tau)
=  E_{\LL_0(\infty)}(\tau, 0 ).
\]
\end{proposition}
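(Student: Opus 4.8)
The plan is to deduce this from the classical (adelic) Siegel--Weil formula for the anisotropic rank-one hermitian space attached to $\mathscr{L}_0(\infty)$, in the shape already recorded in \cite[Proposition 2.2]{BY1}. First I would fix a lattice $L_0\in\gen(\mathscr{L}_0(\infty))$ and use the $\SL_2(\Z)$-equivariant isomorphism $S_{L_0}^{\Delta_0}\iso S_{\mathscr{L}_0}^{\Delta_0}$ (the analogue of \eqref{globalized schwartz}) to identify the vector-valued objects in the statement with the lattice-indexed ones of \cite{BY1}: by construction the coherent Eisenstein series $E_{\mathscr{L}_0(\infty)}(\tau,s,\varphi)$ of \eqref{coherent Eisenstein} is literally the series $E_{L_0}(\tau,s,-1)$ of \cite[Section 2]{BY1}, which depends only on $\gen(L_0)$, and the theta series $\theta_{L_0}$ of the proposition is the standard (weight $-1$, non-holomorphic) theta series of the negative definite rank-one hermitian $\co_\kk$-module $L_0$, i.e. of the underlying even negative definite $\Z$-lattice of signature $(0,2)$. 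After this translation the asserted identity becomes the genus-averaged Siegel--Weil identity of \cite[Proposition 2.2]{BY1}, and the only remaining point is to match the two normalizing constants.

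For the normalization I would compute $|\gen(\mathscr{L}_0(\infty))|$ and the mass $\sum_{L_0}1/|\Aut(L_0)|$. Since every $L_0\in\gen(\mathscr{L}_0(\infty))$ is a rank-one hermitian $\co_\kk$-module we have $\Aut(L_0)=\mu(\kk)$, so $|\Aut(L_0)|=w_\kk$; and by strong approximation (compare Remark \ref{component count} and Proposition \ref{lem:bijection}(1)) the genus has $2^{1-o(d_\kk)}h_\kk$ elements. Thus the mass equals $2^{1-o(d_\kk)}h_\kk/w_\kk$, and the factor $2^{o(d_\kk)}/h_\kk$ appearing in the statement is precisely the reciprocal of this mass up to the elementary constant in \cite[Proposition 2.2]{BY1}. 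Writing out the $m=0$ Fourier coefficient on both sides (on the theta side it is $v\,\varphi(0)$ times $|\gen(\mathscr{L}_0(\infty))|$ scaled by $2^{o(d_\kk)}/h_\kk$, and the Eisenstein side has the known constant term) pins the scalar down.

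As an alternative self-contained route, useful if one wishes to avoid quoting \cite{BY1}, I would verify the identity coefficientwise. Both sides are real-analytic $S_{\mathscr{L}_0}^\vee$-valued modular forms of weight $-1$ for $\SL_2(\Z)$ transforming under the dual Weil representation, so it suffices to compare their $m$-th Fourier coefficients for each $m\in\Q$ and each coset $\mu\in\mathfrak{d}_\kk^{-1}\mathscr{L}_{0,f}/\mathscr{L}_{0,f}$. For $m<0$ both coefficients vanish (the theta side because $L_0$ is definite, the Eisenstein side by the Whittaker computation); for $m=0$ one matches the explicit constant terms; and for $m>0$ the coefficient of $E_{\mathscr{L}_0(\infty)}(\tau,0)$ is a product of local representation densities of $m$ by $\mathscr{L}_{0,p}(\infty)$, which by Siegel's local--global formula for binary quadratic forms equals the genus-average of the global representation numbers $R_{L_0}$ --- the same type of input already used in Proposition \ref{prop:eisenstein coeff} and Theorem \ref{thm:zero cycles}.

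The main obstacle I anticipate is purely bookkeeping: getting the power of $2$ and the interplay of $h_\kk$ and $w_\kk$ exactly right, in particular tracking the normalization of the non-holomorphic theta series through the isomorphism $S_{L_0}^{\Delta_0}\iso S_{\mathscr{L}_0}^{\Delta_0}$ so that it matches $E_{\mathscr{L}_0(\infty)}(\tau,0)$ with the stated constant. No genuinely new analytic input is required beyond what \cite{BY1} and \cite{Scho} already supply.
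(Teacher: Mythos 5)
Your proposal is correct and follows the same route as the paper, which simply observes that the identity is \cite[Proposition 2.2]{BY1} after the identifications you describe; your constant-matching (mass $=2^{1-o(d_\kk)}h_\kk/w_\kk$ with $|\Aut(L_0)|=w_\kk$, yielding the factor $2^{o(d_\kk)}/h_\kk$) is exactly the bookkeeping implicit in that citation. The alternative coefficientwise verification you sketch is not needed but is a sound fallback.
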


For each $\varphi\in S_\Lambda$ define
\[
 R_\Lambda(m,\varphi)  =
  \sum_{  \substack{  \lambda\in \mathfrak{d}_\kk^{-1} \Lambda  \\ \langle \lambda,\lambda\rangle=m  } } \varphi(\lambda).
\]
These representation numbers are the Fourier coefficients of a holomorphic
$S_\Lambda^\vee$-valued modular form
\[
\theta_\Lambda(\tau,\varphi) = \sum_{m\in\Q} R_\Lambda(m,\varphi) \cdot q^m
\in M_{n-1}(\omega_\Lambda^\vee).
\]

Given any  $F\in S_n(\overline{\omega}_\LL)$ with Fourier expansion
\[F(\tau) = \sum_{m\in \Q_{\ge 0}} b(m)  \cdot  q^m,\]
define the \emph{Rankin-Selberg convolution $L$-function}
\begin{equation} \label{eq:vectorRankin-Selberg}
L(F, \theta_\Lambda , s) =
 \Gamma \big(\frac{s}{2} +n-1 \big)
\sum_{m \in \Q_{>0}}  \frac{ \big\{\overline{b(m)},  R_\Lambda(m) \big\}  }
{ (4\pi m)^{\frac{s}{2}+n-1 }}.
\end{equation}
On the right hand side the pairing  is the tautological pairing between $S_\LL$ and its dual.
 The  inclusion   $\mathfrak{d}_\kk^{-1}\Lambda/\Lambda \to \mathfrak{d}_\kk^{-1} \LL_f / \LL_f$
determines a canonical surjection
$S_\LL \to S_\Lambda$, and hence an injection on dual spaces.
In particular, this allows us to view  $R_\Lambda(m)$ as an element of $S_\LL^\vee$.
The usual unfolding method shows that
\[
L(F, \theta_\Lambda , s)
=\int_{\Sl_2(\Z) \backslash \mathbb H}
 \left\{\overline{F (\tau) } ,  E_{\LL_0}(\tau,s) \otimes \theta_\Lambda(\tau)    \right\} v^{n-2} \, du\, dv,
\]
where $\tau=u+iv$  and $E_{\LL_0}(\tau,s)$
is the incoherent Eisenstein series of (\ref{incoherent Eisenstein}).
On the right hand side we are using the canonical isomorphism
$S_\LL^\vee \iso S_{\LL_0}^\vee \otimes S_\Lambda^\vee$
to view $E_{\LL_0}(\tau,s) \otimes \theta_\Lambda(\tau)$ as an
$S_\LL^\vee$-valued function.
Of course $L(F, \theta_\Lambda , 0)=0$, as the Eisenstein series vanishes at $s=0$.

\begin{theorem} \label{thm:CM value}
For every $f\in H_{2-n}(\omega_{\LL})^\Delta$ the CM value
\[
\Phi_\LL( \mathcal{Y}_{(\LL_0,\Lambda)}  ,f ) =
 \sum_{y\in \mathcal{Y}_{(\LL_0,\Lambda)} (\C) }
\frac{ \Phi_\LL(y,f)}{ |  \Aut(y) | }
\]
satisfies
\[
\frac{1} { \deg_\C \mathcal{Y}_{(\LL_0, \Lambda)} }\cdot
\Phi_\LL( \mathcal{Y}_{(\LL_0,\Lambda)}  ,f )
=   - L' \big(\xi(f), \theta_\Lambda, 0 \big) +
  \CT \big[\{ f^+ ,   \mathcal E_{\LL_0}\otimes  \theta_\Lambda \} \big] .
\]
Here  the differential operator
\[
\xi : H_{2-n}(\omega_{\LL} ) \to S_n( \overline{\omega}_\LL)
\]
is defined by   (\ref{defxi}),  and
 $
 \CT [ \{ f^+ ,  \mathcal {E}_{\LL_0} \otimes \theta_\Lambda  \} ]
 $
is the constant term of the $q$-expansion of
 $\{ f^+ ,  \mathcal {E}_{\LL_0} \otimes \theta_\Lambda  \}$.
\end{theorem}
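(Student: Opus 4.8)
## Proof proposal for Theorem \ref{thm:CM value}

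The plan is to reduce the CM value of $\Phi_\mathscr{L}(f)$ to an integral of $f$ against a theta kernel over the CM cycle, and then unfold that integral using the Siegel--Weil formula (Proposition \ref{Siegel-Weil}) and the relation between coherent and incoherent Eisenstein series (Propositions \ref{prop:EisensteinRelation} and \ref{prop:eisenstein coeff}). First I would use the uniformization \eqref{cm uniformization} to write $\Phi_\mathscr{L}(\mathtt{Y}_{(\mathscr{L}_0,\Lambda)},f)$ as a sum over the finitely many triples $(\mathfrak{A}_0,\mathfrak{A}_1,\mathfrak{B})$; by Proposition \ref{lem:bijection}(2), each such point lies on a distinct connected component of $\mathtt{M}_\mathscr{L}(\C)$, and the point $y_{(\mathfrak{A}_0,\mathfrak{A}_1,\mathfrak{B})}$ corresponds to the negative line $L(\mathfrak{A}_0,\mathfrak{A}_1)_\R\subset L(\mathfrak{A}_0,\mathfrak{A})_\R$ where $\mathfrak{A}=\mathfrak{A}_1\oplus\mathfrak{B}$. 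This induces an orthogonal splitting of the lattice $L(\mathfrak{A}_0,\mathfrak{A})$ into the rank-one negative-definite-at-$\infty$ piece coming from $\mathscr{L}_0(\infty)$ and the positive definite piece $\Lambda$, so the Siegel theta function $\Theta_{L(\mathfrak{A}_0,\mathfrak{A})}(\tau,z)$ at $z=y$ factors as a product of a weight-one (non-holomorphic) theta function $\theta_{L_0}(\tau)$ attached to $L_0=L(\mathfrak{A}_0,\mathfrak{A}_1)\in\mathrm{gen}(\mathscr{L}_0(\infty))$ and the holomorphic weight $(n-1)$ theta function $\theta_\Lambda(\tau)$.

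Next I would insert this factorization into the regularized theta integral \eqref{eq:AutoGreen} defining $\Phi_\mathscr{L}(y,f)$, sum over $y$, and apply Proposition \ref{lem:bijection}(1) together with Proposition \ref{Siegel-Weil} to replace the average of $\theta_{L_0}$ over the genus by the value $E_{\mathscr{L}_0(\infty)}(\tau,0)$ of the coherent weight $-1$ Eisenstein series. After normalizing by $\deg_\C\mathtt{Y}_{(\mathscr{L}_0,\Lambda)}$ (using the explicit formula in Remark \ref{rem:degree} and the Siegel--Weil normalization constant $2^{o(d_\kk)}/h_\kk$), this turns the CM value into
\[
\frac{1}{\deg_\C\mathtt{Y}_{(\mathscr{L}_0,\Lambda)}}\cdot\Phi_\mathscr{L}(\mathtt{Y}_{(\mathscr{L}_0,\Lambda)},f)
=\int_{\SL_2(\Z)\backslash\mathbb{H}}^{\reg}\bigl\{f(\tau),\,E_{\mathscr{L}_0(\infty)}(\tau,0)\otimes\theta_\Lambda(\tau)\bigr\}\,d\mu(\tau).
\]
Then, using Proposition \ref{prop:EisensteinRelation} to write $E_{\mathscr{L}_0(\infty)}(\tau,0)\,v^{-2}du\wedge dv=-2\,\bar\partial(E_{\mathscr{L}_0}'(\tau,0)d\tau)$, I would integrate by parts as in the proof of Theorem \ref{thm:regint}(1): the boundary term at the cusp produces the constant term $\CT[\{f^+,\mathcal{E}_{\mathscr{L}_0}\otimes\theta_\Lambda\}]$ (the holomorphic part $\mathcal{E}_{\mathscr{L}_0}$ of $E_{\mathscr{L}_0}'(\tau,0)$ appearing via \eqref{eq:HolPart}), while the interior term rearranges, using $\xi_{2-n}(f)\in S_n(\overline{\omega}_\mathscr{L})$ and the integral representation \eqref{eq:IntegralExpression} of the Rankin--Selberg $L$-function, into $-L'(\xi(f),\theta_\Lambda,0)$; the derivative in $s$ enters precisely because $E_{\mathscr{L}_0}(\tau,0)=0$ and one is really pairing against $E_{\mathscr{L}_0}'(\tau,0)$.

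The main obstacle will be the careful handling of the regularization and the integration by parts in the presence of the singularities of $f$ at the cusp together with the vanishing of the Eisenstein series at $s=0$: one must keep track of the $s$-variable throughout (working with $\Phi(z,f,s)$ and $E_{\mathscr{L}_0}(\tau,s)$ simultaneously), justify that the regularized interior integral converges after the unfolding, and verify that no extra contributions arise from the truncation at height $T$ beyond the advertised constant term. A secondary technical point is the bookkeeping of normalizing factors, automorphism groups, and the passage between $S_\mathscr{L}$-valued and $(S_{\mathscr{L}_0}\otimes S_\Lambda)$-valued forms, which must be done consistently with the conventions of \cite{BY1}; here I would lean on Propositions \ref{prop:eisenstein coeff} and \ref{Siegel-Weil} and the degree formula of Remark \ref{rem:degree} to pin down every constant. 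Once these analytic points are settled, the identity follows by matching the two pieces produced by the integration by parts with the two terms on the right-hand side of the asserted formula.
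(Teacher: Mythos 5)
Your proposal is correct and follows essentially the same route as the paper's own proof (which is itself a sketch reducing to \cite[Theorem 4.7]{BY1}): complex uniformization of the CM cycle, factorization of the Siegel theta function at the CM points into $\theta_{L_0}\otimes\theta_\Lambda$, the Siegel--Weil formula to replace the genus average by the coherent Eisenstein series, Proposition \ref{prop:EisensteinRelation} to pass to $\bar\partial E_{\mathscr{L}_0}'(\tau,0)\,d\tau$, and Stokes' theorem producing the constant term and the $L$-derivative. The technical caveats you flag (regularization, truncation at height $T$, normalization constants) are exactly the points the paper delegates to \cite{BY1}.
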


\begin{proof}
This is really a special case of \cite[Theorem 4.7]{BY1}, but beware that the
statement of  [\emph{loc.~cit.}]  contains a sign error.
We sketch the main ideas for the convenience of the reader.
Recall from Section \ref{ss:complex uniformization}
that the complex points of $\mathcal{Y}_{(\LL_0,\Lambda)}$ are indexed by
triples $(\mathfrak{A}_0,\mathfrak{A}_1,\mathfrak{B})$.  If we fix such a point
$y_{(\mathfrak A_0, \mathfrak A_1, \mathfrak B)} \in \mathcal{Y}_{(\LL_0,\Lambda)}(\C)$
and abbreviate
$L_0=L(\mathfrak A_0, \mathfrak A_1)$ and $L=\Lambda \oplus L_0$, then
the theta function $\Theta_L ( \tau,z)$ appearing in (\ref{eq:AutoGreen}) admits a factorization
\[
\Theta_L(\tau, y_{(\mathfrak A_0, \mathfrak A_1, \mathfrak B)})=\theta_{\Lambda}(\tau) \otimes \theta_{L_0}(\tau)
\]
at  $z=y_{(\mathfrak A_0, \mathfrak A_1, \mathfrak B)}$.  Hence
 \[
\Phi_{\LL}(y_{(\mathfrak A_0, \mathfrak A_1, \mathfrak B)}, f)=
 \int_{\Sl_2(\Z)\backslash \mathbb H}^{\reg} \{ f, \theta_{\Lambda} \otimes \theta_{L_0}\} \, d\mu(\tau).
\]
Summing over all  complex points of $\mathcal{Y}_{(\LL_0,\Lambda)}$ yields
\begin{align*}
\Phi_{\LL}(\mathcal{Y}_{(\LL_0,\Lambda)} , f)
 &=
 \sum_{\substack{  (\mathfrak{A}_0,\mathfrak{A}_1,\mathfrak{B}) \\
 L(\mathfrak A_0, \mathfrak A_1) \in \LL_0(\infty)
 \\ L(\mathfrak A_0, \mathfrak B) \iso \Lambda}}
 \frac{ 1 }{ |\Aut(\mathfrak{A}_0,\mathfrak{A}_1,\mathfrak{B} )| }
 \int_{\Sl_2(\Z)\backslash \mathbb H}^{\reg} \left\{  f,   \theta_{\Lambda} \otimes
 \theta_{L(\mathfrak A_0, \mathfrak A_1) }
 \right\}
\,  d\mu(\tau)
 \\
 &= \frac{ h_\kk} { w_\kk^2 |\Aut(\Lambda)| }
 \sum_{L_0 \in \gen(\LL_0(\infty))}
 \int_{\Sl_2(\Z)\backslash \mathbb H}^{\reg} \left\{ f,  \theta_{\Lambda}\otimes  \theta_{L_0}\right\} \, d\mu(\tau)
 \\
 &=\frac{\deg_\C \mathcal{Y}_{(\LL_0, \Lambda)}}{2}  \int_{\Sl_2(\Z)\backslash \mathbb H}^{\reg}\{ f(\tau), \theta_{\Lambda}(\tau) \otimes E_{\LL_0(\infty)} (\tau, 0) \}  \, d\mu(\tau)
\end{align*}
by the Siegel-Weil formula (Proposition \ref{Siegel-Weil}) and Remark \ref{rem:degree}.
Applying Proposition  \ref{prop:EisensteinRelation} and Stokes' theorem, a simple calculation,
as in   the proof of \cite[Theorem 4.7]{BY1},  shows that
\begin{eqnarray*}\lefteqn{
\frac{1}{\deg_\C \mathcal{Y}_{(\LL_0, \Lambda)} } \cdot
\Phi_{\LL}( \mathcal{Y}_{(\LL_0,\Lambda)} , f) } \\
&= &
- \int_{\Sl_2(\Z)\backslash \mathbb H}^{\reg}
\{ f, \theta_{\Lambda} \otimes  \overline{\partial} E_{\LL_0}'(\tau, 0) \, d\tau\} \\
&=& -   \int_{\Sl_2(\Z)\backslash \mathbb H}^{\reg} d\{f, \theta_{\Lambda}\otimes E_{\LL_0}'(\tau, 0) \, d\tau\}
 -  \int_{\Sl_2(\Z)\backslash \mathbb H}
 \{\overline{ \xi(f)} , \theta_{\Lambda} \otimes E_{\LL_0}'(\tau, 0) \} v^{n}\, d\mu(\tau)
  \\
  &=&
   \CT[\{ f^+(\tau) ,  \theta_\Lambda(\tau) \mathcal E_{\LL_0}(\tau) \}] - L'(\xi(f), \theta_\Lambda, 0) ,
\end{eqnarray*}
as claimed.
\end{proof}

\begin{remark}\label{rem:lots of pairings}
Let $\varphi_\mathfrak{r} \in S_\LL$ be the characteristic function of
\[
\mathfrak{r}^{-1}\LL_f/\LL_f \subset \mathfrak{d}_\kk^{-1}\LL_f/\LL_f.
\]
By abuse of notation we denote again by $\varphi_\mathfrak{r}$ the similarly defined
elements of $S_{\LL_0}$ and $S_\Lambda$.
If $A$ is an element of  $S_\LL^\vee$, $S_{\LL_0}^\vee$, or $S_\Lambda^\vee$
abbreviate $A(\mathfrak{r}) = A(\varphi_\mathfrak{r})$.  For example
\[
R_\Lambda(m,\mathfrak{r}) = R_\Lambda(m,\varphi_\mathfrak{r})
= \sum_{  \substack{  \lambda\in \mathfrak{r}^{-1} \Lambda  \\ \langle \lambda,\lambda\rangle=m  } } 1.
\]
\end{remark}

The following is a restatement of  Theorem \ref{thm:CM value} in the case $f=f_{m,\mathfrak{r}}$.

\begin{corollary}\label{cor:cm values}
The harmonic form  $f_{m,\mathfrak{r}}$   of  Lemma \ref{lem:f-mr} satisfies
\begin{align*}
\frac{1}{ \deg_\C \mathcal{Y}_{(\LL_0, \Lambda)} }\cdot
\Phi_\LL( \mathcal{Y}_{(\LL_0,\Lambda)}  ,f_{m,\mathfrak{r}} )
 & =  - L'( \xi( f_{m,\mathfrak{r}}  ) , \theta_\Lambda , 0)
+
c^+_{m,\mathfrak{r}}( 0 , 0)   \cdot a_{\LL_0}^+(0 ,\mathfrak{r}  )   \cdot R_\Lambda( 0, \mathfrak{r} ) \\
 & \phantom{=} +
\sum_{\substack{  m_1 , m_2 \in \Q_{\ge 0} \\ m_1 + m_2 =m  }}
 a_{\LL_0}^+(m_1 ,\mathfrak{r}  )  \cdot R_\Lambda( m_2, \mathfrak{r} ) .
\end{align*}
\end{corollary}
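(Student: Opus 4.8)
The plan is to specialize Theorem \ref{thm:CM value} to $f=f_{m,\mathfrak{r}}$ and then to evaluate the constant term $\CT[\{f^+_{m,\mathfrak{r}},\mathcal{E}_{\mathscr{L}_0}\otimes\theta_\Lambda\}]$ by hand. Theorem \ref{thm:CM value} gives at once
\[
\frac{1}{\deg_\C\mathtt{Y}_{(\mathscr{L}_0,\Lambda)}}\Phi_\mathscr{L}(\mathtt{Y}_{(\mathscr{L}_0,\Lambda)},f_{m,\mathfrak{r}})=-L'(\xi(f_{m,\mathfrak{r}}),\theta_\Lambda,0)+\CT[\{f^+_{m,\mathfrak{r}},\mathcal{E}_{\mathscr{L}_0}\otimes\theta_\Lambda\}],
\]
so everything comes down to identifying the last term with $c^+_{m,\mathfrak{r}}(0,0)\,a^+_{\mathscr{L}_0}(0,\mathfrak{r})\,R_\Lambda(0,\mathfrak{r})+\sum_{m_1+m_2=m}a^+_{\mathscr{L}_0}(m_1,\mathfrak{r})\,R_\Lambda(m_2,\mathfrak{r})$.

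First I would record the relevant $q$-expansions. By Lemma \ref{lem:f-mr}, $f^+_{m,\mathfrak{r}}=\varphi_{m,\mathfrak{r}}\,q^{-m}+\sum_{k\ge0}c^+_{m,\mathfrak{r}}(k)\,q^k$, whose only term of negative exponent is $\varphi_{m,\mathfrak{r}}\,q^{-m}$. By Proposition \ref{prop:eisenstein coeff}(1) the series $\mathcal{E}_{\mathscr{L}_0}=\sum_{m_1\ge0}a^+_{\mathscr{L}_0}(m_1)\,q^{m_1}$ of \eqref{eq:HolPart} has no terms of negative exponent, and since $\Lambda$ is positive definite the same holds for $\theta_\Lambda=\sum_{m_2\ge0}R_\Lambda(m_2)\,q^{m_2}$. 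Consequently the only contributions to $\CT[\{f^+_{m,\mathfrak{r}},\mathcal{E}_{\mathscr{L}_0}\otimes\theta_\Lambda\}]$ come from pairing the $q^0$-coefficient $c^+_{m,\mathfrak{r}}(0)$ of $f^+_{m,\mathfrak{r}}$ against the $q^0$-coefficient $a^+_{\mathscr{L}_0}(0)\otimes R_\Lambda(0)$ of $\mathcal{E}_{\mathscr{L}_0}\otimes\theta_\Lambda$, and from pairing $\varphi_{m,\mathfrak{r}}$ against the $q^m$-coefficient $\sum_{m_1+m_2=m}a^+_{\mathscr{L}_0}(m_1)\otimes R_\Lambda(m_2)$.

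Next comes the bookkeeping with the finite quadratic modules. Under the $\SL_2(\Z)$-equivariant isomorphism $S_\mathscr{L}\iso S_{\mathscr{L}_0}\otimes_\C S_\Lambda$ attached to $\mathscr{L}=\mathscr{L}_0\oplus\Lambda$, the characteristic function $\varphi_{m,\mathfrak{r}}$ decomposes as a sum of $\varphi^{\mathscr{L}_0}_{a,\mathfrak{r}}\otimes\varphi^\Lambda_{b,\mathfrak{r}}$ over pairs $(a,b)$ with $a+b=m$ in $\Q/\Z$, where $\varphi^{\mathscr{L}_0}_{a,\mathfrak{r}}$ and $\varphi^\Lambda_{b,\mathfrak{r}}$ are the analogous characteristic functions for $\mathscr{L}_0$ and $\Lambda$. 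The key point is that, by the explicit formulas of Proposition \ref{prop:eisenstein coeff}(2)(4) (resp.\ by its defining formula in Remark \ref{rem:lots of pairings}), the functional $a^+_{\mathscr{L}_0}(m_1)\in S_{\mathscr{L}_0}^\vee$ (resp.\ $R_\Lambda(m_2)\in S_\Lambda^\vee$) depends only on the restriction of its argument to cosets whose $Q$-value is $\equiv m_1$ (resp.\ $\equiv m_2$) in $\Q/\Z$. Hence $\{\varphi^{\mathscr{L}_0}_{a,\mathfrak{r}},a^+_{\mathscr{L}_0}(m_1)\}$ equals $a^+_{\mathscr{L}_0}(m_1,\mathfrak{r})$ when $a\equiv m_1$ and vanishes otherwise, and likewise $\{\varphi^\Lambda_{b,\mathfrak{r}},R_\Lambda(m_2)\}$ equals $R_\Lambda(m_2,\mathfrak{r})$ when $b\equiv m_2$ and vanishes otherwise. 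Since for each $m_1,m_2\ge0$ with $m_1+m_2=m$ in $\Q$ there is exactly one pair $(a,b)$ in the decomposition of $\varphi_{m,\mathfrak{r}}$ with $a\equiv m_1$, $b\equiv m_2$, the $\varphi_{m,\mathfrak{r}}$-contribution collapses to $\sum_{m_1+m_2=m}a^+_{\mathscr{L}_0}(m_1,\mathfrak{r})\,R_\Lambda(m_2,\mathfrak{r})$. For the $q^0$-$q^0$ term, $R_\Lambda(0)$ is evaluation at the trivial coset and $a^+_{\mathscr{L}_0}(0)$ is evaluation at the trivial coset scaled by the constant $a^+_{\mathscr{L}_0}(0,\mathfrak{r})$ (using $\varphi^{\mathscr{L}_0}_\mathfrak{r}(0)=1$); since moreover $R_\Lambda(0,\mathfrak{r})=\varphi^\Lambda_\mathfrak{r}(0)=1$, this term equals $c^+_{m,\mathfrak{r}}(0,0)\cdot a^+_{\mathscr{L}_0}(0,\mathfrak{r})\cdot R_\Lambda(0,\mathfrak{r})$, where $c^+_{m,\mathfrak{r}}(0,0)$ denotes the value of $c^+_{m,\mathfrak{r}}(0)$ at the trivial coset. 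Adding the two contributions yields the asserted identity.

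The argument is in the end a direct unwinding, and the only step requiring genuine care is the finite-quadratic-module bookkeeping in the previous paragraph: keeping straight the orthogonal decomposition $S_\mathscr{L}\iso S_{\mathscr{L}_0}\otimes S_\Lambda$, the support properties of the functionals $a^+_{\mathscr{L}_0}(m_1)$ and $R_\Lambda(m_2)$, and the distinction between a rational number $m_i$ and its class in $\Q/\Z$ when interpreting the conditions $Q(\mu)=m_i$ appearing in Proposition \ref{prop:eisenstein coeff}.
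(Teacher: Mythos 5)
Your proposal is correct and follows exactly the paper's route: the paper presents this corollary simply as a restatement of Theorem \ref{thm:CM value} for $f=f_{m,\mathfrak{r}}$, with the constant-term computation left implicit. Your unwinding of $\CT[\{f^+_{m,\mathfrak{r}},\mathcal{E}_{\mathscr{L}_0}\otimes\theta_\Lambda\}]$ via the decomposition $S_\mathscr{L}\iso S_{\mathscr{L}_0}\otimes S_\Lambda$ and the support properties of $a^+_{\mathscr{L}_0}(m_1)$ and $R_\Lambda(m_2)$ is precisely the intended (and correct) verification.
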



\section{The metrized cotautological bundle}


In this section we recall some generalities on metrized line bundles, and introduce
the metrized cotautological bundle.  Fix a pair $(\LL_0,\Lambda)$ as in
(\ref{CM genus}), and let $\LL=\LL_0\oplus\Lambda$ as in Remark
\ref{rem:sum}.


\subsection{Metrized line bundles}
\label{ss:line bundles}


As in \cite{Ho3}, the  canonical map $\mathcal{Y}_{(\LL_0,\Lambda)} \to \mathcal{M}_\LL$
induces a  linear functional
\begin{equation}\label{first degree}
\widehat{\mathrm{CH}}^1_\C(\mathcal{M}_\LL^*) \to \C
\end{equation}
called the \emph{arithmetic degree along $\mathcal{Y}_{  (\LL_0,\Lambda)  }$}, and  denoted
$\widehat{\mathcal{Z}} \mapsto [\widehat{\mathcal{Z}}    : \mathcal{Y}_{  (\LL_0,\Lambda)  } ]$.
Because $\mathcal{Y}_{(\LL_0,\Lambda)}$
is proper over $\co_\kk$, it does not meet the boundary  of $\mathcal{M}_\LL^*$.  Thus
the arithmetic degree along  $\mathcal{Y}_{  (\LL_0,\Lambda)  }$ can be defined and computed
entirely on the open Shimura variety $\mathcal{M}_\LL$.
This is most easily done  using the  language of metrized line bundles, which
provides a rudimentary  intersection theory on  $\mathcal{M}_\LL$.

A \emph{metrized line bundle}  $\widehat{\mathcal{L}}=(\mathcal{L} , || \cdot||)$
on $\mathcal{M}_\LL$  consists of a line bundle $\mathcal{L}$  and a hermitian metric $||\cdot||$
on the complex points $\mathcal{L}(\C)$. The isomorphism classes of metrized line bundles form a group
$\widehat{\Pic}(\mathcal{M}_\LL)$ under tensor product.
An \emph{arithmetic divisor} $\widehat{\mathcal{Z} }=(\mathcal{Z},\Phi)$ is a pair consisting
of a divisor (with integral coefficients) $\mathcal{Z}$ on $\mathcal{M}_\LL$ and a
Green function on $\mathcal{M}_\LL(\C)$  for the complex fiber $\mathcal{Z}(\C)$.
The arithmetic divisors form a group $\widehat{\mathrm{Div}}(\mathcal{M}_\LL)$ under addition.
If we start with an arithmetic divisor  $\widehat{\mathcal{Z}}$, the
constant function $1$ on $\mathcal{M}_\LL$ defines a rational section $\bm{s}$ of
the line bundle $\mathcal{L}=\co(\mathcal{Z})$ associated to $\mathcal{Z}$,
and there is a unique metric $||\cdot||$ on $\mathcal{L}$ satisfying
$-\log || \bm{s} ||_z^2 =  \Phi(z).$ This establishes a surjection
\[
\widehat{\mathrm{Div}}(\mathcal{M}_\LL) \to \widehat{\Pic}(\mathcal{M}_\LL).
\]
A similar discussion holds with $\mathcal{M}_\LL$ replaced by $\mathcal{Y}_{( \LL_0,\Lambda)}$,
and the morphism $\mathcal{Y}_{( \LL_0,\Lambda)} \to \mathcal{M}_\LL$ induces a pullback homomorphism
\[
\widehat{\Pic}(\mathcal{M}_\LL) \to \widehat{\Pic}(\mathcal{Y}_{( \LL_0,\Lambda )}).
\]

As in \cite[Chapter 2.1]{KRY2} there is a linear functional, called the \emph{arithmetic degree},
\[
\widehat{\deg} : \widehat{\mathrm{Pic}}(\mathcal{Y}_{( \LL_0,\Lambda )}) \to \R.
\]
The composition
\begin{equation}\label{second degree}
\widehat{\Pic} (\mathcal{M}_\LL) \to
 \widehat{\Pic}(\mathcal{Y}_{(\LL_0,\Lambda)} )  \map{\widehat{\deg}}\R,
\end{equation}
is again denoted $[\; \cdot : \mathcal{Y}_{(\LL_0,\Lambda)}  ]$.
Taking $f=f_{m,\mathfrak{r}}$ in  (\ref{total cycle})  and restricting to the open Shimura variety
$\mathcal{M}_\LL$ defines an arithmetic divisor
\[
\widehat{\mathcal{Z}}_\LL( f_{m,\mathfrak{r}} )
= \big( \mathcal{Z}_\LL( f_{m,\mathfrak{r}} )  , \Phi_\LL(  f_{m,\mathfrak{r}} )   \big)
 \in \widehat{\Div} (\mathcal{M}_\LL)
\]
satisfying
\[
[ \widehat{\mathcal{Z}}_\LL( f_{m,\mathfrak{r }}  ) : \mathcal{Y} _{(\LL_0,\Lambda)} ]
= [ \widehat{\mathcal{Z}}^\mathrm{total}_\LL(f_{m,\mathfrak{r}}) : \mathcal{Y} _{(\LL_0,\Lambda)} ].
\]
The pairing on the left is (\ref{second degree}), while the pairing on the right is (\ref{first degree}).
If   the intersection $\mathcal{X}=\mathcal{Z}_\LL( m,\mathfrak{r} ) \cap \mathcal{Y}_{(\LL_0,\Lambda)}$
has dimension $0$,  we have the explicit formula
\[
[ \widehat{\mathcal{Z}}_\LL( f_{m,\mathfrak{r} }  ) : \mathcal{Y} _{(\LL_0,\Lambda)} ]
= I( \mathcal{Z}_\LL( m,\mathfrak{r}): \mathcal{Y}_{(\LL_0,\Lambda)})
+ \Phi_\LL( \mathcal{Y}_{(\LL_0,\Lambda)} ,  f_{m,\mathfrak{r}}  )
\]
where the first term is the
\emph{finite intersection multiplicity}\footnote{As both $\mathcal{Z}$ and $\mathcal{Y}$ are
Cohen-Macaulay, the finite intersection multiplicity agrees with the more natural
\emph{Serre intersection multiplicity} defined as  in \cite[p.~11]{SABK}  or \cite[Section 3.1]{Ho3}.  This follows from
 \cite[p.~111]{Se00}.}
\[
I( \mathcal{Z}_\LL(m,\mathfrak{r}): \mathcal{Y}_{(\LL_0,\Lambda)})
= \sum_{ \mathfrak{p} \subset\co_\kk }   \log(\mathrm{N}(\mathfrak{p}))
\sum_{y \in  \mathcal{X}  (\F_\mathfrak{p}^\alg)}
 \frac{    \mathrm{length}_{ \co_{\mathcal{X}   ,y}}(\co_{ \mathcal{X}  , y })}{|\Aut(y) |} ,
\]
and the second term is defined as in Theorem \ref{thm:CM value}.


\subsection{The cotautological bundle}
\label{ss:cotaut bundle}


 Let $S$ be a  scheme and $\pi:A\to S$  an abelian scheme.
 As in \cite[Section 2.1.6]{Lan}, define  a coherent $\co_S$-module,
 the \emph{algebraic de Rham cohomology} of $A$,
 as  the hypercohomology  $H_{dR}^i(A) = \mathbb{R}^i\pi_*( \Omega^\bullet_{A/S} )$
  of the de Rham complex
  \[
  0\to \co_{A} \to \Omega_{A/S}^1 \to \Omega_{A/S}^2 \to \cdots .
  \]
The \emph{algebraic de Rham homology} $H^{dR}_1(A) = \Hom_{\co_S}( H^1_{dR}(A) ,\co_S)$
sits in an exact sequence
\[
0 \to \mathrm{Fil}(A) \to H^{dR}_1(A) \to \Lie(A) \to 0,
\]
and  $\mathrm{Fil}(A)$ is canonically isomorphic to the $\co_S$-dual of $\Lie(A^\vee)$.
Let  $(A^\univ_0,A^\univ)$  denote the universal object over $\mathcal{M}$,
and recall that  $A^\univ$ is endowed with an $\co_\kk$-stable $\co_\mathcal{M}$-submodule
$\mathcal{F}^\univ \subset \Lie(A^\univ)$ such that the quotient
$\Lie(A^\univ)/\mathcal{F}^\univ$ is locally free of rank one.

\begin{definition}
The \emph{cotautological bundle}  on $\mathcal{M}$ is the line bundle
 \[
 \mathcal{T} =
 \Hom_{\co_\mathcal{M}} ( \mathrm{Fil}  ( A_0^\univ ) ,   \Lie(A^\univ) / \mathcal{F}^\univ ).
 \]
Denote by $\mathcal{T}_\LL$ the restriction of $\mathcal{T}$ to $\mathcal{M}_\LL$.
 \end{definition}

The universal abelian
scheme $A^\univ \to \mathcal{M}_{(n-1,1)}$ extends to a semi-abelian scheme
$A^* \to \mathcal{M}^*_{(n-1,1)}$, and the universal subsheaf
$\mathcal{F}^\univ \subset \Lie(A^\univ)$ extends canonically  to a subsheaf
$\mathcal{F}^*\subset \Lie(A^*)$ by \cite[Theorem 2.5.2]{Ho3}.
The cotautological bundle $\mathcal{T}$   therefore extends to a line bundle
 \[
\mathcal{T}^* = \Hom_{\co_{\mathcal{M}^*}} (\mathrm{Fil}(A_0^\univ) , \Lie(A^*) / \mathcal{F}^*)
\]
 on  $\mathcal{M}^*$, and  the restriction of $\mathcal{T}^*$ to $\mathcal{M}^*_\LL$
is denoted $\mathcal{T}^*_\LL$.

Recall from Section \ref{ss:complex uniformization} that each connected component of
$\mathcal{M}_\LL(\C)$
admits a uniformization $\mathcal{D}_{(\mathfrak{A}_0,\mathfrak{A})} \to \mathcal{M}_\LL(\C)$,
where $\mathcal{D}_{(\mathfrak{A}_0,\mathfrak{A})}$ is the space of negative $\kk_\R$-lines
in $L(\mathfrak{A}_0,\mathfrak{A})$.  The hermitian symmetric domain
$\mathcal{D}_{(\mathfrak{A}_0,\mathfrak{A})}$  carries a tautological bundle
whose fiber at a point $z$ is the line $z$, made into a complex vector space using the fixed
isomorphism $\kk_\R\iso \C$.  The following proposition  explains the connection between
this bundle and the cotautological bundle.

\begin{proposition}\label{prop:analytic taut}
At any point $z\in \mathcal{D}_{(\mathfrak{A}_0,\mathfrak{A})}$  there is a canonical complex-linear
isomorphism
 \[
 \fiber^\vee:\mathcal{T}_{\LL,z} \iso \Hom_\C( z ,\C).
 \]
\end{proposition}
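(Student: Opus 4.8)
The plan is to unwind both sides of the claimed isomorphism in terms of the Betti homology of the universal abelian schemes at the point $z$, and then to identify the de Rham cohomology with the Betti cohomology via the comparison isomorphism. First I would recall that over $\C$, for an abelian variety $A$ with $A(\C) = \mathfrak{A}_\R/\mathfrak{A}$, there is a canonical identification $H_1^{dR}(A) \iso \mathfrak{A}\otimes_\Z\C = \mathfrak{A}_\R$ (as complex vector spaces, where the complex structure on the right is the one defining $A(\C)$), under which $\mathrm{Fil}(A) = \overline{\mathrm{Fil}}$ is the $+i$-eigenspace and $\Lie(A)$ is identified with the quotient, i.e. with the $-i$-eigenspace of the complex structure. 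Concretely, $\Lie(A) \iso \mathfrak{A}_\R$ as complex vector spaces via the natural complex structure, and $\mathrm{Fil}(A) \iso \overline{\mathfrak{A}_\R}$, the complex conjugate space. I would state this as a (well-known) lemma, citing the construction in \cite{Lan} or standard references, since the paper has already set up $H^{dR}_1$ and its Hodge filtration in Section \ref{ss:cotaut bundle}.

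Next I would apply this to the point $z\in \mathcal{D}_{(\mathfrak{A}_0,\mathfrak{A})}$, where $\mathfrak{A} = \mathfrak{A}_1\oplus\mathfrak{B}$ in the CM setting (or more generally as in Section \ref{ss:complex uniformization}): here $A_0(\C) = \mathfrak{A}_{0\R}/\mathfrak{A}_0$ with its standard complex structure from $\kk_\R\iso\C$, so $\mathrm{Fil}(A_0)$ is identified with $\overline{\mathfrak{A}_{0\R}}$. For $A = A_z$, the complex structure is the modified one $I_z$ built in Section \ref{ss:complex uniformization}: $I_z$ acts as $+i$ on $z^\perp$ and as $-i$ on $z$. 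Therefore, in the identification $\Lie(A_z)\iso \mathfrak{A}_\R$ with complex structure $I_z$, the subsheaf $\mathcal{F}^\univ$, which by construction corresponds to $z^\perp \subset \mathfrak{A}_\R$, is exactly the $+i$-eigenspace for $I_z$, and the quotient $\Lie(A_z)/\mathcal{F}^\univ$ is canonically the line $z$, but with the complex structure $-i$, i.e. it is $\overline{z}$ as a complex line.

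Putting these together: $\mathtt{T}_{\mathscr{L},z} = \Hom_{\co_\mathtt{M}}(\mathrm{Fil}(A_0^\univ), \Lie(A^\univ)/\mathcal{F}^\univ)$ becomes, at $z$, the complex vector space $\Hom_\C(\overline{\mathfrak{A}_{0\R}}, \overline{z})$. Using that $\mathfrak{A}_{0\R}$ is one-dimensional over $\kk_\R\iso\C$, I would choose a generator to trivialize $\overline{\mathfrak{A}_{0\R}}$; the point is that complex-conjugation gives a natural $\C$-linear isomorphism $\Hom_\C(\overline{\mathfrak{A}_{0\R}},\overline{z}) \iso \overline{\Hom_\C(\mathfrak{A}_{0\R}, z)}$, and composing with the conjugate of the natural evaluation (recall that a choice of nonzero $a_0\in\mathfrak{A}_0$ identifies $L(\mathfrak{A}_0,\mathfrak{A})_\R \iso \mathfrak{A}_\R$, hence identifies the tautological line in $\mathcal{D}_{(\mathfrak{A}_0,\mathfrak{A})}$ at $z$ with $z\subset\mathfrak{A}_\R$) yields a canonical $\C$-linear iso with $\Hom_\C(z,\C)$. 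I would check the independence of the trivializing choices: the scaling ambiguity in $a_0$ (or in the generator of $\mathfrak{A}_{0\R}$) cancels because $\mathtt{T}$ is built as a $\Hom$ between two lines each of which scales the same way, so the composite is canonical.

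The main obstacle, and the step I would spend the most care on, is tracking the complex-conjugations correctly: the cotautological bundle involves $\mathrm{Fil}(A_0^\univ)$ rather than $\Lie(A_0^\univ)$, which introduces one conjugation, and the quotient $\Lie(A^\univ)/\mathcal{F}^\univ$ carries the \emph{opposite} complex structure to the one $z$ inherits as a subspace of $\mathfrak{A}_\R$ — introducing a second conjugation — and the whole point of the proposition (as becomes clear from how it is used later, e.g. in (\ref{intro twist})) is that these two conjugations combine to give an \emph{honest} (not conjugate-linear) isomorphism onto $\Hom_\C(z,\C) = z^\vee$. I would therefore write out the eigenspace decomposition of $\mathfrak{A}_\R\otimes_\R\C$ under $I_z$ explicitly and verify that $\mathrm{Fil}(A_0)\otimes \overline{(\Lie(A_z)/\mathcal{F})}$ matches $z^\vee$ on the nose, rather than its conjugate. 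Once the bookkeeping is in place, the canonicity and complex-linearity are formal.
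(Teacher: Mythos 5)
Your proposal is correct and follows essentially the same route as the paper's proof: identify $\mathrm{Fil}(A_0)$ with $\overline{\mathfrak{A}_{0\R}}$ and $\Lie(A_z)/\mathcal{F}_z$ with $\overline{z}$ via the Betti--de Rham comparison (the paper phrases this with the idempotents $e,\overline{e}$ of $\kk\otimes_\Q\C$ and works with $\Hom_{\kk_\R}(\mathrm{Fil}(A_0),\,\cdot\,)$ directly, avoiding your trivialization by $a_0$ and the subsequent independence check), observe that the resulting map $\mathtt{T}_{\mathscr{L},z}\to z$ is conjugate-linear because of the two signature conditions, and then compose with $w\mapsto\langle\cdot,w\rangle$ to land complex-linearly in $\Hom_\C(z,\C)$. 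The conjugation bookkeeping you flag as the main point is exactly the content of the paper's argument.
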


\begin{proof}
Identify $\kk\otimes_\Q\C\iso \C\times\C$ in such a way that
$x\otimes 1\mapsto (x,\overline{x})$, and define idempotents $e=(1,0)$
and $\overline{e}=(0,1)$.   If $X$ is any complex vector space with a commuting
action of $\kk$, then $\kk$ acts on $e X$ through the fixed embedding $\kk\to \C$,
and acts on $\overline{e} X$ through the conjugate embedding.

 Let $(A_0 , A_z) \in \mathcal{M}_\LL(\C)$ be the image of $z$ under
 $\mathcal{D}_{(\mathfrak{A}_0,\mathfrak{A}) }\to \mathcal{M}_\LL(\C)$.
 The map
 \[
 \mathfrak{A}_{0\C} \iso H_1^{dR}(A_0) \map{\overline{e}}
 \overline{e} H_1^{dR}(A_0)
 \iso \mathrm{Fil}(A_0)
  \]
 restricts to a $\kk_\R$-linear isomorphism $\mathfrak{A}_{0\R} \iso \mathrm{Fil}(A_0)$,
 while the quotient map
 \[
 \mathfrak{A}_\C \iso H_1^{dR}(A_z) \to \Lie(A_z)
 \]
 restricts to a $\kk_\R$-linear
 isomorphism $\mathfrak{A}_\R \iso \Lie(A_z)$.  Thus we obtain isomorphisms
 \[
 L(\mathfrak{A}_0,\mathfrak{A})_\R \iso \Hom_{\kk_\R} (\mathfrak{A}_{0\R} , \mathfrak{A}_\R)
 \iso \Hom_{\kk_\R} (\mathrm{Fil}(A_0)  , \Lie(A)_z ),
 \]
 and tracing through the constructions of Section \ref{ss:complex uniformization} shows that
 their composition identifies $z^\perp$ with $\Hom_{\kk_\R} (\mathrm{Fil}(A_0)  , \mathcal{F}_z )$.
 The surjection
 \[
 L(\mathfrak{A}_0,\mathfrak{A})_\R \to \Hom_{\kk_\R} (\mathrm{Fil}(A_0)  , \Lie(A_z)/\mathcal{F}_z )
 \]
 therefore has kernel $z^\perp$, and identifies
 $z\iso \Hom_{\kk_\R} (\mathrm{Fil}(A_0)  , \Lie(A_z)/\mathcal{F}_z )$.  The inverse of this map
is a $\kk_\R$-linear isomorphism
 \begin{equation}\label{conj taut}
 \fiber:\mathcal{T}_{\LL,z} \to z
 \end{equation}
which is \emph{not} $\C$-linear.  The point is that the signature conditions imposed on
$\Lie(A_0^\univ)$ and $\Lie(A^\univ)/\mathcal{F}^\univ$
 imply that the action of $\kk_\R$ on the fiber  $\mathcal{T}_{\LL,z}$ is through the
 complex conjugate of the fixed isomorphism $\kk_\R\iso \C$.  Thus   $\fiber$  complex-\emph{conjugate}-linear.   The map
 \[
 \fiber^\vee:\mathcal{T}_{\LL,z} \iso \Hom_\C( z ,\C)
 \]
 defined by $\fiber^\vee( s ) = \langle \cdot, \fiber(s) \rangle$ defines the desired complex-linear isomorphism.
 \end{proof}

We use (\ref{conj taut}) to  metrize the cotautological bundle $\mathcal{T}_\LL$:
the norm of a section $\bm{s}$ is
 \begin{equation}\label{taut metric}
 || \bm{s} ||_z^2 = - 4\pi e^\gamma \cdot    \langle \fiber(\bm{s}) ,\fiber(\bm{s})  \rangle,
 \end{equation}
 where $\gamma=-\Gamma'(1)$ is Euler's constant.
The cotautological bundle endowed with the above metric is denoted
\[
\widehat{\mathcal{T}}_\LL   \in \widehat{\Pic}(\mathcal{M}_\LL).
\]

\begin{proposition}
For any nonzero rational section $\bm{s}$ of $\mathcal{T}^*_\LL$, the arithmetic divisor
\[
\widehat{\mathrm{div}}(\bm{s}) = ( \mathrm{div}(\bm{s}) , - \log||\bm{s}||^2 )
\]
defines a  class
\begin{equation}\label{cotaut def}
\widehat{\mathcal{T}}^*_\LL \in \widehat{\mathrm{CH}}^1_\R(\mathcal{M}_\LL^*),
\end{equation}
which does not depend on the choice of $\bm{s}$.
\end{proposition}

\begin{proof}
The only thing to check is that the Green function $- \log||\bm{s}||^2$ has at worst the $\log$-$\log$ error terms at the boundary allowed in the   Burgos-Kramer-K\"uhn theory \cite{BKK,BBK}.

Using the complex coordinates of Section \ref{ss:torcomp}, it suffices to show that for any primitive isotropic vector $\ell\in L(\mathfrak{A}_0,\mathfrak{A})$ and any boundary point
$(\frakz_0,0)\in \widetilde V_\eps(\ell)$ the function
\[
\log|\ell_z^2| = -\log (\norm(\frakz,\tau)/2)
\]
determining the metric and its differentials have log-log
 growth along $q_r=0$ on the neighborhood $B_\delta(\frakz_0,0)$ for some $\delta>0$.
This  is done in Step 2 of the proof of Theorem \ref{thm:bndgrowth}.
\end{proof}


\subsection{The Chowla-Selberg formula}


This section is devoted to  studying the image of $\widehat{\mathcal{T}}_\LL$
under the arithmetic intersection
\[
[ \; \cdot :  \mathcal{Y}_{(\LL_0,\Lambda)} ]  : \widehat{\Pic} (\mathcal{M}_\LL) \to \R
\]
of  Section \ref{ss:line bundles}.   The main result is the following theorem, whose proof ultimately rests on the
Chowla-Selberg formula.

\begin{theorem}\label{thm:taut degree}
For any $\varphi\in S_{\LL_0}$, the metrized cotautological bundle satisfies
\[
\varphi(0) \cdot  [  \widehat{\mathcal{T}}_\LL : \mathcal{Y}_{(\LL_0,\Lambda)}  ]
= -   \deg_\C  \mathcal{Y}_{(\LL_0, \Lambda)}  \cdot     a_{\LL_0}^+(0 , \varphi)   ,
\]
where $a_{\LL_0}^+(0 ) \in S_{\LL_0}^\vee$ is defined by
(\ref{eq:HolPart}), and $\deg_\C  \mathcal{Y}_{(\LL_0,\Lambda)} $
is defined in Remark \ref{rem:degree}.

If we take $\varphi$ to be the characteristic function
of $\mathfrak{r}^{-1} \LL_{0,f}/\LL_{0,f}$, as in  Remark \ref{rem:lots of pairings}, this formula reduces to
\[
  [  \widehat{\mathcal{T}}_\LL^{\otimes R_\Lambda(m, \mathfrak{r}) }  :  \mathcal{Y}_{ ( \LL_0, \Lambda) } ]
  = -   \deg_\C \mathcal{Y}_{(\LL_0, \Lambda)}  \cdot
  a_{\LL_0}^+(0  ,\mathfrak{r}) \cdot R_\Lambda(m,\mathfrak{r})
\]
for all $m\in \Q_{\ge 0}$ and $\mathfrak{r}\mid\mathfrak{d}_\kk$.
\end{theorem}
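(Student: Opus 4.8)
\textbf{Proof proposal for Theorem \ref{thm:taut degree}.}
The plan is to compute the arithmetic degree $[\widehat{\mathtt{T}}_\mathscr{L} : \mathtt{Y}_{(\mathscr{L}_0,\Lambda)}]$ directly from the uniformization (\ref{cm uniformization}), reducing it to a sum of local contributions at each complex point $y_{(\mathfrak{A}_0,\mathfrak{A}_1,\mathfrak{B})}$, and then to identify that sum with a special value of a Faltings height that is governed by the Chowla--Selberg formula. First I would fix a rational section $\bm{s}$ of $\mathtt{T}_\mathscr{L}$ and unwind the definition of (\ref{second degree}): since $\mathtt{Y}_{(\mathscr{L}_0,\Lambda)}$ is proper over $\co_\kk$ and disjoint from the boundary, $[\widehat{\mathtt{T}}_\mathscr{L}:\mathtt{Y}_{(\mathscr{L}_0,\Lambda)}]$ splits as a finite part (counting the order of vanishing of $\bm{s}$ along the vertical fibers, weighted by $\log\mathrm{N}(\mathfrak{p})$) plus an archimedean part $-\sum_y \frac{1}{|\Aut(y)|}\log\|\bm{s}\|_y^2$ coming from the metric (\ref{taut metric}). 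For a judicious choice of $\bm{s}$ arising from a Néron differential on the universal CM elliptic curves $A_0, A_1$, both contributions are precisely those appearing in the Faltings height of a CM abelian variety.

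The key step is therefore the following identification. At the point attached to $(\mathfrak{A}_0,\mathfrak{A}_1,\mathfrak{B})$, Proposition \ref{prop:analytic taut} gives $\mathtt{T}_{\mathscr{L},y} \iso \Hom_\C(z,\C)$ where $z = L(\mathfrak{A}_0,\mathfrak{A}_1)_\R$, and unravelling the construction of $\fiber$ shows that $\mathtt{T}_\mathscr{L}$ restricted to $\mathtt{Y}_{(\mathscr{L}_0,\Lambda)}$ is canonically $\Hom_{\co_\kk}(\mathrm{Fil}(A_0), \mathrm{Lie}(A_1))$, i.e.\ a twist of $\underline{\omega}_{A_0}\otimes\underline{\omega}_{A_1}^{-1}$ by a fixed fractional ideal determined by the hermitian module $L(\mathfrak{A}_0,\mathfrak{A}_1)$. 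Hence $[\widehat{\mathtt{T}}_\mathscr{L}:\mathtt{Y}_{(\mathscr{L}_0,\Lambda)}]$ is, up to the explicit combinatorial factor $\deg_\C\mathtt{Y}_{(\mathscr{L}_0,\Lambda)}$ from Remark \ref{rem:degree}, a difference of Faltings heights of CM elliptic curves with CM by $\co_\kk$, summed over the genus $\mathrm{gen}(\mathscr{L}_0(\infty))$ (using Proposition \ref{lem:bijection} to match components). The normalization $-4\pi e^\gamma$ in (\ref{taut metric}) is chosen exactly so that this matches the normalization of the Eisenstein coefficient. Then I would invoke the Chowla--Selberg formula in the form that computes $\sum_{L_0\in\mathrm{gen}(\mathscr{L}_0(\infty))}$ of the relevant CM height in terms of $\frac{L'(\chi_\kk,0)}{L(\chi_\kk,0)}$, $\log|d_\kk|$, and $\gamma$ --- which is precisely the shape of $a^+_{\mathscr{L}_0}(0,\varphi)$ in Proposition \ref{prop:eisenstein coeff}(2). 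Since the statement is linear in $\varphi$, it suffices to verify it for $\varphi$ with $\varphi(0)\neq 0$, and then the $\varphi(0)$ factor on the left just records that the bundle $\mathtt{T}_\mathscr{L}$ does not see the nonzero cosets.

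The main obstacle I anticipate is bookkeeping with the \emph{twist}: the cotautological bundle is not literally $\underline{\omega}_{A_0}\otimes\underline{\omega}_{A_1}^{-1}$ but differs from it by the line bundle attached to the rank-one hermitian $\co_\kk$-module $L(\mathfrak{A}_0,\mathfrak{A}_1)$, and one must check that summing over the genus $\mathrm{gen}(\mathscr{L}_0(\infty))$ correctly reconstructs the full class number $h_\kk$ worth of ideal classes so that the Chowla--Selberg sum comes out with the stated constant $-w_\kk/(2h_\kk)$-type normalization (cf.\ the factor in Proposition \ref{prop:eisenstein coeff}(4), here specialized to $m=0$). A secondary technical point is that one needs the Faltings height statement in a form compatible with the $\log$-$\log$ arithmetic Chow theory of \cite{BKK}, so that (\ref{cotaut def}) is the class one is actually pairing against; this is handled by step 2 of the proof of Theorem \ref{thm:bndgrowth}, which already shows $-\log\|\bm{s}\|^2$ has at worst $\log$-$\log$ growth, so $\widehat{\mathtt{T}}^*_\mathscr{L}$ is a legitimate class and its pairing with the boundary-disjoint cycle $\mathtt{Y}_{(\mathscr{L}_0,\Lambda)}$ agrees with the naive open-variety computation. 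Once these two points are pinned down, the second displayed formula follows by taking $\varphi = \varphi_{\mathfrak{r}}$ and noting $\varphi_{\mathfrak{r}}(0) = 1$ together with the identity $R_\Lambda(m,\mathfrak{r}) = R_\Lambda(m,\varphi_{\mathfrak{r}})$ and the multiplicativity of arithmetic degree under tensor powers.
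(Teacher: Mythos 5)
Your overall strategy is the paper's: restrict $\widehat{\mathtt{T}}_\mathscr{L}$ to $\mathtt{Y}_{(\mathscr{L}_0,\Lambda)}$, identify it with a Faltings-metrized combination of the (co)Lie bundles of the universal CM elliptic curves $A_0^\univ$ and $A_1^\univ$, apply the Chowla--Selberg formula, and match the result against the constant term $a^+_{\mathscr{L}_0}(0,\varphi)$ of Proposition \ref{prop:eisenstein coeff}. However, your central identification is off by a dualization, and this is fatal to the computation. From the definition, $\mathtt{T}_\mathscr{L}|_{\mathtt{Y}}\iso\Hom(\mathrm{Fil}(A_0^\univ),\Lie(A_1^\univ))$, and since the principal polarization gives a perfect pairing $\Lie(A_0)\otimes\mathrm{Fil}(A_0)\to\co_S$ (i.e.\ $\mathrm{Fil}(A_0)\iso\underline{\omega}_{A_0}$, so $\mathrm{Fil}(A_0)^\vee\iso\Lie(A_0)$), the correct statement is $\mathtt{T}_\mathscr{L}|_{\mathtt{Y}}\iso\Lie(A_0^\univ)\otimes\Lie(A_1^\univ)=\underline{\omega}_{A_0}^{-1}\otimes\underline{\omega}_{A_1}^{-1}$, not a twist of $\underline{\omega}_{A_0}\otimes\underline{\omega}_{A_1}^{-1}$. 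Consequently the arithmetic degree is a \emph{sum} of two Chowla--Selberg contributions (each point contributes $-2h_{\mathrm{Falt}}(A_0)-2h_{\mathrm{Falt}}(A_1)$, i.e.\ twice $\log(2\pi)+\tfrac12\log|d_\kk|+L'(\chi_\kk,0)/L(\chi_\kk,0)$), not a ``difference of Faltings heights.'' Since $A_0$ and $A_1$ both have CM by $\co_\kk$, a difference of their Faltings heights vanishes identically, and your computation would then produce only the metric normalization constant and could never yield the term $2L'(\chi_\kk,0)/L(\chi_\kk,0)$ appearing in $a^+_{\mathscr{L}_0}(0,\varphi)$.

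Two further points. First, the obstacle you anticipate --- summing over $\mathrm{gen}(\mathscr{L}_0(\infty))$ and reconstructing an $h_\kk$ worth of ideal classes to match a $w_\kk/(2h_\kk)$-type constant as in Proposition \ref{prop:eisenstein coeff}(4) --- does not arise here: the relevant coefficient is the constant term of part (2), which carries no such factor, and by Chowla--Selberg every CM elliptic curve with CM by $\co_\kk$ has the same Faltings height regardless of ideal class, so each of the $2^{1-o(d_\kk)}h_\kk^2$ points of $\mathtt{Y}_{(\mathscr{L}_0,\Lambda)}(\C)$ contributes identically and the sum simply produces the factor $\deg_\C\mathtt{Y}_{(\mathscr{L}_0,\Lambda)}$. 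Second, the real bookkeeping you must still supply is the explicit value of the polarization pairing on Betti homology, namely that $\Lie(A_0)\otimes\mathrm{Fil}(A_0)\to\C$ is $2\pi|d_\kk|^{-1/2}\cdot h_{\mathfrak{A}_0}$; this is where the constants $\log(2\pi)$ and $\tfrac12\log|d_\kk|$ from Chowla--Selberg recombine with the normalization $4\pi e^{\gamma}$ of \eqref{taut metric} to give exactly $\gamma+\log|4\pi/d_\kk|-2L'(\chi_\kk,0)/L(\chi_\kk,0)$. Your appeal to ``the normalization is chosen exactly so that this matches'' leaves precisely this verification undone.
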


The proof requires some preparation.  First we state the  Chowla-Selberg formula in a form
suited to our purposes. Suppose $E$ is an elliptic curve over $\C$ with complex multiplication by $\co_\kk$.
Fix a model of $E$ over a finite extension $K/\Q$ contained in $\C$ and  large enough that $E$ has
everywhere good reduction, and let $\pi:\mathcal{E} \to \Spec(\co_K)$ be the N\'eron model of $E$ over $\co_K$.
Let $\omega$ be a nonzero rational
section of the line bundle $\pi_*\Omega^1_{\mathcal{E}/\co_K}$ on $\Spec(\co_K)$ with divisor
\[
\mathrm{div}(\omega) =\sum_{\mathfrak{q}} m(\mathfrak{q}) \cdot \mathfrak{q},
\]
where the sum is over the closed points $\mathfrak{q}\in \Spec(\co_K)$.
The \emph{Faltings height} of $E$ is defined as
\[
h_\mathrm{Falt}(E) =  \frac{1}{[K:\Q]} \left(
 \sum_\mathfrak{q} \log (\mathrm{N}(\mathfrak{q})) \cdot m(\mathfrak{q})
- \frac{1}{2}\sum_{  \tau: K \to \C    }  \log \left|
\int_{\mathcal{E}^\tau(\C)}  \omega^\tau \wedge \overline{\omega^\tau}\;  \right| \right).
\]
It  is independent of the choice of $K$, the model of $E$ over $K$, and
the section $\omega$.  The Chowla-Selberg formula implies \cite{Co} that
\begin{equation}\label{chowla-selberg}
- 2h_\mathrm{Falt}(E)   =    \log(2\pi) + \frac{1}{2} \log |d_\kk|   +  \frac{ L'(\chi_\kk,0) }{ L(\chi_\kk , 0)  }.
\end{equation}

Recall that $\mathcal{Y}_{(\LL_0,\Lambda)}$   carries over it a universal triple of
abelian schemes $(A^\univ_0,A^\univ_1,B^\univ)$.  Define a line bundle
\[
\mathrm{coLie}(A^\univ_0) = \pi_* \Omega^1_{A^\univ_0/\mathcal{Y}_{(\LL_0,\Lambda)}}
\]
on $\mathcal{Y}_{(\LL_0,\Lambda)}$, where $\pi: A^\univ_0 \to \mathcal{Y}_{(\LL_0,\Lambda)}$
is the structure morphism.   A vector $\omega \in \mathrm{coLie}(A^\univ_{0,y})$ in the fiber at
a complex point $y\in \mathcal{Y}_{(\LL_0,\Lambda)}(\C)$
is a global  holomorphic $1$-form on $A^\univ_{0,y}(\C)$, and we denote by
\[
\widehat{\mathrm{coLie}}(A^\univ_0)  \in \widehat{\mathrm{Pic}}( \mathcal{Y}_{(\LL_0,\Lambda)} )
\]
the line bundle $\mathrm{coLie}(A^\univ_0)$ endowed with the metric
\begin{equation}\label{faltings metric}
|| \omega ||_y^2 = \left|  \int_{A^\univ_{0,y}(\C) }   \omega \wedge \overline{\omega} \; \right|.
\end{equation}
With this definition,
\begin{equation}\label{metrized colie}
\widehat{\deg} \  \widehat{\mathrm{coLie}}(A^\univ_0)
=  \sum_{ (A_0, A_1,B) \in \mathcal{Y}_{(\LL_0,\Lambda)  }(\C) }
\frac{2 h_{\mathrm{Falt}}(A_0)}{|\Aut(A_0,A_1,B)| }.
\end{equation}

Of course $\Lie(A_0^\univ)$ is  isomorphic to the dual of
$\mathrm{coLie}(A_0^\univ)$.  Denote by
\begin{equation}\label{Faltings bundle}
\widehat{\Lie}(A^\univ_0)  \in \widehat{\mathrm{Pic}}( \mathcal{Y}_{(\LL_0,\Lambda)} )
\end{equation}
the line bundle $\Lie(A^\univ_0)$ with the  metric dual to (\ref{faltings metric}).
More explicitly, if we endow  $\mathfrak{A}_0=H_1(A^\univ_{0,y}(\C) ,\Z)$ with its
hermitian form $h_{\mathfrak{A}_0}$ as in Section \ref{ss:moduli},
then $\Lie(A^\univ_{0,y}) \iso \mathfrak{A}_{0\R}$ as real vector spaces, and
$
|| v ||_y^2 =  |d_\kk|^{- \frac{1}{2}}  h_{\mathfrak{A}_0}(v,v)
$
for any $v\in \Lie(A^\univ_{0,y})$.

\begin{lemma}\label{lem:metrized lie}
The metrized line bundle $\widehat{\mathrm{Lie}}(A^\univ_0)$ satisfies
\[
\frac{1}{\deg_\C \mathcal{Y}_{(\LL_0, \Lambda)} }
\cdot \widehat{\deg} \ \widehat{\Lie}(A^\univ_0)
=   \log(2\pi)+  \frac{ 1}{2 } \log |d_\kk|  +   \frac{ L'(\chi_\kk,0) }{ L(\chi_\kk , 0)  }.
\]
Of course we may  define $\widehat{\Lie}(A^\univ_1)$ in the same manner as (\ref{Faltings bundle}),
and the stated equality holds with $A^\univ_0$ replaced by $A^\univ_1$.
\end{lemma}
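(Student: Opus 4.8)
The plan is to derive the lemma directly from the Chowla--Selberg formula (\ref{chowla-selberg}) together with the already-recorded identity (\ref{metrized colie}). First I would note that $\widehat{\mathrm{Lie}}(A^\univ_0)$ and $\widehat{\mathrm{coLie}}(A^\univ_0)$ are mutually dual metrized line bundles on $\mathtt{Y}_{(\mathscr{L}_0,\Lambda)}$: the underlying line bundles $\mathrm{Lie}(A^\univ_0)$ and $\mathrm{coLie}(A^\univ_0)=\pi_*\Omega^1_{A^\univ_0/\mathtt{Y}_{(\mathscr{L}_0,\Lambda)}}$ are dual, and the metric (\ref{faltings metric}) and its dual are, by construction, dual to one another. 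Since the arithmetic degree (\ref{second degree}) is a homomorphism out of $\widehat{\mathrm{Pic}}(\mathtt{Y}_{(\mathscr{L}_0,\Lambda)})$ which kills the trivial bundle with trivial metric, this yields
\[
\widehat{\deg}\,\widehat{\mathrm{Lie}}(A^\univ_0) = -\,\widehat{\deg}\,\widehat{\mathrm{coLie}}(A^\univ_0).
\]

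Next I would combine this with (\ref{metrized colie}) and (\ref{chowla-selberg}). By (\ref{metrized colie}),
\[
\widehat{\deg}\,\widehat{\mathrm{coLie}}(A^\univ_0) = \sum_{(A_0,A_1,B)\in \mathtt{Y}_{(\mathscr{L}_0,\Lambda)}(\C)} \frac{2\,h_{\mathrm{Falt}}(A_0)}{|\Aut(A_0,A_1,B)|}.
\]
Every $A_0$ occurring here lies in $\mathtt{M}_{(1,0)}(\C)$, hence is an elliptic curve with complex multiplication by $\co_\kk$, so the Chowla--Selberg formula (\ref{chowla-selberg}) gives $-2h_{\mathrm{Falt}}(A_0) = \log(2\pi) + \tfrac12\log|d_\kk| + \tfrac{L'(\chi_\kk,0)}{L(\chi_\kk,0)}$, a constant independent of the point. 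Therefore
\[
\widehat{\deg}\,\widehat{\mathrm{Lie}}(A^\univ_0) = \Big(\log(2\pi) + \tfrac12\log|d_\kk| + \tfrac{L'(\chi_\kk,0)}{L(\chi_\kk,0)}\Big)\sum_{(A_0,A_1,B)} \frac{1}{|\Aut(A_0,A_1,B)|},
\]
and the remaining sum is precisely $\deg_\C \mathtt{Y}_{(\mathscr{L}_0,\Lambda)}$ by its definition in Remark \ref{rem:degree}. Dividing through gives the asserted equality. For the last sentence of the lemma, the identical argument applies with $A^\univ_0$ replaced by $A^\univ_1$: the object $A^\univ_1$ lives over $\mathtt{M}_{(0,1)}$, which is again smooth and proper of relative dimension $0$ over $\co_\kk$, and each fiber is an elliptic curve with CM by $\co_\kk$, the only difference being that $\co_\kk$ acts through the conjugate embedding, which affects neither the Faltings height nor the Chowla--Selberg formula.

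The argument is essentially bookkeeping once (\ref{metrized colie}) and (\ref{chowla-selberg}) are available, so I do not expect a serious obstacle. The one point deserving care is the compatibility of normalizations: one should confirm that the dual of the Faltings metric (\ref{faltings metric}) is indeed the metric $\|v\|_y^2 = |d_\kk|^{-1/2}h_{\mathfrak{A}_0}(v,v)$ on $\mathrm{Lie}(A^\univ_{0,y})\iso \mathfrak{A}_{0\R}$ asserted in the text, which amounts to a direct computation on the complex torus $A^\univ_{0,y}(\C)=\mathfrak{A}_{0\R}/\mathfrak{A}_0$ relating $\int \omega\wedge\overline{\omega}$ to the covolume of $\mathfrak{A}_0$ via the Riemann form relation (\ref{symplectic}), with no stray factor of $2$ or $\pi$ lost. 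This verification, however, only enters the explanatory remark preceding the lemma and not the displayed identity itself.
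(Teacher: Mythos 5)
Your proof is correct and is exactly the paper's argument: the paper's entire proof of this lemma reads ``Combine (\ref{metrized colie}) and the Chowla-Selberg formula (\ref{chowla-selberg})," and your write-up simply makes explicit the duality step $\widehat{\deg}\,\widehat{\mathrm{Lie}}(A^\univ_0)=-\widehat{\deg}\,\widehat{\mathrm{coLie}}(A^\univ_0)$ and the summation over the points of $\mathtt{Y}_{(\mathscr{L}_0,\Lambda)}(\C)$. Your closing remark is also right that the normalization $\|v\|_y^2=|d_\kk|^{-1/2}h_{\mathfrak{A}_0}(v,v)$ is only needed for the explanatory description of the dual metric, not for the displayed identity.
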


\begin{proof}
Combine (\ref{metrized colie}) and the Chowla-Selberg formula (\ref{chowla-selberg}).
\end{proof}

For any positive  $c\in\R$, define the \emph{twisted trivial bundle}
\[
\widehat{\bm{1}}(c)  \in \widehat{\mathrm{Pic}}( \mathcal{Y}_{(\LL_0,\Lambda)} )
\]
as the structure sheaf $\co_{   \mathcal{Y}_{(\LL_0,\Lambda)} }$
endowed with the metric $|| f||^2_y = c \cdot |f(y)|^2$.
It is clear from the definitions that
\begin{equation}\label{twist degree}
\widehat{\deg} \  \widehat{\bm{1}} (c)
=- \log(c) \cdot \deg_\C \mathcal{Y}_{(\LL_0, \Lambda)}  .
\end{equation}

\begin{lemma}\label{lem:metrized pullback}
There is an isomorphism
\[
\widehat{\mathcal{T}}_\LL |_{ \mathcal{Y}_{(\LL_0,\Lambda)} } \iso
\widehat{\Lie}(A_0^\univ) \otimes
\widehat{\Lie}(A_1^\univ)
 \otimes \widehat{\bm{1}} \left(  16\pi^3 e^\gamma  \right)
\]
of metrized line bundles on $\mathcal{Y}_{(\LL_0,\Lambda)}$.
\end{lemma}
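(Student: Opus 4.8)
The plan is to compute both sides of the claimed isomorphism fiber-by-fiber at the complex points of $\mathtt{Y}_{(\mathscr{L}_0,\Lambda)}$, using the explicit complex uniformization of Section~\ref{ss:CM values} and the description of the cotautological metric from Proposition~\ref{prop:analytic taut} and \eqref{taut metric}. Fix a triple $(\mathfrak{A}_0,\mathfrak{A}_1,\mathfrak{B})$ indexing a complex point $y\in\mathtt{Y}_{(\mathscr{L}_0,\Lambda)}(\C)$, set $\mathfrak{A}=\mathfrak{A}_1\oplus\mathfrak{B}$, and let $z=L(\mathfrak{A}_0,\mathfrak{A}_1)_\R\subset L(\mathfrak{A}_0,\mathfrak{A})_\R$ be the corresponding negative $\kk_\R$-line. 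First I would identify $\mathtt{T}_{\mathscr{L},z}$ via the isomorphism $\fiber:\mathtt{T}_{\mathscr{L},z}\iso z$ of \eqref{conj taut}; since $z=L(\mathfrak{A}_0,\mathfrak{A}_1)_\R$, a choice of generator of $\mathfrak{A}_0$ gives a $\kk_\R$-linear identification $z\iso\mathfrak{A}_{1\R}=\Lie(A_1)$, which should fit with the identification $\mathrm{Fil}(A_0)\iso\mathfrak{A}_{0\R}$ so as to produce a natural isomorphism of \emph{line bundles} $\mathtt{T}_{\mathscr{L}}|_{\mathtt{Y}_{(\mathscr{L}_0,\Lambda)}}\iso\Lie(A_0^\univ)\otimes\Lie(A_1^\univ)$. (One must be slightly careful: $\mathrm{Fil}(A_0^\univ)$ is the $\co$-dual of $\Lie(A_0^{\univ\vee})$, and $A_0$ being principally polarized identifies $A_0^\vee$ with $A_0$; also $\Hom_{\co_\mathtt{M}}(\mathrm{Fil}(A_0^\univ),-)$ unwinds to $\Lie(A_0^\univ)\otimes(-)$ after using self-duality, so the underlying algebraic isomorphism is essentially formal.)

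The substance is the comparison of metrics. On the left, $\|\bm{s}\|_z^2=-4\pi e^\gamma\langle\fiber(\bm s),\fiber(\bm s)\rangle$, where $\langle\cdot,\cdot\rangle$ is the hermitian form on $L(\mathfrak{A}_0,\mathfrak{A})_\R$ restricted to $z$ — and by the orthogonal decomposition this is the hermitian form on $L(\mathfrak{A}_0,\mathfrak{A}_1)_\R$, which in turn is expressed through $h_{\mathfrak{A}_1}$ after choosing a generator of $\mathfrak{A}_0$ (the choice drops out because $\mathfrak{A}_0$ is self-dual of signature $(1,0)$, so a generator has hermitian norm a unit, hence norm $1$ after accounting for the principal polarization). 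On the right, the metric on $\Lie(A_0^\univ)$ is $\|v\|_y^2=|d_\kk|^{-1/2}h_{\mathfrak{A}_0}(v,v)$, likewise for $A_1$, and $\widehat{\bm 1}(16\pi^3 e^\gamma)$ contributes a constant factor $16\pi^3 e^\gamma$. So I would chase a generator $\bm s\mapsto(\fiber(\bm s))\leftrightarrow v_0\otimes v_1$ through the algebraic isomorphism and check that the identity
\[
-4\pi e^\gamma\,\langle\fiber(\bm s),\fiber(\bm s)\rangle
= 16\pi^3 e^\gamma\cdot |d_\kk|^{-1/2}h_{\mathfrak{A}_0}(v_0,v_0)\cdot |d_\kk|^{-1/2}h_{\mathfrak{A}_1}(v_1,v_1)
\]
holds, which amounts to tracking the factors of $\sqrt{|d_\kk|}$ and $2\pi$ coming from the relation \eqref{symplectic} between the symplectic and hermitian forms (note $\pol(x,y)=\mathrm{Tr}_{\kk/\Q}h_{\mathfrak{A}}(\delta_\kk^{-1}x,y)$ and $|d_\kk|=\delta_\kk\overline\delta_\kk$), together with the normalization of $\fiber$ as the inverse of the surjection $L(\mathfrak{A}_0,\mathfrak{A})_\R\to\Hom_{\kk_\R}(\mathrm{Fil}(A_0),\Lie(A_z)/\mathcal{F}_z)$.

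The main obstacle I expect is exactly this bookkeeping of normalization constants: reconciling the constant $-4\pi e^\gamma$ built into \eqref{taut metric}, the Faltings-type metrics $\|v\|_y^2=|d_\kk|^{-1/2}h(v,v)$ on $\widehat{\Lie}(A_i^\univ)$ — themselves chosen (via \eqref{faltings metric}) so that Lemma~\ref{lem:metrized lie} matches Chowla-Selberg — and the asserted twist $16\pi^3 e^\gamma=4\pi e^\gamma\cdot(2\pi)^2$. The cleanest route is probably to first establish the algebraic isomorphism $\mathtt{T}_\mathscr{L}|_{\mathtt{Y}}\iso\Lie(A_0^\univ)\otimes\Lie(A_1^\univ)$ canonically and independently of any choices, then pin down the ratio of metrics as a single positive locally constant function on $\mathtt{Y}_{(\mathscr{L}_0,\Lambda)}(\C)$, and finally evaluate that function at one conveniently chosen point (e.g. with $\mathfrak{A}_0,\mathfrak{A}_1$ taken to be $\co_\kk$ with standard hermitian form) to see it equals $16\pi^3 e^\gamma$. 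Since all the metrics in sight are $\Uni$-invariant and the point count is finite, verifying the constant at a single point suffices.
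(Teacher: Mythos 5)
Your overall strategy coincides with the paper's: first establish the canonical algebraic isomorphism $\mathtt{T}_\mathscr{L}|_{\mathtt{Y}_{(\mathscr{L}_0,\Lambda)}}\iso \Lie(A_0^\univ)\otimes\Lie(A_1^\univ)$ (the paper does this by observing that $\mathcal{F}^\univ$ restricts to $\Lie(B^\univ)$, so $\Lie(A^\univ)/\mathcal{F}^\univ|_{\mathtt{Y}}\iso\Lie(A_1^\univ)$, and then using the perfect pairing $\Lie(A_0)\otimes\mathrm{Fil}(A_0)\to\co_S$ coming from de Rham duality and the principal polarization — exactly the mechanism you sketch in your parenthetical), and then compare metrics, with the constant $16\pi^3e^\gamma=4\pi e^\gamma\cdot(2\pi)^2$ emerging from the normalizations. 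So the route is the same.

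The gap is in your final step. You propose to pin down the ratio of the two metrics by evaluating at a single conveniently chosen point, arguing that invariance plus finiteness of the point count makes this suffice. But $\mathtt{Y}_{(\mathscr{L}_0,\Lambda)}(\C)$ is a \emph{finite disjoint union} of points indexed by non-isomorphic triples $(\mathfrak{A}_0,\mathfrak{A}_1,\mathfrak{B})$; a ratio of metrics is automatically locally constant on a zero-dimensional space, and there is no connectedness or group action relating the different components that would propagate the value from one point to the others. (The unitary-group invariance you invoke acts within a single hermitian space, not across the distinct isomorphism classes of $\mathfrak{A}_0$ and $\mathfrak{A}_1$.) So you must carry out the computation uniformly in the triple. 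The missing uniform ingredient — which is what the paper supplies, citing the proof of \cite[Proposition 4.4]{Ho1} — is the explicit formula for the de Rham pairing $\Lie(A_0)\otimes\mathrm{Fil}(A_0)\to\C$ in terms of the Betti hermitian form: it equals $2\pi|d_\kk|^{-1/2}\,h_{\mathfrak{A}_0}$. Once you have that identity (which holds for every $\mathfrak{A}_0$ at once), the remaining linear algebra is uniform and your displayed identity follows; without it, the single-point check does not close the argument. A minor further point: since $\mathfrak{A}_1$ has signature $(0,1)$, the form $h_{\mathfrak{A}_1}$ is negative definite, so the right-hand side of your displayed identity should carry $|h_{\mathfrak{A}_1}(v_1,v_1)|$ (the metric on $\widehat{\Lie}(A_1^\univ)$ is dual to the intrinsically positive Faltings metric); as written, your two sides have opposite signs.
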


\begin{proof}
Recall that $\mathcal{M}_\LL$ carries a universal pair of abelian schemes
$(A^\univ_0,A^\univ)$, and that $A^\univ$ comes with a universal
$\co_{\mathcal{M}_\LL}$-submodule   $\mathcal{F}^\univ \subset \Lie(A^\univ)$.
  By definition of the morphism
 $\mathcal{Y}_{(\LL_0,\Lambda)} \to\mathcal{M}_\LL$, the universal objects over
 $\mathcal{M}_\LL$ and $\mathcal{Y}_{(\LL_0,\Lambda)}$  are
 related\footnote{There is a mild abuse of notation: we are using $A_0^\univ$ to denote both
 the universal elliptic curve over $\mathcal{M}_\LL$,  and the universal elliptic curve
 over $\mathcal{Y}_{(\LL_0,\Lambda)}$.}   by
 \[
 (A^\univ_0,A^\univ)_{/  \mathcal{Y}_{(\LL_0,\Lambda)}}
 \iso (A^\univ_0, A^\univ_1 \times B^\univ),
 \]
 and the isomorphism
 \[
 \Lie(A^\univ)|_{ \mathcal{Y}_{(\LL_0,\Lambda) } }
  \iso \Lie(A_1^\univ\times B^\univ)
 \]
 identifies
 $
 \mathcal{F}^\univ|_{ \mathcal{Y}_{(\LL_0,\Lambda) } } \iso \Lie(B^\univ).
 $
 In particular there is a canonical  isomorphism
\[
 \mathcal{T}_\LL |_{ \mathcal{Y}_{(\LL_0,\Lambda) } }
 \iso \Hom(\mathrm{Fil}(A^\univ_0) , \Lie(A^\univ_1) ).
\]

For any elliptic curve $A_0\to \Spec(R)$ over a ring, the short exact sequence
\[
0 \to \mathrm{Fil}(A_0) \to H_1^{dR}(A_0) \to \Lie(A_0) \to 0
\]
of $R$-modules is dual to
\[
0 \to H^0(A_0,\Omega^1_{A_0/R}) \to H^1_{dR}(A_0) \to H^1(A_0,\co_{A_0})\to 0,
\]
and there is a canonical identification $H^1(A_0,\co_{A_0}) \iso \Lie(A_0^\vee)$.  In particular
there is canonical perfect  pairing $\Lie(A_0^\vee) \otimes_R \mathrm{Fil}(A_0) \to \co_S$,
and identifying $\Lie(A_0) \iso \Lie(A_0^\vee)$ via the unique principal polarization, we obtain
a perfect pairing
\begin{equation}\label{deRham pairing}
\Lie(A_0) \otimes_R \mathrm{Fil}(A_0) \to R.
\end{equation}
Applying this with $A_0=A_0^\univ$ yields the second isomorphism in
\[
\mathcal{T}_\LL |_{ \mathcal{Y}_{(\LL_0,\Lambda) } }
\iso
\Hom(\mathrm{Fil}(A^\univ_0) , \Lie(A^\univ_1) )
  \iso  \Lie(A^\univ_0)  \otimes \Lie(A^\univ_1).
\]

All that remains is to keep track of  the metrics  under this isomorphism.
This is routine, once one knows an explicit formula for the pairing (\ref{deRham pairing})
when  $A_0\in \mathcal{M}_{(1,0)}(\C)$ is the complex elliptic curve with homology
$\mathfrak{A}_0= H_1(A_0(\C),\Z)$, as in the discussion surrounding (\ref{homology}).
Taking $e$ and $\overline{e}$ as in the proof of Proposition
\ref{prop:analytic taut},  the compositions
\[
\mathfrak{A}_{0\R} \to \mathfrak{A}_{0\C} \iso H_1^{dR}(A_0) \map{e}
e H_1^{dR}(A_0) \iso \Lie(A_0)
\]
and
\[
\mathfrak{A}_{0\R} \to \mathfrak{A}_{0\C} \iso H_1^{dR}(A_0) \map{ \overline{ e}}
\overline{ e } H_1^{dR}(A_0) \iso \mathrm{Fil}(A_0)
\]
are $\kk_\R$-linear isomorphisms.  Thus the pairing (\ref{deRham pairing})
corresponds to a pairing
\[
\mathfrak{A}_{0\R} \times \mathfrak{A}_{0\R} \to \C,
\]
which is hermitian with respect to the action of $\kk_\R\iso \C$.
Using  the proof of \cite[Proposition 4.4]{Ho1}, one can show that this pairing is
$
 2 \pi |d_\kk|^{-1/2}  \cdot h_{\mathfrak{A}_0}.
$
The rest of the proof is elementary linear algebra, and is left to the reader.
\end{proof}

\begin{proof}[Proof of Theorem \ref{thm:taut degree}]
Combining  Lemmas \ref{lem:metrized lie} and \ref{lem:metrized pullback} with  (\ref{twist degree})
shows that
\begin{align*}
[ \widehat{\mathcal{T}}_\LL : \mathcal{Y}_{(\LL_0,\Lambda) } ]
& =
\widehat{\deg} \ \widehat{\Lie}(A^\univ_0)  +  \widehat{\deg} \ \widehat{\Lie}(A^\univ_1)
+  \widehat{\deg} \ \widehat{\bm{1}} \left(  16\pi^3 e^\gamma  \right) \\
&= \deg_\C \mathcal{Y}_{(\LL_0, \Lambda)}  \left(  2 \frac{ L'(\chi_\kk,0) }{ L(\chi_\kk , 0)  } +
\log \left|  \frac{   d_\kk  }{ 4 \pi } \right|  -  \gamma \right),
\end{align*}
and comparing with Proposition \ref{prop:eisenstein coeff} completes the proof.
\end{proof}


\section{The intersection formula}
\label{s:main theorem}


Again, fix a pair $(\LL_0,\Lambda)$ as in  (\ref{CM genus}), and set $\LL=\LL_0\oplus\Lambda$
as in Remark  \ref{rem:sum}.  Recall from Section \ref{ss:harmonic divisors}
the finite dimensional $\C$-vector space $S_\LL$ endowed with the Weil
representation $\omega_\LL:\SL_2(\Z) \to \Aut(S_\LL)$, and a commuting
action of a finite group $\Delta$.


\subsection{The main result}


Let $f  \in H_{2-n}(\omega_\LL)$ be a $\Delta$-invariant harmonic Maass form with
holomorphic part
\[
f^+(\tau) = \sum_{ \substack {m \in \Q \\ m\gg -\infty} } c^+(m) \cdot q^m .
\]
Let $c^+(0,0)$ denote the value of $c^+(0)\in S_\LL$ at the trivial coset in $\mathfrak{d}_\kk^{-1} \LL_f/\LL_f$.
Attached to this $f$ we have, from Sections  \ref{ss:compact} and \ref{ss:KR green},  an arithmetic divisor
\[
\widehat{\mathcal{Z}}^\mathrm{total}_\LL( f )
\in \widehat{\mathrm{CH}}^1_\C(\mathcal{M}_\LL^*).
\]

\begin{definition}
The \emph{arithmetic theta lift of $f$} is the class
\[
\widehat{\Theta}_\LL(f)
= \widehat{\mathcal{Z}}^\mathrm{total}_\LL( f ) + c^+(0,0) \cdot \widehat{\mathcal{T}}^*_\LL
\in \widehat{\mathrm{CH}}^1_\C(\mathcal{M}_\LL^*),
\]
where $\widehat{\mathcal{T}}^*_\LL$ is the metrized cotautological bundle (\ref{cotaut def}).
\end{definition}

The main result of this paper is the following formula, which relates an arithmetic intersection
multiplicity to the derivative of an $L$-function.  The proof will occupy the remainder of Section \ref{s:main theorem}.

\begin{theorem}\label{thm:arithmetic degree}
The arithmetic theta lift satisfies
\[
[ \widehat{\Theta}_\LL( f )  :  \mathcal{Y}_{(\LL_0,\Lambda)} ]
=   -   \deg_\C \mathcal{Y}_{(\LL_0, \Lambda)}  \cdot L'( \xi(f)  , \theta_\Lambda ,0),
\]
where
$
\xi : H_{2-n}(\omega_\LL) \to S_n(\overline{\omega}_\LL)
$
is the complex-conjugate-linear homomorphism of (\ref{defxi}), and the $L$-function on the right  is (\ref{eq:vectorRankin-Selberg}).
\end{theorem}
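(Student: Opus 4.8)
The plan is to reduce the general statement to the case of the distinguished harmonic forms $f=f_{m,\mathfrak{r}}$, and then assemble several results proved earlier in the paper. First I would observe that both sides of the asserted formula are $\C$-linear in $f$ in the following sense: the right-hand side depends only on $\xi(f)\in S_n(\overline{\omega}_\mathscr{L})$, and the map $f\mapsto[\widehat{\Theta}_\mathscr{L}(f):\mathtt{Y}_{(\mathscr{L}_0,\Lambda)}]$ is $\C$-linear on $H_{2-n}(\omega_\mathscr{L})^\Delta$. Using the decomposition \eqref{f decomp}, writing $f=\mathrm{const}+\sum_{m,\mathfrak{r}}\alpha_{m,\mathfrak{r}}f_{m,\mathfrak{r}}$, it suffices to check the formula for the constant forms (which only occur when $n=2$ and $\mathfrak{d}_\kk^{-1}\mathscr{L}_f/\mathscr{L}_f$ is isotropic) and for each $f_{m,\mathfrak{r}}$ separately. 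The constant-form case can be handled directly from Theorem~\ref{thm:taut degree} together with Theorem~\ref{thm:CM value}, since for a constant form $\xi(f)=0$ and the whole content is the cotautological contribution $c^+(0,0)\cdot\widehat{\mathtt{T}}^*_\mathscr{L}$.

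The heart of the argument is therefore the case $f=f_{m,\mathfrak{r}}$, where $\mathtt{Z}_\mathscr{L}(f_{m,\mathfrak{r}})=\mathtt{Z}_\mathscr{L}(m,\mathfrak{r})$. Here I would split according to whether the intersection $\mathtt{X}=\mathtt{Z}_\mathscr{L}(m,\mathfrak{r})\cap\mathtt{Y}_{(\mathscr{L}_0,\Lambda)}$ is zero-dimensional (i.e.\ $R_\Lambda(m,\mathfrak{r})=0$) or not. When $R_\Lambda(m,\mathfrak{r})=0$, the intersection is proper by Remark~\ref{rem:decomposition}, so one has the explicit decomposition of $[\widehat{\mathtt{Z}}_\mathscr{L}(f_{m,\mathfrak{r}}):\mathtt{Y}_{(\mathscr{L}_0,\Lambda)}]$ into a finite intersection multiplicity plus the CM value $\Phi_\mathscr{L}(\mathtt{Y}_{(\mathscr{L}_0,\Lambda)},f_{m,\mathfrak{r}})$ from Section~\ref{ss:line bundles}. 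The finite part is computed stack-by-stack over each $\mathtt{X}_{(\mathscr{L}_0,\Lambda)}(m_1,m_2,\mathfrak{r})$ using the length and point-count formulas of Theorem~\ref{thm:zero cycles}, yielding $\sum_{m_1+m_2=m,\,m_1>0}\log(p)\cdot\nu_p(m_1)\cdot\frac{h_\kk}{w_\kk}\cdot\frac{R_\Lambda(m_2,\mathfrak{r})}{|\Aut(\Lambda)|}\cdot\rho(m_1\mathrm{N}(\mathfrak{s})/p^\epsilon)$, with $p$ the unique prime in $\mathrm{Diff}_{\mathscr{L}_0}(m_1)$; comparing with Proposition~\ref{prop:eisenstein coeff}(4) this is exactly $\deg_\C\mathtt{Y}_{(\mathscr{L}_0,\Lambda)}\cdot\sum_{m_1+m_2=m,\,m_1>0}a^+_{\mathscr{L}_0}(m_1,\mathfrak{r})R_\Lambda(m_2,\mathfrak{r})$. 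Adding the CM value via Corollary~\ref{cor:cm values}, the terms with $m_1>0$ cancel and one is left with $-\deg_\C\mathtt{Y}_{(\mathscr{L}_0,\Lambda)}\cdot L'(\xi(f_{m,\mathfrak{r}}),\theta_\Lambda,0)$ plus the $m_1=0$ term $a^+_{\mathscr{L}_0}(0,\mathfrak{r})R_\Lambda(m,\mathfrak{r})$ and the cotautological term; these last two cancel precisely by Theorem~\ref{thm:taut degree}, since $\widehat{\Theta}_\mathscr{L}(f_{m,\mathfrak{r}})$ includes $c^+_{m,\mathfrak{r}}(0,0)\widehat{\mathtt{T}}^*_\mathscr{L}$ and $a^+_{\mathscr{L}_0}(0)$ pairs against $\widehat{\mathtt{T}}_\mathscr{L}$ to give exactly this.

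The genuinely difficult case is $R_\Lambda(m,\mathfrak{r})\neq 0$, where $\mathtt{Z}_\mathscr{L}(m,\mathfrak{r})$ and $\mathtt{Y}_{(\mathscr{L}_0,\Lambda)}$ meet improperly along the one-dimensional component $\mathtt{X}_{(\mathscr{L}_0,\Lambda)}(0,m,\mathfrak{r})$. This is where the new method advertised in the introduction enters: rather than computing the improper intersection directly, I would work with the twisted line bundle $\widehat{\mathtt{Z}}^\heartsuit_\mathscr{L}(m,\mathfrak{r})=\widehat{\mathtt{Z}}_\mathscr{L}(f_{m,\mathfrak{r}})\otimes\widehat{\mathtt{T}}_\mathscr{L}^{-R_\Lambda(m,\mathfrak{r})}$ of \eqref{intro twist} and construct, using the canonical lifting theorem (Theorem~\ref{thm:superrigid}) and deformation theory, a canonical nonzero section $\bm{\sigma}_{m,\mathfrak{r}}$ of its restriction to $\mathtt{Y}_{(\mathscr{L}_0,\Lambda)}$. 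Then $[\widehat{\mathtt{Z}}^\heartsuit_\mathscr{L}(m,\mathfrak{r}):\mathtt{Y}_{(\mathscr{L}_0,\Lambda)}]$ decomposes as $\widehat{\deg}\,\mathrm{div}(\bm{\sigma}_{m,\mathfrak{r}})$ minus $\sum_y\log\|\bm{\sigma}_{m,\mathfrak{r}}\|_y/|\Aut(y)|$. The key geometric input — which I anticipate is the main obstacle and requires careful Grothendieck--Messing / de Rham crystal bookkeeping analogous to Lemma~\ref{lem:local ring} — is that $\mathrm{div}(\bm{\sigma}_{m,\mathfrak{r}})$ equals the \emph{proper part} of $\mathtt{Z}_\mathscr{L}(m,\mathfrak{r})\cap\mathtt{Y}_{(\mathscr{L}_0,\Lambda)}$, namely $\bigsqcup_{m_1>0}\mathtt{X}_{(\mathscr{L}_0,\Lambda)}(m_1,m-m_1,\mathfrak{r})$, so that its arithmetic degree is again computed by Theorem~\ref{thm:zero cycles}; and that $\|\bm{\sigma}_{m,\mathfrak{r}}\|_y=\Phi_\mathscr{L}(y,f_{m,\mathfrak{r}})$ for every $y\in\mathtt{Y}_{(\mathscr{L}_0,\Lambda)}(\C)$, even for $y$ lying on the singularity $\mathtt{Z}_\mathscr{L}(m,\mathfrak{r})$ — this last point uses the discontinuous extension of the Green function described in Corollary~\ref{cor:sing}, matching the archimedean norm against the value of the regularized theta lift. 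Granting these two facts, the archimedean contribution reassembles into $\Phi_\mathscr{L}(\mathtt{Y}_{(\mathscr{L}_0,\Lambda)},f_{m,\mathfrak{r}})$, and untwisting by $\widehat{\mathtt{T}}_\mathscr{L}^{R_\Lambda(m,\mathfrak{r})}$ via Theorem~\ref{thm:taut degree} produces exactly the same combination as in the proper case; invoking Corollary~\ref{cor:cm values} one more time gives the claimed identity. Finally I would remark that the resulting formula for $f_{m,\mathfrak{r}}$ is manifestly independent of the (non-unique, when $n=2$ isotropic) choice of $f_{m,\mathfrak{r}}$, so the linear extension over \eqref{f decomp} is well defined and completes the proof.
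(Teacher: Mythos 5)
Your proposal is correct and follows essentially the same route as the paper: reduction by linearity via \eqref{f decomp} to the constant forms and the $f_{m,\mathfrak{r}}$, then the adjunction section $\bm{\sigma}_{m,\mathfrak{r}}$ of the twisted bundle \eqref{intro twist}, whose divisor is the proper part of the intersection (Proposition \ref{magic section divisor}) and whose norm at every CM point is the discontinuously extended Green function value (Proposition \ref{prop:complex adjunction} via Corollary \ref{cor:sing}), assembled with Theorem \ref{thm:taut degree} and Corollary \ref{cor:cm values}. The only difference is organizational: you treat the proper case $R_\Lambda(m,\mathfrak{r})=0$ by a separate direct computation, whereas the paper's final proof runs the section argument uniformly for all $f_{m,\mathfrak{r}}$.
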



\subsection{A special case}


 We will prove  Theorem \ref{thm:arithmetic degree} by first verifying it for the forms $f_{m,\mathfrak{r}}$ of
Lemma \ref{lem:f-mr}, in which case the claim is that
\begin{equation}\label{main prelim}
 [ \widehat{\mathcal{Z}}_\LL( f_{m,\mathfrak{r}} )  : \mathcal{Y}_{(\LL_0,\Lambda)} ]
+   c^+_{m,\mathfrak{r}}(0,0)  \cdot   [ \widehat{ \mathcal{T} }_\LL : \mathcal{Y}_{(\LL_0 , \Lambda )} ]
=   -  \deg_\C \mathcal{Y}_{(\LL_0, \Lambda)} \cdot
L' \big( \xi(f_{m,\mathfrak{r}} ) ,\theta_\Lambda , 0 \big).
 \end{equation}
We have enough information now to prove this equality under some restrictive hypotheses.

Fix   $m_1,m_2\in \Q_{\ge 0}$. In Section \ref{ss:zero cycles} we defined an $\co_\kk$-stack
$\mathcal{X}_{(\LL_0,\Lambda)} (m_1, m_2 , \mathfrak{r} )$ equipped with a
finite, unramified, and representable morphism
\[
\mathcal{X}_{(\LL_0,\Lambda)} (m_1, m_2 , \mathfrak{r} ) \to \mathcal{Y}_{(\LL_0,\Lambda)}.
\]
If $m_1>0$, then $\mathcal{X}_{(\LL_0,\Lambda)} (m_1, m_2 , \mathfrak{r} )$ has dimension
zero, and defines a divisor on $\mathcal{Y}_{(\LL_0,\Lambda)}$,
necessarily supported in nonzero characteristic.
By endowing this divisor with the  trivial Green function, we obtain an arithmetic divisor
\begin{equation}\label{arithmetic zero cycles}
\widehat{ \mathcal{X} }_{(\LL_0,\Lambda)} (m_1, m_2 , \mathfrak{r} )
\in \widehat{\mathrm{Div} }  (\mathcal{Y}_{(\LL_0,\Lambda)}).
\end{equation}

 \begin{proposition}\label{prop:final zero cycle}
Suppose  $m=m_1+m_2$ with $m_1\in \Q_{>0}$ and $m_2\in \Q_{\ge 0}$.
The arithmetic divisor (\ref{arithmetic zero cycles}) satisfies
 \begin{align*}
\widehat{\deg}\   \widehat{\mathcal{X}}_{(\LL_0,\Lambda)} (m_1, m_2 , \mathfrak{r} )
&  =
-  \deg_\C \mathcal{Y}_{(\LL_0, \Lambda)} \cdot
a^+_{\LL_0}(m_1,\mathfrak{r}) \cdot   R_\Lambda(m_2,\mathfrak{r}) .
 \end{align*}
 \end{proposition}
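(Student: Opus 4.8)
The plan is to reduce the computation of the arithmetic degree of the $0$-dimensional stack $\mathtt{X}_{(\mathscr{L}_0,\Lambda)}(m_1,m_2,\mathfrak{r})$ to the counting formula of Theorem \ref{thm:zero cycles}, and then to match the resulting arithmetic quantity against the Fourier coefficient $a^+_{\mathscr{L}_0}(m_1,\mathfrak{r})$ of the derivative of the incoherent Eisenstein series as described in Proposition \ref{prop:eisenstein coeff}. First I would note that, since the divisor carries the trivial Green function, its arithmetic degree is purely a finite contribution: by the definition of $\widehat{\deg}$ in (\ref{second degree}) and the structure of $\mathtt{X}_{(\mathscr{L}_0,\Lambda)}(m_1,m_2,\mathfrak{r})$ as a stack supported in a single characteristic $p$, we have
\[
\widehat{\deg}\ \widehat{\mathtt{X}}_{(\mathscr{L}_0,\Lambda)}(m_1,m_2,\mathfrak{r})
= \log(p) \cdot \sum_{z\in \mathtt{X}(\F_\mathfrak{p}^\alg)} \frac{\mathrm{length}_{\co_{\mathtt{X},z}}(\co_{\mathtt{X},z})}{|\Aut(z)|}.
\]
If $|\mathrm{Diff}_{\mathscr{L}_0}(m_1)|>1$, then Theorem \ref{thm:zero cycles}(1) tells us $\mathtt{X}=\emptyset$ and Proposition \ref{prop:eisenstein coeff}(3) tells us $a^+_{\mathscr{L}_0}(m_1,\varphi)=0$; both sides vanish, and we are done. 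So I may assume $\mathrm{Diff}_{\mathscr{L}_0}(m_1)=\{p\}$ for a single prime $p$, which is therefore nonsplit in $\kk$.

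In that case, Lemma \ref{lem:local ring} (the length computation inside the proof of Theorem \ref{thm:zero cycles}) gives that every geometric point of $\mathtt{X}$ has \'etale local ring of length $\nu_p(m_1) = \ord_p(pm_1)\cdot\frac12$ if $p$ is inert and $\ord_p(pm_1)$ if $p$ is ramified, so
\[
\widehat{\deg}\ \widehat{\mathtt{X}}_{(\mathscr{L}_0,\Lambda)}(m_1,m_2,\mathfrak{r})
= \log(p)\cdot \nu_p(m_1) \cdot \sum_{z\in\mathtt{X}(\F_\mathfrak{p}^\alg)}\frac{1}{|\Aut(z)|},
\]
and then the counting formula (\ref{geometric counting}) evaluates the remaining sum as
$\frac{h_\kk}{w_\kk}\cdot\frac{R_\Lambda(m_2,\mathfrak{r})}{|\Aut(\Lambda)|}\cdot\rho\!\left(\frac{m_1\mathrm{N}(\mathfrak{s})}{p^\epsilon}\right)$, where $\mathfrak{s}$ is the prime-to-$\mathfrak{p}$ part of $\mathfrak{r}$ and $\epsilon$ is as in (\ref{epsilon}). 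Here the factor $R_\Lambda(m_2,\mathfrak{r})$ should really be $R_\Lambda(m_2,\mathfrak{s})$ a priori, but since $\mathfrak{r}\mid\mathfrak{d}_\kk$ and $p\mid d_\kk$ means $\mathfrak{r}$ and $\mathfrak{s}$ differ only at $\mathfrak{p}$, one checks that the representation numbers agree because $\Lambda$ is self-dual at $p$ (or, more simply, because both sides of the asserted identity are symmetric under this replacement); this bookkeeping point needs to be verified carefully but is routine given the unramifiedness of $\Lambda$ at $p$.

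It remains to compare with the other side. On the right, $-\deg_\C\mathtt{Y}_{(\mathscr{L}_0,\Lambda)}\cdot a^+_{\mathscr{L}_0}(m_1,\mathfrak{r})\cdot R_\Lambda(m_2,\mathfrak{r})$ is computed from the explicit formula in Proposition \ref{prop:eisenstein coeff}(4):
\[
a^+_{\mathscr{L}_0}(m_1,\varphi_\mathfrak{r}) = -\frac{w_\kk}{2h_\kk}\cdot\rho\!\left(\frac{m_1|d_\kk|}{p^\epsilon}\right)\cdot\ord_p(pm_1)\cdot\log(p)\sum_{\substack{\mu\in\mathfrak{d}_\kk^{-1}\mathscr{L}_{0,f}/\mathscr{L}_{0,f}\\ Q(\mu)=m_1,\ \mu\in\mathfrak{r}^{-1}\mathscr{L}_{0,f}/\mathscr{L}_{0,f}}}2^{s(\mu)},
\]
together with the value $\deg_\C\mathtt{Y}_{(\mathscr{L}_0,\Lambda)} = \frac{h_\kk^2}{w_\kk^2}\cdot\frac{2^{1-o(d_\kk)}}{|\Aut(\Lambda)|}$ from Remark \ref{rem:degree}. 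Multiplying these out, the factor $h_\kk^2/w_\kk^2$ against $w_\kk/(2h_\kk)$ produces $h_\kk/(2w_\kk)$, which together with the factor $2^{1-o(d_\kk)}$ and the $2^{s(\mu)}$ sum over $\mu$ must be reconciled with the factor $h_\kk/w_\kk$ and the single $\rho$-value in the counting formula. The heart of the matching is thus a purely combinatorial identity: one needs to show that
\[
2^{-o(d_\kk)}\sum_{\substack{\mu\in\mathfrak{r}^{-1}\mathscr{L}_{0,f}/\mathscr{L}_{0,f}\\ Q(\mu)=m_1}}2^{s(\mu)}\cdot\rho\!\left(\frac{m_1|d_\kk|}{p^\epsilon}\right) = \rho\!\left(\frac{m_1\mathrm{N}(\mathfrak{s})}{p^\epsilon}\right),
\]
or rather the appropriately normalized version of it. This is the kind of local-global counting of representations by the discriminant form versus counting of ideals that appears in \cite{Scho}, \cite{BY1}, and the proof of the third lemma inside Theorem \ref{thm:zero cycles}; it factors over primes $q\mid d_\kk$, and the factor $2^{s(\mu)}$ precisely accounts for whether $\mu_q=0$, i.e.\ whether $q\mathrm{N}(\mathfrak{b})$-type constraints are genuinely there. \textbf{The main obstacle} I anticipate is exactly this combinatorial/number-theoretic reconciliation of the two parametrizations of the same representation data — the arithmetic-geometry side counts via supersingular points and quaternion orders (through $\rho$ and the genus of $\mathscr{L}_0(p)$), while the Eisenstein side counts via the discriminant form of $\mathscr{L}_{0,f}$ (through the sum over $\mu$ with the $2^{s(\mu)}$ weight), and showing these agree prime-by-prime at the ramified places of $\kk$, keeping careful track of the $\mathfrak{r}$ versus $\mathfrak{s}$ distinction, is where the real work lies. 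Everything else — the reduction to a single $p$, the length computation, and the extraction of the numerical constants — is an application of results already established in the excerpt.
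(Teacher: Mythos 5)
Your overall strategy is the same as the paper's: combine the point count and length computation of Theorem \ref{thm:zero cycles} with the explicit Eisenstein coefficient of Proposition \ref{prop:eisenstein coeff}(4) and Remark \ref{rem:degree}, and reduce everything to the combinatorial identity
\[
\rho \Bigl( \tfrac{m_1  |d_\kk| }{p^\epsilon}  \Bigr)
\sum_{ \substack{  \mu\in \mathfrak{d}_\kk^{-1} \mathscr{L}_{0,f} / \mathscr{L}_{0,f} \\ Q(\mu) =m_1 }  } 2^{s(\mu)}
\varphi_\mathfrak{r}(\mu)
=  2^{o(d_\kk) } \rho\Bigl(  \tfrac{m_1\mathrm{N}(\mathfrak{s})}{p^\epsilon} \Bigr).
\]
You have correctly isolated that identity, and your bookkeeping of the constants $h_\kk$, $w_\kk$, $|\Aut(\Lambda)|$, $2^{1-o(d_\kk)}$ is right. (Your worry about $R_\Lambda(m_2,\mathfrak r)$ versus $R_\Lambda(m_2,\mathfrak s)$ is a non-issue: formula (\ref{geometric counting}) already produces $R_\Lambda(m_2,\mathfrak r)$; only the rank-one factor $L_0$ has its condition relaxed from $\mathfrak r$ to $\mathfrak s$, and that happens via the Lie algebra condition (\ref{small vanishing}), not via any self-duality of $\Lambda$ at $p$.)

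There are two gaps. First, a normalization error: the arithmetic degree of a $0$-cycle supported at $\mathfrak p$ is weighted by $\log\mathrm N(\mathfrak p)$, not $\log p$ (see the displayed formula for $I(\mathtt Z_\mathscr{L}(m,\mathfrak{r}) : \mathtt Y_{(\mathscr{L}_0,\Lambda)})$ in Section \ref{ss:line bundles}). When $p$ is inert one has $\log\mathrm N(\mathfrak p) = 2\log p$, and this factor of $2$ exactly cancels the $1/2$ built into $\nu_p(m_1)$, so that $\log\mathrm N(\mathfrak p)\cdot\nu_p(m_1) = \ord_p(pm_1)\log(p)$ in both the inert and ramified cases --- which is what matches the factor $\ord_p(pm_1)\log(p)$ in $a^+_{\mathscr L_0}(m_1,\mathfrak r)$. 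As written, your computation is off by a factor of $2$ whenever $p$ is inert, and since the proposition is an exact numerical identity this would break the proof. Second, the combinatorial identity you flag as ``the real work'' is indeed the substance of the proof and is not carried out: it must be verified prime-by-prime over $\ell\mid d_\kk$, splitting into cases according to $\ord_\ell(m_1)$ and according to whether $\ell\mid\mathrm N(\mathfrak r)$, $\ell\mid\mathrm{N}(\mathfrak{s})$, or $\ell = p$. Note in particular that the case $\ell = p$ with $\ord_p(m_1) = -1$ is not purely formal: the emptiness of the $\mu$-sum there uses $p\in\mathrm{Diff}_{\mathscr L_0}(m_1)$, i.e.\ that $m_1$ is not represented by $\mathscr L_{0,p}\otimes_{\Z_p}\Q_p$. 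Everything else in your outline matches the intended argument.
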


\begin{proof}
This follows  by comparing Theorem \ref{thm:zero cycles} with  Proposition \ref{prop:eisenstein coeff}.
Abbreviate
\[
\mathcal{X} = \mathcal{X}_{(\LL_0,\Lambda)} (m_1, m_2 , \mathfrak{r} ).
\]
If $|\mathrm{Diff}_{\LL_0}(m_1)| >1$ then both sides of the desired equality are $0$,
so assume $\mathrm{Diff}_{\LL_0}(m_1) = \{p\}$ for a prime $p$, necessarily nonsplit in $\kk$,
and let $\mathfrak{p}$ be the prime of $\kk$ above $p$.   Theorem  \ref{thm:zero cycles} implies
\begin{align*}\nonumber
&\widehat{\deg}\  \widehat{ \mathcal{X}}_{(\LL_0,\Lambda)} (m_1, m_2 , \mathfrak{r} )
=
\log( \mathrm{N}(\mathfrak{p}))
\sum_{x\in \mathcal{X}(\F_\mathfrak{p}^\alg) } \frac{ \mathrm{length}(\co_{\mathcal{X},x}) }{ |\Aut(x) | } \\
&=
\frac{h_\kk   \log(p)  }{w_\kk |\Aut(\Lambda) | } \cdot  \ord_p(p m_1 )   \cdot
R_\Lambda(m_2, \mathfrak{r}) \cdot \rho\left(  \frac{m_1\mathrm{N}(\mathfrak{s})}{p^\epsilon} \right) ,
\end{align*}
where $\mathfrak{s}$ is the prime-to-$\mathfrak{p}$ part of $\mathfrak{r}$, and $\epsilon$ is defined
by (\ref{epsilon}).
On the other hand,  Proposition \ref{prop:eisenstein coeff} tells us that
\[
a^+_{\LL_0}(m_1, \mathfrak{r} )
=
- \frac{ w_\kk \log(p) }{2 h_\kk}  \cdot \ord_p(p m_1)
\cdot  \rho \left( \frac{m_1  |d_\kk| }{p^\epsilon}  \right)
\sum_{ \substack{  \mu\in \mathfrak{d}_\kk^{-1} \LL_{0,f} / \LL_{0,f} \\ Q(\mu) =m_1 }  } 2^{s(\mu)}
\varphi_\mathfrak{r}(\mu),
\]
where $\varphi_\mathfrak{r}$ is the characteristic function of
$\mathfrak{r}^{-1} \LL_{0,f} / \LL_{0,f} \subset \mathfrak{d}_\kk^{-1} \LL_{0,f} / \LL_{0,f}$.

The proposition follows from the above equalities and Remark \ref{rem:degree},  once we prove
\[
\rho \left( \frac{m_1  |d_\kk| }{p^\epsilon}  \right)
\sum_{ \substack{  \mu\in \mathfrak{d}_\kk^{-1} \LL_{0,f} / \LL_{0,f} \\ Q(\mu) =m_1 }  } 2^{s(\mu)}
\varphi_\mathfrak{r}(\mu)
=  2^{o(d_\kk) } \rho\left(  \frac{m_1\mathrm{N}(\mathfrak{s})}{p^\epsilon} \right)  .
\]
Both sides factor as a product of local terms, and the equality of local terms for primes not dividing $d_\kk$
is obvious.  It therefore suffices to prove, for every prime $\ell\mid d_\kk$, the relation
\begin{equation}\label{local combo}
\rho_\ell  (m_1  |d_\kk| )
\sum_{ \substack{  \mu\in \mathfrak{d}_\kk^{-1} \LL_{0,\ell} / \LL_{0,\ell} \\ Q(\mu) =m_1 }  }
2^{s_\ell(\mu)}  \varphi_{\mathfrak{r},\ell}(\mu)
=  2 \rho_\ell(  m_1\mathrm{N}(\mathfrak{s}) ),
\end{equation}
where $\rho_\ell(k)$ is the number of ideals in $\co_{\kk,\ell}$ of norm $k\Z_\ell$,
\[
s_\ell(\mu) = \begin{cases}
1 &\mbox{if } \mu=0 \\
0&\mbox{otherwise,}
\end{cases}
\]
and $\varphi_{\mathfrak{r},\ell}$ is the characteristic function of
$\mathfrak{r}^{-1} \LL_{0,\ell} / \LL_{0,\ell} \subset
\mathfrak{d}_\kk^{-1} \LL_{0,\ell} / \LL_{0,\ell}$.
\begin{enumerate}
\item[Case 1:]
If $\ord_\ell(m_1) \ge 0$,  then only the term $\mu=0$ contributes to the left hand side
of (\ref{local combo}), both sides of the equality are equal to $2$, and we are done.

\item[Case 2:] If $\ord_\ell(m_1)<-1$,  then $\rho_\ell(m_1 |d_\kk|)= \rho_\ell(m_1 \mathrm{N}(\mathfrak{s}))=0$,
and we are done.

\item[Case 3:] If $\ord_\ell(m_1)=-1$ and $\ell\nmid \mathrm{N}(\mathfrak{r})$, then
$\rho_\ell(m_1 \mathrm{N}(\mathfrak{s}))=0$.  On the left hand side of (\ref{local combo}),
the assumption $\ord_\ell(m_1)=-1$ implies that any $\mu$ appearing in the sum must be nonzero,
and hence $\varphi_{\mathfrak{r},\ell}(\mu) =0$.  Thus in this case both sides of (\ref{local combo}) vanish.

\item[Case 4:] If $\ord_\ell(m_1)=-1$ and $\ell\mid  \mathrm{N}(\mathfrak{s})$, then
$\rho_\ell(m_1 |d_\kk|) = \rho_\ell(m_1 \mathrm{N}(\mathfrak{s})) =1$.  Let $\mathfrak{l}\subset \co_\kk$
be the prime determined by $\ell\co_\kk=\mathfrak{l}^2$.
As  $\ell \not\in \mathrm{Diff}_{\LL_0}(m_1)$, the rank one
$\kk_\mathfrak{l}$-hermitian space $\LL_{0,\ell}\otimes_{\Z_\ell} \Q_\ell$ represents $m_1$.
 It follows from the self-duality of
$\LL_{0,\ell}$  that $\mathfrak{d}_\kk^{-1} \LL_{0,\ell}$ represents $m_1$, and
from this it is easy to see that the  rank one $\co_\kk/\mathfrak{l}$-quadratic space
$\mathfrak{d}_\kk^{-1} \LL_{0,\ell} / \LL_{0,\ell}$
has two distinct nonzero solutions to $Q(\mu)=m_1$.  Thus
\[
\sum_{ \substack{  \mu\in \mathfrak{d}_\kk^{-1} \LL_{0,\ell} / \LL_{0,\ell} \\ Q(\mu) =m_1 }  }
2^{s_\ell(\mu)}  =2
\]
and again (\ref{local combo}) holds.

\item[Case 5:]  If   $\ord_\ell(m_1)=-1$ and  $\ell=p$, then
 $\rho_\ell(m_1\mathrm{N}(\mathfrak{s}))=0$.  On the left hand side
of (\ref{local combo}), the sum over $\mu$ is
empty: any $\mu \in  \mathfrak{d}_\kk^{-1} \LL_{0,p} / \LL_{0,p}$ representing
$m_1\in \Q_p/\Z_p$ could be lifted to  $\mu \in  \mathfrak{d}_\kk^{-1} \LL_{0,p}$ representing
$m_1\in \Q_p$,  contradicting $p\in \mathrm{Diff}_{\LL_0}(m_1)$.  Thus both sides of
(\ref{local combo}) vanish.
\end{enumerate}
This exhausts all cases, and completes the proof.
\end{proof}

We can now prove  (\ref{main prelim})  under the simplifying
hypothesis  $R_\Lambda(m,\mathfrak{r})=0$.  Recall from Remark \ref{rem:decomposition} that
under this hypothesis
\[
\mathcal{Z}_\LL(m,\mathfrak{r}) \cap \mathcal{Y}_{ (\LL_0,\Lambda )}
\iso
\bigsqcup_{   \substack{  m_1 \in \Q_{>0} \\ m_2 \in \Q_{\ge 0} \\ m_1+m_2 =m  }   }
 \mathcal{X}_{ ( \LL_0,\Lambda ) } (m_1, m_2 , \mathfrak{r} ) ,
 \]
 and each stack appearing on the right has dimension zero.  Proposition
 \ref{prop:final zero cycle} shows that
\begin{align*}
\frac{1}{ \deg_\C \mathcal{Y}_{(\LL_0, \Lambda)}  }  \cdot
 I  ( \mathcal{Z}_\LL( m,\mathfrak{r} )  : \mathcal{Y}_{(\LL_0,\Lambda)} )
 & =
 \frac{1}{ \deg_\C \mathcal{Y}_{(\LL_0, \Lambda)}  }
  \sum_{   \substack{  m_1 \in \Q_{>0} \\ m_2 \in \Q_{\ge 0} \\ m_1+m_2 =m  }   }
  \widehat{\deg}\   \widehat{ \mathcal{X}}_{(\LL_0,\Lambda)} (m_1, m_2 , \mathfrak{r} )  \\
 & =  -
 \sum_{   \substack{  m_1 \in \Q_{>0}   \\ m_2 \in \Q_{ \ge 0} \\  m_1+m_2 =m  }   }
 a^+_{\LL_0}(m_1,\mathfrak{r})  \cdot  R_\Lambda(m_2,\mathfrak{r})  ,
\end{align*}
while Corollary \ref{cor:cm values} shows that
\begin{align*}
\frac{1}{\deg_\C \mathcal{Y}_{(\LL_0, \Lambda)} } \cdot
 \Phi_\LL(\mathcal{Y}_{(\LL_0 , \Lambda)},  f_{m,\mathfrak{r}} )
 &=  -  L' \big( \xi(f_{m,\mathfrak{r}} ) ,\theta_\Lambda , 0 \big)\\
 &\phantom{=}{}+  c^+_{m,\mathfrak{r}} (0,0)  \cdot
 a^+_{\LL_0}(0,\mathfrak{r})  \cdot R_\Lambda(0,\mathfrak{r})   \\
& \phantom{=}{}+   \sum_{   \substack{  m_1 \in \Q_{>0}  \\ m_2 \in \Q_{ \ge 0} \\  m_1+m_2 =m  }   }
 a^+_{\LL_0}(m_1, \mathfrak{r} ) \cdot R_\Lambda(m_2, \mathfrak{r})  .
\end{align*}
Adding these together gives
\[
 \frac{1}{ \deg_\C \mathcal{Y}_{(\LL_0, \Lambda)}  } \cdot
   [ \widehat{\mathcal{Z}}_\LL( f_{m,\mathfrak{r}} )  : \mathcal{Y}_{(\LL_0,\Lambda)} ]
  =     c^+_{m,\mathfrak{r}} (0,0) \cdot  a^+_{\LL_0}(0,\mathfrak{r}) \cdot R_\Lambda(0,\mathfrak{r})
    -   L' \big( \xi(f_{m,\mathfrak{r}} ),\theta_\Lambda , 0\big) ,
\]
and  an application of Theorem \ref{thm:taut degree}  completes the proof.
Proving  (\ref{main prelim}) in general requires treating improper intersections,
and requires a bit more work.


\subsection{The adjunction formula}
\label{ss:adjunction}


The proof of  (\ref{main prelim}) in full generality revolves around the study of a canonical section
\begin{equation}\label{adjunction section}
\bm{\sigma}_{m,\mathfrak{r}} \in \Gamma\big(
\mathcal{Z}^\heartsuit_{\LL}(m,\mathfrak{r}) |_{ \mathcal{Y}_{(\LL_0,\Lambda)}   }   \big)
\end{equation}
of the line bundle
\begin{equation}\label{magic bundle}
\mathcal{Z}^\heartsuit_{\LL}(m,\mathfrak{r}) =
\mathcal{Z}_{\LL}(m,\mathfrak{r}) \otimes \mathcal{T}_\LL^{\otimes-R_\Lambda(m,\mathfrak{r})}
\end{equation}
restricted to $\mathcal{Y}_{(\LL_0,\Lambda)}$.
This subsection is devoted to the construction of (\ref{adjunction section}),
and the calculation of its divisor, which the reader may find in Proposition
\ref{magic section divisor}  below.

Because of the minor nuisance that the natural maps $\mathcal{Y}\to \mathcal{M}$ and
$\mathcal{Z}(m,\mathfrak{r}) \to \mathcal{M}$ are not  closed immersions, the section
(\ref{adjunction section}) will be  constructed by patching together sections on an \'etale open
cover.  Accordingly, we define a \emph{sufficiently small \'etale open subscheme} of $\mathcal{M}_\LL$
to be a scheme $U$ together with an \'etale morphism $U \to \mathcal{M}_\LL$
such that
\begin{enumerate}
\item
on each connected component  $Z\subset \mathcal{Z}_{ \LL} (m,\mathfrak{r})_{/U}$
the natural map $Z \to U$ is a closed immersion,
\item
on each connected component  $Y \subset \mathcal{Y}_{(\LL_0,\Lambda) /U}$
the natural map $Y \to U$ is a closed immersion, and  the universal object $(A_0,A_1,B)$
over $Y$ satisfies  $L(A_0,B) \iso \Lambda$.
\end{enumerate}
As in the discussion following Definition \ref{def:KR divisors},  the stack $\mathcal{M}_\LL$
admits a finite cover by sufficiently small \'etale open subschemes.

Fix a sufficiently small \'etale open subscheme $U \to \mathcal{M}_\LL$,  and
connected components
 \begin{align}\label{etale components}
 Z & \subset \mathcal{Z}_\LL(m,\mathfrak{r})_{/U} \\
 Y & \subset \mathcal{Y}_{(\LL_0,\Lambda) /U } \nonumber.
 \end{align}
  The smoothness of $\mathcal{Y}_{(\LL_0 , \Lambda)}$ over $\co_\kk$, together
with our hypotheses on $U$, imply that  $Y$ is a reduced and irreducible one-dimensional closed
subscheme of $U$.  The closed subscheme $Z\subset U$
is  perhaps neither reduced nor irreducible, but an easy deformation theory argument
shows that the generic fiber of $\mathcal{Z}_\LL(m,\mathfrak{r})$ is smooth,
and hence $Z_{/\kk}$ is a smooth variety  of dimension $n-1$.  The intersection
$Z\cap Y=Z\times_U Y$ is a closed subscheme of $Y$, and hence is either all of $Y$ or is
of dimension $0$.

\begin{definition}
Given connected components (\ref{etale components}), we say that
\begin{enumerate}
\item
$Z$ is  \emph{$Y$-proper} if  $Z \cap Y$ has dimension $0$,
\item
$Z$ is \emph{$Y$-improper} if  $Z\cap Y =Y$.
\end{enumerate}
\end{definition}

\begin{proposition}\label{prop:improper}
The number of $Y$-improper components
$Z\subset \mathcal{Z}_\LL(m,\mathfrak{r})_{/U}$ is $R_\Lambda(m,\mathfrak{r})$.
\end{proposition}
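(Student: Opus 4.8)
The plan is to set up an explicit bijection between the set of $Y$-improper components of $\mathtt{Z}_\mathscr{L}(m,\mathfrak{r})_{/U}$ and the finite set $\{\lambda\in\mathfrak{r}^{-1}\Lambda:\langle\lambda,\lambda\rangle=m\}$, whose cardinality is $R_\Lambda(m,\mathfrak{r})$ by (\ref{lambda rep}). Let $(A_0,A_1,B)$ denote the universal triple over $Y$. By our hypothesis that $U$ is sufficiently small we have $L(A_0,B)\iso\Lambda$, and by (\ref{ortho sum}) there is an orthogonal decomposition $L(A_0,A_1\times B)\iso L(A_0,A_1)\oplus L(A_0,B)$ over $Y$. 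A section of $\mathtt{Z}_\mathscr{L}(m,\mathfrak{r})_{/U}$ over $Y$ is the same as an element $\lambda\in\mathfrak{r}^{-1}L(A_0,A_1\times B)$ with $\langle\lambda,\lambda\rangle=m$ satisfying the extra vanishing condition (\ref{extra vanishing}), and under the decomposition we write $\lambda=\lambda_1+\lambda_2$.

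The first key point I would establish is that $\lambda_1=0$ for any such section, so that $\lambda\in\mathfrak{r}^{-1}L(A_0,B)\iso\mathfrak{r}^{-1}\Lambda$. Indeed, $Y$ is reduced, irreducible and one-dimensional; as it is smooth of relative dimension $0$ over $\co_\kk$ it cannot be contained in a special fibre, so its generic fibre $Y_{/\kk}$ is dense in $Y$. A suitable integral multiple $\mathrm{N}(\mathfrak{r})\lambda_1$ is an honest $\co_\kk$-linear homomorphism $A_0\to A_1$ over $Y$; over a geometric generic point $A_0$ is a CM elliptic curve of signature $(1,0)$ and $A_1$ one of signature $(0,1)$, and by exactly the argument in the proof of Theorem \ref{thm:zero cycles} (no $\co_\kk$-conjugate-linear homomorphism between CM elliptic curves exists in characteristic $0$) this homomorphism vanishes on the dense open $Y_{/\kk}$; since $A_1\to Y$ is separated it then vanishes on all of $Y$, so $\lambda_1=0$. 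Conversely, any $\lambda\in\mathfrak{r}^{-1}\Lambda\iso\mathfrak{r}^{-1}L(A_0,B)$ with $\langle\lambda,\lambda\rangle=m$ automatically satisfies (\ref{extra vanishing}): the morphism $\delta_\kk\lambda$ factors through $B$, and since $\mathcal{F}=\Lie(B)\subset\Lie(A_1\times B)$ the induced map $\delta_\kk\lambda\colon\Lie(A_0)\to\Lie(A_1\times B)/\mathcal{F}$ is zero. Hence sections of $\mathtt{Z}_\mathscr{L}(m,\mathfrak{r})_{/U}$ over $Y$ are canonically in bijection with $\{\lambda\in\mathfrak{r}^{-1}\Lambda:\langle\lambda,\lambda\rangle=m\}$.

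It remains to match these sections with $Y$-improper components. If $Z$ is $Y$-improper, then $Y\subseteq Z$ scheme-theoretically, so restricting the universal section carried by $Z$ to $Y$ gives a section over $Y$, hence an element $\lambda(Z)$ of the above set. Conversely, the section $s_\lambda\colon Y\to\mathtt{Z}_\mathscr{L}(m,\mathfrak{r})_{/U}$ attached to $\lambda$ has connected image, so lands in a unique connected component $Z_\lambda$; because $Z_\lambda\to U$ is a closed immersion (this is precisely where the ``sufficiently small'' hypothesis is used), the composite $Y\to Z_\lambda\hookrightarrow U$ is the given inclusion $Y\hookrightarrow U$, so $Y\subseteq Z_\lambda$ as closed subschemes of $U$ and $Z_\lambda$ is $Y$-improper. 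The closed-immersion property also shows that a component $Z$ admits at most one section over $Y\hookrightarrow U$ (namely the inclusion), which makes $Z\mapsto\lambda(Z)$ and $\lambda\mapsto Z_\lambda$ mutually inverse; counting then gives the proposition. I expect this last bit — the bookkeeping that converts ``number of connected components'' into ``number of sections'', including the facts that distinct $\lambda$ yield distinct components and that every $Y$-improper $Z$ is hit exactly once — to be the only genuinely delicate step, and it rests entirely on the closed-immersion property guaranteed by passing to a sufficiently small $U$; the geometric inputs (vanishing of $\lambda_1$ over $Y$ and automatic validity of (\ref{extra vanishing})) are short once the characteristic-zero argument from the proof of Theorem \ref{thm:zero cycles} is invoked.
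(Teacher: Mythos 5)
Your proof is correct and follows essentially the same route as the paper's: the orthogonal decomposition $\lambda=\lambda_1+\lambda_2$, the vanishing of $\lambda_1$ forced by the signature conditions in characteristic zero, the identification $L(A_0,B)\iso\Lambda$ supplied by the sufficiently-small hypothesis, and the closed-immersion property to convert the count of $\lambda$'s into a count of components. The only (cosmetic) difference is that the paper computes the fiber of $\mathtt{Z}_\mathscr{L}(m,\mathfrak{r})_{/U}$ over the generic point $\eta$ of $Y$ and leaves the final bookkeeping implicit, whereas you work with sections over all of $Y$ and spell that bookkeeping out.
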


\begin{proof}
Let $\eta  \in  Y$ be the  generic point, so that $k(\eta)$ is a finite extension of
$\kk$, and let $\overline{\eta} \to \eta$ be the geometric generic point above $\eta$.
Denote by  $(A_{0,\eta},A_{1,\eta}, B_\eta)$
and  $(A_{0, \overline{\eta} },A_{1,\overline{\eta} }, B_{\overline{\eta} })$  the pullbacks to  $\eta$
and $\overline{\eta}$ of the  universal object over $\mathcal{Y}_{(\LL_0,\Lambda)}$.   The fiber
\[
\mathcal{Z}_\LL(m,\mathfrak{r})_{\overline{\eta}}  = \mathcal{Z}_\LL(m,\mathfrak{r})_{/U} \times_U \overline{\eta}
\]
is a disjoint union of copies of
$\overline{\eta}$, one for every
\[
\lambda \in \mathfrak{r}^{-1} L( A_{0,\overline{\eta}}  , A_{1, \overline{\eta}}  \times B_{ \overline{\eta} })
\]
satisfying $\langle \lambda , \lambda \rangle =m$.  Under the decomposition
\ref{ortho sum},  any such  $\lambda$ takes the form  $\lambda=\lambda_1+\lambda_2$.
The map $\lambda_1:A_{0, \overline{\eta}} \to A_{1, \overline{\eta}}$ must vanish because of signature
considerations, and so  all such $\lambda$ lie in
\[
\mathfrak{r}^{-1} L( A_{0, \overline{\eta}}  ,  B_{ \overline{ \eta}  }) \iso \mathfrak{r}^{-1} \Lambda.
\]
It follows that $\mathcal{Z}_\LL(m,\mathfrak{r})_{\overline{\eta}}$
is a disjoint union of $R_\Lambda(m,\mathfrak{r})$ copies of $\overline{\eta}$.  Moreover,
 our definition of a sufficiently small \'etale open guarantees that
$\mathfrak{r}^{-1} L( A_{0, \eta }  ,  B_\eta) \iso \mathfrak{r}^{-1} \Lambda$,
and so all such $\lambda$ are already defined over $\eta$.  In other words,
$\mathcal{Z}_\LL(m,\mathfrak{r})_{ \eta }$
is a disjoint union of $R_\Lambda(m,\mathfrak{r})$ copies of $\eta$, and the claim follows easily.
\end{proof}

As in the proof of Lemma  \ref{lem:local ring}, write $\co_\kk = \Z[\Pi]$ and define elements of $\co_\kk\otimes_\Z \co_U$ by
\[
J = \Pi \otimes 1 - 1\otimes \Pi , \qquad
\overline{J} = \overline{\Pi} \otimes 1 - 1\otimes \Pi .
\]
An elementary calculation shows that the sequence
\[
\cdots \map{\overline{J}}   \co_\kk\otimes_\Z \co_U \map{J}
 \co_\kk\otimes_\Z \co_U \map{\overline{J}} \co_\kk\otimes_\Z \co_U \map{J} \cdots.
\]
is exact.

\begin{lemma}\label{lem:J div}
Every geometric point  $y\to Y$ admits an  affine \'etale neighborhood
$\Spec(R) \to U$ with the following property:  letting $A$ denote the pullback to $R$ of the universal object via
$U\to \mathcal{M}\to\mathcal{M}_{(n-1,1)}$,  the $R$-module $\Lie(A)$ is free, and admits a basis
$\epsilon_1,\ldots,\epsilon_n$ such that
\begin{enumerate}
\item
 $\epsilon_1,\ldots,\epsilon_{n-1}$ is a basis for the universal
subsheaf $\mathcal{F} \subset \Lie(A)$,
\item
 the operator  $J\in \co_\kk \otimes_\Z R$ on $\Lie(A)$ has the form
\begin{equation}\label{j form}
J = \left[\begin{matrix}
0 & \cdots & 0 & j_1 \\
\vdots & \ddots&  \vdots& \vdots \\
0 & \cdots & 0 & j_n
\end{matrix}\right] \in M_n(R)
\end{equation}
for some $j_1,\ldots, j_n \in R$  satisfying $(j_1, \ldots, j_n) = (j_n) = \mathfrak{d}_\kk R$,
 \item
there is unique  $J_0 \in R^\times$ such that $J=\delta_\kk \circ J_0$ as endomorphisms
of $ \Lie(A)/\mathcal{F}$, where  $\delta_\kk \in \co_\kk$ is any generator of  $\mathfrak{d}_\kk$.
\end{enumerate}
\end{lemma}

\begin{proof}
Certainly there is an affine \'etale neighborhood over which
\[
0 \to \mathrm{Fil}(A) \to H_1^{dR}(A) \to \Lie(A) \to 0
\]
is an exact sequence of free $R$-modules.  If $\epsilon_1\ldots,\epsilon_n$
is any basis of $H_1^{dR}(A)$ such that $\epsilon_1\ldots,\epsilon_{n-1}$
generates $\mathcal{F}$, then the matrix of $J$ is (\ref{j form})
for some $j_1,\ldots, j_n \in R$, simply because $J\mathcal{F}=0$.
Futhermore, $J$ acts on the quotient $\Lie(A)/\mathcal{F}$
through the complex conjugate of the structure map $\co_\kk\to R$,
and so $j_n=\overline{\Pi}-\Pi$.  It is easy to check that $(\overline{\Pi}-\Pi)\co_\kk=\mathfrak{d}_\kk$,
and so   $(j_n)=\mathfrak{d}_\kk R$.

  To prove that $(j_1,\ldots, j_n) =\mathfrak{d}_\kk R$,
after possibly shrinking the \'etale neighborhood  $\Spec(R)$,
it suffices to prove this equality after replacing  $R$ by the completion of the \'etale local ring
at $y$.  The proof of  \cite[Proposition 3.2.3]{Ho3} shows that the ideal $(j_1,\ldots, j_n)$ is principal.
Everything we have said so far holds for any geometric point of $U$.  Now we exploit the
hypothesis that $y$ is a geometric point of $Y$.
Let $I\subset R$ be the ideal
defining the closed subscheme $Y \times_U \Spec(R) \subset \Spec(R)$,
and let $A'$ be the reduction of $A$ to $R/I$. By definition of the morphism $\mathcal{Y}\to \mathcal{M}$,
the abelian scheme $A'$  comes with a decomposition
$A' \iso A_1\times B$, and  the subsheaf $\mathcal{F}' \subset \Lie(A')$ is $\mathcal{F}'=\Lie(B)$.
In particular, $\mathcal{F}'$ admits the $\co_\kk$-stable, and hence $J$-stable, $\co_Y$-direct summand
$\Lie(A_0)$. Thus there is \emph{some} basis of $\Lie(A')$ with respect to which
\[
J = \left[\begin{matrix}
0 & \cdots & 0 & 0 \\
\vdots & \ddots&  \vdots& \vdots \\
0 & \cdots & 0 & 0\\
0 & \cdots & 0 & j_n
\end{matrix}\right] \in M_n(R/I).
\]
As the ideal of $R/I$ generated by the entries of $J$ is independent of the choice of basis,
we see that
$
(j_1,\ldots, j_n)=(j_n)=(\delta_\kk)
$
in $R/I$.  Now  pick a generator $\gamma \in R$ of the principal ideal $(j_1,\ldots, j_n)$.
  We have  shown that $(\delta_\kk) \subset (\gamma)$,  with equality after reducing modulo $I$.
  Furthermore, $R/I$ is an integral domain of characteristic $0$, as $Y$ is reduced, irreducible, and
  flat over $\co_\kk$.  It follows that if we  write
$\delta_\kk = u \gamma$ with $u\in R$, then $u$ is a unit in $R/I$,  and hence
is also a unit in the local ring $R$.  This shows that $(j_1,\ldots, j_n)=(\delta_\kk)$ in $R$.

For the existence and uniqueness of the unit $J_0\in R^\times$ note  that $\mathcal{M}$, hence also $R$, is flat over $\co_\kk$,
and so $\delta_\kk \in R$ is not a zero divisor.   Thus $j_n$ is uniquely divisible by the image of $\delta_\kk$
under $\co_\kk \to \End_R(\Lie(A)/\mathcal{F})) =R$. Dividing $j_n$ by this image defines the desired unit $J_0$.
\end{proof}

Keeping $Z$ and $Y$ as in (\ref{etale components}), denote by $\mathcal{I}_Z \subset \co_U$ the
ideal sheaf defining the closed subscheme $Z \hookrightarrow U$, and by
$\co(Z)= \mathcal{I}_Z^{-1}$ the line bundle on $U$ determined
by the divisor $Z$.   Let $\mathcal{I}_Y \subset \co_U$ be the ideal sheaf defining the closed
subscheme $Y\hookrightarrow U$.
The \emph{first order infinitesimal neighborhood} of $Y$ is the closed
subscheme  $\widetilde{Y} \hookrightarrow U$  defined by the ideal  sheaf $\mathcal{I}_Y^2 \subset \co_U$.
The picture is
\[
\xymatrix{
& &  {Z} \ar[dr] \\
{Y} \ar@/^/[urr]\ar@/_/[drr] \ar[r]&  {\widetilde{Y} \cap Z}  \ar[ur] \ar[dr] & & U \ar[r]&\mathcal{M}_\LL\\
&  &  {\widetilde{Y}} \ar[ur]
}
\]
where $\widetilde{Y} \cap Z =\widetilde{Y} \times_U Z$.
Let $(A_0, A , \lambda )$ be the pullback to $Y$  of the universal object over
$\mathcal{Z}_\LL(m,\mathfrak{r})$.
 Of course the pair $( A_0 , A)$ has a canonical extension
$( \widetilde{A}_0, \widetilde{A})$ to $\widetilde{Y}$, obtained by pulling back the universal
pair over $\mathcal{M}$ via $\widetilde{Y} \to U \to \mathcal{M}$, but there is no such
canonical extension of $\lambda$ to $\widetilde{Y}$.  Indeed, $\widetilde{Y}\cap Z$ is the
maximal closed subscheme of $\widetilde{Y}$ over which $\lambda$ extends to an element of
$\mathfrak{r}^{-1}L(\widetilde{A}_0,\widetilde{A})$ satisfying the vanishing condition of
(\ref{extra vanishing}).

Recall that, by virtue of the moduli problem defining $\mathcal{M}_{(n-1,1)}$, the $\co_{\widetilde{Y}}$-module
$\Lie(\widetilde{A})$ comes equipped with a  corank one submodule $\widetilde{\mathcal{F}}$.
We will now construct a canonical $\co_{\widetilde{Y}}$-module map
\[
\bm{obst}(\lambda) : \mathrm{Fil}(\widetilde{A}_0) \to \Lie(\widetilde{A} ) /  \widetilde{\mathcal{F}},
\]
the \emph{obstruction to deforming $\lambda$}, whose zero locus subscheme  is $\widetilde{Y} \cap Z$.
The scheme $U$  may be covered by open subschemes $\{U_i\}$ with the property that on each $U_i$
either $\mathrm{N}(\mathfrak{r}) \in \co_{U_i}^\times$  or   $\mathfrak{r}\co_{U_i}=\mathfrak{d}_\kk \co_{U_i}$.
In the construction of $\bm{obst}(\lambda)$  we are free to assume that $U$ itself satisfies one of these two properties.

First assume  $\mathrm{N}(\mathfrak{r}) \in \co_U^\times$.  Under this hypothesis, $\lambda$ determines an $\co_\kk$-linear map
$\lambda : H_1^{dR}(A_0) \to H_1^{dR}(A)$ of $\co_Y$-modules, which,  by the deformation theory
of \cite[Proposition 2.1.6.4]{Lan} extends canonically to an $\co_\kk$-linear map
$
\widetilde{ \lambda } : H_1^{dR}( \widetilde{A}_0) \to H_1^{dR}(\widetilde{A})
$
of $\co_{\widetilde{Y}}$-modules. Define $\bm{obst}(\lambda)$ as the composition
\begin{equation}\label{first obst}
\mathrm{Fil}( \widetilde{A}_0) \to
H_1^{dR}( \widetilde{A}_0) \map{\widetilde{\lambda}} H_1^{dR}(\widetilde{A} )
\to   \Lie(\widetilde{A} ) / \widetilde{\mathcal{F}}.
\end{equation}

Now assume $\mathfrak{r} \co_U = \mathfrak{d}_\kk \co_U$.  As above, by deformation theory the
map $\delta_\kk \lambda : A_0 \to A$ induces a map
\[
\widetilde{ \delta_\kk \lambda} : H_1^{dR}(\widetilde{A}_0) \to H_1^{dR}(\widetilde{A}).
\]
Once  again using $\mathrm{Fil}(\widetilde{A}_0) = J H_1^{dR}(\widetilde{A}_0)$, as in the proof of \cite[Proposition 2.1.2]{Ho2},
define $\bm{obst}(\lambda)$ as the composition
\begin{equation}\label{second obst}
\mathrm{Fil}(\widetilde{A}_0)  = J H_1^{dR}( \widetilde{A}_0) \map{ J s \mapsto \widetilde{\delta_\kk \lambda} (s)} H_1^{dR}(\widetilde{A} )
\to   \Lie(\widetilde{A} ) / \widetilde{\mathcal{F}} \map{J_0}  \Lie(\widetilde{A} ) / \widetilde{\mathcal{F}},
\end{equation}
where $J_0 \in \co_U^\times$ is as in Lemma \ref{lem:J div}.

\begin{remark}
To see that (\ref{second obst}) is well-defined, suppose  $Js_1=Js_2$.  This implies that
$s_1 -s_2 \in  \overline{J} H_1^{dR}(\widetilde{A}_0)$.  The signature condition on $\widetilde{\mathcal{F}}$
implies that $\overline{J}$ annihilates $\Lie(\widetilde{A})/\widetilde{\mathcal{F}}$,
and therefore  $\widetilde{ \delta_\kk \lambda}(s_1) = \widetilde{\delta_\kk \lambda}(s_2)$
in    $\Lie(\widetilde{A})/ \widetilde{\mathcal{F}}$.
\end{remark}

\begin{remark}
If  both conditions  $\mathrm{N}(\mathfrak{r}) \in \co_U^\times$ and
$\mathfrak{r} \co_U = \mathfrak{d}_\kk \co_U$ are satisfied  then $\delta_\kk \in \co_U^\times$, and
 the relation $J=\delta_\kk \circ J_0$ guarantees that the compositions
(\ref{first obst}) and (\ref{second obst}) agree.
\end{remark}

\begin{lemma}
The zero locus subscheme of $\bm{obst}(\lambda)$ is $\widetilde{Y} \cap Z$.
\end{lemma}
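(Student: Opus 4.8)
The plan is to show that the two closed subschemes $\tilde{Y}\cap Z$ and $\{\bm{obst}(\lambda)=0\}$ of $\tilde{Y}$ coincide by verifying that each satisfies the same universal property. Since the claim is local on $U$, I would work over an affine \'etale neighborhood $\Spec(R)\to U$ of the given geometric point of $Y$ as in Lemma \ref{lem:J div}, and in particular one may assume that either $\mathrm{N}(\mathfrak{r})\in\co_U^\times$ or $\mathfrak{r}\co_U=\mathfrak{d}_\kk\co_U$, since $\bm{obst}(\lambda)$ was itself defined case by case. Both $\mathrm{Fil}(\tilde{A}_0)$ and $\Lie(\tilde{A})/\tilde{\mathcal{F}}$ are line bundles on $\tilde{Y}$, so $\{\bm{obst}(\lambda)=0\}$ is the closed subscheme cut out by the image of the associated map $\mathrm{Fil}(\tilde{A}_0)\otimes(\Lie(\tilde{A})/\tilde{\mathcal{F}})^\vee\to\co_{\tilde{Y}}$. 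The key observation, immediate from the moduli description of Definition \ref{def:KR divisors} and the fact that over the sufficiently small $U$ the stack $\mathtt{Z}_\mathscr{L}(m,\mathfrak{r})_{/U}$ is a disjoint union of closed immersions, is that $\tilde{Y}\cap Z=\tilde{Y}\times_U Z$ is the \emph{largest} closed subscheme $T$ with $Y\subseteq T\subseteq\tilde{Y}$ over which $\lambda$ extends to an element of $\mathfrak{r}^{-1}L(\tilde{A}_0|_T,\tilde{A}|_T)$ satisfying \eqref{extra vanishing}. Both $\tilde{Y}\cap Z$ and $\{\bm{obst}(\lambda)=0\}$ contain $Y$ (the latter because $\lambda$ itself is a genuine homomorphism satisfying \eqref{extra vanishing}, so $\bm{obst}(\lambda)|_Y=0$). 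Hence it will suffice to prove that, for every closed $T$ with $Y\subseteq T\subseteq\tilde{Y}$, the restriction $j_T^*\bm{obst}(\lambda)$ vanishes \emph{if and only if} $\lambda$ extends over $T$ with the vanishing property \eqref{extra vanishing}; applying this with $T=\{\bm{obst}(\lambda)=0\}$ and with $T=\tilde{Y}\cap Z$ and invoking the two maximality statements then yields both inclusions.

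So the real content is the identification of $\bm{obst}(\lambda)$ with the genuine obstruction to this deformation problem. When $\mathrm{N}(\mathfrak{r})\in\co_U^\times$, the homomorphism $\lambda\colon A_0\to A$ over $Y$ deforms canonically to the $\co_\kk$-linear map $\tilde{\lambda}\colon H_1^{dR}(\tilde{A}_0)\to H_1^{dR}(\tilde{A})$ by the crystalline deformation theory of \cite[Proposition 2.1.6.4]{Lan}; restricted to $T$, it arises from a homomorphism of abelian schemes exactly when it respects the Hodge filtrations, and by Remark \ref{rem:mostly vanishing} such a homomorphism then automatically satisfies \eqref{extra vanishing} since $\mathfrak{d}_\kk$ is invertible on $T$. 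Because $\co_\kk\otimes_\Z\co_U$ is \'etale over $\co_U$ in this case, $H_1^{dR}$, $\Lie$ and all the filtrations split into isotypic summands and $\tilde{\mathcal{F}}$ is forced to be the rank $n-1$ summand of $\Lie(\tilde{A})$; tracing through this splitting shows that ``$\tilde{\lambda}|_T$ respects the Hodge filtrations'' is literally the vanishing of the composite of \eqref{first obst}, i.e. of $j_T^*\bm{obst}(\lambda)$. I expect this case to be essentially bookkeeping with the isotypic decomposition.

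The main obstacle is the case $\mathfrak{r}\co_U=\mathfrak{d}_\kk\co_U$, where $\delta_\kk$ is a non-zero-divisor but non-invertible parameter and one must argue through Grothendieck--Messing theory as in the proof of \cite[Proposition 2.1.2]{Ho2}. Here $y=\delta_\kk\lambda\colon A_0\to A$ is a genuine $\co_\kk$-linear homomorphism over $Y$ with $y(\Lie(A_0))\subseteq\mathcal{F}$, deforming on de Rham homology to the $\co_\kk$-linear $\widetilde{\delta_\kk\lambda}\colon H_1^{dR}(\tilde{A}_0)\to H_1^{dR}(\tilde{A})$. Using $\mathrm{Fil}(\tilde{A}_0)=J\cdot H_1^{dR}(\tilde{A}_0)$ and the operator $j$ with $J=\delta_\kk\circ j$ from Lemma \ref{lem:J div}, the defining formula \eqref{second obst} shows that $j_T^*\bm{obst}(\lambda)=0$ is equivalent to $\widetilde{\delta_\kk\lambda}(H_1^{dR}(\tilde{A}_0))|_T$ landing in the preimage of $\tilde{\mathcal{F}}$, where one uses that $j$ induces an automorphism of $\Lie(\tilde{A})/\tilde{\mathcal{F}}$. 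From this I must extract two things: first, that $y$ genuinely extends over $T$, i.e.\ $\widetilde{\delta_\kk\lambda}|_T$ respects the Hodge filtrations, which I would deduce from the $\co_\kk$-linearity of $\widetilde{\delta_\kk\lambda}$ together with the fact that $J$ annihilates $\tilde{\mathcal{F}}$ and hence carries the preimage of $\tilde{\mathcal{F}}$ into $\mathrm{Fil}(\tilde{A})$; and second, that this extension is divisible by $\delta_\kk$, so that $\tilde{\lambda}\in\mathfrak{r}^{-1}L(\tilde{A}_0|_T,\tilde{A}|_T)$, which I would obtain from the exactness of the sequence \eqref{J exact} exactly as in the proof of Lemma \ref{lem:J div}. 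The converse direction — if $\lambda$ extends over $T$ with the vanishing property then $j_T^*\bm{obst}(\lambda)=0$ — comes out directly by unwinding \eqref{second obst}. Establishing these equivalences \emph{scheme-theoretically} over the square-zero thickening $Y\hookrightarrow\tilde{Y}$, rather than merely on geometric points, is the delicate step, and is precisely where the structural facts about $J$, $\overline{J}$, and $j$ recorded in Lemma \ref{lem:J div} are needed; granting it, the universal-property argument of the first paragraph closes the proof.
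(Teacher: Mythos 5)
Your overall architecture is the same as the paper's: identify $\tilde{Y}\cap Z$ as the maximal closed subscheme of $\tilde{Y}$ over which $\lambda$ deforms with the vanishing condition \eqref{extra vanishing}, identify that locus via Grothendieck--Messing/de Rham deformation theory with the zero locus of the "true" obstruction $\bm{obst}^*(\lambda)$ valued in $\Lie(\tilde{A})$, and then compare with $\bm{obst}(\lambda)$, which only sees the quotient $\Lie(\tilde{A})/\tilde{\mathcal{F}}$. Your treatment of the case $\mathfrak{r}\co_U=\mathfrak{d}_\kk\co_U$ matches the paper's: invertibility of $j$ on $\Lie(\tilde{A})/\tilde{\mathcal{F}}$ reduces the vanishing of \eqref{second obst} to the vanishing of $\widetilde{\delta_\kk\lambda}$ into the quotient, and $J\tilde{\mathcal{F}}=0$ together with $\mathrm{Fil}(\tilde{A}_0)=JH_1^{dR}(\tilde{A}_0)$ shows this already forces $\widetilde{\delta_\kk\lambda}$ to respect the Hodge filtrations.

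The gap is in your first case. You assert that when $\mathrm{N}(\mathfrak{r})\in\co_U^\times$ the ring $\co_\kk\otimes_\Z\co_U$ is \'etale over $\co_U$, so that everything splits into isotypic summands and $\tilde{\mathcal{F}}$ is forced to be the rank $n-1$ summand. That is only true when $d_\kk\in\co_U^\times$, and $\mathrm{N}(\mathfrak{r})\in\co_U^\times$ does not imply this: for $\mathfrak{r}=\co_\kk$ one has $\mathrm{N}(\mathfrak{r})=1$ everywhere, yet $U$ may lie over a prime ramified in $\kk$ (the same conflation appears when you invoke Remark \ref{rem:mostly vanishing} "since $\mathfrak{d}_\kk$ is invertible on $T$" --- the hypothesis of that remark is $\mathrm{N}(\mathfrak{r})\in\co_S^\times$, not invertibility of $d_\kk$). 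Over a ramified prime with $\mathfrak{r}$ prime to it, the idempotents $e,\overline{e}$ do not exist, $\tilde{\mathcal{F}}$ is genuinely extra data rather than an isotypic summand, and the equivalence between the vanishing of the Hodge-filtration obstruction in $\Lie(\tilde{A})$ and the vanishing of its image in $\Lie(\tilde{A})/\tilde{\mathcal{F}}$ is the real content of the lemma. The paper handles it by passing to the \'etale local ring at a geometric point of $Y$, writing $J$ in the basis of Lemma \ref{lem:J div}, and observing that the two sections generate the ideals $\lambda_n\cdot(j_1,\ldots,j_n)$ and $\lambda_n\cdot(j_n)$ respectively; these coincide precisely because $(j_1,\ldots,j_n)=(j_n)$, an identity that holds only on a neighborhood of $Y$ (its proof uses the product decomposition $A'\iso A_1\times B$ along $Y$) and not at a general point of $\mathtt{M}$. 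Without this step your argument does not cover the Kudla--Rapoport divisors $\mathtt{Z}(m,\co_\kk)$ in the fibers over ramified primes, which is exactly where the improper intersections of interest occur.
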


\begin{proof}
First assume that  $\mathrm{N}(\mathfrak{r}) \in \co_U^\times$.
Using the notation of (\ref{first obst}), denote by $\bm{obst}^*(\lambda)$ the composition
\[
\mathrm{Fil}( \widetilde{A}_0) \to
H_1^{dR}( \widetilde{A}_0) \map{\widetilde{\lambda}} H_1^{dR}(\widetilde{A} )
\to   \Lie(\widetilde{A} ) ,
\]
so that $\bm{obst}(\lambda)$ is the composition
\[
\mathrm{Fil}( \widetilde{A}_0)\map{ \bm{obst}^*(\lambda) } \Lie(\widetilde{A} ) \to \Lie(\widetilde{A} ) / \widetilde{\mathcal{F}}.
\]
By deformation theory, the zero locus subscheme of $\bm{obst}^*(\lambda)$ is the maximal closed subscheme of $\widetilde{Y}$
over which  $\lambda$ extends to an element of $\mathfrak{r}^{-1}L(\widetilde{A}_0,\widetilde{A})$.  For this extension
the vanishing of (\ref{extra vanishing}) is automatic by Remark \ref{rem:mostly vanishing}, and the hermitian norm of the
extension is equal to the hermitian norm of $\lambda$.  It follows that
the zero locus subscheme of $\bm{obst}^*(\lambda)$ is $\widetilde{Y}\cap Z$.  Thus we are reduced proving that
$\bm{obst}^*(\lambda)$ and $\bm{obst}(\lambda)$ have the same zero locus subscheme.

If $d_\kk\in \co_U^\times$ the argument is simple, and exploits the splitting
\[
\co_\kk \otimes_\Z \co_{\widetilde{Y}} \iso \co_{\widetilde{Y}} \times \co_{\widetilde{Y}}.
\]
The orthogonal idempotents on the right hand side induce a splitting
$N=e N \oplus \overline{e}N$ of any $\co_\kk \otimes_\Z \co_{\widetilde{Y}}$-module $N$,
in which $eN$ is the maximal submodule on which $\co_\kk$ acts through the structure
map $\co_\kk\to \co_{\widetilde{Y}}$, and $\overline{e}N$ is the maximal submodule on which $\co_\kk$ acts
through the complex conjugate.  Kr\"amer's signature condition on $\widetilde{\mathcal{F}}$  implies that
$\widetilde{\mathcal{F}} = e \Lie(\widetilde{A})$, and so $\widetilde{\mathcal{F}}$ admits a canonical
complementary  summand $\overline{e} \Lie(\widetilde{A})$
on which $\co_\kk$ acts through the complex conjugate of the structure morphism.
The image of  $\bm{obst}^*(\lambda)$ is contained in
$\overline{e} \Lie(\widetilde{A})\iso \Lie(\widetilde{A})/\widetilde{\mathcal{F}}$.
Thus $\bm{obst}^*(\lambda)$ vanishes if and only if $\bm{obst}(\lambda)$ vanishes, as desired.

Returning to the general case  (but still assuming $\mathrm{N}(\mathfrak{r}) \in \co_U^\times$),
fix a geometric point $y\in \widetilde{Y}(\F)$  and let $\bm{R}$ be the  \'etale local ring of $\widetilde{Y}$ at $y$.
Denote by $(\bm{A}_0 , \bm{A})$ the pullback of $(\widetilde{A}_0 , \widetilde{A} )$ through
$\Spec(\bm{R}) \to V \to \widetilde{Y}$, and similarly denote by
\[
\widetilde{\bm{\lambda}} : H_1^{dR}(\bm{A}_0 ) \to H_1^{dR}(\bm{A})
\]
the pullback of $\widetilde{\lambda}   : H_1^{dR}(\widetilde{A}_0 ) \to H_1^{dR}(\widetilde{A})$.
Using Lemma \ref{lem:J div}, there is an $\bm{R}$-basis  $\epsilon_1,\ldots,\epsilon_n$
of $\Lie(\bm{A})$ such that $\epsilon_1,\ldots,\epsilon_{n-1}$ generates
the corank one $\bm{R}$-submodule $\mathcal{F}_{\bm{A}} \subset \Lie(\bm{A})$,
and  $J$ acts on $\Lie(\bm{A})$ as
\[
J = \left[\begin{matrix}
0 & \cdots & 0 & j_1 \\
\vdots & \ddots&  \vdots& \vdots \\
0 & \cdots & 0 & j_n
\end{matrix}\right] \in M_n(\bm{R}),
\]
where $j_1,\ldots, j_n\in \bm{R}$ satisfy $(j_1,\ldots, j_n)=(j_n)$.
Fix an $\co_\kk\otimes_\Z\bm{R}$-module generator $\sigma\in H_1^{dR}(\bm{A}_0)$, and write
\[
 \widetilde{\bm{\lambda}}(\sigma) = \lambda_1\epsilon_1+\cdots+\lambda_n\epsilon_n \in \Lie(\bm{A})
\]
with $\lambda_i\in\bm{R}$.  Using
\[
\mathrm{Fil}(\bm{A}_0) = J  H_1^{dR}(\bm{A}_0) =J\sigma,
\]
 the image of $\bm{obst}(\lambda)|_{\bm{R}}$ is generated by
\[
\widetilde{\bm{\lambda}}( J \sigma)=\lambda_n j_n \epsilon_n \in \Lie(\bm{A}) / \mathcal{F}_{\bm{A}},
\]
while the image of $\bm{obst}^*(\lambda)|_{\bm{R}}$ is generated by
\[
 \widetilde{\bm{\lambda}}( J \sigma) = J \cdot  \widetilde{\bm{\lambda}}(\sigma)
= \lambda_n( j_1 \epsilon_1+ \cdots + j_n\epsilon_n) \in \Lie(\bm{A}).
\]
As $\lambda_n (j_1,\ldots,j_n)  =   \lambda_n ( j_n)$, the maximal quotient of $\bm{R}$ in which
$\bm{obst}(\lambda)|_{\bm{R}}$ vanishes is the same as the maximal quotient in which
$\bm{obst}^*(\lambda)|_{\bm{R}}$ vanishes.

Now assume that $\mathfrak{r} \co_U = \mathfrak{d}_\kk \co_U$.  The zero locus subscheme of
$\bm{obst}^*(\lambda)$ is the maximal closed subscheme of $\widetilde{Y}$ over which
$\lambda$ extends to an element of  $\mathfrak{r}^{-1} L(\widetilde{A}_0,\widetilde{A})$ satisfying the vanishing
of (\ref{extra vanishing}).  By deformation theory, this is the same as the maximal closed
subscheme over which the compositions
\[
\mathrm{Fil}(\widetilde{A}_0) \to  H_1^{dR}(\widetilde{A}_0) \map{ \widetilde{\delta_\kk \lambda} } H_1^{dR}(\widetilde{A}) \to  \Lie(\widetilde{A})
\]
and
\[
H_1^{dR}(\widetilde{A}_0)  \map{ \widetilde{\delta_\kk \lambda} } H_1^{dR}(\widetilde{A})  \to
\Lie(\widetilde{A}) \to \Lie(\widetilde{A}) / \widetilde{\mathcal{F}}
\]
 vanish.  Using $J\widetilde{\mathcal{F}}=0$ and
 $ \mathrm{Fil}(\widetilde{A}_0) = J H_1^{dR}(\widetilde{A}_0),$
one  easily checks that the vanishing of the second composition implies the vanishing of the first.
As $J_0\in R^\times$,  the vanishing of the second composition is equivalent to the vanishing of $\bm{obst}(\lambda)$.  Thus
$\bm{obst}(\lambda)$ and $\bm{obst}^*(\lambda)$ have the same zero locus subscheme.
\end{proof}

The following result  is reminiscent of the classical adjunction isomorphism
as in \cite[Lemma 9.1.36]{Liu}, however our result is particular to the moduli space $\mathcal{M}$.
Indeed, it is a statement about the cotautological bundle $\mathcal{T}$, which we have
defined using the moduli interpretation of $\mathcal{M}$.

\begin{theorem}[Adjunction]\label{thm:adjunction}
Assuming that $Z$ is $Y$-improper, there is a canonical isomorphism
\begin{equation}\label{adjunction}
\co(Z)|_Y \iso \mathcal{T}_\LL |_Y
\end{equation}
of line bundles on $Y$.
\end{theorem}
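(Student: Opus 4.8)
The plan is to prove \eqref{adjunction} by comparing two canonical ``first order'' data attached to the inclusion $Y\subseteq Z$: one coming from the local equation of the divisor $Z$, the other from the obstruction map $\bm{obst}(\lambda)$ constructed just before the theorem.

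Since $\mathtt{Y}_{(\mathscr{L}_0,\Lambda)}$ is smooth over $\co_\kk$ and $U$ is regular, the closed immersion $Y\hookrightarrow U$ is a regular immersion, so the conormal sheaf $\mathcal{C}_Y=\mathcal{I}_Y/\mathcal{I}_Y^2$ is locally free of rank $n-1$ on the reduced, irreducible, one dimensional scheme $Y$. As $Z$ is $Y$-improper we have $Y\subseteq Z$, hence $\mathcal{I}_Z\subseteq\mathcal{I}_Y$; restricting the natural inclusion $\mathcal{I}_Z\hookrightarrow\co_U$ to the first order neighbourhood $\tilde Y$ and using that $\mathcal{I}_Y$ is square zero in $\co_{\tilde Y}$, we obtain a canonical $\co_Y$-linear map
\[
\overline{s_Z}\colon \mathcal{I}_Z|_Y\longrightarrow \mathcal{C}_Y .
\]
Likewise, $\bm{obst}(\lambda)$ is a map of line bundles $\mathrm{Fil}(\tilde A_0)\to\Lie(\tilde A)/\tilde{\mathcal F}$ on $\tilde Y$, i.e.\ a global section of $\mathtt{T}_\mathscr{L}|_{\tilde Y}$, whose zero locus is $\tilde Y\cap Z$ by the lemma preceding the theorem. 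As $Z$ is $Y$-improper this zero locus contains $Y$, so $\bm{obst}(\lambda)$ vanishes along $Y$; a map of line bundles on $\tilde Y$ vanishing on $Y$ has image contained in $\mathcal{I}_Y\cdot(\,\cdot\,)\cong\mathcal{C}_Y\otimes_{\co_Y}(\,\cdot\,)|_Y$, so it factors as a canonical $\co_Y$-linear map
\[
\overline{\bm{obst}(\lambda)}\colon \mathtt{T}_\mathscr{L}^\vee|_Y\longrightarrow\mathcal{C}_Y .
\]

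Two facts finish the proof: (a) $\overline{s_Z}$ is injective, and (b) $\overline{s_Z}$ and $\overline{\bm{obst}(\lambda)}$ have the same image in $\mathcal{C}_Y$. For (a): $Y$ is reduced and $\mathcal{C}_Y$ is locally free, so it suffices to check non-vanishing at the generic point $\eta$ of $Y$; but $Y$ is flat over $\co_\kk$, so $\eta\in Y_{/\kk}\subseteq Z_{/\kk}$, which is smooth over $\kk$ by the discussion preceding the theorem, hence a local equation $g$ of $Z$ at $\eta$ is a regular parameter in $\co_{U,\eta}$, and therefore the image of $\overline{s_Z}$ at $\eta$ is generated by the class $\bar g$ of $g$ in $\mathcal{C}_{Y,\eta}=\mathfrak{m}_{U,\eta}/\mathfrak{m}_{U,\eta}^2$, which is nonzero. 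For (b): the assertion is local on $U$, so on a small enough \'etale chart we may trivialize $\co(Z)$ and $\mathtt{T}_\mathscr{L}$; then $\overline{s_Z}$ is represented by the reduction $\bar g\in\mathcal{C}_Y$ of a local equation $g$ of $Z$, and $\overline{\bm{obst}(\lambda)}$ by a function $b\in\co_{\tilde Y}$ writing $\bm{obst}(\lambda)=b\cdot t$ for a local generator $t$ of $\mathtt{T}_\mathscr{L}$. By the lemma, $V(\bar g)$ and $V(b)$ both equal $\tilde Y\cap Z$ as closed subschemes of $\tilde Y$, so $(\bar g)=(b)$ in $\co_{\tilde Y}$; thus $b=u\bar g$ with $u\in\co_{\tilde Y}^\times$, and reducing modulo $\mathcal{I}_Y$ gives $\co_Y\cdot b=\co_Y\cdot\bar g$ inside $\mathcal{C}_Y$, i.e.\ $\mathrm{Im}(\overline{\bm{obst}(\lambda)})=\mathrm{Im}(\overline{s_Z})$.

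Granting (a) and (b), let $\mathcal{K}\subseteq\mathcal{C}_Y$ be this common image. Then (a) says $\overline{s_Z}\colon\mathcal{I}_Z|_Y\xrightarrow{\ \sim\ }\mathcal{K}$, and (b) says $\overline{\bm{obst}(\lambda)}\colon\mathtt{T}_\mathscr{L}^\vee|_Y\twoheadrightarrow\mathcal{K}$; since $\mathcal{K}\cong\mathcal{I}_Z|_Y$ is invertible and a surjection of invertible sheaves is an isomorphism, $\overline{\bm{obst}(\lambda)}$ is also an isomorphism onto $\mathcal{K}$. Composing yields a canonical isomorphism $\mathcal{I}_Z|_Y\cong\mathtt{T}_\mathscr{L}^\vee|_Y$, and dualizing gives \eqref{adjunction}. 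The local descriptions used in (b) glue over $U$ --- in particular over the two kinds of open on which $\bm{obst}(\lambda)$ was defined --- because $s_Z$, $\bm{obst}(\lambda)$, and hence $\overline{s_Z}$ and $\overline{\bm{obst}(\lambda)}$, are canonical; the compatibility of the two constructions of $\bm{obst}(\lambda)$ (according to whether $\mathrm{N}(\mathfrak r)$ is a local unit or $\mathfrak r\co_U=\mathfrak d_\kk\co_U$) is already built into the preceding lemma and needs no new input. I expect the one genuinely delicate point to be the injectivity (a): it is essential here that $Z$ be smooth on its generic fibre, so that its local equation along $Y$ is a regular parameter rather than lying in the square of the maximal ideal.
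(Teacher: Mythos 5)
Your proof is correct, and it rests on the same two pillars as the one in the paper: the observation that $\bm{obst}(\lambda)$ and the tautological section of $\co(Z)$ have the same zero locus $\tilde{Y}\cap Z$ in the first order neighborhood, and the use of generic smoothness of $Z$ to see that a local equation of $Z$ at the generic point of $Y$ is a regular parameter (not in $\mathfrak{m}^2$). Where you differ is in how the isomorphism is extracted. The paper works directly with the relation $g=uf$ in $R/I^2$ and proves the key commutative-algebra lemma that any $x$ with $gx=0$ in $R/I^2$ lies in $I/I^2$ (via a tangent space computation at the generic point), concluding that $u$ is a unit modulo $I$. You instead push both sections into the conormal module $\mathcal{C}_Y=\mathcal{I}_Y/\mathcal{I}_Y^2$, show the two induced maps $\mathcal{I}_Z|_Y\to\mathcal{C}_Y$ and $\mathtt{T}_\mathscr{L}^\vee|_Y\to\mathcal{C}_Y$ have the same image $\mathcal{K}$, prove the first is injective (hence $\mathcal{K}$ is invertible), and invoke the fact that a surjection of invertible sheaves is an isomorphism. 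This packaging is cleaner in one respect: canonicity and gluing are automatic, since both maps to $\mathcal{C}_Y$ are defined globally without choices, whereas the paper has to argue separately that the local multipliers $u$ have well-defined reductions mod $I$.

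One assertion in step (b) is not justified as stated: from $(\bar g)=(b)$ in $\co_{\tilde Y}$ you conclude ``thus $b=u\bar g$ with $u\in\co_{\tilde Y}^\times$.'' In a ring with nilpotents, two generators of the same principal ideal need not differ by a unit --- establishing precisely this (modulo $I$) is the entire content of the paper's tangent-space lemma, so you should not get it for free. Fortunately your argument does not need it: mutual divisibility $b=u\bar g$, $\bar g=vb$ with $u,v\in\co_{\tilde Y}$ arbitrary already gives $\co_Y\cdot[b]=\co_Y\cdot[\bar g]$ inside $\mathcal{C}_Y$, which is all that (b) claims, and the unit you ultimately need is produced by the surjection-of-invertible-sheaves step, whose nontrivial input is the injectivity proved in (a). I would simply delete the clause ``with $u\in\co_{\tilde Y}^\times$'' and note that equality of images follows from mutual divisibility; as written it reads as if you believe a false general fact, even though the surrounding argument is sound. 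You have also correctly identified the crux: everything hinges on $g\notin\mathfrak{m}_{U,\eta}^2$, which is where the smoothness of $Z_{/\kk}$ enters.
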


\begin{proof}
View the obstruction
\[
\bm{obst}(\lambda) : \mathrm{Fil}(\widetilde{A}_0) \to \Lie(\widetilde{A} ) /  \widetilde{\mathcal{F}}
\]
as a section $\bm{obst}(\lambda) \in \Gamma(  \mathcal{T}_\LL |_{\widetilde{Y}  })$
with zero locus subscheme $\widetilde{Y} \cap Z$. Under the inclusion $\co_U \subset \co(Z)$ of $\co_U$-modules,
the constant function $1$ on $U$ defines a section $\bm{s}\in \Gamma(\co(Z))$ with
zero locus  subscheme $Z$.  Hence the restriction $\bm{s}|_{\widetilde{Y}} \in \Gamma(\co(Z)|_{\widetilde{Y} })$
also has zero locus subscheme $\widetilde{Y} \cap Z$.

After passing to a Zariski open cover of $U$, we are free to assume that $U=\Spec(R)$ is
affine, and that the line bundles $\co(Z)$ and $\mathcal{T}_\LL |_U$ are trivial.
Fix isomorphisms of $R$-modules $\Gamma(\co(Z))\iso R$ and $\Gamma(\mathcal{T}_\LL |_U) \iso R$,
and let $I\subset R$ be the ideal defining the closed subscheme $Y\subset U$.  Note that
$R/I\iso \co_Y$ is an integral domain of characteristic $0$, and hence $I$ is prime.
Let $f,g\in R/I^2$ be the elements corresponding to the sections $\bm{obst}(\lambda)$ and $\bm{s}|_{\widetilde{Y}}$.
As these sections have the same zero locus subscheme, $(f)=(g)$  and  we may write
$f=vg$ and $g=uf$ for some $u,v \in R/I^2$.  In particular $g\cdot (1-uv)=0$.

We claim that if  $x\in R/I^2$ satisfies $gx=0$, then $x\in I/I^2$.  Suppose not.
The element $g\in R/I^2$ comes with a lift to $R$, defined by $\bm{s}$, and we fix any lift
of $x$ to $R$, necessarily with $x\not\in I$.  In particular $x$ is a unit in the localization $R_I$.  Therefore
$g \in I^2 R_I$ and the natural surjection $R_I \to R_I/(g)$ induces an  isomorphism on tangent spaces.
This is a contradiction, as we know from the smoothness of the generic fibers of $U$ and $Z$ that
 $R_I$ and $R_I/(g)$ are smooth $\kk$-algebras of dimensions $n-1$ and $n-2$, respectively.

If we apply the above to $x= 1-uv$ we see that $1=uv$ in $R/I$.  Therefore  the map
\[
 \Gamma(  \mathcal{T}_\LL|_{\widetilde{Y}} )   \iso R/I^2 \map{u} R/I^2 \iso \Gamma(\co(Z)|_{\widetilde{Y} }),
\]
which takes $\bm{obst}(\lambda) \mapsto \bm{s}|_{\widetilde{Y}}$, becomes an isomorphism  after tensoring with
$R/I \iso \co_Y$. Although $g=uf$ does not determine $u$ uniquely, any other such $u$  has the same image
in $R/I$.   In view of the discussion above, the desired  isomorphism (\ref{adjunction}) may be defined as
 follows: Zariski locally on $\widetilde{Y}$ there is a homomorphism
 \[
 \mathcal{T}_\LL|_{\widetilde{Y}}  \to  \co(Z)|_{\widetilde{Y}}
 \]
 satisfying $\bm{obst}(\lambda) \mapsto \bm{s}|_{\widetilde{Y}}$.  Such a homomorphism is not unique,
 but any two have the same restriction to $Y$.  This restriction is an isomorphism, and these isomorphisms
 patch together over a Zariski cover.
\end{proof}

At last we construct the promised section (\ref{adjunction section}).
Fix one connected component $Y\subset \mathcal{Y}_{(\LL_0,\Lambda)/U}$, and
regard $\mathcal{Z}_\LL(m,\mathfrak{r})$ as a line bundle on $\mathcal{M}_\LL$.
Its pullback to a line bundle on $U$ satisfies
\[
\mathcal{Z}_\LL(m,\mathfrak{r}) |_U \iso \bigotimes_{ Z  } \co(Z),
\]
where the tensor product is over all connected components  $Z \subset \mathcal{Z}_\LL(m,\mathfrak{r})_{/U}$.
Combining the adjunction isomorphism (\ref{adjunction}) with Proposition \ref{prop:improper} yields an isomorphism
\[
\mathcal{T}_\LL^{R_\Lambda(m,\mathfrak{r})} |_Y  \iso
 \bigotimes_{ \text{$Y$-improper $Z$ }} \co(Z)|_Y,
\]
and hence an isomorphism
\[
\mathcal{Z}_\LL(m,\mathfrak{r})|_Y \iso  \mathcal{T}_\LL^{R_\Lambda(m,\mathfrak{r})} |_Y
\otimes \bigotimes_{ \text{$Y$-proper $Z$ }} \co(Z)|_Y,
\]
which we rewrite as
\begin{equation}\label{sweet isomorphism}
\mathcal{Z}^\heartsuit_\LL(m,\mathfrak{r})|_Y \iso
\bigotimes_{ \text{$Y$-proper $Z$} } \co(Z)|_Y.
\end{equation}
Each line bundle $\co(Z)\supset \co_U$ on $U$ has a canonical section $\bm{s} \in \Gamma( \co (Z))$,
corresponding to the constant function $1$ in $\co_U$, satisfying
\[
 \mathrm{div}(\bm{s}|_Y)   = \mathrm{div}(\bm{s})\cap Y =  Z\cap Y
\]
as divisors on $Y$. Therefore (\ref{sweet isomorphism}) determines a section
\[
\bm{\sigma}_{m,\mathfrak{r}}|_Y \in \Gamma( \mathcal{Z}^\heartsuit_\LL(m,\mathfrak{r})|_Y)
\]
corresponding to the section $\otimes \bm{s} |_Y$ on the right hand side of (\ref{sweet isomorphism}),
and this section  satisfies
\begin{equation}\label{sweet divisor}
\mathrm{div}(\bm{\sigma}_{m,\mathfrak{r}}|_Y) = \sum_{  \text{$Y$-proper $Z$} }  ( Z\cap Y).
\end{equation}
Note that each  $ \bm{s}|_Y$ appearing in the tensor product is nonvanishing at the
generic point of $Y$, precisely because the tensor product is over only  the $Y$-proper $Z$'s.
In particular $\bm{\sigma}_{m,\mathfrak{r}}|_Y$ is nonzero.

By repeating the above construction on each connected component $Y$ of
$\mathcal{Y}_{(\LL_0,\Lambda)/U}$ we obtain
a section of the pullback of $\mathcal{Z}^\heartsuit_\LL(m,\mathfrak{r})$ to
$\mathcal{Y}_{(\LL_0,\Lambda)/U}$.   As $U$ varies over a cover of $\mathcal{M}_\LL$ by sufficiently
small \'etale opens, these sections (being truly canonical) agree on the overlaps, and
the desired section (\ref{adjunction section}) is defined by patching them together.

\begin{proposition}\label{magic section divisor}
As divisors on $\mathcal{Y}_{( \LL_0,\Lambda)}$, we have
\[
\mathrm{div}(\bm{\sigma}_{m,\mathfrak{r}})  =
\sum_{   \substack{  m_1 \in \Q_{>0} \\ m_2 \in \Q_{\ge 0} \\ m_1+m_2 =m  }   }
\mathcal{X}_{ ( \LL_0,\Lambda ) } (m_1, m_2 , \mathfrak{r} ) .
\]
\end{proposition}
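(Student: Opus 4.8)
The plan is to reduce everything to the local adjunction identity \eqref{sweet divisor} by working étale-locally on $\mathtt{M}_\mathscr{L}$, and then to match the resulting divisor against the decomposition \eqref{scheme-theoretic decomp} using only the dimension information recorded in Theorem \ref{thm:zero cycles} and Proposition \ref{prop:bad parts}.

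First I would fix a sufficiently small étale open $U\to\mathtt{M}_\mathscr{L}$, a connected component $Y\subset\mathtt{Y}_{(\mathscr{L}_0,\Lambda)/U}$, and set $W=\mathtt{Z}_\mathscr{L}(m,\mathfrak{r})_{/U}\times_U Y$. Decomposing $\mathtt{Z}_\mathscr{L}(m,\mathfrak{r})_{/U}$ into its (finitely many) connected components $Z$ gives
\[
W=\bigsqcup_{Z}(Z\cap Y)=\bigsqcup_{Y\text{-improper }Z}Y\ \sqcup\ \bigsqcup_{Y\text{-proper }Z}(Z\cap Y),
\]
by the very definition of $Y$-(im)properness (each $Y$-improper $Z$ contributes a copy of $Y$, since then $Z\cap Y=Y$). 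On the other hand, restricting the isomorphism \eqref{scheme-theoretic decomp} along $Y\to\mathtt{Y}_{(\mathscr{L}_0,\Lambda)}$ gives a second open-closed decomposition
\[
W=\bigsqcup_{\substack{m_1,m_2\in\Q_{\ge0}\\ m_1+m_2=m}}\mathtt{X}_{(\mathscr{L}_0,\Lambda)}(m_1,m_2,\mathfrak{r})_{/Y}.
\]

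Next I would compare the two decompositions. Since $\mathtt{Z}_\mathscr{L}(m,\mathfrak{r})$ is finite over $\mathtt{M}_\mathscr{L}$ and $Y$ is a regular, integral, one-dimensional $\co_\kk$-scheme (by smoothness of $\mathtt{Y}_{(\mathscr{L}_0,\Lambda)}/\co_\kk$), the scheme $W$ is finite over $Y$ and every connected component of $W$ has dimension $0$ or $1$. A connected component of $W$ lies in some $Z\cap Y$ with $Z$ being $Y$-improper precisely when it is $1$-dimensional (in which case $Z\cap Y=Y$), and in some $Z\cap Y$ with $Z$ being $Y$-proper precisely when it is $0$-dimensional. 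In the second decomposition, Theorem \ref{thm:zero cycles} says $\mathtt{X}_{(\mathscr{L}_0,\Lambda)}(m_1,m_2,\mathfrak{r})$ is $0$-dimensional whenever $m_1>0$, while Proposition \ref{prop:bad parts} says $\mathtt{X}_{(\mathscr{L}_0,\Lambda)}(0,m,\mathfrak{r})$ is smooth of relative dimension $0$ over $\co_\kk$, hence $1$-dimensional; étale base change along $Y$ preserves these dimensions. Thus a connected component of $W$ lies in $\mathtt{X}_{(\mathscr{L}_0,\Lambda)}(0,m,\mathfrak{r})_{/Y}$ iff it is $1$-dimensional, and in $\bigsqcup_{m_1>0}\mathtt{X}_{(\mathscr{L}_0,\Lambda)}(m_1,m_2,\mathfrak{r})_{/Y}$ iff it is $0$-dimensional. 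Hence the two open-closed subschemes
\[
\bigsqcup_{Y\text{-proper }Z}(Z\cap Y)\qquad\text{and}\qquad\bigsqcup_{\substack{m_1>0\\ m_1+m_2=m}}\mathtt{X}_{(\mathscr{L}_0,\Lambda)}(m_1,m_2,\mathfrak{r})_{/Y}
\]
of $W$ coincide, each being exactly the union of the $0$-dimensional connected components of $W$. (As a byproduct this reproves Proposition \ref{prop:improper}.) Feeding this into \eqref{sweet divisor} yields $\mathrm{div}(\bm{\sigma}_{m,\mathfrak{r}}|_Y)=\sum_{m_1>0,\ m_1+m_2=m}\mathtt{X}_{(\mathscr{L}_0,\Lambda)}(m_1,m_2,\mathfrak{r})_{/Y}$ as divisors on $Y$, multiplicities included, since the underlying schemes coincide. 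Letting $U$ range over a cover of $\mathtt{M}_\mathscr{L}$ by sufficiently small étale opens and $Y$ over the components of $\mathtt{Y}_{(\mathscr{L}_0,\Lambda)/U}$, these local identities glue — the section $\bm{\sigma}_{m,\mathfrak{r}}$ and the cycles $\mathtt{X}_{(\mathscr{L}_0,\Lambda)}(m_1,m_2,\mathfrak{r})$ being defined globally — to the claimed equality of divisors on $\mathtt{Y}_{(\mathscr{L}_0,\Lambda)}$.

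The genuinely substantive input is the local adjunction computation \eqref{sweet divisor}, which is already in hand; the only point demanding care in the present argument is the bookkeeping in the comparison step, namely checking that the decomposition of $W$ into connected components simultaneously refines the decomposition by components $Z$ of the Kudla--Rapoport divisor and the decomposition \eqref{scheme-theoretic decomp} by $(m_1,m_2)$, so that the ``$Y$-proper'' piece and the ``$m_1>0$'' piece agree as \emph{schemes} — and hence as divisors, with multiplicities — and not merely set-theoretically.
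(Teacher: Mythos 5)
Your argument is correct and is essentially the paper's own proof: both restrict to a sufficiently small \'etale open $U$ and a component $Y$, compare the two open--closed decompositions of $\mathtt{Z}_\mathscr{L}(m,\mathfrak{r})_{/U}\times_U Y$ coming from the components $Z$ and from \eqref{scheme-theoretic decomp}, identify the $Y$-proper piece with the $m_1>0$ piece by extracting the $0$-dimensional connected components via Theorem \ref{thm:zero cycles} and Proposition \ref{prop:bad parts}, and then conclude from \eqref{sweet divisor}. The only difference is presentational: you spell out the refinement-by-connected-components bookkeeping (and note that it reproves Proposition \ref{prop:improper}) where the paper simply says ``take the $0$-dimensional parts of both sides.''
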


\begin{proof}
As in the construction of $\bm{\sigma}_{m,\mathfrak{r}}$, fix a sufficiently small \'etale open subscheme
$U \to \mathcal{M}_\LL$ and a connected component $Y\subset \mathcal{Y}_{(\LL_0,\Lambda) / U}$.
It follows from (\ref{scheme-theoretic decomp}) that there is an isomorphism of $Y$-schemes
\[
 \bigsqcup_{  Z } ( Z\cap Y )
\iso
\bigsqcup_{   \substack{  m_1, m_2 \in \Q_{\ge 0} \\ m_1+m_2 =m  }   }
 \mathcal{X}_{(\LL_0,\Lambda)} (m_1, m_2 , \mathfrak{r} )_{/Y}
\]
where the disjoint union on the left is over all connected components
$Z\subset\mathcal{Z}_\LL(m,\mathfrak{r})_{/U}$.
Each side of this isomorphism has a well-defined $0$-dimensional part: the disjoint union of
all its $0$-dimensional connected components.  Taking the $0$-dimensional parts,
using Theorem \ref{thm:zero cycles} and Proposition \ref{prop:bad parts} for the right hand side,
and then viewing the $0$-dimension parts as divisors on $Y$, we find
\[
 \sum_{  \text{$Y$-proper $Z$} } ( Z\cap Y )
=
\sum_{   \substack{  m_1 \in \Q_{>0} \\ m_2 \in \Q_{\ge 0}  \\ m_1+m_2 =m  }   }
 \mathcal{X}_{(\LL_0,\Lambda)} (m_1, m_2 , \mathfrak{r} )_{/Y}.
\]
Combining this with (\ref{sweet divisor}) shows that
\[
\mathrm{div}(\bm{\sigma}_{m,\mathfrak{r}})_{/Y}
 = \sum_{   \substack{  m_1 \in \Q_{>0} \\ m_2 \in \Q_{\ge 0}  \\ m_1+m_2 =m  }   }
 \mathcal{X}_{(\LL_0,\Lambda)} (m_1, m_2 , \mathfrak{r} )_{/Y},
\]
and the claim follows immediately.
\end{proof}


\subsection{Adjunction in the complex fiber}


\label{sect:compladj}

Fix one point $y\in \mathcal{Y}_{(\LL_0,\Lambda)}(\C)$.
We will give a purely analytic construction of the fiber of  $\bm{\sigma}_{m,\mathfrak{r}}$
at $y$ using the complex uniformization (\ref{uniformization}).
Recall from  Section \ref{ss:complex uniformization} that to the point $y$
there is associated a triple $(\mathfrak{A}_0,\mathfrak{A}_1,\mathfrak{B})$ of
hermitian $\co_\kk$-modules such that $L(\mathfrak{A}_0,\mathfrak{B}) \iso \Lambda$ and
\[
L(\mathfrak{A}_0,\mathfrak{A}_1) \in \mathrm{gen}( \LL_0(\infty) ).
\]
Set $\mathfrak{A}=\mathfrak{A}_1\oplus \mathfrak{B}$ so that
$L(\mathfrak{A}_0,\mathfrak{A}) \in \mathrm{gen}(\LL(\infty))$ and
\[
L(\mathfrak{A}_0,\mathfrak{A}) \iso L(\mathfrak{A}_0,\mathfrak{A}_1) \oplus \Lambda.
\]
Recall that the connected component
of $\mathcal{M}_\LL(\C)$ containing $y$ admits an orbifold presentation
\[
 \Gamma \backslash \mathcal{D}\to  \mathcal{M}_\LL(\C)
\]
in which $\mathcal{D}$ is the space of negative lines in $L(\mathfrak{A}_0,\mathfrak{A})_\R$,
and that under this presentation the point $y$ corresponds to the negative line
$
L(\mathfrak{A}_0,\mathfrak{A}_1)_\R  \subset  L(\mathfrak{A}_0,\mathfrak{A})_\R.
$

Denote by $Z(m,\mathfrak{r})$ the pullback to  $\mathcal{D}$ of the divisor
$\mathcal{Z}_\LL(m,\mathfrak{r})$. By (\ref{KR uniformization}) the corresponding line bundle  is
\begin{equation}\label{analytic line bundle}
Z(m,\mathfrak{r}) \iso
\bigotimes_{  \substack{\lambda \in \mathfrak{r}^{-1}L(\mathfrak{A}_0, \mathfrak{A}) \\ \langle \lambda,\lambda\rangle =m  } } \co(\lambda).
\end{equation}
On the right hand side  $\co(\lambda)$ is the line bundle on $\mathcal{D}$ defined by the divisor $\mathcal{D}(\lambda)$
of negative lines orthogonal to $\lambda$.   We must explain the meaning of the  infinite tensor product on the right.
Denote by  $\bm{s}(\lambda)$  the constant function $1$ on $\mathcal{D}$,
viewed as a section of $\co(\lambda)$.  For  any open set $U\subset \mathcal{D}$ with compact closure
there are only finitely many $\lambda \in \mathfrak{r}^{-1}L(\mathfrak{A}_0, \mathfrak{A})$
satisfying $\langle \lambda,\lambda\rangle =m$ for which $\mathcal{D}(\lambda) \cap U \not=\emptyset$.
For $\lambda$ not in this finite set the section $\bm{s}(\lambda)$ is nonvanishing on $U$, and
defines a trivialization of $\co(\lambda)$.  Thus,  after restricting to any such   $U$ all but finitely many
of the factors of  (\ref{analytic line bundle}) are trivialized, and the meaning of
(\ref{analytic line bundle}) is clear.  The line bundle (\ref{analytic line bundle})  has a canonical section
\[
\bm{s}_{m,\mathfrak{r}} =
 \bigotimes_{  \substack{\lambda \in \mathfrak{r}^{-1}L(\mathfrak{A}_0, \mathfrak{A}) \\ \langle \lambda,\lambda\rangle =m  } } \bm{s}(\lambda)
\]
corresponding to the constant function $1$ in $\co_\mathcal{D} \subset Z(m,\mathfrak{r})$.

Let $T$ denote the pullback of the cotautological bundle $\mathcal{T}_\LL$ to $\mathcal{D}$.
The irreducible components of the divisor $Z(m,\mathfrak{r})$ passing through $y$ are
indexed by the set
\begin{equation}\label{analytic components}
\{ \lambda\in \mathfrak{r}^{-1}\Lambda : \langle \lambda,\lambda\rangle =m \} \subset L(\mathfrak{A}_0, \mathfrak{A}).
\end{equation}
Recall from Proposition  \ref{prop:analytic taut} that at every point  $z\in \mathcal{D}$
there is a canonical isomorphism
\[
T_z \iso \Hom_\C(z,\C),
\]
which was called $\fiber^\vee$, but which we now  suppress  from the notation.
 For each $z\in\mathcal{D}$ and each $\lambda$ in (\ref{analytic components}), denote by  $\lambda_z$
 the orthogonal projection of $\lambda$ to $z$.  There is a unique
holomorphic section $\bm{obst}^\mathrm{an}(\lambda) \in \Gamma( \mathcal{D} , T)$  whose fiber at every point $z$
satisfies
\[
\bm{obst}^\mathrm{an}_z(\lambda)   = \langle \cdot , \lambda_z \rangle.
\]
Of course the zero locus of $\bm{obst}^\mathrm{an}(\lambda)$ is the divisor $\mathcal{D}(\lambda)$,
and hence there is a unique isomorphism of line bundles $\co(\lambda)\iso T$ satisfying
$\bm{s}(\lambda) \mapsto \bm{obst}^\mathrm{an}(\lambda)$.  This isomorphism
is the analytic analogue of  the adjunction isomorphism of Theorem \ref{thm:adjunction}.

Define a holomorphic section
\[
\bm{obst}^\mathrm{an}_{m,\mathfrak{r}} =
 \bigotimes_{  \substack{\lambda \in \mathfrak{r}^{-1}\Lambda \\ \langle \lambda,\lambda\rangle =m } } \bm{obst}^\mathrm{an}(\lambda)
\]
of
\[
T^{R_\Lambda(m,\mathfrak{r}) }
=  \bigotimes_{  \substack{\lambda \in \mathfrak{r}^{-1}\Lambda \\ \langle \lambda,\lambda\rangle =m  } } T.
\]
 The pullback  of (\ref{magic bundle}) to $\mathcal{D}$ is
\begin{equation}\label{complex bundle coords}
Z^\heartsuit(m,\mathfrak{r}) \iso  Z(m,\mathfrak{r}) \otimes
T^{\otimes - R_\Lambda(m,\mathfrak{r}) },
\end{equation}
which has the holomorphic section
$\bm{s}_{m,\mathfrak{r}} \otimes (\bm{obst}^\mathrm{an}_{m,\mathfrak{r} })^{-1}$.

\begin{lemma}
The fiber at $y$ of $\bm{s}_{m,\mathfrak{r}} \otimes (\bm{obst}^\mathrm{an}_{m,\mathfrak{r}})^{-1}$
agrees with the fiber at $y$ of the section $\bm{\sigma}_{m,\mathfrak{r}}$.
\end{lemma}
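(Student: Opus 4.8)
The plan is to show that, under the complex uniformization of $\mathtt{M}_\mathscr{L}$ around $y$, the analytic section $\bm{obst}^{\mathrm{an}}(\lambda)$ of the pulled‑back cotautological bundle $T$ \emph{coincides} with the algebraic obstruction section $\bm{obst}(\lambda)$ of Section~\ref{ss:adjunction}, for every $\lambda$ in the index set \eqref{analytic components}; granting this, the algebraic and analytic adjunction isomorphisms agree at $y$ and the lemma becomes bookkeeping. First I would record that the $Y$‑improper components of $\mathtt{Z}_\mathscr{L}(m,\mathfrak{r})$ through $y$ are indexed precisely by \eqref{analytic components}: writing $\lambda=\lambda_1+\lambda_2$ under \eqref{ortho sum}, the point $y$ (the negative $\kk_\R$‑line $z=L(\mathfrak{A}_0,\mathfrak{A}_1)_\R$) lies on $\mathcal{D}(\lambda)$ iff $\langle z,\lambda_1\rangle=0$ (the $\lambda_2$ part being orthogonal to $z$ automatically, as $\Lambda\perp L(\mathfrak{A}_0,\mathfrak{A}_1)$), which by nondegeneracy of the negative definite form on $z$ forces $\lambda_1=0$, i.e.\ $\lambda\in\mathfrak{r}^{-1}\Lambda$; there are $R_\Lambda(m,\mathfrak{r})$ of these, matching Proposition~\ref{prop:improper}. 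The complex uniformization identifies $\mathtt{Z}_\mathscr{L}(m,\mathfrak{r})$ with its canonical ``constant $1$'' section with $Z(m,\mathfrak{r})$ together with $\bm{s}_{m,\mathfrak{r}}=\bigotimes\bm{s}(\lambda)$, and identifies $\mathtt{T}_\mathscr{L}$ with $T$ compatibly with the isomorphism $\fiber^\vee$ of Proposition~\ref{prop:analytic taut}; hence $\mathtt{Z}^\heartsuit_\mathscr{L}(m,\mathfrak{r})$ is identified with $Z^\heartsuit(m,\mathfrak{r})$, and it suffices to compare, for each $\lambda$ in \eqref{analytic components}, the algebraic adjunction isomorphism $\mathtt{T}_\mathscr{L}|_Y\iso\co(Z)|_Y$ of Theorem~\ref{thm:adjunction} with the analytic one ($T\iso\co(\lambda)$ on $\mathcal{D}$, $\bm{obst}^{\mathrm{an}}(\lambda)\mapsto\bm{s}(\lambda)$), evaluated at $y$.

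Second, I would reduce this comparison to the identity $\fiber^\vee\circ\bm{obst}(\lambda)=\bm{obst}^{\mathrm{an}}(\lambda)$ over the first‑order infinitesimal neighborhood $\tilde{Y}$. Both adjunction isomorphisms are defined by ``division'' of two sections having the \emph{same} zero locus: algebraically, after trivializing $\co(Z)$ and $\mathtt{T}_\mathscr{L}$ near $y$, one has $\bm{s}|_{\tilde{Y}}=u\cdot\bm{obst}(\lambda)$ for a $u\in R/I^2$ whose class mod $I$ is a well‑defined unit, and the adjunction at $y$ is multiplication by $u(y)$; analytically, $\bm{s}(\lambda)=u^{\mathrm{an}}\cdot\fiber^\vee(\bm{obst}^{\mathrm{an}}(\lambda))$ on $\mathcal{D}$ for a nowhere‑vanishing holomorphic $u^{\mathrm{an}}$ (both sides cutting out $\mathcal{D}(\lambda)$), and the analytic adjunction at $y$ is multiplication by $u^{\mathrm{an}}(y)$. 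Since $\bm{s}|_{\tilde{Y}}$ and $\bm{s}(\lambda)$ already agree under the uniformization, the identity $\fiber^\vee\circ\bm{obst}(\lambda)=\bm{obst}^{\mathrm{an}}(\lambda)$ over $\tilde{Y}$ gives $u\equiv u^{\mathrm{an}}$, hence $u(y)=u^{\mathrm{an}}(y)$.

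The remaining identity $\fiber^\vee\circ\bm{obst}(\lambda)=\bm{obst}^{\mathrm{an}}(\lambda)$ is where the real work sits, and I would prove it by unwinding \eqref{first obst} and \eqref{second obst}. On a connected component of $\mathtt{M}_\mathscr{L}(\C)$ the elliptic curve $A_0$ is constant, so $H_1^{dR}(A_0)$ is the constant bundle $\mathfrak{A}_{0\C}$ with $\mathrm{Fil}(A_0)=\overline{e}\mathfrak{A}_{0\C}$; by the relative de Rham comparison for the universal family (\cite{Lan}, Section~2.1.6, with GAGA and the Gauss--Manin connection) $H_1^{dR}(A^\univ)$ analytifies to the constant bundle $\mathfrak{A}_\C$ with varying Hodge filtration, and the endomorphism of $H_1^{dR}$ induced by the quasi‑isogeny $\lambda$ (resp.\ $\delta_\kk\lambda$) is the constant $\co_\kk$‑linear map $\lambda_\C$ (resp.\ $\delta_\kk\lambda_\C$). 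The identifications $\mathrm{Fil}(A_0)\iso\mathfrak{A}_{0\R}$, $\Lie(A_z)\iso\mathfrak{A}_\R$, $\Lie(A_z)/\mathcal{F}_z\iso z$ of the proof of Proposition~\ref{prop:analytic taut} then turn $\bm{obst}(\lambda)$ at $z$ into the composite $\mathfrak{A}_{0\R}\hookrightarrow\mathfrak{A}_{0\C}\map{\lambda_\C}\mathfrak{A}_\C\twoheadrightarrow\mathfrak{A}_\R\cong z$, which that proof recognizes as $\fiber^{-1}$ applied to the orthogonal projection $\lambda_z$; thus $\fiber^\vee(\bm{obst}(\lambda)_z)=\langle\cdot,\lambda_z\rangle=\bm{obst}^{\mathrm{an}}(\lambda)_z$. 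In the ramified case $\mathfrak{r}\co_U=\mathfrak{d}_\kk\co_U$ one must further check that the endomorphism $j$ of $\Lie(A)/\mathcal{F}$ with $J=\delta_\kk\circ j$ acts on $\Lie(A_z)/\mathcal{F}_z\cong z$ as the scalar by which $J$ acts divided by $\overline{\delta_\kk}$ — forced by Kr\"amer's signature condition, which makes $\co_\kk$ act on the quotient through the conjugate embedding — and combine this with $\lambda_\C(Js)=J\lambda_\C(s)$ to see that $\bm{obst}(\lambda)(Js)=j\cdot\widetilde{\delta_\kk\lambda}(s)$ again evaluates to $\bm{obst}^{\mathrm{an}}(\lambda)$ on $Js\in\mathrm{Fil}(\tilde{A}_0)$. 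This last reconciliation of $\co_\kk$‑actions, signs and the normalization of $\delta_\kk$ is the fiddliest step; everything else is formal, or already contained in the proof of Proposition~\ref{prop:analytic taut}. Finally, assembling: $\bm{s}_{m,\mathfrak{r}}\otimes(\bm{obst}^{\mathrm{an}}_{m,\mathfrak{r}})^{-1}$ corresponds, under $Z^\heartsuit(m,\mathfrak{r})\iso\bigotimes_{\lambda\notin\mathfrak{r}^{-1}\Lambda}\co(\lambda)$, to $\bigotimes_{\lambda\notin\mathfrak{r}^{-1}\Lambda}\bm{s}(\lambda)$, while $\bm{\sigma}_{m,\mathfrak{r}}$ corresponds, under \eqref{sweet isomorphism}, to $\bigotimes_{\text{$Y$-proper }Z}\bm{s}|_Y$; the uniformization matches these index sets, line bundles, canonical sections, and (by the above) the adjunction isomorphisms used to cancel the $\mathtt{T}_\mathscr{L}$‑factors, so the two sections have the same fiber at $y$.
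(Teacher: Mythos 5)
Your proof is correct and follows essentially the same route as the paper: the heart of both arguments is the identification, via the Gauss--Manin connection (de Rham homology as a constant bundle with flat extension of $\lambda$, equivalently parallel transport), of the algebraic obstruction $\bm{obst}(\lambda)$ over the first-order neighborhood with the restriction of $\bm{obst}^{\mathrm{an}}(\lambda)$, after which the comparison of the two adjunction isomorphisms is formal. You supply more detail than the paper in the ``tracing through definitions'' step (the division-of-sections argument for the units $u$ and $u^{\mathrm{an}}$, and the ramified case $\mathfrak{r}\co_U=\mathfrak{d}_\kk\co_U$), but the underlying idea is the same.
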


\begin{proof}
The main thing to explain is the relation between the analytically constructed section $\bm{obst}^\mathrm{an}(\lambda)$
and the algebraically constructed section $\bm{obst}(\lambda)$ of the previous subsection.  We will express
both constructions in terms of parallel transport with respect to the Gauss-Manin connection.
 Let $\bm{R} =\widehat{\co}_{\mathcal{D},y}$ be the completion of the ring of germs of holomorphic
functions at $y$.  Equivalently, $\bm{R}$ is the completed \'etale local ring of $\mathcal{M}_{\LL/\C}$
at $y$, a power series ring over $\C$ in $n-1$ variables.
Let $(A_0,A) \in \mathcal{M}_\LL(\C)$ be the pair represented by the point $y$,
and let $(\bm{A}_0,\bm{A})$ be the universal deformation of $(A_0,A)$ over $\bm{R}$.
If $\mathfrak{m} \subset \bm{R}$ is the maximal ideal, set $\widetilde{R}=\bm{R}/\mathfrak{m}^2$.
Thus $\Spec(\widetilde{R})$
is the first order infinitesimal neighborhood of $y$.  Denote by $(\widetilde{A}_0, \widetilde{A})$
the reduction of $(\bm{A}_0,\bm{A})$ to $\widetilde{R}$.  Let
\[
\bm{T} = \Hom_{\bm{R}}  ( \mathrm{Fil}(\bm{A}_0) , \Lie(\bm{A}) / \mathcal{F}_{\bm{A}} )
\]
be the pullback to $\bm{R}$ of the cotautological bundle, and  let $\widetilde{T}$ be
its reduction to $\widetilde{R}$.

Each $\lambda$ in the set (\ref{analytic components}) determines a $\C$-linear map
$\lambda: H_1^{dR}(A_0) \to H_1^{dR}(A),$
which, using the Gauss-Manin connection, has a canonical deformation
\[
\bm{\lambda} : H_1^{dR}(\bm{A}_0) \to H_1^{dR}(\bm{A})
\]
defined by parallel transport.  See \cite{Lan} and \cite{Vo}
for the Gauss-Manin connection, and \cite{BeOg} for the algebraic theory of parallel transport.
The composition
\[
\mathrm{Fil}(\bm{A}_0) \to H_1^{dR}(\bm{A}_0) \map{\bm{\lambda}}
H_1^{dR}(\bm{A}) \to  \Lie(\bm{A}) / \mathcal{F}_{\bm{A}} ,
\]
viewed as an element of $\bm{T}$, is precisely the pullback of $\bm{obst}^\mathrm{an}(\lambda)$
to $\bm{T}$.
 On the other hand, the reduction of $\bm{\lambda}$ to $\widetilde{R}$
is precisely the map
\[
\widetilde{\lambda} : H_1^{dR}(\widetilde{A}_0) \to H_1^{dR}(\widetilde{A})
\]
appearing in (\ref{first obst}).  Thus the reduction map $\bm{T} \to \widetilde{T}$ sends
$\bm{obst}^\mathrm{an}(\lambda) \mapsto \bm{obst}(\lambda)$.  With this in mind, the rest of the proof
follows by tracing through the definitions of the two sections in question.
\end{proof}

Define a  metrized line bundle on $\mathcal{M}_\LL$ by
\begin{equation}\label{metrized magic bundle}
 \widehat{\mathcal{Z}}^\heartsuit_\LL(f_{m,\mathfrak{r}}) =
  \widehat{\mathcal{Z}}_\LL(f_{m,\mathfrak{r}})
  \otimes    \widehat{\mathcal{T}}_\LL^{\otimes - R_\Lambda(m,\mathfrak{r})}.
\end{equation}

\begin{proposition}\label{prop:complex adjunction}
For any point $y\in \mathcal{Y}_{(\LL_0,\Lambda)} (\C)$, the section $\bm{\sigma}_{m,\mathfrak{r}}$
constructed in Section \ref{ss:adjunction} satisfies
\[
-\log || \bm{\sigma}_{m,\mathfrak{r}} ||_y^2 = \Phi_\LL ( y , f_{m,\mathfrak{r}})
\]
with respect to the metric on $\mathcal{Z}^\heartsuit_\LL(f_{m,\mathfrak{r}})|_y$
determined by (\ref{metrized magic bundle}).
\end{proposition}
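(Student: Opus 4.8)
The plan is to compute both sides of the claimed identity $-\log\|\bm{\sigma}_{m,\mathfrak r}\|_y^2 = \Phi_\mathscr{L}(y, f_{m,\mathfrak r})$ by working in the complex uniformization attached to the point $y$, and then matching them term by term. Recall from the previous subsection that the fiber of $\bm{\sigma}_{m,\mathfrak r}$ at $y$ equals the fiber of $\bm{s}_{m,\mathfrak r}\otimes(\bm{obst}^{\mathrm{an}}_{m,\mathfrak r})^{-1}$, using the factorization $Z^\heartsuit(m,\mathfrak r)\iso Z(m,\mathfrak r)\otimes T^{\otimes -R_\Lambda(m,\mathfrak r)}$ over $\mathcal D$. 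The metric on $\widehat{\mathtt Z}^\heartsuit_\mathscr{L}(f_{m,\mathfrak r})$ is by definition the tensor product of the metric on $\widehat{\mathtt Z}_\mathscr{L}(f_{m,\mathfrak r})$, whose defining Green function is $\Phi_\mathscr{L}(f_{m,\mathfrak r})$, and the inverse $R_\Lambda(m,\mathfrak r)$-th power of the cotautological metric (\ref{taut metric}). So the left hand side splits as a contribution from $\bm{s}_{m,\mathfrak r}$ (governed by $\Phi_\mathscr{L}(f_{m,\mathfrak r})$ away from the singularity) minus a contribution from $\bm{obst}^{\mathrm{an}}_{m,\mathfrak r}$ (governed by the cotautological metric and the projections $\lambda_z$).

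First I would handle the case where $y$ does \emph{not} lie on $\mathtt Z_\mathscr{L}(m,\mathfrak r)(\C)$, i.e.\ the set (\ref{analytic components}) is empty. Then $R_\Lambda(m,\mathfrak r)=0$, the obstruction section is trivial, and $\bm{\sigma}_{m,\mathfrak r}$ is literally $\bm{s}_{m,\mathfrak r}$, whose norm-squared is by construction $\exp(-\Phi_\mathscr{L}(y,f_{m,\mathfrak r}))$ since $\Phi_\mathscr{L}(f_{m,\mathfrak r})$ is a Green function for $\mathtt Z_\mathscr{L}(f_{m,\mathfrak r})=\mathtt Z_\mathscr{L}(m,\mathfrak r)$ and $y$ is not on the divisor. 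This is immediate. The real content is the case $y\in\mathtt Z_\mathscr{L}(m,\mathfrak r)(\C)$, where $\Phi_\mathscr{L}(f_{m,\mathfrak r})$ has a logarithmic singularity at $y$ but the section $\bm{\sigma}_{m,\mathfrak r}$ is nonzero there: the twist by $\widehat{\mathtt T}_\mathscr{L}^{\otimes-R_\Lambda}$ precisely cancels the singularity. Here I would use Corollary \ref{cor:sing}: approaching $y$ along $z\to y$ with $z\notin Z(f_{m,\mathfrak r})$,
\[
\Phi_\mathscr{L}(y,f_{m,\mathfrak r}) = \lim_{z\to y}\Big[\Phi_\mathscr{L}(z,f_{m,\mathfrak r}) + \sum_{\substack{\lambda\in \mathfrak r^{-1}L(\mathfrak A_0,\mathfrak A)\cap y^\perp\\ \lambda\neq 0,\ \langle\lambda,\lambda\rangle=m}} \big(\log(4\pi|\langle\lambda_z,\lambda_z\rangle|)-\Gamma'(1)\big)\Big],
\]
the sum being exactly over the set (\ref{analytic components}) — note $L(\mathfrak A_0,\mathfrak A_1)_\R\cap$ (the span of these $\lambda$) picks out $\mathfrak r^{-1}\Lambda$ by the orthogonal decomposition, and $c^+_{m,\mathfrak r}(-\langle\lambda,\lambda\rangle,\lambda)=1$ on the support of $\varphi_{m,\mathfrak r}$. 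On the other side, $-\log\|\bm{obst}^{\mathrm{an}}_z(\lambda)\|^2$ with respect to the cotautological metric (\ref{taut metric}) is $-\log\big(-4\pi e^\gamma\langle\fiber(\bm{obst}^{\mathrm{an}}_z(\lambda)),\fiber(\bm{obst}^{\mathrm{an}}_z(\lambda))\rangle\big)$; since $\fiber^\vee(\bm{obst}^{\mathrm{an}}_z(\lambda)) = \langle\cdot,\lambda_z\rangle$ and the negative line $z$ has negative definite hermitian form, tracing through $\fiber$ vs.\ $\fiber^\vee$ gives $-\log(4\pi e^\gamma\,|\langle\lambda_z,\lambda_z\rangle|^{-1}\cdot|\langle\lambda_z,\lambda_z\rangle|^2)$ up to normalization, i.e.\ the cotautological norm of $\bm{obst}^{\mathrm{an}}(\lambda)$ contributes $-\log(4\pi e^\gamma|\langle\lambda_z,\lambda_z\rangle|)$ to $-\log\|\bm{\sigma}_{m,\mathfrak r}\|_y^2$ after the inversion. (Here I would double-check the exact power of $|\langle\lambda_z,\lambda_z\rangle|$ and the constant by unwinding (\ref{conj taut}), (\ref{taut metric}), and the relation $\fiber^\vee(s)=\langle\cdot,\fiber(s)\rangle$ — this is the one genuinely fiddly normalization check.)

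Assembling: $-\log\|\bm{\sigma}_{m,\mathfrak r}\|_y^2 = \lim_{z\to y}\big[-\log\|\bm{s}_{m,\mathfrak r}(z)\|^2 + \sum_\lambda \log(4\pi e^\gamma|\langle\lambda_z,\lambda_z\rangle|)\big] = \lim_{z\to y}\big[\Phi_\mathscr{L}(z,f_{m,\mathfrak r}) + \sum_\lambda(\log(4\pi|\langle\lambda_z,\lambda_z\rangle|) - \Gamma'(1))\big]$, using $\gamma=-\Gamma'(1)$, and this is exactly the right hand side of Corollary \ref{cor:sing}, namely $\Phi_\mathscr{L}(y,f_{m,\mathfrak r})$. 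One point requiring care: $-\log\|\bm{s}_{m,\mathfrak r}(z)\|^2$ differs from $\Phi_\mathscr{L}(z,f_{m,\mathfrak r})$ by a smooth function a priori, since both are Green functions for the same divisor $\mathtt Z_\mathscr{L}(f_{m,\mathfrak r})$; but by the very definition of the metric on $\widehat{\mathtt Z}_\mathscr{L}(f_{m,\mathfrak r})$ in Section \ref{ss:line bundles} (the unique metric with $-\log\|\bm s\|_z^2 = \Phi_\mathscr{L}(f_{m,\mathfrak r})(z)$ for the canonical rational section), they are \emph{equal}, so no discrepancy arises. The main obstacle is thus not conceptual but bookkeeping: correctly propagating the two competing conventions for $\fiber$ versus $\fiber^\vee$ and the normalizing constant $4\pi e^\gamma$ through the tensor product, so that the Euler-constant terms in (\ref{taut metric}) and in Corollary \ref{cor:sing} cancel exactly rather than leaving a residual constant. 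Once that is pinned down, the comparison is term-by-term and the proof is complete.
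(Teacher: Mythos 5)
Your proposal is correct and follows essentially the same route as the paper: reduce to the fiber of $\bm{s}_{m,\mathfrak{r}}\otimes(\bm{obst}^{\mathrm{an}}_{m,\mathfrak{r}})^{-1}$ via the preceding lemma, use $-\log\|\bm{s}_{m,\mathfrak{r}}\|_z^2=\Phi_\mathscr{L}(z,f_{m,\mathfrak{r}})$ off the divisor, compute $\log\|\bm{obst}^{\mathrm{an}}(\lambda)\|_z^2=\log|\langle\lambda_z,\lambda_z\rangle|+\log(4\pi)+\gamma$ from (\ref{taut metric}) using $\fiber(\bm{obst}^{\mathrm{an}}_z(\lambda))=\lambda_z$, and identify the limit with Corollary \ref{cor:sing}. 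The normalization you flag as fiddly does come out exactly as in your final displayed identity (the intermediate remark about a contribution of $-\log(4\pi e^\gamma|\langle\lambda_z,\lambda_z\rangle|)$ ``after the inversion'' has the wrong sign, but your assembled formula carries the correct $+$ sign and matches the paper).
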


\begin{proof}
 The metrized line bundle (\ref{metrized magic bundle})  pulls back to a metrized line bundle on $\mathcal{D}$,
whose underlying line bundle is (\ref{complex bundle coords}).  It is easy to compute the norm
of the section $\bm{s}_{m,\mathfrak{r}} \otimes (\bm{obst}^\mathrm{an}_{m,\mathfrak{r}})^{-1}$
with respect to this metric.  For any $z\in \mathcal{D}$ not contained in the support of $Z(m,\mathfrak{r})$,
the section $\bm{s}_{m,\mathfrak{r}}$ satisfies
\[
- \log || \bm{s}_{m,\mathfrak{r}} ||^2_z = \Phi_\LL(z, f_{m,\mathfrak{r}}),
\]
by definition of the metric on $\widehat{\mathcal{Z}}_{\LL}(f_{m,\mathfrak{r}})$, while
\[ \log || \bm{obst}^\mathrm{an}_{m,\mathfrak{r}} ||_z^2
= \sum_{  \substack{\lambda \in \mathfrak{r}^{-1}\Lambda \\ \langle \lambda,\lambda\rangle =m  } }
\log || \bm{obst}^\mathrm{an}(\lambda) ||_z^2  \\
= \sum_{  \substack{\lambda \in \mathfrak{r}^{-1}\Lambda \\ \langle \lambda,\lambda\rangle =m  } }
\big( \log | \langle \lambda_z,\lambda_z \rangle |  + \log( 4\pi) +\gamma \big),
\]
by definition (\ref{taut metric}) of the metric on $\widehat{\mathcal{T}}_\LL$. It follows that
\begin{align*}
- \log|| \bm{\sigma}_{m,\mathfrak{r}} ||_y^2
& =  \lim_{z\to y}  \big( - \log || \bm{s}_{m,\mathfrak{r}} ||^2_z +
\log || \bm{obst}^\mathrm{an}_{m,\mathfrak{r}} ||_z^2  \big) \\
& = \lim_{z\to y}  \Big(
\Phi_\LL(z, f_{m,\mathfrak{r}} ) +
 \sum_{  \substack{\lambda \in \mathfrak{r}^{-1}\Lambda \\ \langle \lambda,\lambda\rangle =m  } }
\big( \log | \langle  \lambda_z  ,  \lambda_z \rangle |  + \log( 4\pi) +\gamma \big)
  \Big).
\end{align*}
By  Corollary \ref{cor:sing}, this is    the value at $y$ of the (discontinuous) function
$\Phi_\LL(z,f_{m,\mathfrak{r}})$.
\end{proof}


\subsection{Completion of the proof}


Again we consider the metrized line bundle
\[
 \widehat{\mathcal{Z}}^\heartsuit_\LL(f_{m,\mathfrak{r}}) =
  \widehat{\mathcal{Z}}_\LL(f_{m,\mathfrak{r}})
  \otimes    \widehat{\mathcal{T}}_\LL^{\otimes - R_\Lambda(m,\mathfrak{r})}
\]
on $\mathcal{M}_\LL$.  Its restriction to $\mathcal{Y}_{(\LL_0,\Lambda)}$
has a canonical nonzero section (\ref{adjunction section}), which
determines an arithmetic divisor
\[
\widehat{\mathrm{div} }(\bm{\sigma}_{m,\mathfrak{r}}) = (\mathrm{div}(\bm{\sigma}_{m,\mathfrak{r}}) , -\log || \bm{\sigma}_{m,\mathfrak{r}}||^2)
\in \widehat{\mathrm{Div}}(  \mathcal{Y}_{(\LL_0,\Lambda)} )
\]
satisfying
\[
[  \widehat{\mathcal{Z}}^\heartsuit_\LL(f_{m,\mathfrak{r}}) : \mathcal{Y}_{\LL_0,\Lambda} ]
= \widehat{\deg}\ \widehat{\mathrm{div}} (\bm{\sigma}_{m,\mathfrak{r}}) .
\]
Proposition \ref{magic section divisor} implies that the arithmetic divisor
\[
\widehat{\mathrm{div} }_\mathrm{fin}(\bm{\sigma}_{m,\mathfrak{r}}) = (\mathrm{div}(\bm{\sigma}_{m,\mathfrak{r}}) , 0 )
\]
satisfies
\begin{equation}\label{magic section finite}
\widehat{\deg} \  \widehat{\mathrm{div} }_\mathrm{fin}(\bm{\sigma}_{m,\mathfrak{r}})  =
\sum_{   \substack{  m_1 \in \Q_{>0} \\ m_2 \in \Q_{\ge 0} \\ m_1+m_2 =m  }   }
\widehat{\deg} \ \widehat{\mathcal{X}}_{ ( \LL_0,\Lambda ) } (m_1, m_2 , \mathfrak{r} ) .
\end{equation}
On the other hand, it is immediate from Proposition \ref{prop:complex adjunction}  that
the arithmetic divisor
\[
\widehat{\mathrm{div} }_\infty(\bm{\sigma}_{m,\mathfrak{r}}) = (0 , -\log || \bm{\sigma}_{m,\mathfrak{r}}||^2)
\]
satisfies
\begin{equation}\label{magic section infinite}
\widehat{\deg} \  \widehat{\mathrm{div} }_\infty(\bm{\sigma}_{m,\mathfrak{r}})    =
\Phi_\LL(  \mathcal{Y}_{(\LL_0,\Lambda)} , f_{m,\mathfrak{r}}  ) .
\end{equation}
At last we have all the necessary ingredients to prove the main result.

\begin{proof}[Proof of Theorem \ref{thm:arithmetic degree}]
First we treat the case $f=f_{m,\mathfrak{r}}$.
Combining (\ref{magic section finite}) with Proposition  \ref{prop:final zero cycle} shows that
 \[
 \widehat{\deg} \ \widehat{\mathrm{div} }_\mathrm{fin}(\bm{\sigma}_{m,\mathfrak{r}})  =
   -  \deg_\C \mathcal{Y}_{(\LL_0, \Lambda)}
   \sum_{   \substack{  m_1 \in \Q_{>0} \\ m_2 \in \Q_{\ge 0} \\ m_1+m_2 =m  }   }
  a^+_{\LL_0}(m_1 ,\mathfrak{r})  \cdot   R_\Lambda(m_2, \mathfrak{r}) .
 \]
The  $m_1=0$ term absent from the right hand side instead appears in the equality
\[
  [  \widehat{\mathcal{T}}_\LL^{\otimes R_\Lambda(m,\mathfrak{r}) }  :  \mathcal{Y}_{ ( \LL_0, \Lambda) } ]
  = -   \deg_\C \mathcal{Y}_{(\LL_0, \Lambda)}
   \cdot a_{\LL_0}^+(0 ,\mathfrak{r} ) \cdot  R_\Lambda(m, \mathfrak{r})
\]
of Theorem \ref{thm:taut degree}, and combining all of this with  (\ref{magic section infinite})
gives the final equality in
\begin{eqnarray*}\lefteqn{
[  \widehat{\mathcal{Z}}_\LL( f_{m,\mathfrak{r}} ) : \mathcal{Y}_{( \LL_0,\Lambda}) ] } \\
& = &
 [  \widehat{\mathcal{T}}_\LL^{ \otimes R_\Lambda(m,\mathfrak{r}) } :  \mathcal{Y}_{ ( \LL_0, \Lambda) } ]
 +   [  \widehat{\mathcal{Z}}^\heartsuit_\LL( f_{ m,\mathfrak{r}}  ) : \mathcal{Y}_{(\LL_0,\Lambda)} ]
 \\
& = &
[  \widehat{\mathcal{T}}_\LL^{ \otimes R_\Lambda(m,\mathfrak{r}) } :  \mathcal{Y}_{ ( \LL_0, \Lambda) } ]
+  \widehat{\deg} \  \widehat{\mathrm{div} }_\mathrm{fin}(\bm{\sigma}_{m,\mathfrak{r}})
+  \widehat{\deg} \  \widehat{\mathrm{div} }_\infty(\bm{\sigma}_{m,\mathfrak{r}})   \\
& = &
 \Phi_\LL(\mathcal{Y}_{(\LL_0,\Lambda)} , f_{m,\mathfrak{r}} )
 -  \deg_\C \mathcal{Y}_{(\LL_0, \Lambda)}
 \sum_{   \substack{  m_1,m_2 \in \Q_{\ge 0} \\ m_1+m_2 =m  }   }
  a^+_{\LL_0}(m_1,\mathfrak{r})  \cdot  R_\Lambda(m_2,\mathfrak{r} ) .
 \end{eqnarray*}
 Corollary \ref{cor:cm values} shows that
\begin{align*}
 \frac{1}{ \deg_\C \mathcal{Y}_{(\LL_0, \Lambda)} } \cdot
  \Phi_\LL(\mathcal{Y}_{(\LL_0 , \Lambda)},  f_{m , \mathfrak{r}} )
 &= -  L' \big( \xi(f_{m,\mathfrak{r}}) ,\theta_\Lambda , 0 \big)\\
&\phantom{=}{}+   c^+_{m,\mathfrak{r}}(0,0) \cdot  a^+_{\LL_0}(0 ,\mathfrak{r} ) \cdot R_\Lambda(0,\mathfrak{r})  \\
& \phantom{=}{}+   \sum_{   \substack{  m_1,m_2 \in \Q_{\ge 0} \\  m_1+m_2 =m  }   }
 a^+_{\LL_0}(m_1 , \mathfrak{r}) \cdot R_\Lambda(m_2,\mathfrak{r}) ,
\end{align*}
and hence
\begin{align*}
&\frac{1}{ \deg_\C \mathcal{Y}_{(\LL_0, \Lambda)}  } \cdot
[  \widehat{\mathcal{Z}}_\LL(f_{m,\mathfrak{r}}) : \mathcal{Y}_{( \LL_0,\Lambda ) } ] \\
&=
 -     L' \big( \xi(f_{m,\mathfrak{r}}) ,\theta_\Lambda ,0 \big)
 +    c^+_{m,\mathfrak{r}}(0,0) \cdot  a^+_{\LL_0}(0 ,\mathfrak{r} ) \cdot R_\Lambda(0,\mathfrak{r}) .
\end{align*}
Another application of Theorem \ref{thm:taut degree} then shows that
\begin{align*}
& [  \widehat{\mathcal{Z}}_\LL(f_{m,\mathfrak{r}}) : \mathcal{Y}_{ (\LL_0,\Lambda) } ] \\
&=
 -    \deg_\C \mathcal{Y}_{(\LL_0, \Lambda)} \cdot L' \big( \xi(f_{m,\mathfrak{r}} ) ,\theta_\Lambda ,0 \big)
 - c^+_{m,\mathfrak{r}} (0,0) \cdot [ \widehat{\mathcal{T}}_\LL : \mathcal{Y}_{(\LL_0 , \Lambda)} ].
\end{align*}
This completes the proof of (\ref{main prelim}), and hence the proof of Theorem \ref{thm:arithmetic degree}
when $f=f_{m,\mathfrak{r}}$.

 If $f=c^+(0)$ is a constant function (this can only happen when
$n=2$) then $\mathcal{Z}_\LL(f)=0$  and Theorems \ref{thm:CM value} and
\ref{thm:taut degree} imply
\begin{align*}
[ \widehat{\Theta}_\LL( f )  :  \mathcal{Y}_{(\LL_0,\Lambda)} ]   & =
\Phi_\LL( \mathcal{Y}_{(\LL_0,\Lambda)} , f) + c^+(0,0) \cdot
[\widehat{\mathcal{T}}_\LL : \mathcal{Y}_{(\LL_0,\Lambda)}] \\
& =
- \deg_\C \mathcal{Y}_{(\LL_0, \Lambda)} \cdot L'( \xi(f)  , \theta_\Lambda ,0) \\
& \quad
+ \deg_\C \mathcal{Y}_{(\LL_0, \Lambda)}  \cdot
  \{ c^+(0) , \mathcal{E}_{\LL_0} \otimes \theta_\Lambda \}
+ c^+(0,0) \cdot  [\widehat{\mathcal{T}}_\LL : \mathcal{Y}_{(\LL_0,\Lambda)}]
 \\
&= - \deg_\C \mathcal{Y}_{(\LL_0, \Lambda)} \cdot L'( \xi(f)  , \theta_\Lambda ,0).
\end{align*}
Thus Theorem \ref{thm:arithmetic degree} also holds for constant forms.
The decomposition (\ref{f decomp}) implies that the space  $H_{2-n}(\omega_\LL)^\Delta$ is spanned by the constant forms
and the $f_{m,\mathfrak{r}}$'s, and so the desired equality follows by  linearity.
\end{proof}


\begin{thebibliography}{ABCD}



\bibitem[BeOg]{BeOg} {\em P. Berthelot and A. Ogus},
Notes on Crystalline Cohomology.
Princeton University Press, (1978).

\bibitem[Bo1]{Bo1}
\emph{R. E. Borcherds},
Automorphic forms with singularities on Grassmannians.
Invent. Math. \textbf{132} (1998), 491--562.

\bibitem[Bo2]{Bo2}
\emph{R. Borcherds},
 The Gross-Kohnen-Zagier theorem in higher dimensions.
 Duke Math. J. \textbf{97} (1999), 219--233.
Correction in: Duke Math J. \textbf{105} No. 1 p.183--184.


\bibitem[Br1]{Br1} \emph{J. H. Bruinier},
Borcherds products on  $\Orth(2,l)$ and Chern classes of Heegner divisors.
Springer Lecture Notes in Mathematics {\bf 1780}, Springer-Verlag (2002).

\bibitem[B{B}K]{BBK}\emph{J. H. Bruinier, J. Burgos and U. K\"uhn},
Borcherds products and arithmetic intersection theory on Hilbert modular surfaces.
Duke Math. J. {\bf 139} (2007), 1--88.


\bibitem[BF]{BF} {\em J. H. Bruinier and J. Funke},
On two geometric theta lifts.
Duke Math. Journal. {\bf 125} (2004), 45--90.



\bibitem[BY]{BY1} \emph{J. H. Bruinier and T.H.~Yang},
Faltings heights of CM cycles and derivatives of $L$-functions.
Invent. Math. {\bf 177} (2009), 631--681.

\bibitem[BKK]{BKK} {\em J. Burgos, J. Kramer, and U. K\"uhn},
Cohomological arithmetic Chow groups.
J. Inst. Math. Jussieu. {\bf 6}  (2007), 1--178.

\bibitem[Co]{Co} \emph{P.~Colmez},
P\'eriods des vari\'et\'es ab\'eliennes \`a multiplication complexe.
Ann. Math. {\bf 138}  (1993), 625--683 .

\bibitem[FGA]{FGA}{\em B. Fantechi, L. G\"ottsche, L. Illusie, S. Kleiman, N. Nitsure, A. Vistoli},
Fundamental Algebraic Geometry: Grothendieck's FGA Explained.
Mathematical Surveys and Monographs, Vol.~123,
American Mathematical Society, 2005.

\bibitem[Gr]{Gr} {\em B. Gross},
On canonical and quasi-canonical liftings.
Invent. Math., {\bf 84} (1986), 321--326.

\bibitem[GZ]{GZ} {\em B. Gross and D. Zagier},
Heegner points and derivatives of L-series.
Invent. Math. {\bf 84} (1986), 225--320.

\bibitem[Hof]{Hof} {\em E. Hofmann},
Automorphic products on unitary groups.
Dissertation, Technische Universit\"at Darmstadt (2011).

\bibitem[Ho1]{Ho1}{\em B. Howard},
Intersection theory on Shimura surfaces II.
Invent. Math. {\bf 183}, No. 1,  (2011), 1--77.


\bibitem[Ho2]{Ho2}{\em B. Howard},
Complex multiplication cycles and Kudla-Rapoport divisors.
Annals of Math., {\bf 176}  (2012), 1097--1171.

\bibitem[Ho3]{Ho3}{\em B. Howard},
Complex multiplication cycles and Kudla-Rapoport divisors II.
To appear in Amer. J. Math.



\bibitem[Kr]{Kr} \emph{N. Kr\"amer},
Local models for ramified unitary groups.
Abh. Math. Sem. Univ. Hamburg {\bf 73} (2003), 67--80.



\bibitem[Ku2]{Ku:Duke}  \emph{S. Kudla},
Algebraic cycles on Shimura varieties of orthogonal type,
Duke Math. J.  {\bf 86}  (1997),  no. 1, 39--78.


\bibitem[Ku3]{Ku:Integrals}\emph{S. Kudla},
Integrals of Borcherds forms.
Compositio Math. \textbf{137} (2003), 293--349.


\bibitem[Ku4]{Ku4}\emph{S. Kudla},
Special cycles and derivatives of Eisenstein series.
Heegner points and Rankin L-series, 243--270, Math. Sci. Res. Inst. Publ., 49, Cambridge Univ. Press, 2004.




\bibitem[KR1]{KR1} {\em S. Kudla and M. Rapoport},
Special cycles on unitary Shimura varieties I. Unramified local theory.
Invent. Math. {\bf 184} (2011), no. 3, 629--682.

\bibitem[KR2]{KR2} {\em S. Kudla and M. Rapoport},
Special cycles on unitary Shimura varieties II.  Global theory.
Preprint.


\bibitem[KRY1]{KRY1}{\em S.~Kudla, M.~Rapoport, and T.H.~Yang},
On the Derivative of an Eisenstein series of Weight One.
Internat. Math. Res. Notices 1999, no. 7, 347--385.


\bibitem[KRY2]{KRY2}{\em S.~Kudla, M.~Rapoport, and T.H.~Yang},
Modular Forms and Special Cycles on Shimura Curves.
Princeton University Press, (2006).


\bibitem[Lan]{Lan} \emph{K.W.~Lan},
Arithmetic Compacitifactions of PEL-Type Shimura Varieties, London Math. Soc. Monograph,  36.
Princeton University Press, Princeton, (2013).


\bibitem[Liu]{Liu} \emph{Q.~Liu},
Algebraic Geometry and Arithmetic Curves.
Oxford University Press, (2002).



\bibitem[Nek]{Nek}
{\em J. ~Nekov\'a\v{r}},
On the $p$-adic height of Heegner cycles.
 Math. Ann.  {\bf 302} (1995), 609--686.

\bibitem[Pa]{Pa} {\em G. Pappas},
On the arithmetic moduli schemes of PEL Shimura varieties.
J. Algebraic Geom. {\bf 9} (2000), no. 3, 577--605.



\bibitem[Sch]{Sch} \emph{N. R. Scheithauer},
Some constructions of modular forms for the Weil representation of $\SL_2(\Z)$.
Preprint (2011).


\bibitem[Scho]{Scho}
\emph{J.~P.~Schofer},  Borcherds forms and generalizations of singular moduli.
 J. Reine Angew. Math. {\bf 629} (2009), 1--36.

\bibitem[Se73]{Se73}
\emph{J.~P.~Serre}, A Course in Arithmetic.
Graduate Texts in Mathematics, No. 7. Springer-Verlag, New York-Heidelberg, 1973.

\bibitem[Se00]{Se00}
\emph{J.~P.~Serre}, Local Algebra.
 Springer Monographs in Mathematics. Springer-Verlag, Berlin, 2000.


\bibitem[SABK]{SABK} {\em C. Soul\'e, D. Abramovich, J.-F. Burnol, and J. Kramer},
Lectures on Arakelov Geometry.
 Cambridge Studies in Advanced Mathematics {\bf 33},
Cambridge University Press, Cambridge (1992).


\bibitem[Vi]{Vi} \emph{A. Vistoli},
Intersection theory on algebraic stacks and on their moduli spaces.
Invent. Math. {\bf 97} (1989), no. 3, 613--670.

\bibitem[Vo]{Vo} \emph{C. Voisin},
Hodge Theory and Complex Algebraic Geometry I.
Cambridge University Press (2002).


\bibitem[Ya]{YaColmez} {\em T.H. ~Yang},
Chowla-Selberg formula and Colmez's conjecture.
Can. J. Math. {\bf 132} (2010), 456-472.



\bibitem[Zh]{Zh} {\em S.~Zhang},
 Heights of {H}eegner cycles and derivatives of {$L$}-series.
 Invent. Math., {\bf 130} (1997), 99--152.


\end{thebibliography}
\end{document}